\tikzset{%
  treenode/.style = {shape=rectangle, rounded corners,%
                     draw, align=center,%
                     top color=white, bottom color=blue!20},%
  root/.style     = {treenode, font=\Large, bottom color=red!30},%
  env/.style      = {treenode, font=\ttfamily\normalsize},%
  dummy/.style    = {circle,draw,inner sep=0pt,minimum size=1.4mm}%
}%
\newtheorem{theorem}[equation]{Theorem}%
\newtheorem{lemma}[equation]{Lemma}%
\newtheorem{proposition}[equation]{Proposition}%
\newtheorem{corollary}[equation]{Corollary}%
\theoremstyle{definition}%
\newtheorem{definition}[equation]{Definition}%
\newtheorem{example}[equation]{Example}%
\newtheorem{remark}[equation]{Remark}%
\newtheorem{notation}[equation]{Notation}%
\renewcommand{\O}{\ensuremath{\mathcal{O}}}%
\DeclareMathOperator{\colim}{colim}%
\numberwithin{equation}{section}%
\author{Lu\'is Alexandre Pereira}%
\title{Equivariant dendroidal sets}%
\tikzset{%
  treenode/.style = {shape=rectangle, rounded corners,%
                     draw, align=center,%
                     top color=white, bottom color=blue!20},%
  root/.style     = {treenode, font=\Large, bottom color=red!30},%
  env/.style      = {treenode, font=\ttfamily\normalsize},%
  dummy/.style    = {circle,draw,inner sep=0pt,minimum size=2mm}%
}%
\begin{document}

\maketitle%

\begin{abstract}
	We extend the Cisinski-Moerdijk-Weiss theory of $\infty$-operads to the equivariant setting to obtain a notion of $G$-$\infty$-operads that encode ``equivariant operads with norm maps'' up to homotopy. At the root of this work is the identification of a suitable category of $G$-trees together with a notion of $G$-inner horns capable of encoding the compositions of norm maps.
	
	Additionally, we follow Blumberg and Hill by constructing suitable variants associated to each of the indexing systems  featured in their work.
\end{abstract}

\tableofcontents

\section{Introduction}%

Operads encode a variety of algebraic structures, such as monoids, commutative monoids or (depending on the ambient category) Lie algebras, $E_n$-algebras, etc. Indeed, all such instances can be regarded as categories of algebras for some (fixed) suitable operad.
Informally, an operad $\O$ consists of ``sets/spaces of $n$-ary operations'' $\O(n)$, $n \geq 0$, each of which carries a $\Sigma_n$-action encoding ``reordering the inputs of the operations'', and a suitable notion of ``composition of operations''.

From the homotopy theory point of view, one of the most important classes of operads is certainly that of the $E_\infty$-operads, which are ``up to homotopy'' replacements of the commutative operad $\mathsf{Com}$. More concretely, while algebras for $\mathsf{Com}$ are the usual commutative monoids, the algebras for an $E_\infty$-operad are ``up to homotopy commutative monoids'', where associativity and commutativity are only enforced up to homotopy. Further, $E_{\infty}$-operads $\O$ are characterized by the property that each space $\O(n)$ is a contractible space with a free $\Sigma_n$-action.

This work lies at the intersection of operad theory and equivariant homotopy theory. Briefly, in $G$-equivariant homotopy theory a map of $G$-spaces $X \to Y$ is considered a $G$-weak equivalence only if all the induced fix point maps $X^H \to Y^H$, $H \leq G$ are weak equivalences. Therefore, it is no surprise that the characterization of $G$-$E_\infty$-operads, i.e. $G$-equivariant operads whose algebras are ``$G$-equivariant up to homotopy commutative monoids'' would need to be modified. Indeed, a naive first guess might be that a $G$-operad $\O$ should be called $G$-$E_\infty$ if
\begin{inparaenum}
\item[(i)] each space $\O(n)$ has a free $\Sigma_n$-action and
\item[(ii)] $\O(n)$ is $G$-contractible.
\end{inparaenum}
Accepting this tentative characterization for the moment, such a $G$-operad  is easily produced: simply taking a (non-equivariant) $E_\infty$-operad and giving it a trivial $G$-action yields such an example.
 However, it has long been known \cite{CW91} that such ``$G$-trivial $E_\infty$-operads'' are not the correct replacement for the commutative operad in the equivariant setting.
To see why, we consider the much studied example of $R$ a (strictly) commutative $G$-ring spectrum. For a finite $G$-set $T$ with $n$ elements it is possible to equip 
$R^{\wedge T} \simeq R^{\wedge n}$ with a mixed $G$-action combining the actions on $R$ and $T$. One often writes
$N^T R$ for $R^{\wedge T}$ together with this action and calls it the Hill-Hopkins-Ravenel \textit{norm}. Multiplication then induces \textit{norm maps}
\begin{equation}\label{NORM MAPS}
	N^T R \to R
\end{equation}
satisfying equivariance and associativity conditions.
The flaw of ``$G$-trivial $E_\infty$-operads'' is then that they lack all norm maps (\ref{NORM MAPS}) with $T$ a non-trivial $G$-set (or, after restriction to $H\leq G$, $T$ a non-trivial $H$-set). 

In understanding this issue, note first that though 
 $\O(n)$ has a $G\times \Sigma_n$-action when $\O$ is a $G$-operad,
conditions (i) and (ii) above  actually fail to determine a \textit{unique} $G\times \Sigma_n$-homotopy type. Indeed, (i) implies that $\O(n)^{\Gamma}=\emptyset$ whenever $\Gamma \cap \Sigma_n \neq \**$ while (ii) implies that 
$\O(n)^{\Gamma} \sim \**$ if $\Gamma \leq G$, but these conditions leave out many subgroups $\Gamma \leq G \times \Sigma_n$. Indeed, there are identifications
\begin{equation}
\Gamma \text{ such that } \Gamma \cap \Sigma_n = \**
	\leftrightarrow
\text{graph of } G\geq H \to \Sigma_n
	\leftrightarrow
H\text{-action on } \{1,\cdots,n\}
\end{equation}
and (ii) covers only those $\Gamma$ encoding trivial $H$-actions.
The correct characterization of $G$-$E_{\infty}$-operads is then that:
\begin{inparaenum}
	\item[(i)] $\O(n)$ is $\Sigma_n$-free;
	\item[(ii')] $\O(n)$ is graph-contractible, i.e. $\O(n)^{\Gamma}\sim \**$ for any $\Gamma \leq G \times \Sigma_n$ such that $\Gamma \cap \Sigma_n = \**$.
\end{inparaenum}

A key observation of Blumberg and Hill in \cite{BH15} is that the reason a ``$G$-trivial $E_{\infty}$-operad'' induces only the norm maps for trivial sets is that it satisfies (ii') only for those $\Gamma$ encoding trivial sets. Indeed, their work takes this observation much further.
Motivated by the study of equivariant spectra in incomplete universes, they define a whole lattice of types of $G$-operads, which they dub \textit{$N_{\infty}$-operads}, and which satisfy (ii') only for $\Gamma$ encoding $H$-sets within certain special families. Further, they call such special families \textit{indexing systems}. ``$G$-trivial $E_{\infty}$'' and $G$-$E_{\infty}$ are then the minimum and maximum types of $N_{\infty}$-operads, with the remaining types interpolating in between.

The motivation for this paper (and the larger project it belongs to) is the observation that the closure conditions for the $H$-sets in an indexing system identified in \cite[Def. 3.22]{BH15} admit a nice diagrammatic interpretation (discussed in \S \ref{INDEXSYS SEC}) and that this suggests the possibility of encoding equivariant operads with norm maps (i.e. with operations in $\O(n)^{\Gamma}$) via suitable diagrammatic models.

Indeed, it is well known that composition of operations in an operad can be encoded using tree diagrams and work of Moerdijk and Weiss in \cite{MW08} and follow up work of Cisinski and Moerdijk in \cite{CM11} builds a category $\Omega$ of trees and a model structure on the presheaf category 
$\mathsf{dSet}=\mathsf{Set}^{\Omega^{op}}$
which is shown in the follow up papers \cite{CM13a} and \cite{CM13b} to be Quillen equivalent to the category of
colored simplicial operads.

The role of this paper is to provide the equivariant analogue of the work in \cite{MW08} and \cite{CM11}. We first identify a (non-obvious) category $\Omega_G$ of $G$-trees (introduced in 
$\S \ref{EQUIVTREESINTO SEC}$ and formally defined in 
$\S \ref{EQUIVTREECAT SEC}$) capable of encoding norm maps and their compositions, and then adapt the proofs in \cite{MW08} and \cite{CM11} to prove the existence of a model structure on $\mathsf{dSet}^G$ whose fibrant objects, which we call $G$-$\infty$-operads, are ``up to homotopy $G$-operads with norm maps''. We note that our results are not formal: indeed, while our proofs closely follow those in \cite{MW08} and \cite{CM11} the presence of equivariance often requires significant modifications. Moreover, we note that alternative so called ``genuine'' model structures on $\mathsf{dSet}^G$ built by formal methods (say, by mimicking the definition of the genuine model structure on $\mathsf{Top}^G$) would instead only model $G$-operads \textit{without non trivial norm maps}.

\section*{Acknowledgments}

The current work owes much to Peter Bonventre: the category $\Omega_G$ of $G$-trees, which is at the root of this paper, is a joint discovery with him; the author first heard of the notion of broad posets (due to Weiss) from him; and lastly, this work has also greatly benefited from extensive joint conversations.

I would also like to thank the anonymous referee for his or hers helpful suggestions.

%The discovery of the category $\Omega_G$ of $G$-trees, which is at the root of this work, was made jointly with Peter Bonventre, and much in this work has also benefited 

\section{Main results}

Our main result follows. It is the equivariant analogue of \cite[Thm. 2.4]{CM11}.

\begin{theorem}\label{GINFTYOP THM}
	There exists a model structure on $\mathsf{dSet}^G$ such that
	\begin{itemize}
		\item the cofibrations are the $G$-normal monomorphisms;
		\item the fibrant objects are the $G$-$\infty$-operads;
		\item the weak equivalences are the smallest class containing the $G$-inner anodyne extensions, the trivial fibrations and closed under ``$2$-out-of-$3$''.
	\end{itemize}
\end{theorem}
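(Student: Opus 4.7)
The plan is to adapt the Cisinski--Moerdijk argument of \cite{CM11}, which specializes Cisinski's general recognition theorem for model structures on presheaf categories. The setup requires three preliminary inputs (each established in earlier sections): a cellular model for $\mathsf{dSet}^G$ in which the $G$-normal monomorphisms arise as the saturation of boundary inclusions $\partial \Omega_G[T] \to \Omega_G[T]$ for $T$ a $G$-tree (so that the small object argument produces cofibration/trivial-fibration factorizations); a distinguished set of $G$-inner horn inclusions $\Lambda^e[T] \to \Omega_G[T]$ whose saturation is the class of $G$-inner anodyne extensions; and a functorial cylinder (or path) object compatible with both of these classes.

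The core technical step is then a \emph{pushout-product} property: if $i$ is a generating $G$-normal monomorphism and $j$ is a $G$-inner horn inclusion, the Leibniz product $i \mathbin{\square} j$ is again $G$-inner anodyne. With this in hand, one proceeds as in \cite{CM11} to deduce that the $G$-$\infty$-operads are exactly the objects with the right lifting property against all $G$-inner horn inclusions; that the class of weak equivalences described in the theorem coincides with a Cisinski-style localizer generated by the inner anodynes; and that the trivial cofibrations are identified via lifting against a suitable class of ``$G$-inner fibrations with the equivalence-lifting property'' between $G$-$\infty$-operads (the equivariant analogue of the marked-dendroidal-set machinery of \cite{CM11}). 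The model structure axioms then follow from the general recognition theorem, with factorizations produced by standard small object arguments.

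The main obstacle is expected to be the pushout-product property for $G$-inner anodyne extensions. In the non-equivariant case the proof proceeds via a carefully ordered filtration of the Leibniz product indexed by subtrees of the ambient tree; in the equivariant setting the filtration must simultaneously track the combinatorial face data and the stabilizer data of $G$-trees, and genuinely new arguments are required wherever non-free norm-type stabilizers appear --- precisely the phenomenon that distinguishes the present model structure from the naive ``genuine'' one flagged in the introduction. A secondary difficulty is the construction of a $G$-normal cylinder object: products in $\mathsf{dSet}^G$ do not in general preserve $G$-trees, so one must instead take a $G$-normal replacement of the product with a contractible interval object while maintaining compatibility with the $G$-inner anodynes.
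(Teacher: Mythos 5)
Your plan follows essentially the same route as the paper: Cisinski's recognition machinery applied with the interval $J_d\otimes(-)$ (working over a normalization $E_\infty$ of the terminal object to deal with normality of the cylinder, then transferring), with the pushout-product of boundary inclusions against $G$-inner horns as the central technical input, and a separate characterization of the fibrant objects as exactly the $G$-$\infty$-operads. The one understated point is that this last characterization also rests on an equivariant join/outer-horn lifting theorem (the analogue of \cite[Thm.~4.2]{CM11}, here Theorem \ref{JOINLIFT THM}, proved via the wide forest category $\Phi_w$), which is comparable in weight to the pushout-product step rather than a routine consequence of it; otherwise your outline matches the paper's proof.
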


Theorem \ref{GINFTYOP THM} will be proven as the combination of Proposition \ref{DSETGMODEL PROP}, Theorem \ref{FIBRANTOBJ THM} and 
Corollary \ref{WEAKEQUIVCHAR}.

Further, letting $\mathcal{F}$ denote an indexing system (cf. \cite[Def. 3.22]{BH15}, and as reinterpreted in Definition \ref{WEAKINDEXSYS DEF}), we also prove the following more general result.

\begin{theorem}\label{INDEXSYSMAIN THM}
For $\mathcal{F}$ a weak indexing system there
exists a model structure on $\mathsf{dSet}^G$ such that
	\begin{itemize}
		\item the cofibrations are the $\mathcal{F}$-normal monomorphisms;
		\item the fibrant objects are the $\mathcal{F}$-$\infty$-operads;
		\item the weak equivalences are the smallest class containing the $\mathcal{F}$-inner anodyne extensions, the $\mathcal{F}$-trivial fibrations and closed under ``$2$-out-of-$3$''.
	\end{itemize}
\end{theorem}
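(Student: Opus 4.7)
The plan is to obtain Theorem \ref{INDEXSYSMAIN THM} by a step-by-step $\mathcal{F}$-parameterized replay of the argument for Theorem \ref{GINFTYOP THM}: I would prove $\mathcal{F}$-analogues of Proposition \ref{DSETGMODEL PROP}, Theorem \ref{FIBRANTOBJ THM} and Corollary \ref{WEAKEQUIVCHAR} and then assemble them in the same way. The guiding principle is that the axioms defining a weak indexing system (Definition \ref{WEAKINDEXSYS DEF}) should be exactly what is needed to ensure that the subclass of $G$-trees whose local labels correspond to $\mathcal{F}$-admissible $H$-sets is stable under the tree operations -- subtree extraction, grafting and Boardman--Vogt tensor -- that underlie the combinatorial arguments of \cite{MW08} and \cite{CM11}. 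This is precisely the diagrammatic reinterpretation of \cite[Def. 3.22]{BH15} advertised in the introduction, and once it is in place the remainder of the proof becomes, in large part, a matter of running the previous argument in this restricted universe.

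Concretely, I would first verify the closure properties of the $\mathcal{F}$-admissible $G$-trees directly from the weak indexing system axioms. I would then define $\mathcal{F}$-normal monomorphisms and $\mathcal{F}$-inner horn inclusions by restricting the corresponding $G$-generating sets to $\mathcal{F}$-admissible $G$-trees, and obtain the $\mathcal{F}$-analogue of Proposition \ref{DSETGMODEL PROP} by invoking the same cofibrantly generated model structure recipe with these $\mathcal{F}$-variant generating sets. Next I would establish the $\mathcal{F}$-analogue of Theorem \ref{FIBRANTOBJ THM}, identifying the fibrant objects with the $\mathcal{F}$-$\infty$-operads. Finally, the $\mathcal{F}$-version of Corollary \ref{WEAKEQUIVCHAR} would follow from the usual fibrant-replacement argument applied to the resulting weak factorization system, with 2-out-of-3 closure enforced by construction.

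The main obstacle will be the pushout-product step in the proof of the fibrant-object characterization: showing that the pushout-product of an $\mathcal{F}$-inner horn with an $\mathcal{F}$-normal monomorphism is again $\mathcal{F}$-inner anodyne. For the complete indexing system this is already the technical core of Theorem \ref{GINFTYOP THM}; the extra content here is that every $G$-tree produced during the combinatorial filtration (by inner faces, grafts and tensor products) must remain $\mathcal{F}$-admissible. This is ultimately a bookkeeping task: each combinatorial move in the filtration needs to be matched to a closure axiom of the weak indexing system that keeps us inside the $\mathcal{F}$-admissible family. Once this bookkeeping is set up, the proof of Theorem \ref{GINFTYOP THM} should restrict cleanly to yield Theorem \ref{INDEXSYSMAIN THM}, with the extremal cases ($\mathcal{F}$ minimal and $\mathcal{F}$ maximal) serving as sanity checks that the restriction has been carried out consistently.
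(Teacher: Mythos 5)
Your proposal is correct and follows essentially the same route as the paper: the paper likewise proves Theorem \ref{INDEXSYSMAIN THM} by replaying \S \ref{EXIST SEC} and \S \ref{FIBOBJCHAR SEC} with ``$G$-tree''/``normal''/``anodyne'' replaced by their $\mathcal{F}$-versions, and discharges the bookkeeping obstacle you identify via exactly the two observations you anticipate -- that $\mathcal{F}$-trees form a sieve in $\Omega_G$ (this is the definition of a weak indexing system) and that subtrees of $S\otimes T$ are $\mathcal{F}$-trees whenever $S$ and $T$ are, since the generating broad relations of the tensor product are induced from those of the factors. The only detail worth flagging is that the fibrant-object characterization also requires replacing $\mathsf{dSet}_G$ by $\mathsf{dSet}_{\mathcal{F}}=\mathsf{Set}^{\Omega_{\mathcal{F}}^{op}}$ in Notations \ref{XTOTHEPARX NOT} and \ref{KAX NOT}, which your outline covers implicitly.
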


Theorem \ref{INDEXSYSMAIN THM}
is proven at the end of \S \ref{INDEXSYS SEC}.

\begin{remark}
In the special case where $\mathcal{F}$ is the indexing system containing only the trivial $H$-sets, the model structure given by Theorem \ref{INDEXSYSMAIN THM} coincides with the ``formal genuine model structure'' as built using \cite[Prop. 2.6]{St16} (for the collection of all subgroups $H \leq G$). However, this is not the case for either Theorem \ref{GINFTYOP THM} or indeed the vast majority of instances of Theorem \ref{INDEXSYSMAIN THM}, which are not formal consequences of the existence of the model structure on $\mathsf{dSet}$.
\end{remark}

\section{Outline}

After reviewing the familiar types of trees found elsewhere in the literature (e.g. \cite{MW07}, \cite{MW08}, \cite{CM11}, among others), 
\S \ref{INTROGTREES SEC} provides an introductory look at the new equivariant trees that motivate this paper, focusing on examples. Most notably, each $G$-tree can be represented by two \textit{distinctly} shaped tree diagrams, called the 
\textit{expanded} and \textit{orbital} representations, each capturing different key features.

\S \ref{TREESFORESTS SEC} lays the necessary framework for our work.
Specifically, \S \ref{BROADPOSETS SEC} recalls Weiss' algebraic \textit{broad poset} model \cite{We12} for the category 
$\Omega$ of trees, which we prefer since planar representations of $G$-trees can easily get prohibitively large.
 \S \ref{CATFOR SEC} discusses forests, which play an auxiliary role.
 \S \ref{EQUIVTREECAT SEC} formally introduces the category $\Omega_G$ of $G$-trees.
Lastly, \S \ref{PRESHEAFCAT SEC} introduces 
all the necessary presheaf categories, most notably the category 
$\mathsf{dSet}^G$ featured in Theorems \ref{GINFTYOP THM} and 
\ref{INDEXSYSMAIN THM}.

\S \ref{NORMONANODYNEEXT SEC} discusses the notions of $G$-normal monomorphism and $G$-$\infty$-operad needed to state 
Theorem \ref{GINFTYOP THM}. The former of these is straightforward, but the latter requires the key (and more subtle) notion of $G$-inner horn (Definition \ref{INNERHORNG DEF}).

\S \ref{TENSORPROD SEC} is the technical heart of the paper, extending the key technical results \cite[Prop 9.2]{MW08} and \cite[Thms. 5.2 and 4.2]{CM11} concerning tensor products of dendroidal sets and the dendroidal join to the equivariant setting (Theorems \ref{EXPNPROP THM}, \ref{OUTERIN THM}, \ref{JOINLIFT THM}).

\S \ref{MODELSTR SEC} then finishes the proof of Theorem \ref{GINFTYOP THM} by combining the results of \S \ref{TENSORPROD SEC} with the arguments in the proof of the original non-equivariant result \cite[Thm. 2.4]{CM11}. 

Finally, \S \ref{INDEXSYS SEC} proves Theorem \ref{INDEXSYSMAIN THM} by straightforward generalizations of our arguments to the
framework of general indexing systems.

\section{An introduction to equivariant trees}
\label{INTROGTREES SEC}

\subsection{Planar trees}%

Operads are a tool for studying various types of algebraic structures that possess operations of several arities. More concretely, an operad $\O$ consists of a sequence of sets (or, more importantly to us, spaces/simplicial sets) $\O(n)$, $n \geq 0$ which behave as sets (spaces/simplicial sets) of $n$-ary operations. I.e., one should have 
\textit{composition product} maps (cf. \cite[Def. 1.1]{May72} or \cite[Def. 1.4]{GJ95}) %
\begin{equation}\label{COMPPROD EQ}%
\begin{tikzcd}[row sep =0em]
	\O(k) \times \O(n_1) \times \cdots \times \O(n_k) %
		\ar{r}{\circ}  &
	\O(n_1 + \cdots + n_k) \\
	(\varphi,\psi_1,\cdots,\psi_k) \ar[mapsto]{r} & \varphi(\psi_1,\cdots,\psi_k)%
\end{tikzcd}
\end{equation}%
and an identity $id \in \O(1)$ satisfying suitable associativity and unital conditions.%

A powerful tool for visualizing operadic compositions and their compatibilities is given by tree diagrams. For instance, the tree %
\[%
	\begin{tikzpicture}[grow=up, every node/.style = {font=\footnotesize},level distance = 1.9em]%
	\tikzstyle{level 2}=[sibling distance=3.75em]%
	\tikzstyle{level 3}=[sibling distance=1.5em]%
		\node {}%
			child{node [dummy,label=-10:$\varphi$] {}%
				child{node [dummy,label=right:$\psi_3$] {}}%
				child{node [dummy,label=right:$\psi_2$] {}%
					child{}%
					child{}%
					child{}%
				}%
				child{node [dummy,label=left:$\psi_1$] {}%
					child{node {} }%
					child{node {} }%
				}%
			};%
	\end{tikzpicture}%
\]%
encodes the composition of operations %
$\varphi \in \O(3)$, $\psi_1 \in \O(2)$, $\psi_2 \in \O(3)$ and $\psi_3 \in \O(0)$, %
and one has $\varphi(\psi_1,\psi_2,\psi_3) \in \O(5)$, with arity of the composite given by counting \textit{leaves} (i.e. the edges at the top of the tree, not capped by a circle).%

Alternatively, given the presence of the identity $id \in \O(1)$, one can instead define operads using so called \textit{partial composition products} \cite[Def. 1.16]{MSS02}%
\[%
\begin{tikzcd}[row sep = 0em]
	\O(k) \times \O(n) \ar{r}{\circ_i} &
	\O(n+k-1) \\
	(\varphi,\psi) \ar[mapsto]{r} & \varphi(id, \cdots, id,  \psi, id,\cdots, id)%
\end{tikzcd}
\]%
which are also readily visualized using trees. For example, %
\begin{equation}\label{TREEWITHLABELS EQ}%
	\begin{tikzpicture}[auto,grow=up,every node/.style={font=\footnotesize},level distance=2.3em]%
	\tikzstyle{level 2}=[sibling distance=6em]%
	\tikzstyle{level 3}=[sibling distance=1.5em]%
		\node {}%
			child{node [dummy,label=-10:$\varphi$] {}% 
				child{node {}%
				edge from parent node [swap] {$3$}}%
				child{node [dummy,label=right:$\psi$] {}% 
					child{}%
					child{}%
					child{}%
				edge from parent node {$2$}}%
				child{node {}%
				edge from parent node {$1$}}%
			};%
	\end{tikzpicture}%
\end{equation}%
encodes the partial composition $\varphi \circ_2 \psi = \varphi(id, \psi, id)$.%

Heuristically, trees encoding iterations of $\circ$ are naturally tiered whereas trees encoding iterations of %
$\circ_i$ operations are not, as exemplified by the following.%
\[%
	\begin{tikzpicture}[grow = up, level distance = 1.5em,dummy/.style={circle,draw,inner sep=0pt,minimum size=1.75mm}]%
		\tikzstyle{level 2}=[sibling distance=3.5em]%
		\tikzstyle{level 3}=[sibling distance=2.5em]%
		\tikzstyle{level 4}=[sibling distance=0.75em]%
			\node at (0,0) {}%
				child{node [dummy] {}%
					child{node [dummy] {}%
						child[sibling distance=0.8em]{node [dummy] {}%
							child %
						}%
						child[sibling distance=0.8em]{node [dummy] {}%
							child% 
						}%
					}%
					child[sibling distance=1em]{node [dummy] {}%
						child{node [dummy] {}}%
						child{node [dummy] {}%
							child %
							child %
						}%
						child{node [dummy] {}% 
							child %
							child %
							child %
						}%
					}%
					child{node [dummy] {}}%
					child{node [dummy] {}%
						child{node [dummy]{}%
							child[sibling distance=0.8em] %
							child[sibling distance=0.8em] %
						}%
					}%
				};%
			\node at (5.5,0) {}%
				child{node [dummy] {}%
					child{node [dummy] {}%
						child[sibling distance=0.8em] %
						child[sibling distance=0.8em] %
					} %
					child[sibling distance=1em]{node [dummy] {}%
						child{node [dummy] {}}%
						child{node [dummy] {}%
							child %
							child %
						}%
						child{node [dummy] {}%
							child %
							child %
							child %
						}%
					}%
					child{node [dummy] {}}%
					child{node [dummy] {}%
						child[sibling distance=0.8em] %
						child[sibling distance=0.8em] %
					}%
				};%
%		\draw (-2,1)--(2,1);
\end{tikzpicture}%
\]%
In the leftmost tree, encoding an iterated composition of $\circ$, all leaves appear at the same height and the operations, encoded by \textit{nodes} (i.e. the circles) are naturally divided into levels. On the other hand, this fails for the rightmost tree, which encodes iterated compositions of $\circ_i$. Indeed, while the definition %
$\varphi \circ_i \psi = \varphi(id, \cdots, id, \psi, id,\cdots, id)$ would allow us to convert the rightmost tree into a tiered tree by inserting nodes labeled by $id$, there are multiple ways to do so (indeed, the leftmost tree represents one such possibility).%

In practice, the second type of trees seems to be the most convenient and we will henceforth work only with such trees.%

\subsection{Symmetric trees}%

The (planar) tree notation just described is suitable for working with so-called ``non-$\Sigma$ operads''. In many  applications, however, operads possess an additional piece of structure: each set (space/simplicial set) $\O(n)$ has a left action of the symmetric group $\Sigma_n$. Heuristically, the role of this action is to ``change the order of the inputs of an operation'': thinking of $\varphi \in \O(n)$ as an operation %
$x_1 \cdots, x_n \mapsto \varphi(x_1,\cdots,x_n)$ and letting $\sigma \in \Sigma_n$, then %
$\sigma \varphi \in \O(n)$ would correspond to the operation %
$x_1, \cdots, x_n \mapsto \varphi(x_{\sigma(1)},\cdots,x_{\sigma(n)})$.%

When representing compositions on a (symmetric) operad, it thus becomes convenient to think of the edges above a node, which represent the inputs for the operation labeling the node, as not having a fixed order. One immediate drawback of this perspective, however, is that drawing %
a planar representation of such a tree on paper necessarily requires choosing an (arbitrary) order for the input edges of every node. Therefore, it is possible for different planar representations to encode the exact same information. %
For example, the pictures %
	\begin{equation}\label{SYMTREES EQ}%
		\begin{tikzpicture}[auto,grow = up, level distance = 2.25em,every node/.style={font=\footnotesize}]%
		\tikzstyle{level 2}=[sibling distance=5.25em]%
		\tikzstyle{level 3}=[sibling distance=1.25em]%
		\tikzstyle{level 4}=[sibling distance=0.75em]%
			\node at (0,0){}%
				child{node [dummy,label=-10:$\varphi$] {}%
					child{ node [dummy,label=0:$\chi$] {}%
						child{node {}%
						edge from parent node [swap,near end] {$b$}}%
					edge from parent node [swap] {$c_3$}}%
					child[level distance = 2.75em]{node {}%
					edge from parent node [near end]  {$c_2$}}%
					child{ node [dummy,label=180:$\psi$] {}%
						child{node {}%
						edge from parent node [swap, very near end] {$a_2$}}%
						child{node {}%
						edge from parent node [very near end] {$a_1$}}%
					edge from parent node  {$c_1$}}%
				edge from parent node [near start] {$r$}};%
			\node at (5.75,0){}%
				child{node [dummy,label=-10:$(123)\varphi$] {}%
					child{node {}%
					edge from parent node [swap] {$c_2$}}%
					child[level distance = 2.75em]{ node [dummy,label=0:$(12)\psi$] {}%
						child[level distance = 2.25em]{node {}%
						edge from parent node [swap,very near end] {$a_1$}}%
						child[level distance = 2.25em]{node {}%
						edge from parent node [very near end] {$a_2$}}%
					edge from parent node {$c_1$}}%
					child{ node [dummy,label=180:$\chi$] {}%
						child{node {}%
						edge from parent node [near end] {$b$}}%
					edge from parent node {$c_3$}}%
				edge from parent node [near start] {$r$}};%
		\end{tikzpicture}%
	\end{equation}%
display two planar representations of the same tree that encode \textit{the same} composition data. To explain why, we first point out that $a_1, a_2, b, c_1, c_2, c_3$ are simply the \textit{names} of the edges of the tree (needed so as to distinguish different representations of the tree in the plane), by contrast with $\psi,\chi,(123)\varphi$, etc. which are operations in $\O$. %
Next, for a finite set $S$ of size $n$, denote %
$\O(S) = \left(\mathsf{Iso}(\{1,\cdots,n\},S)\times \O(n)\right)_{\Sigma_n}$, %
where the orbits $(-)_{\Sigma_n}$ are defined using the diagonal action, acting on the $\mathsf{Iso}(\{1,\cdots,n\},S)$ component by precomposition with the inverse. %
Note that though $\O(S)$ is of course isomorphic to $\O(n)$, the isomorphism is not canonical, depending on the choice of an isomorphism $\{1,\cdots,n\} \xrightarrow{\simeq} S$. With this convention, both trees in (\ref{SYMTREES EQ}) represent the same instance of a composition
\[
  \O(\{c_1,c_2,c_3\}) \times \O(\{a_1,a_2\}) \times \O(\{b\})%
    \xrightarrow{(-) \circ_{c_1} (-) \circ_{c_3} (-)}
	\O(\{a_1,a_2,c_2,b\}).
\]
The reason for the differing labels on the nodes of the trees is then that different planar representations correspond to different choices of isomorphisms $\{1,\cdots,n\} \xrightarrow{\simeq} S$. %
For example, the leftmost tree uses the identification %
	$\{1,2,3\} \simeq \{c_1,c_2,c_3\}$ %
while the rightmost tree uses %
	$\{1,2,3\} \simeq \{c_3,c_1,c_2\}$) %
so that, for example, the classes %
	$[(\{1,2,3\}\xrightarrow{\simeq}\{c_1,c_2,c_3\},\varphi)]$, %
	$[(\{1,2,3\} \xrightarrow{\simeq} \{c_3,c_1,c_2\},(123)\varphi)]$ %
are in fact the \textit{same} element of $\O({c_1,c_2,c_3})$.%

For reasons to become apparent when we discuss how to encode compositions in equivariant operads using equivariant trees, the fact that the labels in (\ref{SYMTREES EQ}) change depending on the planar representation is rather inconvenient. Since the source of the problem is the use of non canonical isomorphisms $\O(S) \simeq \O(|S|)$, an easy solution is to use labels in $\O(S)$ instead. Thus, denoting by %
	$[\psi] \in \O(\{a_1,a_2\})$, $[\chi] \in \O(\{b\})$, %
	$[\varphi] \in \O(\{c_1,c_2,c_3\})$ %
the classes of the operations in the leftmost tree in (\ref{SYMTREES EQ}), the same information could then be instead encoded by labeling \textit{both} trees with the common labels %
$[\varphi]$, $[\psi]$, $[\chi]$. %
In what follows, we refer to labels of the form $\varphi \in \O(n)$ as \textit{coordinate dependent} labels and to 
$[\varphi] \in \O(S)$ labels as \textit{coordinate free} labels. %

Additionally, an important feature of symmetric trees not present in  planar trees is that they have non trivial ``automorphism groups''. For example, the tree %
\[%
	\begin{tikzpicture}[grow = up, level distance = 1.5em,dummy/.style={circle,draw,inner sep=0pt,minimum size=1.5mm}]%
	\tikzstyle{level 2}=[sibling distance=2.5em]%
	\tikzstyle{level 3}=[sibling distance=1em]%
		\node{}%
			child{node [dummy] {}%
				child{node [dummy] {}%
					child %
					child %
				}%
				child{node [dummy] {}%
					child %
					child %
				}%
				child{node [dummy] {}%
					child %
					child %
				}%
			};%
	\end{tikzpicture}%
\]%
has automorphism group isomorphic to the wreath product
$\Sigma_3 \wr \Sigma_2 = \Sigma_3 \ltimes (\Sigma_2)^{\times 3}$.%

\begin{remark}\label{COLOROPPERS REM}
	An alternative and more rigorous perspective on (\ref{SYMTREES EQ}) is provided by \cite[\S 3]{MW07}, where it is explained that any tree $T$ has an associated  \textit{colored} operad $\Omega(T)$. Briefly, colored operads generalize operads much in the way that categories generalize monoids: each colored operad $\O$ has a collection of objects, morphism sets $\O(\underline{b};a)$ for an ordered tuple of source objects $\underline{b}=b_1,\cdots, b_n$ and target object $a$,
	units $id_a \in \O(a;a)$, compositions
	\[
	\O(b_1,\cdots,b_n;a) \times
	\O(\underline{c}_1;b_1) \times \cdots \times
	\O(\underline{c}_n;b_n)
	\xrightarrow{\circ}
	\O(\underline{c}_1,\cdots,\underline{c}_n;a),
	\]
and isomorphisms $\O(b_1,\cdots,b_n;a) \xrightarrow{\sigma_{\**}}\O(b_{\sigma^{-1}(1)},\cdots,b_{\sigma^{-1}(n)};a)$ for each $\sigma \in \Sigma_n$
satisfying natural associativity, unitality and symmetry conditions. Note that regular operads are then ``colored operads with a single color''.

$\Omega(T)$ is then the colored operad with objects the set of edges of $T$ and freely generated by morphisms associated to each node. Explicitly, 
for the tree in (\ref{SYMTREES EQ}) these generators are (unique) morphisms $a_1 a_2 \to c_1$, $b \to c_3$ and 
$c_1 c_2 c_3 \to d$
 if using the leftmost planar representation, or generators   
$a_2 a_1 \to c_1$, $b \to c_3$, $c_3 c_1 c_2 \to d$ if using the 
 rightmost planar representation (that the two descriptions coincide follows from the symmetry isomorphisms).

(\ref{SYMTREES EQ}) is then a diagrammatic representation of a morphism $\Omega(T) \to \O$, between the colored operad $\Omega(T)$ and the regular operad $\O$, with the node labels being the image of the associated generators of $\Omega(T)$.
\end{remark}

\subsection{Equivariant trees}\label{EQUIVTREESINTO SEC}%

Throughout the following we fix a finite group $G$ and the term operad will now refer to an operad $\O$ together with a left $G$-action compatible will all the structure. Notably, $\O(n)$ now has a $G \times \Sigma_n$-action, so that from an equivariant homotopy theory perspective it is  natural to consider fixed points
$\O(n)^{\Gamma}$ for $\Gamma \leq G \times \Sigma_n$.
On the other hand, since operad theory often focuses on $\Sigma$-cofibrant operads (i.e. such that $\O(n)$ is $\Sigma_n$-free), it is natural to focus attention on $\Gamma$ such that
$\Gamma \cap \Sigma_n = \**$, since for such $\O$ it is $\O(n)^{\Gamma}=\emptyset$ otherwise. The key identifications
\[
\Gamma \text{ such that } \Gamma \cap \Sigma_n = \**
	\leftrightarrow
\text{graph of } G\geq H \to \Sigma_n
	\leftrightarrow
\text{action of } H\leq G \text{ on } \{1,\cdots,n\}
\]
then hint at a deep connection between $G$-operads and 
$H$-sets that is at the core of Blumberg and Hill's work in 
\cite{BH15}. Briefly, to each $\Sigma$-cofibrant $G$-operad $\O$ they associate the family of those $\Gamma$ such that 
$\O(n)^{\Gamma} \neq \emptyset$ and, in turn, the family of the corresponding $H$-sets, $H\leq G$
\cite[Def. 4.5]{BH15}. They then show that such families satisfy a number of novel closure conditions
\cite[Lemmas 4.10, 4.11, 4.12, 4.15]{BH15}, and dub such a family an \textit{indexing system} \cite[Def. 3.22]{BH15}. Moreover, analyzing their proofs one sees that the key idea is that carefully chosen fixed point conditions on the source of the composition
(\ref{COMPPROD EQ}) induce fixed point conditions on its target (for an explicit example, see (\ref{FIXEDPOINTCOMP EQ}) below). 

The discovery of equivariant trees was the result of an attempt to encode the closure conditions of Blumberg and Hill diagrammatically, and we provide more details on how that works in \S \ref{INDEXSYS SEC}.
For now, however, we focus on examples.

As a first guess, one might attempt to define $G$-trees simply as symmetric trees together with a $G$-action (using the automorphisms mentioned in the previous section).
As it turns out, such ``trees with a $G$-action'' are only a part of what is required, though we will choose such trees as our first examples.

\begin{example}\label{Z4 EX} %
Let $G=\mathbb{Z}_{/4}$. The following are two \textit{equivalent} representations of a symmetric tree $T$ with a $G$-action. %
\[%
	\begin{tikzpicture}[auto,grow=up, every node/.style={font=\scriptsize},level distance = 2.7em]%
	\tikzstyle{level 2}=[sibling distance=5em]%
	\tikzstyle{level 3}=[sibling distance=1.3em]%
		\node at (0,0)[{font=\footnotesize}]{$T$}%
			child{node [dummy] {}%
				child{node [dummy]  {}%
				edge from parent node [swap] {$c+1$}}%
				child[sibling distance=5.5em,level distance = 4em]{node [dummy] {}%
					child[level distance = 2.7em]{node {} edge from parent node [swap,near end] {$a+3$}}%
					child[level distance = 2.7em]{node {} edge from parent node[near end] {$a+1$}}%
				edge from parent node [swap,near end] {$b+1$}}%
				child[sibling distance=5.5em,level distance = 4em]{node [dummy] {}%
					child[level distance = 2.7em]{node {} edge from parent node [swap,near end] {$a+2$}}%
					child[level distance = 2.7em]{node {} edge from parent node [near end] {$\phantom{0+}a$}}%
				edge from parent node [near end] {$b$}}%
				child{node [dummy] {}%
				edge from parent node {$c\phantom{||}$} }%
			edge from parent node [swap] {$d$}};%
	\begin{scope}[every node/.style={font=\footnotesize}]
		\node at (5.5,0){$T$}%
			child{node [dummy] {}%
				child[level distance = 4em,sibling distance=4.5em]{node [dummy] {}%
					child[level distance = 2.7em]{node {} edge from parent node [swap] {$a+G/4G$}}%
				edge from parent node [swap] {$b+G/2G$}}%
				child[sibling distance=3.5em]{node [dummy] {} %
				edge from parent node [near end] {$c+G/2G$}}%
			edge from parent node [swap] {$d+G/G$}};%
	\end{scope}%
	\end{tikzpicture}%
\]%
The leftmost representation, which we call the \textit{expanded representation}, is simply a planar representation of the corresponding equivariant tree, together with a naming convention for the edges that reflects the $G$-action. More concretely, $1 \in G$ acts on the tree by sending $a$ to $a+1$, $a+1$ to $a+2$, $b$ to $b+1$, etc (note that implicitly $b+2=b$, $c+2=c$, $d+1=d$). %

The rightmost representation, which we call the \textit{orbital representation}, is obtained from the expanded representation by %
``identifying edges which lie in the same $G$-orbit'', and then labeling the corresponding ``edge orbit'' by the $G$-set of the edges corresponding to it. %
\end{example}%

\begin{example}\label{D6TREE EX}%
Let $G = D_6 = \{e, r, r^2, r^3, r^4, r^5, s, sr, s r^2, s r^3, s r^4, s r^5\}$ denote the hexagonal dihedral group with generators $r,s$ such that %
$r^6=e$, $s^2=e$, $s r s = r^5$.%

Letting $H_1 \geq H_2 \geq H_3$ denote the subgroups %
 $H_1 = \langle r^2,s \rangle$, %
 $H_2 = \langle s \rangle$, %
 $H_3 = \{e\}$ %
one has the following representations of a tree $T$ with $G$-action.%
\[%
	\begin{tikzpicture}[auto,grow=up, level distance = 2.2em,every node/.style={font=\scriptsize,inner sep=2pt},dummy/.style    = {circle,draw,inner sep=0pt,minimum size=2mm}]%
	\tikzstyle{level 2}=[sibling distance=14em]%
	\tikzstyle{level 3}=[sibling distance=5em]%
	\tikzstyle{level 4}=[sibling distance=0.75em]%
		\node[{font=\footnotesize}]{$T$}%
			child{node [dummy] {}%
				child{node [dummy] {}%
					child{node [dummy] {}%
						child{node {}%
						edge from parent node [swap, very near end] {$s r a\phantom{1^2}$}}%
						child{node {}%
						edge from parent node [very near end] {$r^5 a$}}%
					edge from parent node [swap] {$r^5 b$}}%
					child{node [dummy] {}%
						child{node {}%
						edge from parent node [swap,very near end] {$s r^3 a$}}%
						child{node {}%
						edge from parent node [very near end] {$r^3 a$}}%
					edge from parent node [near end] {$r^3 b$}}%
					child{node [dummy] {}%
						child{node {}%
						edge from parent node [swap,very near end] {$s r^5 a$}}%
						child{node {}%
						edge from parent node [very near end] {$\phantom{1^2}r a$}}%
					edge from parent node {$\phantom{r^5}r b$}}%
				edge from parent node [swap] {$r c$}}%
				child{node [dummy] {}%
					child{node [dummy] {}%
						child{node {}%
						edge from parent node [swap,very near end] {$s r^2 a$}}%
						child{node {}%
						edge from parent node [very near end] {$r^4 a$}}%
					edge from parent node [swap] {$r^4 b$}}%
					child{node [dummy] {}%
						child{node {}%
						edge from parent node [swap,very near end] {$s r^4 a$}}%
						child{node {}%
						edge from parent node [very near end] {$r^2 a$}}%
					edge from parent node [near end] {$r^2 b$}}%
					child{node [dummy] {}%
						child{node {}%
						edge from parent node [swap,very near end] {$s a\phantom{^2}$}}%
						child{node {}%
						edge from parent node [very near end] {$\phantom{1^2}a$}}%
					edge from parent node {$\phantom{1^1}b$}}%
				edge from parent node {$c$}}%
			edge from parent node [swap] {$d$}};%
	\begin{scope}[auto,every node/.style={font=\footnotesize}]%
		\node at (5.5,0){$T$}%
			child{node [dummy] {}%
				child{node [dummy] {}%
					child{node [dummy] {}%
						child{node {}%
						edge from parent node [swap] {$(G/H_3) \cdot a$}}%
					edge from parent node [swap] {$(G/H_2) \cdot b$}}%
				edge from parent node [swap] {$(G/H_1) \cdot c$}}%
			edge from parent node [swap] {$(G/G) \cdot d$}};%
	\end{scope}%
	\end{tikzpicture}%
\]%
We note that it is implicit in the orbital representation that, for example, %
the assignment $b \mapsto c$ defines a $G$-set map %
$(G/H_2) \cdot b  \to (G/H_1) \cdot c$ (i.e. $H_1 \geq H_2$). %
\end{example}%

We can now ask what the analogue of the node labels in (\ref{TREEWITHLABELS EQ}) are for such $G$-trees. For example,
for $G=\mathbb{Z}_{/4}$
 consider the label $\varphi$ in the leftmost expanded representation of the equivariant \textit{corolla} (i.e. tree with a single node) below.
\begin{equation} \label{EQUIVCOROLLA EQ}%
	\begin{tikzpicture}[auto,grow=up, level distance = 3em,every node/.style={font=\scriptsize}]%
	\tikzstyle{level 2}=[sibling distance=4em]%
		\node at (0,0)[{font=\footnotesize}]{$C$}%
			child{node [dummy,label=200:$ \varphi $] {}%
				child[level distance = 2.25em]{node {}%
				edge from parent node [swap] {$c+1$}}%
				child[sibling distance=2em]{node {}%
				edge from parent node [very near end,swap] {$b+1$}}%
				child[sibling distance=2em]{node {}%
				edge from parent node [very near end] {$b$}}%
				child[level distance = 2.25em]{node {}%
				edge from parent node {$\phantom{1+}c$}}%
			edge from parent node [swap] {$d$}};%
	\begin{scope}[every node/.style={font=\footnotesize}]%
		\node at (4.5,0){$C$}%
			child{node [dummy,label=200:$\lbrack \varphi \rbrack$] {}%
				child{node {}%
				edge from parent node [right] {$b+G/2G$}}%
				child[level distance = 2.25em]{node {}%
				edge from parent node [left] {$c+G/2G$}}%
			edge from parent node [right] {$d + G/G$}};%
	\end{scope}%
	\end{tikzpicture}%
\end{equation}%
An immediate answer is provided by Remark
\ref{COLOROPPERS REM}: indeed, the corolla $C$ with a $G$-action will generate a colored operad $\Omega(C)$ with a $G$-action, so that  
(\ref{EQUIVCOROLLA EQ}) should encode a $G$-equivariant map 
$\Omega(C) \to \O$. Unpacking this observation, one should have 
$\varphi \in \O(4)$, but an additional equivariance condition is to be expected. To make this explicit, note first that there are left actions of both $G$ and $\Sigma_4$ on the set of \textit{all} morphisms of $\Omega(C)$ and that these actions commute, assembling to an action of $G\times \Sigma_4$. As concrete examples, $1 \in \mathbb{Z}_{/4}$ sends the morphism 
$c b (b+1) (c+1) \to d$
	to  
$(c+1) (b+1) b c \to d$
while $(124) \in \Sigma_4$ sends 
$c b (b+1) (c+1) \to d$
	to
$(c+1) c (b+1) b \to d$.
Further, one can readily check that the $G\times \Sigma_4$-isotropy of the morphism $c b (b+1) (c+1) \to d$ is precisely the subgroup 
$\Gamma_{\{c,b,b+1,c+1\}}$
 given by the graph of the homomorphism $G \to \Sigma_4$
 encoding the $G$-set $\{c,b,b+1,c+1\}$. And since $\varphi$ is the image of that morphism, we get the sought condition 
 $\varphi \in \O(4)^{\Gamma_{\{c,b,b+1,c+1\}}}$.

We now turn our attention to the orbital representation of $C$ on the right side of (\ref{EQUIVCOROLLA EQ}), which is often preferable both for conceptual reasons and compactness.
Writing $S = \{b,c,b+1,c+1\}$, our node label is now the coordinate free label 
$[\varphi] \in \O(S)$ (indeed, a label in $\O(4)$ can not be used since the orbital notation provides no ordering of $S$).
Further, $\O(S)$ now has \textit{two} \textit{commuting} $G$-actions, one induced by the structural $G$-action on $\O$ and one induced by the $G$-action on $S$. Referring to the combined \textit{diagonal} $G$-action as the \textit{canonical} $G$-action, the equivariance condition is now straightforward: it is simply 
$[\varphi] \in \O(S)^{G}$.

It seems helpful to make the equivalence between the previous two paragraphs explicit. The homomorphism $\rho \colon G \to \Sigma_S$ encoding $S$ induces a semi-direct product
$G \ltimes \Sigma_S$ with multiplication  
$(g,\sigma)(\bar{g},\bar{\sigma})=(g \bar{g},\sigma \rho(g)\bar{\sigma}\rho(g)^{-1})$.
$G \ltimes \Sigma_S$ naturally acts on $S$ with the action of $(g,\sigma)$ given by $\sigma \rho(g)$ and on $\O(S)$ with action given by $\sigma g$ (where $\sigma \in \Sigma_S$ acts via symmetries and $g\in G$ acts via the canonical action).
The isomorphism 
$\tau \colon\{c,b,b+1,c+1\} \xrightarrow{\simeq} \{1,2,3,4\}$
then induces an isomorphism 
$
\tau_{\**} \colon G \ltimes \Sigma_S \xrightarrow{\simeq} G \times \Sigma_4
$
via $\tau_{\**}(g,\sigma)=(g,\tau\sigma \rho(g)\tau^{-1})$.
That the previous two paragraphs are equivalent is then simply the observation that $\tau_{\**}$ sends the subgroup 
$G \leq G \ltimes \Sigma_S$ to 
$\Gamma_{\{c,b,b+1,c+1\}} \leq G \times \Sigma_4$, i.e. that the graph subgroup encodes the canonical action when in coordinate dependent notation.

\vskip 5pt

Now that we know that corollas with $G$-actions encode operations fixed by graphs of full homomorphisms $G\to \Sigma_n$, we turn to the question of how to encode operations fixed by graphs of \textit{partial} homomorphisms $G \geq H \to \Sigma_n$. A natural first guess might be that this role is played by corollas with a $H$-action. However, due to the lack of full $G$-actions this would not quite provide the necessary maps for the category $\Omega_G$ of $G$-trees that we introduce in \S \ref{EQUIVTREECAT SEC}.
The solution is both simple and surprising: one simply
``induces a tree with a $H$-action into a $G$-object''.
We start with an example where $H=\**$.
\begin{example}\label{FREECOR EX}%
Let $G = \mathbb{Z}_{/3}$. The equivariant corolla $C$ with orbital representation given on the right%
\[%
	\begin{tikzpicture}[auto,grow=up,every node/.style={font=\scriptsize},level distance = 2.5em]%
	\tikzstyle{level 2}=[sibling distance=2.75em]%
		\node at (0,0) {}%
			child{node [dummy] {}%
				child[level distance = 1.5em]{node {}%
				edge from parent node [swap] {$c\phantom{+1}$}}%
				child[level distance = 2.75em]{node {}%
				edge from parent node [swap, near end] {$b$}}%
				child[level distance = 1.5em]{node {}%
				edge from parent node {$\phantom{1+}a$}}%
			edge from parent node [swap] {$d$}};%
		\node at (2.6,0) {}%
			child{node [dummy] {}%
				child[level distance = 1.5em]{node {}%
				edge from parent node [swap] {$c+1$}}%
				child[level distance = 2.75em]{node {}%
				edge from parent node [swap, near end] {$b+1$}}%
				child[level distance = 1.5em]{node {}%
				edge from parent node {$a+1$}}%
			edge from parent node [swap] {$d+1$}};%
		\node at (5.2,0) {}%
			child{node [dummy] {}%
				child[level distance = 1.5em]{node {}%
				edge from parent node [swap] {$c+2$}}%
				child[level distance = 2.75em]{node {}%
				edge from parent node [swap, near end] {$b+2$}}%
				child[level distance = 1.5em]{node {}%
				edge from parent node {$a+2$}}%
			edge from parent node [swap] {$d+2$}};%
		\node at (9,0) {}%
			child{node [dummy] {}%
				child[level distance = 1.5em]{node {} %
				edge from parent node [near end,swap] {$c + G/3G$}}%
				child[level distance = 2.75em]{node {}%
				edge from parent node [very near end,swap] {$b + G/3G$}}%
				child[level distance = 1.5em]{node {}%
				edge from parent node [near end] {$a + G/3G$}}%
			edge from parent node [swap] {$d + G/3G$}};%
		\draw[decorate,decoration={brace,amplitude=2.5pt}] (5.3,0) -- (-0.1,0) node[midway]{$C$}; %
		\node at (9,-0.15) {$C$};
	\end{tikzpicture}%
\]%
has expanded representation given by the union of the three (non-equivariant) corollas on the left. For clarity, we stress that we refer to the three trees on the left together as a forming a single $\mathbb{Z}_{/3}$-tree.
The legitimacy of this nomenclature is born out of the role such $G$-trees play in the theory, though for now we simply point out that at least the orbital representation is a ``honest'' tree (for further discussion, see \S \ref{EQUIVTREECAT SEC}).

A map $\Omega(C) \to \O$ is then determined by the image of the morphism $abc \to d$, and hence by an arbitrary operation
$[\varphi] \in \O(\{a,b,c\}) \simeq \O(3)$, which determines operations 
$[\varphi]+1 \in \O(\{a+1,b+1,c+1\})$,
$[\varphi]+2 \in \O(\{a+2,b+2,c+2\})$.
\end{example}%

\begin{example}\label{D6SMALLER EX}%
Keeping $G = D_6$ and $H_1,H_2,H_3$ as in Example \ref{D6TREE EX}, removing the \textit{root} orbit (i.e bottom orbit) from the $G$-tree $T$ therein yields the $D_6$-tree
\begin{equation}\label{D6SMALLER EQ}%
	\begin{tikzpicture}[auto,grow=up, level distance = 2.2em,every node/.style={font=\scriptsize}]%
		\tikzstyle{level 2}=[sibling distance=4.75em]%
		\tikzstyle{level 3}=[sibling distance=0.75em]%
			\node at (5,0){}%
				child{node [dummy] {}%
					child{node [dummy] {}%
						child{node {}%
						edge from parent node [swap,very near end] {$s r a\phantom{1^2}$}}%
						child{node {}%
						edge from parent node [very near end] {$r^5 a$}}%
					edge from parent node [swap] {$r^5 b$}}%
					child{node [dummy] {}%
						child{node {}%
						edge from parent node [swap,very near end] {$s r^3 a$}}%
						child{node {}%
						edge from parent node [very near end] {$r^3 a$}}%
					edge from parent node [swap,near end] {$r^3 b$}}%
					child{node [dummy] {}%
						child{node {}%
						edge from parent node [swap,very near end] {$s r^5 a$}}%
						child{node {}%
						edge from parent node [very near end] {$\phantom{1^2}r a$}}%
					edge from parent node  {$r b$}}%
				edge from parent node [swap] {$r c$}};%
			\node at (0,0){}%	
				child{node [dummy] {}%
					child{node [dummy] {}%
						child{node {}%
						edge from parent node [swap,very near end] {$s r^2 a$}}%
						child{node {}%
						edge from parent node [very near end] {$r^4 a$}}%
					edge from parent node [swap] {$r^4 b$}}%
					child{node [dummy] {}%
						child{node {}%
						edge from parent node [swap,very near end] {$s r^4 a$}}%
						child{node {}%
						edge from parent node [very near end] {$r^2 a$}}%
					edge from parent node [near end,swap] {$r^2 b$}}%
					child{node [dummy] {}%
						child{node {}%
						edge from parent node [swap,very near end] {$s a \phantom{1^2}$}}%
						child{node {}%
						edge from parent node [very near end] {$\phantom{1^2}a$}}%
					edge from parent node  {$\phantom{r^4}b$}}%
				edge from parent node [swap] {$c$}};%
		\begin{scope}[every node/.style={font=\footnotesize}]%
			\node at (8,0){}%
				child{node [dummy] {}%
					child{node [dummy] {}%
						child{node {}%
						edge from parent node [right] {$(G/H_3) \cdot a$}}%
					edge from parent node [right] {$(G/H_2) \cdot b$}}%
				edge from parent node [right] {$(G/H_1) \cdot c$}};%
		\end{scope}%
		\draw[decorate,decoration={brace,amplitude=2.5pt}] (5.1,0) -- (-0.1,0) node[midway]{$S$}; %
		\node at (8,-0.15) {$S$};
	\end{tikzpicture}%
\end{equation}%
\end{example}%

We end this introduction by illustrating the kind of  compositions that $G$-trees encode. Taking the $D_6$-tree $S$ from Example \ref{D6SMALLER EX}, a map $\Omega(S) \to \O$ leads to node labels 
\[%\label{D6SMALLER2 EQ}
	\begin{tikzpicture}[auto,grow=up, level distance  = 2.2em,every node/.style={font=\footnotesize}]%
	\tikzstyle{level 2}=[sibling distance=4.25em]%
	\tikzstyle{level 3}=[sibling distance=0.75em]%
		\node at (4.75,0){}%
			child{node [dummy,label=190:$r \varphi$] {}%
				child{node [dummy,label=350:$r^5 \psi$] {}%
					child{node {}%
					edge from parent node {}}%
					child{node {}%
					edge from parent node {}}%
				edge from parent node {}}%
				child[right]{node [dummy,label=180:$r^3 \psi$] {}%
					child[right,sibling distance=0.1em]{node {}%
					edge from parent node {}}%
					child[left,sibling distance=0.1em]{node {}%
					edge from parent node {}}%
				edge from parent node {}}%
				child{node [dummy,label=190:$\phantom{1^2}r \psi$] {}%
					child{node {}%
					edge from parent node {}}%
					child{node {}%
					edge from parent node {}}%
				edge from parent node {}}%
			edge from parent node {}};%
		\node at (0,0){}%
			child{node [dummy,label=190:$ \varphi $] {}%
				child{node [dummy,label=350:$r^4 \psi$] {}%
					child{node {}%
					edge from parent node {}}%
					child{node {}%
					edge from parent node {}}%
				edge from parent node {}}%
				child[right]{node [dummy,label=180:$r^2 \psi$] {}%
					child[right,sibling distance=0.1em]{node {}%
					edge from parent node {}}%
					child[left,sibling distance=0.1em]{node {}%
					edge from parent node {}}%
				edge from parent node {}}%
				child{node [dummy,label=190:$\psi$] {}%
					child{node {}%
					edge from parent node {}}%
					child{node {}%
					edge from parent node [near end] {$a$}}%
				edge from parent node {$b$}}%
			edge from parent node [swap] {$c$}};%
		\node at (8.25,0){}%
			child{node [dummy,label=190:$\lbrack \varphi \rbrack$] {}%
				child{node [dummy,label=190:$\lbrack \psi \rbrack$] {}%
					child{node {}%
					edge from parent node [right] {$(G/H_3) \cdot a$}}%
				edge from parent node [right] {$(G/H_2) \cdot b$}}%
			edge from parent node [right] {$(G/H_1) \cdot c$}};%
	\end{tikzpicture}%
\]%
where $[\psi] \in \O\left((H_2/H_3)\cdot a\right)^{H_2}$ and 
$ [\varphi] \in \O\left((H_1/H_2)\cdot b\right)^{H_1}$. We note that in particular
$r[\varphi] \in \O\left(r(H_1/H_2)r^{-1}\cdot r b\right)^{r H_1 r^{-1}} = \O\left((H_1^r/H_2^r)\cdot rb\right)^{H_1^r}$ and likewise for $r \psi,r^2 \psi,\cdots, r^5 \psi$, 
so that we are adopting the convention that labels in the orbital notation are chosen according to the edge orbit generators $a,b,c$.

Further unpacking the map $\Omega(S) \to \O$, $S$ encodes the fact that the composition product %
\[\O(H_1/H_2) \times \prod_{[h] \in H_1/H_2} \O(H_2^h / H_3^h) \to \O(H_1/H_3)\]
restricts to %
\begin{equation}\label{FIXEDPOINTCOMP EQ}
\O(H_1/H_2)^{H_1} \times \left(\prod_{[h] \in H_1/H_2} \O(H_2^h / H_3^h)^{H_2^{h}}\right)^{H_1} \to \O(H_1/H_3)^{H_1}
\end{equation} %
or, using that %
$\left(\prod_{[h] \in H_1/H_2} \O(H_2^h / H_3^h)^{H_2^{h}}\right)^{H_1} \simeq \O(H_2 / H_3)^{H_2}$, simply %
\[\O(H_1/H_2)^{H_1} \times \O(H_2/H_3)^{H_2} \to \O(H_1/H_3)^{H_1}.\] %

%\begin{remark}
%  Note that, in this example, we have a particular component whose root is labeled by $d=d+0$; in fact, this can always be assumed to be the case, and we call this the \textit{unit component}, denoted $T_e$. Furthermore, we observe that, if $G/H$ is the label of the root edge in the orbital notation, then $T$ is isomorphic to $G \cdot_H T_e$.
%\end{remark}

%Again following the guidance of \cite{BH15} one also needs, in coordinate dependent language, to have a way to encode elements $\varphi \in \O(n)^{\Gamma}$ in the case of $\Gamma \leq G \times \Sigma_n$ the partial graph subgroup of a partial homomorphism $G \geq H \to \Sigma_n$. Alternatively, in coordinate free language, this means we want to encode $\varphi \in \O(X)^H$ for $X$ an $H$-set, $H \leq G$. %
% Noting that corollas whose ``root orbit'' is $G/G$ encode operations fixed by full $G$-graphs and that corollas whose root orbit is $G/e$ encode operations with no fixed conditions, one might correctly guess that such operations are encoded by corollas with $G/H$ root orbit. %

\section{Categories of trees and forests}
\label{TREESFORESTS SEC}

In this section we introduce the several categories of trees, forests and presheaves we will be working with. We will make heavy use of the broad poset framework introduced by Weiss in \cite{We12}, which provides an algebraically flavored model for the category $\Omega$ of trees (cf. \cite{MW07}, \cite{MW08}, \cite{CM11}, \cite{HHM16}, among others). We will find this particularly convenient since, when using tree diagrams as in \S \ref{EQUIVTREESINTO SEC}, representative examples of equivariant trees are typically quite large.

\subsection{Broad posets and the category of trees}\label{BROADPOSETS SEC}

We start by recalling the key notions in \cite{We12} and  establishing some basic results.

Given a set $T$ we denote by $T^+$ the free abelian monoid generated by $T$. Elements of $T^+$ will be written in tuple notation, such as $\underline{e}=e_1 e_3 e_1 e_2 = e_1 e_1 e_2 e_3 \in T^+$ for $e_1, e_2, e_3 \in T$. 
We will also write $e_i \in \underline{e}$ whenever $e_i$ is a  ``letter'' appearing in $\underline{e}$,  
$\underline{f} \subset \underline{e}$ if $\underline{f}\underline{g} = \underline{e}$ for some $\underline{g}\in T^+$, and denote the ``empty tuple'' of $T^+$ by $\epsilon$.

\begin{definition}
A (commutative) broad poset structure \cite[Def. 3.2]{We12} on $T$ is a relation $\leq$ on $T^+,T$ (i.e. a subset of $T^+ \times T$) satisfying 
\begin{itemize}
 \item \textit{Reflexivity}: $e \leq e$ (for $e \in T$); 
 \item \textit{Antisymmetry}: if $e \leq f$ and $f \leq e$ then $e=f$ (for $e,f \in T$);
 \item \textit{Broad transitivity}: if $f_1 f_2 \cdots f_n = \underline{f} \leq e $ and $\underline{g}_i \leq f_i$, then $\underline{g}_1 \cdots \underline{g}_n \leq e$ (for $e,f_i \in T$, $\underline{f}, \underline{g}_i \in T^+$).
\end{itemize}
\end{definition}

\begin{remark}\label{PREBROAD REM}
Omitting antisymmetry yields the notion of
a \textit{pre-broad poset}.
\end{remark}

Since the main examples of broad posets are induced by constructions involving trees, we will refer to the elements of a broad poset as its \textit{edges}. 

\begin{definition}
A broad poset $P$ is called \textit{simple} if for any broad relation $e_1 \cdots e_n \leq e$ one has $e_i = e_j$ only if $i=j$.
\end{definition}

\begin{notation}
A broad poset structure $\leq$ on $T$ naturally induces the following preorder relations on $T$ and $T^+$: 
\begin{itemize} 
  \item for $f,e \in T$ we say that $f$ is a \textit{descendant} of $e$, written $f \leq_d e$, if there exists a broad relation $\underline{f} \leq e$ such that $f \in \underline{f}$;  
  \item for $\underline{f}, \underline{e} \in T^+$, we write $\underline{f} \leq \underline{e}$ if it is possible to write 
	$\underline{f} = \underline{f}_1 \cdots \underline{f}_k$, 
	$\underline{e} = e_1 \cdots e_k$ 
	such that $\underline{f}_i \leq e_i$ for $i=1,\cdots,k$.
\end{itemize}
\end{notation}

\begin{remark}
Generally, these preorders can be fairly counter-intuitive. For example, it is possible to have $a b \leq a$, or even both $a a \leq a$ and $a \leq a a$ simultaneously. 
The case of simple broad posets, however, is much simpler.
\end{remark}

\begin{proposition}\label{SIMPLEBROAD PROP}
Let $T$ be a simple broad poset. Then $\leq_d$ (resp. $\leq$) is an order relation on $T$ (resp. on $T^+$). Further, if $f_1 \cdots f_k \leq e$ then the $f_i$ are $\leq_d$-incomparable 
(in particular, $e \underline{f} \leq e$ only if $\underline{f}=\epsilon$).
\end{proposition}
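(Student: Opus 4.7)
The strategy is to derive all claims from a single technical observation, which is essentially the ``in particular'' clause of the statement: in a simple broad poset, if $e\underline{f}\leq e$ then $\underline{f}=\epsilon$. To prove this, I would apply broad transitivity to $e\underline{f}\leq e$ itself, substituting this same relation at the distinguished $e$ on the left while leaving each $f\in\underline{f}$ fixed via reflexivity; the resulting relation $e\underline{f}\,\underline{f}\leq e$ has each letter of $\underline{f}$ appearing at least twice on the left, which is incompatible with simplicity unless $\underline{f}=\epsilon$.

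The incomparability claim for a relation $f_1\cdots f_k\leq e$ is proved by the same mechanism. Suppose for contradiction that $f_i\leq_d f_j$ with $i\neq j$ (note $f_i\neq f_j$ by simplicity of the given relation). Choosing a witness $\underline{g}\leq f_j$ with $f_i\in\underline{g}$ and using broad transitivity to substitute $\underline{g}$ for $f_j$ in $f_1\cdots f_k\leq e$, the resulting relation has $f_i$ appearing both at position $i$ and inside $\underline{g}$, contradicting simplicity.

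For the statement that $\leq_d$ is an order on $T$, reflexivity is direct from broad-poset reflexivity, and transitivity follows by substituting a witness $\underline{f}\leq g$ at the distinguished occurrence of $g$ in a witness $\underline{g}\leq e$ via broad transitivity. For antisymmetry, given witnesses $\underline{f}\leq e$ with $f\in\underline{f}$ and $\underline{e}\leq f$ with $e\in\underline{e}$, I would substitute the second into the first at the distinguished $f$ to obtain a broad relation whose left-hand side contains $e$ as a letter. The key observation then forces this left-hand side to collapse to the single letter $e$, so in particular $\underline{e}=e$ and all remaining letters of $\underline{f}$ (other than the distinguished $f$) must be absent; unwinding, $\underline{f}=f$ and $\underline{e}=e$, so $f\leq e$ and $e\leq f$ as broad relations, and broad-poset antisymmetry gives $f=e$.

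Finally, for $\leq$ on $T^+$, reflexivity is immediate (decompose $\underline{e}=e_1\cdots e_k$ with $\underline{f}_i=e_i$) and transitivity follows by composing decompositions componentwise via broad transitivity. Antisymmetry will be the main obstacle: given $\underline{f}\leq\underline{e}$ with decomposition $\underline{f}=\underline{f}_1\cdots\underline{f}_k$, $\underline{e}=e_1\cdots e_k$, $\underline{f}_i\leq e_i$, and $\underline{e}\leq\underline{f}$ with mirrored decomposition $\underline{e}=\underline{e}_1\cdots\underline{e}_\ell$, $\underline{f}=f_1\cdots f_\ell$, $\underline{e}_j\leq f_j$, the plan is to compose via broad transitivity and obtain, for each $i$, a relation $\prod_{j:\,f_j\in\underline{f}_i}\underline{e}_j\leq e_i$ whose left-hand side is a sub-tuple of $\underline{e}$. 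Provided every $\underline{f}_i$ and $\underline{e}_j$ is non-empty (the generic case), length counting forces $k=\ell$ and $|\underline{f}_i|=|\underline{e}_j|=1$, reducing the two decompositions to permutations $\pi,\rho$ satisfying $f_{\pi(i)}\leq e_i$ and $e_{\rho(j)}\leq f_j$; a finite-orbit argument on the composite $\rho\pi$ combined with antisymmetry of $\leq_d$ on $T$ and of the underlying broad poset then identifies the letters. The delicate point is handling potential nullary broad relations $\epsilon\leq e$ that could make some factors empty; when they arise, the composed relation $(\ldots)\leq e_i$ still falls under the key observation as soon as $e_i$ itself appears among its letters, collapsing the offending product, and chasing the effect across the mirrored decomposition restores the permutation structure needed to conclude $\underline{f}=\underline{e}$.
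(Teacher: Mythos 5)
Your key observation ($e\underline{f}\leq e$ forces $\underline{f}=\epsilon$), the incomparability argument, and the antisymmetry of $\leq_d$ are all correct and essentially the paper's own arguments: both rest on producing a repeated letter in a broad relation, violating simplicity (the paper derives the ``in particular'' clause from incomparability rather than proving it first, but this is cosmetic). The divergence is in the antisymmetry of $\leq$ on $T^+$, and that is where there is a genuine gap. Your length-counting argument only works when every block $\underline{f}_i$ and $\underline{e}_j$ is non-empty, and your treatment of the degenerate case does not close it. Concretely, let $\tau$ be the self-map on occurrences of $\underline{e}$ obtained by composing the two decompositions, so the composed relation at $i$ is $\prod_{m\in\tau^{-1}(i)}e_m\leq e_i$. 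Your key observation applies only when $i\in\tau^{-1}(i)$, and nothing forces this: for instance $\tau$ could send $1\mapsto 2$, $2\mapsto 1$, $3\mapsto 2$, giving $e_2\leq e_1$, $e_1e_3\leq e_2$ and $\epsilon\leq e_3$, where $e_2$ does not appear on the left of its own composed relation; one must first identify $e_1=e_2$ via antisymmetry of $\leq_d$ along the $2$-cycle before the key observation eliminates $e_3$. Moreover the offending empty block may be invisible on the $\underline{e}$ side altogether (take $\underline{e}=e_1$, $\underline{f}=f_1f_2$ with $f_1f_2\leq e_1$, $e_1\leq f_1$, $\epsilon\leq f_2$: the $\underline{e}$-side composition is just $e_1\leq e_1$, and only the $\underline{f}$-side composition $f_1f_2\leq f_1$ yields the contradiction). ``Chasing the effect across the mirrored decomposition'' gestures at these steps but is not an argument.

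The paper sidesteps all of this by inducting on $|\underline{e}|$ and tracking a $\leq_d$-\emph{maximal} letter $e\in\underline{e}$: the two decompositions give $e\leq_d f\leq_d e'$ with $f\in\underline{f}$, $e'\in\underline{e}$, maximality plus antisymmetry of $\leq_d$ force $e=f=e'$, the ``in particular'' clause forces the two blocks involved to be the singletons $e$ and $f$, and one cancels this matched pair and invokes the induction hypothesis. If you prefer to keep your global picture, the repair is of the same flavour: from $e_m\leq_d e_{\tau(m)}$, iterate $\tau$ until you land on a cycle, identify all letters along the cycle by antisymmetry of $\leq_d$, and only then apply the key observation to each composed relation on the cycle to conclude that $\tau$ is a bijection with $e_{\tau^{-1}(i)}=e_i$; then repeat on the $\underline{f}$ side. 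Either way, the maximality (or cycle-identification) step is a necessary ingredient that your sketch omits.
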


\begin{proof}
	The ``further'' part is immediate: if two $f_i$ were $\leq_d$-comparable then broad transitivity would produce a non simple broad relation. 

	To see that $\leq_d$ satisfies antisymmetry, note that if $e'\underline{f} \leq e$ and $e \underline{g}\leq e'$ then 
	$e \underline{g} \underline{f}\leq e$ so that it must be 
	$\underline{g}=\underline{f}=\epsilon$ and the antisymmetry of $\leq$ on $T$ implies $e=e'$.

	Finally, we show antisymmetry of $\leq$ on $T^+$ by induction on the size of the tuple $\underline{e}$ in a pair of relations 
	$\underline{f}\leq \underline{e}$ and $\underline{e} \leq \underline{f}$. The $\underline{e}=\epsilon$ case is immediate. Otherwise let $e\in \underline{e}$ be $\leq_d$-maximal and choose $e\underline{g} \subset \underline{e}$, $f\in \underline{f}$ such that $e\underline{g} \leq f$ and choose $\underline{h}f \subset \underline{f}$ and $e'\in \underline{e}$ such that $\underline{h}f \leq e'$. Then $e \leq_d f \leq_d e'$ and by $\leq_d$-maximality of $e$ it must be $e=f=e'$ and hence,
	by the ``further'' claim, also $\underline{g}=\epsilon=
	\underline{h}$. And since this must hold regardless of how $f,e',\underline{g},\underline{h}$ are chosen, one concludes that, writing 
	$\underline{e}=e\underline{e}'$, $\underline{f}=e\underline{f}'$ it must in fact also be $\underline{e}'\leq \underline{f}'$ and 
	$\underline{f}' \leq \underline{e}'$, so that the induction hypothesis applies. 
\end{proof}

\begin{definition}
An edge $e \in T$ is called 
\begin{itemize}
 \item a \textit{leaf} if there are no $\underline{f} \in T^+$ 
  such that $\underline{f} < e$ 
	(i.e. $\underline{f} \leq e$ and $\underline{f} \neq e$);
 \item a \textit{node} if there is a non empty maximum $\underline{f}\neq \epsilon$ such that $\underline{f} < e$;
 \item a \textit{stump} if $\underline{f} = \epsilon$ is the maximum (in fact, only) $\underline{f}$ such that $\underline{f}<e$.
\end{itemize}
Further, in either the node or stump case the maximum such $\underline{f}$ is denoted $e^{\uparrow}$.
\end{definition}

\begin{remark}
While it is customary to regard stumps simply as a type of node, we will find it convenient, in lieu of Proposition \ref{CLOSEDSUBTREE PROP} and Lemma \ref{BROADREL STUMPS}, to separate the two cases.
\end{remark}

The following definition is the key purpose of \cite{We12}.

\begin{definition}\label{DENRORD DEF}
 A \textit{dendroidally ordered set} is a finite simple broad poset $T$ satisfying the following additional conditions
\begin{itemize}
 \item \textit{Nodal}: each edge $e \in T$ is either a leaf, a node or a stump;
 \item \textit{Root}: there is a maximum $r_T \in T$ for $\leq_d$, called the \textit{root} of $T$.
\end{itemize}
\end{definition}

Weiss proves in \cite{We12} that the category of dendroidally ordered sets (together with the obvious notion of monotonous function) is equivalent to the category $\Omega$ of trees (cf. \cite{MW07}, \cite{MW08}, \cite{CM11}, \cite{HHM16}, etc). As such, we will henceforth refer to dendroidally ordered sets simply as \textit{trees} and use them as our model for $\Omega$.

\begin{example}\label{BROADTREE EX}
The tree diagram
\[
	\begin{tikzpicture}[auto,grow=up, level distance  = 2.2em,every node/.style={font=\footnotesize}]%
	\tikzstyle{level 2}=[sibling distance=4.25em]%
	\tikzstyle{level 3}=[sibling distance=1.5em]%
		\node at (0,0){}%
			child{node [dummy] {}%
				child{node [dummy] {}%
					child{node {}%
					edge from parent node [swap, very near end] {$d$}}%
					child{node {}%
					edge from parent node [very near end] {$\phantom{1}c$}}%
				edge from parent node [swap] {$g$}}%
				child[level distance = 2.5em]{node [dummy] {}%
				edge from parent node [swap] {$f$}}%
				child{node [dummy] {}%
					child{node[dummy] {}%
					edge from parent node [very near end,swap] {$b$}}%
					child{node {}%
					edge from parent node [very near end] {$\phantom{b}a$}}%
				edge from parent node {$e$}}%
			edge from parent node [swap] {$r$}};%
	\end{tikzpicture}%
\]%
represents a broad poset structure on $\{a,b,c,d,e,f,g,r\}$. The nodes represent generating broad relations $\epsilon \leq b$, $ab\leq e$, $\epsilon \leq f$, $cd \leq g$, $ efg\leq r$ with the other broad relations, such as $afcd \leq r$, obtained by ``composition'' (i.e. using broad transitivity). Note that, alternatively, one can also write 
$b^{\uparrow}=\epsilon$, $e^{\uparrow}=ab$, $f^{\uparrow}=\epsilon$, $g^{\uparrow}= cd$, $r^{\uparrow} = efg$
to denote the generating broad relations.
\end{example}

We will make use of the following basic results. 

\begin{proposition}\label{MAPOUTTREE PROP}
 Let $T$ be a tree and $A$ any broad poset.
 A set map $\varphi \colon T \to A$ is a broad poset map if and only if $\varphi(e^{\uparrow}) \leq \varphi(e)$ for each node/stump $e \in T$.
 
 In particular, the broad relations of $T$ are generated by the 
 $e^{\uparrow} \leq e$ relations.
\end{proposition}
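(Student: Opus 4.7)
The forward implication is immediate: since $e^{\uparrow} \leq e$ is a broad relation of $T$ for each node or stump $e$ (by definition of $e^{\uparrow}$), any broad poset map must preserve it. The substantive content is the ``in particular'' statement, from which the converse direction follows at once: if $\varphi$ preserves the generating relations $e^{\uparrow} \leq e$, then broad transitivity in $A$ forces $\varphi$ to preserve every broad relation of $T$ that is obtained from these generators via reflexivity and (iterated) broad transitivity.

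So the plan reduces to proving that every broad relation $\underline{f} \leq e$ of $T$ arises in this way. Let $\sim$ denote the smallest pre-broad relation on $T$ (in the sense of Remark \ref{PREBROAD REM}) containing reflexivity and the relations $e^{\uparrow} \leq e$; the inclusion $\sim\, \subseteq\, \leq$ is clear since $\leq$ satisfies the required axioms and contains the generators. For the reverse inclusion, I would induct on $e$ with respect to $\leq_d$, which is an order relation by Proposition \ref{SIMPLEBROAD PROP} and is well-founded since $T$ is finite. Given $\underline{f} \leq e$: if $\underline{f}=e$ we are done by reflexivity; otherwise $\underline{f}<e$, and the nodal axiom forces $e$ to be either a node or a stump. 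If $e$ is a stump, then $\underline{f}=\epsilon = e^{\uparrow}$ is a generator, so $\underline{f} \sim e$.

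The main (and only mildly delicate) case is when $e$ is a node, with $e^{\uparrow} = e_1 \cdots e_n$. Here the maximality clause of the node axiom says that any $\underline{f} < e$ satisfies $\underline{f} \leq e^{\uparrow}$ in the $T^+$-preorder, so one can decompose $\underline{f} = \underline{f}_1 \cdots \underline{f}_n$ with $\underline{f}_i \leq e_i$. Since each $e_i <_d e$, the induction hypothesis gives $\underline{f}_i \sim e_i$, and combining with $e^{\uparrow} \sim e$ via broad transitivity yields $\underline{f} \sim e$, completing the induction. The hardest point to pin down cleanly is the correct interpretation of the ``maximum'' in the node axiom as maximality in the $T^+$-preorder and the resulting factorization of $\underline{f}$; once that is unpacked the induction is entirely formal.
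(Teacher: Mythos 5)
Your proof is correct and uses essentially the same argument as the paper: the $\leq_d$-induction exploiting that any non-identity relation $\underline{f}<e$ factors through the maximum $e^{\uparrow}$, giving $\underline{f}=\underline{f}_1\cdots\underline{f}_k$ with $\underline{f}_i\leq e_i$. The only (cosmetic) difference is the order of the two claims: you prove the ``in particular'' generation statement first and deduce the mapping criterion from it, whereas the paper proves the mapping criterion by the same induction and obtains the generation statement by applying it to the identity map into $T$ equipped with the generated broad poset structure.
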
 

\begin{proof}
Since for any non-identity relation $\underline{f} < e$ one has $\underline{f} \leq e^{\uparrow} < e$ one can write $e^{\uparrow} = e_1 \cdots e_k$, $\underline{f} = \underline{f}_1 \cdots \underline{f}_k$ so that $\underline{f}_i \leq e_i$ ($k=0$ is allowed, in which case $e^{\uparrow}=\underline{f}=\epsilon$), so the result follows by $\leq_d$ induction on $e$. 

The ``in particular'' claim follows from the identity map $id_T\colon T \to \tilde{T}$, where $\tilde{T}$ has the broad poset structure generated by the $e^{\uparrow} \leq e$.
\end{proof}

\begin{lemma}\label{INEQMAXIFF NOPEN LEM}
Let $T$ be a tree. For any $e \in T$ there exists a minimum 
$e^{\lambda} \in T^+$ such that $e^{\lambda} \leq e$. In fact, $e^{\lambda}=l_1 \cdots l_k$ consists of those leaves $l_i$ such that $l_i \leq_d e$.

Further, a broad relation 
$\underline{f}=f_1 \cdots f_n \leq e$ 
holds if and only if $f_i \leq_d e$, the $f_i$ are $\leq_d$-incomparable and $f_1^{\lambda} \cdots f_n^{\lambda} = e^{\lambda}$.
\end{lemma}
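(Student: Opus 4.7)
The plan is to prove both claims by strong induction on $e \in T$ with respect to $\leq_d$, leveraging the three-way case analysis (leaf, stump, node) supplied by the nodal condition.

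For the existence and leaf description of $e^{\lambda}$, I would construct it case by case: set $e^{\lambda} = e$ if $e$ is a leaf, $e^{\lambda} = \epsilon$ if $e$ is a stump, and $e^{\lambda} = e_1^{\lambda} \cdots e_k^{\lambda}$ if $e$ is a node with $e^{\uparrow} = e_1 \cdots e_k$, invoking the induction hypothesis on the $e_i$. Minimality in the node case follows because any $\underline{f} \leq e$ either equals $e$ (where $e^{\lambda} \leq e$ is immediate) or satisfies $\underline{f} \leq e^{\uparrow}$, permitting a decomposition $\underline{f} = \underline{f}_1 \cdots \underline{f}_k$ with $\underline{f}_i \leq e_i$ and an application of the induction hypothesis. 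To see that $e^{\lambda}$ lists precisely the leaves $l_i \leq_d e$ with multiplicity one, I would use that in a simple broad poset no leaf $l$ can descend from two distinct edges of $e^{\uparrow}$: otherwise broad transitivity would allow one to replace both such edges by tuples containing $l$ inside the relation $e^{\uparrow} \leq e$, yielding a non-simple broad relation and contradicting the ``further'' part of Proposition \ref{SIMPLEBROAD PROP}.

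The forward direction of the second claim is essentially immediate: $f_i \leq_d e$ holds by the very definition of $\leq_d$, and $\leq_d$-incomparability of the $f_i$ is the ``further'' part of Proposition \ref{SIMPLEBROAD PROP}. For the equality $f_1^{\lambda} \cdots f_n^{\lambda} = e^{\lambda}$, the relations $f_i^{\lambda} \leq f_i$ combined with $\underline{f} \leq e$ via broad transitivity yield $f_1^{\lambda} \cdots f_n^{\lambda} \leq e$, so minimality of $e^{\lambda}$ forces $e^{\lambda} \leq f_1^{\lambda} \cdots f_n^{\lambda}$ in the $T^+$ preorder; equality in $T^+$ then follows because both sides are tuples of leaves, and leaves have no strict descendants.

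The converse direction of the second claim is the main obstacle, and I would tackle it by strong induction on $e$. If some $f_i = e$, incomparability forces $n = 1$ and $f_1 = e \leq e$ is trivial. Otherwise every $f_i$ is a strict descendant of $e$, so by the generating-relations statement in Proposition \ref{MAPOUTTREE PROP} each $f_i$ descends from some edge of $e^{\uparrow}$, and by the simplicity argument from the existence step this edge $e_j$ is unique. This induces a partition $F_j = \{f_i : f_i \leq_d e_j\}$, and comparing leaf contents gives $\prod_{f_i \in F_j} f_i^{\lambda} = e_j^{\lambda}$. The induction hypothesis applied to each $e_j$ (a strict descendant of $e$) then yields $\prod_{f_i \in F_j} f_i \leq e_j$, and broad transitivity applied to $e_1 \cdots e_k \leq e$ assembles these into $f_1 \cdots f_n \leq e$. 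Care is needed for the degenerate cases when some $F_j$ is empty (so $e_j^{\lambda} = \epsilon$, which the induction still produces as $\epsilon \leq e_j$) and when $n = 0$ (where $e^{\lambda} = \epsilon$ forces $e$ to be a stump, or a node with only stumps above it, and $\epsilon \leq e$ follows by broad transitivity applied to $e^{\uparrow} \leq e$).
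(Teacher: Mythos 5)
Your proof is correct and follows essentially the same route as the paper's: $\leq_d$-induction on $e$, factoring every non-identity relation through $e^{\uparrow}$ and decomposing the tuple accordingly so that the induction hypothesis applies to each $e_i \in e^{\uparrow}$. You additionally spell out details the paper leaves implicit (uniqueness via simplicity of the edge of $e^{\uparrow}$ above each $f_i$, the degenerate cases, and the ``only if'' half of the further claim), which is fine.
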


\begin{proof}
The proof is by $\leq_d$ induction on $e$. The leaf case is obvious. Otherwise, let 
$\underline{f} \leq e^{\uparrow} < e$
be any non identity relation and write 
$e^{\uparrow} = e_1 \cdots e_k$ and 
$\underline{f} = \underline{f}_1 \cdots \underline{f}_k$ so that $\underline{f}_i \leq e_i$ ($k=0$ is allowed). 
By induction, $e_i^{\lambda} \leq \underline{f}_i \leq e_i$ where $e_i^{\lambda}$ consists of the leaves $l$ such that $l \leq_d e_i$ and hence indeed
$e^{\lambda} = e_1^{\lambda} \cdots e_k^{\lambda} \leq \underline{f}$.

Only the ``if'' half of the ``further'' statement needs proof.
We use the same induction argument: incomparability yields 
$e \not\in \underline{f}$ provided $\underline{f} \neq e$
and, writing $e^{\uparrow}=e_1 \cdots e_k$ and
$\underline{f} = \underline{f}_1 \cdots \underline{f}_k$ 
so that $s \in \underline{f}_i$ if and only if $s \leq_d e_i$, the induction hypothesis applies.
\end{proof}

\begin{example}
In Example \ref{BROADTREE EX} $e^{\lambda}= a$, $g^{\lambda}=cd$, $f^{\lambda}=\epsilon$, $r^{\lambda}=acd$.
\end{example}

We now discuss a key operation on trees: grafting. We recall that $\eta$ denotes the tree with a single edge, also denoted $\eta$, and only the identity relation $\eta \leq \eta$.

\begin{definition}
Let $T, U \in \Omega$ be trees and let $v$ denote both a leaf $v \colon \eta \to T$  and the root $v \colon \eta \to U$. The \textit{grafting} 
$T \amalg_v U$ is the pushout (of pre-broad posets) 
\[
\begin{tikzcd}[row sep=1.2em]
\eta \ar{r}{v} \ar[swap]{d}{v} & U \ar{d} \\
T  \ar{r} & T \amalg_v U
\end{tikzcd}
\]
\end{definition}

\begin{proposition}
$T \amalg_v U$ is a tree.
\end{proposition}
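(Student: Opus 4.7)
The plan is to verify each defining axiom of a tree—finite simple broad poset, nodal, with a root—for the pre-broad-poset pushout $W := T\amalg_v U$.

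First I would describe $W$ concretely. Its underlying set is $E(T)\sqcup E(U)/(v_T\sim v_U)$, which is finite. By the ``in particular'' part of Proposition \ref{MAPOUTTREE PROP}, the broad relations of $T$ and of $U$ are each generated by their $e^{\uparrow}\leq e$ relations, so the broad relations of the pushout $W$ are generated by the union of these two families. Although $v$ is a leaf in $T$, in $W$ it acquires the generating relation $v^{\uparrow}_U\leq v$ inherited from $U$ (when $U\neq\eta$), and hence assumes in $W$ the type—node, stump, or leaf if $U=\eta$—that its role as root of $U$ prescribes.

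Second, I would record the basic structural data. The root of $W$ is $r_T$: every edge of $T$ is a $\leq_d$-descendant of $r_T$ in $T$, and every edge of $U$ is a $\leq_d$-descendant of $v$ in $U$, which then descends to $r_T$ in $T$. Each edge of $W$ is either a leaf, a node, or a stump, with its type inherited from $T$ for $e\in T\setminus\{v\}$, from $U$ for $e\in U\setminus\{v\}$, and from $U$ for the edge $v$ itself. Maximality of the generating $e^{\uparrow}$ is preserved in $W$ because refining any occurrence of $v$ inside $e^{\uparrow}_T$ by $v^{\uparrow}_U$ yields a strictly smaller tuple in the $\leq$-order on $T^+$.

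Third—the main technical obstacle—I would prove that $W$ is a simple broad poset. The crucial geometric observation is that $v$ is the \emph{unique} shared edge, is $\leq_d$-maximal in $U$, and is a leaf in $T$. Consequently, the descendant sets in $W$ of any two $\leq_d$-incomparable edges from the same component are disjoint: a common descendant would have to route through $v$, forcing $v$ itself to descend from both incomparable edges and contradicting simplicity of the component. This disjointness immediately precludes cycles $e\leq_d f\leq_d e$ crossing between $T$ and $U$, providing antisymmetry on edges and thus the broad poset axioms; and it fuels an induction on the length of a broad-transitivity derivation $\underline{f}\leq e$ to establish simplicity, since when one substitutes letters $f_i$ in an already-simple relation by simple sub-relations $\underline{g}_i\leq f_i$, the resulting word $\underline{g}_1\cdots\underline{g}_n$ has pairwise distinct letters because each $\underline{g}_i$ is supported on the descendants of $f_i$, and these descendant sets are disjoint.

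Antisymmetry of $\leq$ on $T^+$ then follows from Proposition \ref{SIMPLEBROAD PROP} applied to the now-established simple broad poset, completing the verification that $W\in\Omega$. I expect the most delicate point to be the simplicity induction—specifically the careful bookkeeping of letters equal to $v$ and of sub-relations that cross from $T$ into $U$ through the gluing edge.
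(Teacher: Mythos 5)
Your proposal is correct and follows essentially the same route as the paper: describe the pushout explicitly and verify the finiteness, antisymmetry, simplicity, nodal and root axioms directly. The paper is terser — it records a closed form for \emph{all} relations of $T\amalg_v U$ (those of $T$, those of $U$, and the composites $\underline{t}\,\underline{u}\leq t$ arising from $\underline{t}v\leq t$ in $T$ and $\underline{u}\leq v$ in $U$) and then declares the axioms straightforward; with that closed form, simplicity is immediate because $\underline{t}$ lies in $T\setminus\{v\}$ and $\underline{u}$ in $U$, so no induction over derivations is needed. Your generator-and-derivation induction works too, but one step needs tightening: in the inductive step you justify distinctness of the letters of $\underline{g}_1\cdots\underline{g}_n$ by disjointness of the descendant sets $D(f_i)$, yet your stated hypothesis on $f_1\cdots f_n\leq e$ is only that it is \emph{simple} (pairwise distinct letters), which does not by itself give disjoint descendant sets (e.g.\ $f_1<_d f_2$ is not excluded by distinctness alone). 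You should strengthen the induction hypothesis to ``the letters are pairwise $\leq_d$-incomparable'' (equivalently, have pairwise disjoint descendant sets, with none containing $e$); this holds for the generating relations by Proposition \ref{SIMPLEBROAD PROP} applied to $T$ and $U$ together with your observation that $U$'s edges are reachable from $T$ only through $v$, and it is preserved under broad transitivity, after which simplicity and antisymmetry follow as you indicate.
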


\begin{proof}
	The underlying set of $T \amalg_v U$ is the underlying  coproduct and the broad relations are easily seen to come in three types:
	\begin{inparaenum}
	\item[(i)] $\underline{t} \leq t$ a relation in $T$;
	\item[(ii)] $\underline{u} \leq u$ a relation in $U$;
	\item[(iii)] $\underline{t}\underline{u} \leq t$ whenever 
	both $\underline{t} v \leq t$ in $T$ and
	$ \underline{u} \leq v$ in $U$.
	\end{inparaenum}
	The antisymmetry, simple, nodal and root conditions are straightforward.
\end{proof}

\begin{remark}\label{GENGRAFTDEF REM}
More generally, letting $\underline{v}=v_1 \cdots v_n$ denote both a tuple of leaves of $T$ and the roots 
$v_i \in U_i$
we similarly define a grafted tree 
$T \amalg_{\underline{v}}(U_1 \amalg \cdots \amalg U_n)$.
Explicitly, its broad relations have the form
\begin{inparaenum}
	\item[(i)] $\underline{t} \leq t$ a relation in $T$;
	\item[(ii)] $\underline{u} \leq u$ a relation in some $U_i$;
	\item[(iii)] $\underline{t}\underline{u}_{k_1}\cdots \underline{u}_{k_p}  \leq t$ whenever $\{k_i\} \subset \{1,\cdots,n\}$ and both $\underline{t}v_{k_1}\cdots v_{k_p} \leq t$ in $T$ and
	$\underline{u}_{k_i} \leq v_{k_i}$ in $U_i$.
\end{inparaenum}
\end{remark}

\begin{example} A tree representation of the grafting procedure follows.
\begin{equation}\label{GRAFT EQ}
	\begin{tikzpicture}[auto,grow=up, level distance  = 1.8em,every node/.style={font=\footnotesize,inner sep=1.5pt},
	dummy/.style = {circle,draw,inner sep=0pt,minimum size=1.5mm}]%
	\tikzstyle{level 2}=[sibling distance=3.75em]%
	\tikzstyle{level 3}=[sibling distance=2.7em]%
	\tikzstyle{level 4}=[sibling distance=1em]%
		\node at (5.75,0.3){$T \amalg_{\underline{v}}(U_1 \amalg U_2 \amalg U_3)$}%
			child{node [dummy] {}%
				child[level distance = 1.3em]{node [dummy] {}%
					child[level distance = 1.8em]{node [dummy] {}%
						child
						child{node [dummy] {}%
							child
							child
						}
						child
					edge from parent node [swap] {$v_3$}}
					child[level distance = 1.8em]
				}
				child{node [dummy] {}%
				edge from parent node [swap] {$v_2$}}
				child[level distance = 1.3em]{node [dummy] {}%
					child[level distance = 1.8em]{node [dummy] {}%
					}
					child[level distance = 1.8em]{node [dummy] {}%
						child
						child
						child{node [dummy] {}%
						}			
					edge from parent node{$v_1$}}
				}
			edge from parent};%
		\node at (0,0){$T$}%
			child{node [dummy] {}%
				child[level distance = 1.3em]{node [dummy] {}%
					child[level distance = 1.8em]{
					edge from parent node [swap] {$v_3$}}
					child[level distance = 1.8em]
				}
				child{
				edge from parent node [swap] {$v_2$}}
				child[level distance = 1.3em]{node [dummy] {}%
					child[level distance = 1.8em]{node [dummy] {}%
					}
					child[level distance = 1.8em]{
					edge from parent node {$v_1$}}
				}
			edge from parent};%
	\begin{scope}[level distance  = 1.6em]
	\tikzstyle{level 2}=[sibling distance=1em]%
	\tikzstyle{level 3}=[sibling distance=1em]%
		\node at (-2,2.2) {$U_1$}
			child{node [dummy] {}%
				child
				child
				child{node [dummy] {}%
				}
			edge from parent node [swap]{$v_1$}};
		\node at (0,2.2) {$U_2$}
			child{node [dummy] {}%
			edge from parent node [swap]{$v_2$}};
		\node at (2,2.2) {$U_3$}
			child{node [dummy] {}%
				child
				child{node [dummy] {}%
					child
					child
				}
				child
			edge from parent node [swap]{$v_3$}};
	\end{scope}
	\end{tikzpicture}%
\end{equation}
\end{example}

We will also find it useful to be able to reverse the grafting procedure.

For any tree $T$ and edge $e\in T$ we will let $T^{\leq e}$ 
denote the pre-broad poset with set $\{f \in T | f\leq_d e\}$ and the generating relations $f^{\uparrow} \leq f$ for $f\leq_d e$.

Similarly, for a $\leq_d$-incomparable tuple $\underline{e}=e_1 \cdots e_n$
we will let $T_{\nless \underline{e}}$ denote the pre-broad poset with set 
$\{f \in T | \forall_i f \nless_d e_i\}$ and the generating relations $f^{\uparrow} \leq f$ such that $\forall_i f\nless_d e_i$ and $f\neq e_i$. Note that by incomparability it is $e_i\in T_{\nless \underline{e}}$.

\begin{proposition}\label{UNGRAFT PROP}
$T^{\leq e}$ and $T_{\nless \underline{e}}$ are trees. Hence, for any $\leq_d$-incomparable tuple $\underline{e}=e_1 \cdots e_n$, one has
\begin{equation}\label{REVERSEGRAFT EQ}
T \simeq T_{\nless \underline{e}} \amalg_{\underline{e}} 
(T^{\leq e_1} \amalg \cdots \amalg T^{\leq e_n}). 
\end{equation}
\end{proposition}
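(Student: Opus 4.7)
The plan is to first verify the dendroidal axioms for $T^{\leq e}$ and $T_{\nless \underline{e}}$, and then extract the grafting decomposition by a direct comparison of broad relations.

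I would start by checking that the stated generating relations $f^{\uparrow}\leq f$ are well-formed, meaning that the letters of $f^{\uparrow}$ belong to the appropriate underlying set. For $T^{\leq e}$, broad transitivity immediately yields $g\leq_d f\leq_d e$ for each letter $g\in f^{\uparrow}$. For $T_{\nless\underline{e}}$ the check is more delicate, and the key intermediate fact is that if $g$ is a letter of $f^{\uparrow}$ then there is no edge $h$ with $g<_d h<_d f$: any such $h$ would, by Lemma \ref{INEQMAXIFF NOPEN LEM}, lie $\leq_d$ below some letter $g'$ of $f^{\uparrow}$, and $\leq_d$-incomparability of those letters (Proposition \ref{SIMPLEBROAD PROP}) would then force $g=g'$ and hence $h=g$. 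Granted this, suppose some $g\in f^{\uparrow}$ satisfied $g<_d e_i$; the $\leq_d$-ancestors of $g$ form a chain containing both $f$ and $e_i$, so these are comparable. The case $f\leq_d e_i$ is excluded by $f\nless_d e_i$ and $f\neq e_i$, while $e_i<_d f$ would place $e_i$ strictly between $g$ and $f$, contradicting the intermediate fact. Hence $g\in T_{\nless\underline{e}}$.

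Next I would verify the remaining axioms. Reflexivity, simplicity and antisymmetry descend directly from $T$, as the broad relations in both cases form sub-pre-broad-posets of those of $T$. For the nodal condition, each edge whose generating relation $f^{\uparrow}\leq f$ is preserved retains its leaf/node/stump status from $T$; and each $e_i$ becomes a leaf of $T_{\nless \underline{e}}$ by construction, while the leaves of $T$ that lie $\leq_d e$ become the leaves of $T^{\leq e}$. The root of $T^{\leq e}$ is visibly $e$, and the root of $T_{\nless \underline{e}}$ is $r_T$, which belongs to the subset by $\leq_d$-maximality of $r_T$ in $T$.

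For the grafting identity (\ref{REVERSEGRAFT EQ}) I would use the explicit description of broad relations in a grafting from Remark \ref{GENGRAFTDEF REM} to match them with those of $T$. Any broad relation $\underline{f}\leq h$ in $T$ admits a unique decomposition: each letter of $\underline{f}$ either lies in $T_{\nless \underline{e}}$ or satisfies $f\leq_d e_i$ for a unique $i$ (uniqueness by $\leq_d$-incomparability of $\underline{e}$). Grouping the letters of the second kind by $i$ yields sub-tuples $\underline{f}_i\leq e_i$ living in $T^{\leq e_i}$, and the remaining letters together with the $e_i$'s so used form a relation in $T_{\nless\underline{e}}$; using Lemma \ref{INEQMAXIFF NOPEN LEM} one verifies that these pieces assemble to precisely a type (iii) relation in the grafting, and conversely. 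Types (i) and (ii) of Remark \ref{GENGRAFTDEF REM} correspond to broad relations of $T$ not crossing any $e_i$ and to those lying below a single $e_i$, respectively. The main obstacle I anticipate is the intermediate tree-structural fact in the first paragraph, which underlies every subsequent step; once it is available, the rest is largely bookkeeping.
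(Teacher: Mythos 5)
Your proof is correct, but it is organized rather differently from the paper's, which is much terser. For the verification that $T^{\leq e}$ and $T_{\nless \underline{e}}$ are trees, the paper's only substantive observation is that any $\leq_d$-chain of generating relations in $T$ whose endpoints lie in the sub-poset lies entirely within it, from which the nodal and root axioms follow; your ``no edge strictly between a letter of $f^{\uparrow}$ and $f$'' lemma is a sharper form of the same structural fact (your appeal to ancestors forming a chain is Corollary \ref{INCOMPPLANARAX COR}, and the factorization of any $h<_d f$ through a letter of $f^{\uparrow}$ comes from the maximality of $f^{\uparrow}$ rather than from Lemma \ref{INEQMAXIFF NOPEN LEM}), and if anything it addresses the well-formedness of the generating relations of $T_{\nless\underline{e}}$ more explicitly than the paper does. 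The real divergence is in the grafting identity: the paper disposes of it in one line by noting that $T$ and the grafting are trees on the same underlying set with the same collection of generating vertex relations $f^{\uparrow}\leq f$ --- each such relation of $T$ lands in exactly one of the pieces, by $\leq_d$-incomparability of the $e_i$ --- so the ``in particular'' clause of Proposition \ref{MAPOUTTREE PROP} identifies the two broad posets at once. You instead match arbitrary composite broad relations of $T$ against the three types of Remark \ref{GENGRAFTDEF REM}, which works (the uniqueness of your grouping of letters is the content of Proposition \ref{INDECOMPUNIQUE PROP}) but incurs exactly the bookkeeping you anticipate. The reduction to generating relations is worth internalizing: it is the same device the paper reuses, for instance, in the proof of Corollary \ref{TALLOUTERFACT COR}.
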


\begin{proof}
In both cases antisymmetry and simplicity are inherited from $T$. Further, since any string $f_1 \leq_d f_2 \leq_d \cdots \leq_d f_n$ in $T$ (note that we can assume the $\leq_d$ are induced by generating relations) where $f_1,f_n \in T^{\leq e}$ (resp. $f_1,f_n \in T_{\nless \underline{e}}$) is a string in 
$ T^{\leq e}$ (resp. $T_{\nless \underline{e}}$), the nodal and root conditions also follow.

(\ref{REVERSEGRAFT EQ}) follows from the ``in particular'' claim 
in Proposition \ref{MAPOUTTREE PROP}.
\end{proof}

\begin{example}
Denoting by $V$ the rightmost tree in (\ref{GRAFT EQ}), one has
$U_i = V^{\leq v_i}$, $T = V_{\nless \underline{v}}$.
We note here a pictorial mnemonic for our index/exponent notation: $V^{\leq v_i}$ denotes an ``upper subtree'' of $V$ while $V_{\nless \underline{v}}$ denotes a ``lower subtree''.
\end{example}

\begin{remark}\label{OUTERFACELEAVES REM}
 The leaves of $T_{\nless \underline{e}}$ consist of the edges  $e_i \in \underline{e}$ together with the leaves of $T$ not in $\underline{e}^{\lambda}$, i.e., those leaves 
$\leq_d$-incomparable with the $e_i \in \underline{e}$.
\end{remark}

\begin{corollary}\label{TALLOUTERFACT COR}
Let $U \xrightarrow{\varphi} T$ be a map in $\Omega$ and let  $r$, $\underline{l}$ be the root and leaf tuple of $U$.
Then $\varphi$ naturally factors as
$U \to T^{\leq \varphi(r)}_{\nless \varphi(\underline{l})}
\hookrightarrow T$.
\end{corollary}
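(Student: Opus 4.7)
The plan is to first verify the set-theoretic factorization (that every edge of $U$ maps into the claimed subtree), and then to verify that the factored map preserves broad relations.

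For the set-theoretic part, since $u \leq_d r$ for every $u \in U$, one has $\varphi(u) \leq_d \varphi(r)$. To see $\varphi(u) \nless_d \varphi(l_i)$ for every leaf $l_i \in \underline{l}$, suppose for contradiction that $\varphi(u) <_d \varphi(l_i)$. Then $u \neq l_i$, and since $l_i$ is a leaf $u$ and $l_i$ must be $\leq_d$-incomparable in $U$. Let $c \in U$ be their minimal common ancestor and write $c^\uparrow = c_1 \cdots c_n$, with $u \leq_d c_{j_1}$, $l_i \leq_d c_{j_2}$ and $j_1 \neq j_2$. Lemma \ref{INEQMAXIFF NOPEN LEM} applied to $\varphi(c^\uparrow) \leq \varphi(c)$ in $T$ says the $\varphi(c_j)$ are pairwise distinct and $\leq_d$-incomparable. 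However, $\varphi(u) \leq_d \varphi(c_{j_1})$ and $\varphi(u) \leq_d \varphi(l_i) \leq_d \varphi(c_{j_2})$, so both $\varphi(c_{j_1})$ and $\varphi(c_{j_2})$ are ancestors of $\varphi(u)$ in $T$ and hence $\leq_d$-comparable---a contradiction.

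For the broad-poset-map part, by Proposition \ref{MAPOUTTREE PROP} it suffices to check, for each node/stump $u \in U$, that $\varphi(u^\uparrow) \leq \varphi(u)$ holds in the subtree. Via Lemma \ref{INEQMAXIFF NOPEN LEM} this reduces to three conditions in the subtree: (a) each letter $\varphi(u_j)$ of $\varphi(u^\uparrow)$ is $\leq_d \varphi(u)$; (b) these letters are pairwise $\leq_d$-incomparable; (c) the concatenation of their $\lambda$-tuples equals $\varphi(u)^\lambda$, with $\lambda$ computed inside the subtree. Conditions (a) and (b) are inherited from the corresponding facts in $T$ once one observes that $\leq_d$ on subtree elements matches $\leq_d$ in $T$ (a short chain argument, using that any $\varphi(l_i)$ can only appear as an endpoint of such a chain). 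Remark \ref{OUTERFACELEAVES REM} combined with the identity $\varphi(l_1)^{\lambda} \cdots \varphi(l_n)^{\lambda} = \varphi(r)^{\lambda}$ in $T$ (from $\varphi(\underline{l}) \leq \varphi(r)$) further shows that the leaves of the subtree are exactly the edges of $\varphi(\underline{l})$, so condition (c) in the subtree becomes the statement that the set $\{\varphi(l_i) : \varphi(l_i) \leq_d \varphi(u)\}$ is partitioned by the sets $\{\varphi(l_i) : \varphi(l_i) \leq_d \varphi(u_j)\}$.

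I anticipate condition (c) to be the main obstacle; the crux is the implication that $\varphi(l_i) \leq_d \varphi(u)$ in $T$ forces $l_i \leq_d u$ already in $U$. Granted this implication, the leaf-tuple identity $u_1^\lambda \cdots u_m^\lambda = u^\lambda$ in $U$ (Lemma \ref{INEQMAXIFF NOPEN LEM} applied to $u^\uparrow \leq u$) transfers under $\varphi$ to yield (c) directly. The implication itself I would prove by contradiction, exactly as in the set-theoretic step: take the minimal common ancestor $c$ of $u$ and $l_i$ in $U$ (assuming $l_i \not\leq_d u$), so $u, l_i$ lie under distinct letters $c_{j_1}, c_{j_2}$ of $c^\uparrow$; then simplicity applied to $\varphi(c^\uparrow) \leq \varphi(c)$ renders $\varphi(c_{j_1}), \varphi(c_{j_2})$ $\leq_d$-incomparable in $T$, yet both appear in the chain of ancestors of $\varphi(l_i)$, a contradiction.
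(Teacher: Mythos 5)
Your proposal is correct in substance and follows the same route as the paper's (much terser) proof: reduce to the generating relations $u^{\uparrow}\leq u$ via Proposition \ref{MAPOUTTREE PROP}, then verify the criterion of Lemma \ref{INEQMAXIFF NOPEN LEM} inside the subtree $T^{\leq \varphi(r)}_{\nless \varphi(\underline{l})}$, using that its root is $\varphi(r)$ and its leaf tuple is $\varphi(\underline{l})$. What you spell out at length --- the set-theoretic containment and your conditions (a)--(c) --- is exactly what the paper compresses into two sentences.

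One assertion is false as written: in the set-theoretic step you claim that ``since $l_i$ is a leaf, $u$ and $l_i$ must be $\leq_d$-incomparable in $U$''. A leaf is $\leq_d$-below all of its ancestors (e.g.\ $l_i\leq_d r$), so the case $l_i\leq_d u$ is entirely possible and your common-ancestor argument does not cover it. Fortunately it is immediate: $l_i\leq_d u$ gives $\varphi(l_i)\leq_d\varphi(u)$, which together with the assumed $\varphi(u)<_d\varphi(l_i)$ contradicts antisymmetry of $\leq_d$ on $T$ (Proposition \ref{SIMPLEBROAD PROP}); the only comparison ruled out by $l_i$ being a leaf is $u<_d l_i$. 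With that case added your argument is complete. You could also shorten both of your common-ancestor arguments by citing Lemma \ref{LEQDCOMP LEM} (note $\Omega\subset\Phi_w\subset\Phi_i$), which already says that $\leq_d$-comparability is preserved and reflected by $\varphi$; combined with the fact that nothing lies strictly $\leq_d$-below a leaf, this gives both the containment $\varphi(u)\nless_d\varphi(l_i)$ and your key implication ``$\varphi(l_i)\leq_d\varphi(u)$ forces $l_i\leq_d u$'' in one line each, rather than by reproving a special case of that lemma.
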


\begin{proof}
By the ``in particular'' claim in Proposition \ref{MAPOUTTREE PROP} it suffices to check that any relation 
$\underline{e}\leq^T e$ in $T$
is a relation in $T^{\leq e}_{\nless \underline{e}}$. Since the root of $T^{\leq e}_{\nless \underline{e}}$ is $e$ and the leaf tuple is $\underline{e}$, this follows by Lemma \ref{INEQMAXIFF NOPEN LEM}.
\end{proof}

\begin{lemma}\label{INCOMPEDGES LEM}
Let $T$ be a tree and $\underline{e}$ a tuple of 
$\leq_d$-incomparable edges of $T$. 
Then, letting $r$ be the root of $T$, there exists a broad relation of the form 
$\underline{e} \underline{f} \leq r$.
\end{lemma}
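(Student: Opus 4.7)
The plan is to exhibit $\underline{f}$ explicitly as the tuple of leaves of $T$ that are $\leq_d$-incomparable with every $e_i$, and then extract the desired broad relation from the subtree $T_{\nless \underline{e}}$ supplied by Proposition \ref{UNGRAFT PROP}.

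First I would invoke Proposition \ref{UNGRAFT PROP} so that $T_{\nless \underline{e}}$ is a tree. Its root is still $r$: indeed $r \in T_{\nless \underline{e}}$ since $r \nless_d e_i$ (as $r$ is $\leq_d$-maximal and each $e_i$ is a proper descendant whenever $r\neq e_i$, and the case $r=e_i$ is trivial), and it remains $\leq_d$-maximal inside the sub-broad-poset. By Remark \ref{OUTERFACELEAVES REM}, the leaves of $T_{\nless \underline{e}}$ are precisely the tuple $\underline{e}$ together with the leaves of $T$ that are $\leq_d$-incomparable with all of the $e_i$; call this latter tuple $\underline{f}$.

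Next, I would apply Lemma \ref{INEQMAXIFF NOPEN LEM} to $T_{\nless \underline{e}}$ at the edge $r$, obtaining a broad relation $r^{\lambda} \leq r$ in $T_{\nless \underline{e}}$ whose left-hand side consists of all leaves of $T_{\nless \underline{e}}$. By the identification above, and using commutativity of $T^+$ to reorder, this relation is exactly $\underline{e}\, \underline{f} \leq r$ in $T_{\nless \underline{e}}$.

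Finally, I would transport this relation to $T$ by observing that the inclusion $T_{\nless \underline{e}} \hookrightarrow T$ is a broad poset map: by Proposition \ref{MAPOUTTREE PROP} it suffices that each generating relation $g^{\uparrow} \leq g$ of $T_{\nless \underline{e}}$ hold in $T$, which is the case by the very definition of $T_{\nless \underline{e}}$. I do not anticipate any genuine obstacle here; the only point that requires a moment's thought is the verification that the leaf tuple of $T_{\nless \underline{e}}$ is exactly $\underline{e}\,\underline{f}$, and that is supplied by Remark \ref{OUTERFACELEAVES REM}.
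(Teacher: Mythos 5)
Your proof is correct, and all the ingredients you invoke (Proposition \ref{UNGRAFT PROP}, Remark \ref{OUTERFACELEAVES REM}, Lemma \ref{INEQMAXIFF NOPEN LEM}, Proposition \ref{MAPOUTTREE PROP}) appear in the paper strictly before Lemma \ref{INCOMPEDGES LEM} and do not depend on it, so there is no circularity. However, your route differs from the paper's: the paper gives a bare-hands induction on the total $\leq_d$-distance from the edges of $\underline{e}$ to the root, writing $r^{\uparrow}=r_1\cdots r_k$, splitting $\underline{e}=\underline{e}_1\cdots\underline{e}_k$ according to which $r_i$ each edge descends from, and applying the inductive hypothesis to the subtrees $T^{\leq r_i}$. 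You instead package the same combinatorics through the outer-face construction: take $T_{\nless\underline{e}}$, observe its leaf tuple is $\underline{e}\,\underline{f}$, read off $r^{\lambda}=\underline{e}\,\underline{f}\leq r$ from Lemma \ref{INEQMAXIFF NOPEN LEM}, and push forward along the broad poset inclusion into $T$. Your version is shorter and makes the witness $\underline{f}$ explicit (the leaves of $T$ incomparable with every $e_i$), at the cost of leaning on Remark \ref{OUTERFACELEAVES REM}, which the paper asserts without proof; the paper's inductive argument is more self-contained and closer to first principles. Both are legitimate.
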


\begin{proof}
The proof is by induction on the sum of the distances (measured in $\leq_d$ inequality chains) from the edges in $\underline{e}$ to the root. Clearly $r \in \underline{e}$ only if $r=\underline{e}$.
Otherwise one can write $r^{\uparrow} = r_1\cdots r_k$, 
$\underline{e} = \underline{e}_1 \cdots \underline{e}_k$ so that 
$s \in \underline{e}_i$ if and only if $s \leq_d r_i$.
The induction hypothesis applies to $\underline{e}_i$ and the subtrees 
$T^{\leq r_i}$.
\end{proof}

Since the converse of the previous lemma holds by Proposition \ref{SIMPLEBROAD PROP}, we obtain the following.

\begin{corollary}\label{INCOMPPLANARAX COR}
If $e,f\in T$ are $\leq_d$-incomparable and $e'\leq_d e$ then 
$e',f$ are $\leq_d$-incomparable.
\end{corollary}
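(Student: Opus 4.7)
The plan is to chain together Lemma \ref{INCOMPEDGES LEM} (which upgrades an incomparable tuple to an actual broad relation at the root) with broad transitivity and Proposition \ref{SIMPLEBROAD PROP} (which says that letters in any broad relation are $\leq_d$-incomparable).

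More concretely, I would start from the incomparable pair $\{e,f\}$ and apply Lemma \ref{INCOMPEDGES LEM} to obtain a broad relation of the form
\[
	e f \underline{g} \leq r
\]
for some $\underline{g}\in T^+$, where $r$ is the root of $T$. Since $e' \leq_d e$, by definition there is a broad relation $\underline{e}' \leq e$ with $e' \in \underline{e}'$. Broad transitivity, applied to $ef\underline{g}\leq r$ together with $\underline{e}' \leq e$ (and the identity relations $f\leq f$, $g_i \leq g_i$), then yields a single broad relation
\[
	\underline{e}' f \underline{g} \leq r
\]
in which both $e'$ and $f$ appear as distinct letters (distinctness is automatic since $T$ is simple).

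Invoking Proposition \ref{SIMPLEBROAD PROP}, every pair of letters in a broad relation over a simple broad poset is $\leq_d$-incomparable; in particular $e'$ and $f$ are $\leq_d$-incomparable, which is the desired conclusion. There is no serious obstacle here: the entire content of the corollary is packaging Lemma \ref{INCOMPEDGES LEM} and the incomparability half of Proposition \ref{SIMPLEBROAD PROP} together via one application of broad transitivity, so the proof should be only a few lines.
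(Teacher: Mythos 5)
Your proof is correct and is precisely the argument the paper intends: the paper's one-line justification ("the converse of the previous lemma holds by Proposition \ref{SIMPLEBROAD PROP}") is exactly the combination of Lemma \ref{INCOMPEDGES LEM}, broad transitivity, and the incomparability clause of Proposition \ref{SIMPLEBROAD PROP} that you spell out. No issues.
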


\subsection{Categories of forests}\label{CATFOR SEC}

We will also need to discuss forests, i.e. formal coproducts of trees.
We start by generalizing Definition \ref{DENRORD DEF}.

\begin{definition}\label{FORESTORD DEF}
 A \textit{forestially ordered set} is a finite simple broad poset $F$ satisfying the following additional conditions:
\begin{itemize}
 \item \textit{Nodal}: each edge $e \in F$ is either a leaf, a node or a stump;
 \item \textit{Root tuple}: for each edge $e$ there is a unique $\leq_d$-maximal element $r \in T$ such that $e \leq_d r$.
 Further, we denote by $\underline{r}_F$ the tuple of $\leq_d$-maximal elements of $F$ and refer to it as the \textit{root tuple}.
\end{itemize}
\end{definition}

The last condition guarantees that any forestially ordered set decomposes as a disjoint union of dendroidally ordered sets, i.e. trees. We shall hence refer to these simply as \textit{forests} and denote by $\Phi$ the category formed by them.

\begin{definition}
	A map of forests $F \xrightarrow{\varphi} F'$ is called
	\begin{itemize}
		\item \textit{wide} if $\varphi(\underline{r}_{F}) \leq \underline{r}_{F'}$;
		\item \textit{independent} if there exists a tuple $\underline{e} \in F^+$ such that $\varphi(\underline{r}_{F})\underline{e} \leq \underline{r}_{F'}$.
	\end{itemize}
	The subcategory of forests and independent (resp. wide) maps is denoted $\Phi_w$ (resp. $\Phi_i$).
Note that there are inclusions $\Phi_w \hookrightarrow \Phi_i \hookrightarrow \Phi$.
\end{definition}

\begin{remark}
	The category $\Phi_i$ nearly coincides with the category of forests discussed in \cite[\S 3.1]{HHM16}, the only difference being that here we include the empty forest $\emptyset$.
\end{remark}

\begin{example}\label{EXAMPLEFORESTS EX}
Consider the following tree $T$ and subtrees (all labels are on edges).
\[%
	\begin{tikzpicture}[auto,grow=up, every node/.style={font=\scriptsize},level distance = 1.75em]%
	\begin{scope}%
	\tikzstyle{level 2}=[sibling distance=4.5em]%
	\tikzstyle{level 3}=[sibling distance=2em]%
		\node at(0,0) {$T$}%
			child{node [dummy] {}
				child{node [dummy] {}
					child{node [dummy] {}
					edge from parent node[swap,near end]{$h$}}
					child{node [dummy] {}
					edge from parent node[near end]{$g$}}
				edge from parent node[swap]{$i$}}
				child{node [dummy] {}
					child{edge from parent node [swap]{$e$}}
				edge from parent node [swap]{$f$}}
				child{node [dummy] {}
					child{node [dummy] {}
					edge from parent node[near end,swap]{$c$}}
					child[level distance = 2.25em]{edge from parent node[swap,near end]{$b$}}
					child{edge from parent node [near end]{$a$}}
				edge from parent node{$d$}}
			edge from parent node[swap]{$r$}};
	\end{scope}
	\begin{scope}
	\tikzstyle{level 2}=[sibling distance=2.25em]%
			\node at(4,0.1) {$U$}%
			child[level distance = 2em]{node [dummy] {}
				child{node [dummy] {}
				edge from parent node[near end,swap]{$h$}}
				child[level distance = 2.25em]{
				edge from parent node[near end,swap]{$e$}}
				child{
				edge from parent node[near end]{$d$}}
			edge from parent node[swap]{$r$}};
	\end{scope}%
	\begin{scope}
	\tikzstyle{level 2}=[sibling distance=2.5em]%
			\node at(6.75,0.1) {$V$}%
			child[level distance = 2em]{node [dummy] {}
				child{
				edge from parent node[near end,swap]{$b$}}
				child{
				edge from parent node[near end]{$a$}}
			edge from parent node[swap]{$d$}};
	\end{scope}%
	\end{tikzpicture}
\]
Further denoting by $\eta_a,\eta_b,\cdots$ the single edge subtrees corresponding to each edge, some examples of wide morphisms are given by the inclusions
\[
 U \hookrightarrow T,
 \quad V \amalg \eta_f \hookrightarrow T,
 \quad V \amalg \eta_e \hookrightarrow T
\]
\[
 \quad V \amalg \eta_f \amalg \eta_i \hookrightarrow T,
 \quad V \amalg \eta_f \amalg \eta_g \hookrightarrow T,
 \quad \eta_a \amalg \eta_b \amalg \eta_f \hookrightarrow T,
\]
some examples of non-wide independent maps are given by
\[
	V \amalg \eta_g \hookrightarrow T,
	\quad \eta_f \amalg \eta_i \hookrightarrow T,
	\quad \eta_g \amalg \eta_h \hookrightarrow T,
	\quad \emptyset \hookrightarrow T
\]
and examples of non-independent maps are given by
\[
	V \amalg \eta_r \to T,
	\quad V \amalg \eta_f \amalg \eta_i \amalg \eta_g \to T,
	\quad \eta_i \amalg \eta_i \to T.
\]
\end{example}

The following is a useful forestial strengthening of Proposition \ref{SIMPLEBROAD PROP}, which follows by combining that result with Lemma \ref{INCOMPEDGES LEM}.

\begin{proposition}\label{INDECOMPUNIQUE PROP}
Let $F$ be a forest. Then,  
if $\underline{f} \leq \underline{e} = e_1 \cdots e_k$ with $\leq_d$-incompa\-rable $e_i$, the decomposition 
$\underline{f} = \underline{f}_1 \cdots \underline{f}_k$
with $\underline{f}_i \leq e_i$ is unique. In fact, for 
$s \in \underline{f}$ one has $s \in \underline{f}_i$ iff $s \leq_d e_i$.
\end{proposition}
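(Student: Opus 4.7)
The plan is to reduce the whole statement to a single-letter claim: \emph{if $s \in F$ satisfies both $s \leq_d e_i$ and $s \leq_d e_j$, then $e_i = e_j$.} Once this claim is secured, both assertions of the proposition follow at once. Indeed, given any decomposition $\underline{f} = \underline{f}_1 \cdots \underline{f}_k$ with $\underline{f}_i \leq e_i$, broad transitivity applied to $\underline{f}_i \leq e_i$ forces each letter $s \in \underline{f}_i$ to satisfy $s \leq_d e_i$. The claim then says the index $i$ is uniquely determined by $s$, so the multiset $\underline{f}_i$ is forced to be exactly the sub-tuple of $\underline{f}$ consisting of those letters $s$ with $s \leq_d e_i$ (counted with multiplicity). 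This simultaneously proves uniqueness of the decomposition and yields the ``in fact'' characterization.

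To prove the claim, I first reduce to a single tree. A chain of $\leq_d$-inequalities $s \leq_d e$ preserves the unique root above its endpoints, so $s \leq_d e_i$ and $s \leq_d e_j$ force $e_i,e_j$ to live in a common tree $T \subseteq F$, say with root $r$. Suppose for contradiction that $e_i \neq e_j$; combined with the hypothesis that the $e_l$ are $\leq_d$-incomparable, Lemma \ref{INCOMPEDGES LEM} furnishes a broad relation $e_i e_j \underline{g} \leq r$ in $T$. By definition of the descendant relation, I can pick relations $\underline{s} \leq e_i$ and $\underline{s}' \leq e_j$ both containing the letter $s$. Broad transitivity applied to $e_i e_j \underline{g} \leq r$, substituting $\underline{s}$ for $e_i$, $\underline{s}'$ for $e_j$, and the reflexive relations $g \leq g$ for the remaining entries, produces $\underline{s}\,\underline{s}'\,\underline{g} \leq r$. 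However, the letter $s$ now appears at least twice in this tuple, violating simplicity of $F$.

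The main obstacle is getting the contradiction cleanly from simplicity, which is precisely why Lemma \ref{INCOMPEDGES LEM} is the right input: it converts the abstract assumption of $\leq_d$-incomparability into an explicit broad relation that one can feed into broad transitivity to manufacture a repeated letter. The only other subtlety is that the proposition is stated for a forest rather than a tree, but this is handled in the very first step by showing that $e_i, e_j$ with a common descendant must live in a common connected component.
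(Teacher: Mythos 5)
Your proof is correct and follows essentially the same route as the paper's: both reduce the statement to showing that $s \leq_d e_i$ and $s \leq_d e_j$ force $i=j$, both use the unique root above $s$ to place $e_i,e_j$ in a common tree, and both invoke Lemma \ref{INCOMPEDGES LEM} to manufacture a non-simple broad relation. You merely spell out explicitly (via broad transitivity on $e_ie_j\underline{g}\leq r$) the step the paper summarizes as ``can be used to produce a non simple relation.''
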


\begin{proof}
If $s\leq_d e_i$, $s\leq_d e_j$ with $i\neq j$ the unique root $r$ such that $s\leq_d r$ must also satisfy $e_i\leq_d r$, $e_j\leq_d r$ and Lemma \ref{INCOMPEDGES LEM} can be used to produce a non simple relation. Thus such $e_i$ is unique and the result follows. 
\end{proof}

We now turn to describing the degeneracy-face decomposition of maps in the broad poset setting, which we obtain as 
Proposition \ref{FACT PROP} below.

\begin{definition}\label{FACEDEG DEF}
A map $F \xrightarrow{\varphi} F'$ in $\Phi_i$ is called 
	\begin{itemize}
		\item a \textit{face map} if the underlying set map is injective;
		\item a \textit{degeneracy} if the underlying set map is surjective and for each leaf $l \in F$ it is $\epsilon \nleq \varphi(l)$.
	\end{itemize}
\end{definition}

\begin{lemma}\label{LEQDCOMP LEM}
For any map $F \xrightarrow{\varphi} F'$ in $\Phi_i$, $e,\bar{e}$ are $\leq_d$-comparable iff $\varphi(e),\varphi(\bar{e})$ are $\leq_d$-comparable.

Further, if $\varphi(e)=\varphi(\bar{e})$ then $\bar{e}\underline{f}\leq e$ can hold only if $\underline{f}=\epsilon$ and thus either $e \leq \bar{e}$ or $\bar{e} \leq e$.
\end{lemma}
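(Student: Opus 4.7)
The plan is to split into the stated biconditional and the subsequent ``further'' clause. The forward implication of the biconditional is immediate since $\varphi$ preserves broad relations, so only the reverse needs work: assuming $e,\bar e$ are $\leq_d$-incomparable, I want $\varphi(e),\varphi(\bar e)$ incomparable as well. I would distinguish two cases according to whether $e,\bar e$ lie in the same tree of $F$ (i.e.\ share a common $\leq_d$-maximal element) or not.

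In the easier ``same tree'' case, letting $r$ be the common root, Lemma \ref{INCOMPEDGES LEM} produces a broad relation $e\bar e\underline f\leq r$; applying $\varphi$ gives $\varphi(e)\varphi(\bar e)\varphi(\underline f)\leq\varphi(r)$, and Proposition \ref{SIMPLEBROAD PROP} (and simplicity of $F'$) forces the letters $\varphi(e),\varphi(\bar e)$ to be $\leq_d$-incomparable (and in fact distinct). The ``different roots'' case is the more delicate one and I expect to be the main obstacle. Here I first want to show that for distinct roots $r,\bar r\in\underline r_F$ the images $\varphi(r),\varphi(\bar r)$ are $\leq_d$-incomparable in $F'$: this is exactly where independence of $\varphi$ enters, since the relation $\varphi(\underline r_F)\underline g\leq\underline r_{F'}$ decomposes uniquely by Proposition \ref{INDECOMPUNIQUE PROP}, and in each sub-relation Proposition \ref{SIMPLEBROAD PROP} rules out comparability of two letters coming from $\varphi(\underline r_F)$. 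Once this is in hand, assume for contradiction $\varphi(e)\leq_d\varphi(\bar e)$; then $\varphi(e)\leq_d\varphi(r)$ and $\varphi(e)\leq_d\varphi(\bar e)\leq_d\varphi(\bar r)$, so by Corollary \ref{INCOMPPLANARAX COR} (ancestors of a given edge are $\leq_d$-totally ordered) $\varphi(r)$ and $\varphi(\bar r)$ become comparable, contradicting the previous step.

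For the ``further'' clause, suppose $\varphi(e)=\varphi(\bar e)$ and $\bar e\underline f\leq e$. Applying $\varphi$ gives $\varphi(e)\varphi(\underline f)\leq\varphi(e)$, so the ``in particular'' half of Proposition \ref{SIMPLEBROAD PROP} forces $\varphi(\underline f)=\epsilon$, hence $\underline f=\epsilon$ and $\bar e\leq e$. The biconditional just proved ensures $e,\bar e$ are $\leq_d$-comparable, say $\bar e\leq_d e$; picking a witnessing broad relation and stripping off $\bar e$ from its source tuple gives the required relation $\bar e\leq e$ (with the symmetric case being dual). The only subtlety worth flagging is to be careful that ``$\varphi(\underline f)=\epsilon$'' really forces $\underline f=\epsilon$, which follows because $\varphi$ sends single letters to single letters, so tuple length is preserved.
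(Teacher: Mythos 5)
Your proof is correct and follows essentially the same route as the paper's: both hinge on Lemma \ref{INCOMPEDGES LEM}, the independence condition $\varphi(\underline{r}_F)\underline{g}\leq\underline{r}_{F'}$, and the incomparability/simplicity clauses of Proposition \ref{SIMPLEBROAD PROP}. The only difference is presentational — the paper handles both of your cases at once by working with a single relation $e\bar{e}\underline{f}\leq\underline{r}_F$ to the whole root tuple, and compresses the ``further'' clause to ``immediate'', whereas you spell out the same-tree/different-tree split and the length-preservation point forcing $\underline{f}=\epsilon$.
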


\begin{proof}
If $e,\bar{e}$ are not $\leq_d$-comparable, Lemma \ref{INCOMPEDGES LEM} ensures that there exists $\underline{f}$ such that $e\bar{e} \underline{f}\leq \underline{r}_F$. Thus, by definition of $\Phi_i$, there exists $\underline{g}$ such that $\varphi(e)\varphi(\bar{e})\varphi(\underline{f})\underline{g} \leq \varphi(\underline{r}_F)\underline{g} \leq \underline{r}_{F'}$, hence $\varphi(e)$, $\varphi(\bar{e})$ are $\leq_d$-incomparable.

The ``further'' claim is immediate.
\end{proof}

\begin{lemma}\label{FACEDEGISO LEM}
	If $F \xrightarrow{\varphi} F'$ is both a face map and a degeneracy then $\varphi$ is an isomorphism.
\end{lemma}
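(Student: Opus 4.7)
The plan is to show $\varphi$ is an iso by proving its set-theoretic inverse (which exists since face = injective and degeneracy = surjective) is also a broad poset map. By (the forest analogue of) Proposition \ref{MAPOUTTREE PROP}, this reduces to checking that each generating relation $(e')^{\uparrow} \leq e'$ of $F'$ has a lift to $F$.

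I would first upgrade Lemma \ref{LEQDCOMP LEM} to full reflection of $\leq_d$: combining the reflection of $\leq_d$-comparability from that lemma with the forward preservation of $\leq_d$ (automatic since $\varphi$ is a broad map) and antisymmetry of $\leq_d$ yields $\varphi(e) \leq_d \varphi(f) \Leftrightarrow e \leq_d f$. Consequently $\leq_d$-maximal elements correspond, so $\varphi$ restricts to a bijection of root tuples, and via Proposition \ref{UNGRAFT PROP} also to bijective broad maps between the upper subtrees $F^{\leq r_i}$ and $F'^{\leq \varphi(r_i)}$, reducing us to the case where $F = T$ and $F' = T'$ are trees. I would then induct on $|T|$: the case $|T| = 1$ is trivial, and for $|T| \geq 2$ the root $r$ is necessarily a node, and injectivity forces $r_{T'} = \varphi(r)$ to likewise be a node (since $\varphi(r^{\uparrow})$ is a non-empty tuple strictly below $\varphi(r)$). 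Writing $r^{\uparrow} = e_1 \cdots e_k$ and $r_{T'}^{\uparrow} = f_1 \cdots f_j$, reflection of $\leq_d$ applied to $f_l \leq_d r_{T'}$ gives $\varphi^{-1}(f_l) \leq_d r$, and the $\leq_d$-incomparability of the $e_i$ yields a unique $i(l)$ with $\varphi^{-1}(f_l) \leq_d e_{i(l)}$. Once I establish $\varphi^{-1}(f_l) = e_{i(l)}$, $\varphi$ bijects $\{e_i\} \leftrightarrow \{f_l\}$ (carrying the generator $r^{\uparrow} \leq r$ to $r_{T'}^{\uparrow} \leq r_{T'}$); the restrictions $\varphi|_{T^{\leq e_i}}$ are again bijective broad face-and-degeneracy maps between smaller trees, to which the induction hypothesis applies, and Proposition \ref{MAPOUTTREE PROP} then assembles $\varphi^{-1}$ into a global broad poset map.

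The crucial obstacle is the ``no strict descent'' step of excluding $\varphi^{-1}(f_l) <_d e_{i(l)}$, which is essentially the statement that $\varphi$ cannot stretch a single parent-child relation in $T$ across multiple edges of $T'$. I would argue by contradiction: strictness would place $e_{i(l)}$ outside $T^{\leq \varphi^{-1}(f_l)}$, and since the sets $T^{\leq \varphi^{-1}(f_l)}$ (over $l$) partition $T \setminus \{r\}$, some $l' \neq l$ would satisfy $e_{i(l)} \in T^{\leq \varphi^{-1}(f_{l'})}$; chaining yields $\varphi^{-1}(f_l) \leq_d e_{i(l)} \leq_d \varphi^{-1}(f_{l'})$, which transports along $\varphi$ to $f_l \leq_d f_{l'}$, contradicting the incomparability of the $f_l$. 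The degeneracy condition $\epsilon \nleq \varphi(l)$ is used in the background to ensure stumps are preserved (and hence, by bijectivity, also leaves), so that the restrictions to subtrees indeed satisfy the inductive hypothesis.
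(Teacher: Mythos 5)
Your proof is correct, but it follows a genuinely different route from the paper's, which is much shorter. The paper's argument is global: after using bijectivity to identify the underlying sets, Lemma \ref{LEQDCOMP LEM} shows that the relations $\leq_d$ on $F$ and $F'$ coincide; the degeneracy condition, combined with the observation that a leaf is exactly a $\leq_d$-minimal edge with $\epsilon \nleq l$, shows the two structures have the same leaves; and then the ``further'' criterion of Lemma \ref{INEQMAXIFF NOPEN LEM} (a broad relation $f_1\cdots f_n\leq e$ is determined by $\leq_d$-data and the leaf tuples $(-)^{\lambda}$) forces the broad relations to coincide. You instead verify that the inverse is a broad map generator-by-generator via Proposition \ref{MAPOUTTREE PROP}, descending from the root and using your partition/incomparability argument to rule out ``strict descent'' at each vertex. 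I checked the key steps --- the upgrade of Lemma \ref{LEQDCOMP LEM} to full reflection of $\leq_d$, the fact that the $T^{\leq \varphi^{-1}(f_{l})}$ partition $T\setminus\{r\}$, and the surjectivity of $l\mapsto i(l)$ --- and they all go through. What the paper's route buys is brevity, since Lemma \ref{INEQMAXIFF NOPEN LEM} already encodes the rigidity your induction re-derives locally; what yours buys is independence from that leaf-criterion and a more explicit accounting of exactly where the degeneracy condition is needed (to prevent a leaf of $T$ from acquiring a stump in $T'$, which is the only way a bijective broad map between trees with matching $\leq_d$ can fail to be invertible).
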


\begin{proof}
	Bijectiveness allows us to assume that the underlying sets are the same and it thus follows from Lemma \ref{LEQDCOMP LEM} that the relations $\leq_d^F$ and $\leq_d^{F'}$ coincide.
Hence, both forest structures have the same roots (i.e. $\leq_d$-maximal edges) and one needs only show that broad relations coincide in each of the constituent trees. But noting that a leaf $l$ is precisely a $\leq_d$-minimal edge such that $\epsilon \nleq l$, the definition of degeneracy implies that $F, F'$ have the same leaves and the result follows by the criterion in the ``further'' part of
Lemma \ref{INEQMAXIFF NOPEN LEM}.
\end{proof}

\begin{lemma}\label{BROADTRANS LEM}
	Let $F \xrightarrow{\varphi} F'$ be a map in $\Phi_i$ and $\underline{f} \leq e$ be a broad relation in $F$ such that $\varphi(\underline{f}) \neq \varphi(e)$. Then for any $\underline{f}',e'$ such that $\varphi(\underline{f}')=\varphi(\underline{f})$, $\varphi(e')=\varphi(e)$ it is also $\underline{f}' \leq e'$. 
\end{lemma}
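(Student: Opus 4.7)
The plan is to verify the three conditions that appear in the ``if'' direction of Lemma \ref{INEQMAXIFF NOPEN LEM}: namely, that each $f_i' \leq_d e'$, that the $f_i'$ are pairwise $\leq_d$-incomparable, and that $\bigsqcup_i (f_i')^{\lambda} = (e')^{\lambda}$. Once these hold, the conclusion $\underline{f}' \leq e'$ follows at once. Write $\underline{f}=f_1\cdots f_k$. My first observation is that $\varphi(f_i) \neq \varphi(e)$ for every $i$: otherwise, in the broad relation $\varphi(\underline{f}) \leq \varphi(e)$ the ``further'' clause of Proposition \ref{SIMPLEBROAD PROP} would collapse $\varphi(\underline{f})$ to $\varphi(e)$, contradicting the hypothesis. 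The same clause also forces the $\varphi(f_i)$ to be pairwise $\leq_d$-incomparable, and in particular distinct, so I can re-index $\underline{f}'=f_1'\cdots f_k'$ so that $\varphi(f_i')=\varphi(f_i)$ for each $i$.

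Conditions (1) and (2) then follow from two applications of Lemma \ref{LEQDCOMP LEM}. Since $\varphi$ preserves $\leq_d$ and $\varphi(f_i')<_d \varphi(e')$, $f_i'$ and $e'$ are $\leq_d$-comparable; antisymmetry of $\leq_d$ on the simple broad poset $F$ (Proposition \ref{SIMPLEBROAD PROP}), together with $\varphi(f_i') \neq \varphi(e')$, then rules out $e' \leq_d f_i'$, leaving $f_i' <_d e'$. Similarly, the $\leq_d$-incomparability of the $\varphi(f_i')$ in $F'$ transfers back to pairwise $\leq_d$-incomparability of the $f_i'$ in $F$.

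Condition (3) is the step I expect to be the main obstacle, since $\varphi$ need not preserve the leaf/node/stump trichotomy. The key observation that resolves it is: whenever two edges $x,y \in F$ satisfy $\varphi(x)=\varphi(y)$, Lemma \ref{LEQDCOMP LEM} yields their $\leq_d$-comparability, and then the ``further'' clause of that lemma upgrades this to a single-letter broad relation $x \leq y$ or $y \leq x$; Lemma \ref{INEQMAXIFF NOPEN LEM} then gives the equality of leaf tuples $x^{\lambda} = y^{\lambda}$. Applying this to each pair $(f_i,f_i')$ and to $(e,e')$, and combining with the identity $\bigsqcup_i f_i^{\lambda} = \underline{f}^{\lambda} = e^{\lambda}$ coming from $\underline{f} \leq e$ via Lemma \ref{INEQMAXIFF NOPEN LEM}, one obtains $\bigsqcup_i (f_i')^{\lambda} = \bigsqcup_i f_i^{\lambda} = e^{\lambda} = (e')^{\lambda}$, as required.
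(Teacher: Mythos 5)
Your proof is correct and follows essentially the same route as the paper's: both verify the three conditions in the ``if'' direction of Lemma \ref{INEQMAXIFF NOPEN LEM}, transfer the $\leq_d$-statements along $\varphi$ via Lemma \ref{LEQDCOMP LEM}, and reduce the leaf condition to showing $a^{\lambda}=\bar{a}^{\lambda}$ whenever $\varphi(a)=\varphi(\bar{a})$. One cosmetic slip: to rule out $e'\leq_d f_i'$ you should apply $\varphi$ and use antisymmetry of $\leq_d$ in $F'$ (where $\varphi(f_i')<_d\varphi(e')$ is already known), not in $F$, where $f_i'\leq_d e'$ is precisely what you are trying to establish.
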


\begin{proof}
Writing $\underline{f}=f_1\cdots f_n$, $\underline{f}'=f'_1\cdots f'_n$, the condition 
$\varphi(\underline{f})\neq \varphi(e)$ ensures $f_i<_d e$, and
thus Lemma \ref{LEQDCOMP LEM}
implies it is also $f_i'<_d e'$. Using the characterization in Lemma \ref{INEQMAXIFF NOPEN LEM}, the desired relation $\underline{f}' \leq e'$ will hold provided we show that $a^{\lambda}=\bar{a}^{\lambda}$ whenever $\varphi(a)=\varphi(\bar{a})$. But since leaves are $\leq_d$-minimal, this too follows from Lemma \ref{LEQDCOMP LEM}. 
\end{proof}

%\begin{proof}Consider first the case $e=e'$. 
%	The proof is by upward $\leq_d$-induction on $e$. 
%	Writing $e^{\uparrow}=e_1 \cdots e_k$, 
%	$\underline{f}=\underline{f}_1\cdots \underline{f}_k$, 
%	$\underline{f}'=\underline{f}'_1\cdots \underline{f}'_k$
%	so that $\underline{f}_i \leq e_i$, $\varphi(\underline{f}_i)=\varphi(\underline{f}'_i)$, the induction hypothesis applies to establish each relation $\underline{f}'_i \leq_F e_i$ unless 
%	it is $\varphi(\underline{f}'_i) = \varphi(e_i)$. 
%	But then $\underline{f}'_i=f'_i$ is in fact a simpleton and hence by the ``further'' part of Lemma \ref{LEQDCOMP LEM} it is either $e_i \leq f'_i$ or $f_i' \leq e_i$. On the other hand, the first half of Lemma \ref{LEQDCOMP LEM} together with the condition $\varphi(\underline{f}) \neq \varphi(e)$ imply that $f_i' <_d e$ and thus that $f_i' \leq_d e_j$ for some $j$. But since the $e_m$ are pairwise $\leq_d$-incomparable, it must be that $j=i$ and $f_i' \leq e_i$, finishing the argument in this case.
	
%Consider now the case $\underline{f}=\underline{f}'$, again proven by upward $\leq_d$-induction on $e$.
%By the ``further'' part of Lemma \ref{LEQDCOMP LEM}
%it is either $e \leq e'$ or $e'< e$. The former case is immediate and in the later case $e' \leq e^{\uparrow} < e$ and hence by applying $\varphi$ we see that $e^{\uparrow}$ must be a simpleton, so that the induction hypothesis finishes the proof.
%\end{proof}

\begin{remark}
	It follows from the ``further'' part in Lemma \ref{LEQDCOMP LEM} together with antisymmetry that the pre-image of any edge by a map $\varphi$ always consists of a linearly ordered subset of edges. As such, degeneracies are necessarily maps that ``collapse linear sections of a tree'', and indeed that was their original description in \cite{We12}. A typical degeneracy (sending edges labeled $a_i$, $b_i$, $c_i$, $d_i$ to respective edges $a$, $b$, $c$, $d$) is pictured below.
\[%
	\begin{tikzpicture}[auto,grow=up, every node/.style={font=\scriptsize},
	dummy/.style={circle,draw,inner sep=0pt,minimum size=1.5mm},level distance = 1.75em]%
	\begin{scope}%
	\tikzstyle{level 3}=[sibling distance=4.5em]%
	\tikzstyle{level 4}=[sibling distance=1.5em]%
		\node at(-0.5,0) {}%
			child{node[dummy] {}
				child{node[dummy] {}
					child{node[dummy] {}
						child{node[dummy] {}
							child{node[dummy] {}}
							child
						edge from parent node [swap] {$d_1$}}
					edge from parent node [swap] {$d_2$}}
					child{node[dummy] {}
						child{node[dummy] {}
							child{node[dummy] {}
							edge from parent node [swap] {$c_1$}}
						edge from parent node [swap]{$c_2$}}
					edge from parent node [swap]{$c_3$}}
					child{node[dummy] {}
						child{node[dummy] {}
							child{node[dummy] {}
								child{edge from parent node {$a_1$}}
							edge from parent node {$a_2$}}
						edge from parent node {$b_1$}}
					edge from parent node {$b_2$}}
				edge from parent node [swap]{$e_1$}}
			edge from parent node [swap] {$e_2$}};
	\end{scope}
	\begin{scope}%
	\tikzstyle{level 2}=[sibling distance=3.5em]%
	\tikzstyle{level 3}=[sibling distance=1.5em]%
		\node at(5.5,0.75) {}%
				child{node[dummy] {}
						child{node[dummy] {}
							child{node[dummy] {}}
							child
						edge from parent node [swap] {$d$}}
							child{node[dummy] {}
							edge from parent node [swap] {$c$}}
						child{node[dummy] {}
								child{edge from parent node {$a$}}
						edge from parent node {$b$}}
				edge from parent node [swap] {$e$}};
		\draw[->>] (1.75,1.5) -- (3.5,1.5);
	\end{scope}
	\end{tikzpicture}
\]
\end{remark}

\begin{lemma}\label{SECTIONISTREE LEM}
	Let $\varphi \colon F \to F'$ be a map in $\Phi_i$
	and let $U \subset F$ be any sub-broad poset consisting of exactly one edge in each pre-image of $\varphi$ and the broad relations of $F$ between them. Then $U$ is a forest. 
\end{lemma}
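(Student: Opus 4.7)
The plan is to verify each axiom of Definition \ref{FORESTORD DEF} for the sub-broad poset structure on $U$ inherited by restricting $\leq^F$. Reflexivity, antisymmetry, broad transitivity, simplicity, and finiteness are all automatic since $U \subset F$ is a subset and no new relations are introduced. So the work lies entirely in verifying the nodal and root tuple conditions.

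The key auxiliary tool is the retraction $\rho \colon F \to U$ sending each $e \in F$ to the unique $U$-representative of its $\varphi$-pre-image. By the further part of Lemma \ref{LEQDCOMP LEM}, $e$ and $\rho(e)$ are $\leq_d^F$-comparable and related by a \emph{singleton} broad relation; Lemma \ref{INEQMAXIFF NOPEN LEM} then upgrades this to the crucial leaf-preservation identity $\rho(e)^{\lambda} = e^{\lambda}$. In particular, if $l$ is a leaf of $F$ with $l \notin U$ then $\rho(l)$ is a $U$-edge containing $l$ among its descendants and with the same leaf set $\{l\}$.

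For the nodal condition at $e \in U$, the decisive dichotomy is whether some leaf $l \in e^{\lambda}$ satisfies $\rho(l) = e$. If so, then $e^{\lambda} = \{l\}$, and any non-trivial tuple $\underline{u} \in U^+$ with $\underline{u} <^F e$ would need some $u_j \in U$ with $u_j <_d^F e$ and $l \leq_d^F u_j$; but tracing this chain through $\varphi$ and invoking antisymmetry in $F'$ forces $\varphi(u_j) = \varphi(e)$, hence $u_j = e$ by the single-representative property, a contradiction. So no such $\underline{u}$ exists and $e$ is a leaf of $U$. In the remaining case, define $e^{\Uparrow_U}$ to be the tuple of $\leq_d^F$-maximal elements of $\{v \in U : v <_d^F e\}$. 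This tuple is $\leq_d^F$-incomparable by construction, and every leaf $l \in e^{\lambda}$ lies below one of its entries via $\rho(l) \in U$; Lemma \ref{INEQMAXIFF NOPEN LEM} therefore yields $e^{\Uparrow_U} \leq^F e$, and the further part of that lemma, applied leaf-tuple by leaf-tuple, confirms that $e^{\Uparrow_U}$ dominates every other $\underline{v} <^U e$. Hence $e$ is a node when $e^{\Uparrow_U} \neq \epsilon$, and a stump otherwise (which forces $e^{\lambda} = \epsilon$).

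For the root tuple condition at $e \in U$, let $r_F$ be the root of $e$'s component in $F$ and set $r_U := \rho(r_F)$. Comparing $\varphi(e) \leq_d^{F'} \varphi(r_F) = \varphi(r_U)$ via Lemma \ref{LEQDCOMP LEM}, antisymmetry in the tree $F'$ together with the single-representative property rules out $r_U <_d^F e$, so $e \leq_d^F r_U$; descending from $r_U$ through iterated $\Uparrow_U$-tuples (finiteness of $F$ guaranteeing termination) then upgrades this to a relation in $U$, so $e \leq_d^U r_U$. Uniqueness is symmetric: any $r' \in U$ with $e \leq_d^U r'$ has $r' \leq_d^F r_F$ by maximality of the root, and the same $F'$-comparison with the single-representative property forces $r' \leq_d^F r_U$, which the same iteration upgrades to $r' \leq_d^U r_U$. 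The main obstacle is the collapse case in the nodal step, where a leaf of $F$ lies in $e$'s own pre-image chain and causes $e$ to become a leaf (rather than a node) in $U$; handling it cleanly requires the finer leaf-tuple criterion of Lemma \ref{INEQMAXIFF NOPEN LEM} rather than a bare $\leq_d$-argument.
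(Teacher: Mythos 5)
Your overall strategy is sound and genuinely different from the paper's. The paper gets all three conditions almost for free from Lemma \ref{BROADTRANS LEM}: for $u \in U$ it takes the $\leq_d$-minimal element $\bar u$ of the pre-image of $\varphi(u)$ and transfers the generating relation $\bar u^{\uparrow} \leq \bar u$ (when $\bar u$ is not a leaf) to the unique tuple of $U$-representatives over $\varphi(\bar u^{\uparrow})$, which then serves as the vertex of $u$ in $U$; roots are handled by the same transfer. You instead rebuild the vertices ``from below'', as the $\leq_d$-maximal $U$-edges under $e$, and verify the resulting relation against the leaf-tuple criterion of Lemma \ref{INEQMAXIFF NOPEN LEM}. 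Your nodal argument, including the dichotomy on whether some leaf of $e^{\lambda}$ lies in $e$'s own fiber, checks out, and is a legitimate (if longer) alternative to the paper's.

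There is, however, one unjustified step in the root-tuple paragraph. The upgrade from $e \leq_d^F r_U$ to $e \leq_d^U r_U$ by ``descending through iterated $\Uparrow_U$-tuples'' requires that every $v \in U$ encountered with $e <_d^F v$ be a \emph{node} of $U$ (case 2 of your dichotomy), so that $v^{\Uparrow_U}$ is defined, nonempty, and contains an element above $e$. But $v$ could a priori be a case-1 leaf of $U$: your case-1 argument only excludes broad relations $\underline{u} <^U v$ with $\underline{u} \in U^+$, whereas $e <_d^F v$ is witnessed by a relation of $F$ that may pass through edges outside $U$, so it is not covered. The descent is in fact safe, but this needs an argument: if $l$ is the leaf of $F$ with $l \leq_d v$ and $\varphi(l)=\varphi(v)$, and $w \in U$ satisfies $w <_d^F v$, then either $w$ is $\leq_d$-comparable with $l$, in which case $\varphi(l) \leq_d \varphi(w) \leq_d \varphi(v) = \varphi(l)$ and antisymmetry force $\varphi(w)=\varphi(v)$ and hence $w=v$; or $w$ and $l$ are $\leq_d$-incomparable, in which case Lemma \ref{INCOMPEDGES LEM} applied to the tree $F^{\leq v}$ produces a relation $w l \underline{h} \leq v$ whose image $\varphi(w)\varphi(l)\varphi(\underline{h}) \leq \varphi(l)$ contradicts the ``further'' part of Proposition \ref{SIMPLEBROAD PROP}. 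With this inserted, case-1 edges have no $U$-edges strictly below them, the descent never gets stuck, and the proof is complete.
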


\begin{proof}
Simplicity of $U$ is inherited from $F$. Given $u\in U$, let $\bar{u} \in F$ be the $\leq_d$-minimal edge such that 
$\varphi(\bar{u})=\varphi(u)$. Lemma \ref{BROADTRANS LEM} implies that $u$ is a leaf in $U$ if $\bar{u}$ is a leaf in $F$. Otherwise, $\bar{u}^{\uparrow}< \bar{u}$ (in $F$) and again by Lemma \ref{BROADTRANS LEM} the unique tuple $\underline{v}$ of $U$ such that 
$\varphi(\underline{v})=\varphi(\bar{u}^{\uparrow})$
provides the desired node tuple $u^{\uparrow}=\underline{v}$ for $U$.
Lastly, by Lemma \ref{BROADTRANS LEM} $s$ will be a root of $U$ iff $\varphi(s)=\varphi(r)$ for $r$ a root of $F$.
\end{proof}

We now prove the following factorization result, which in the $\Omega$ case first appeared as \cite[Lemma 3.1]{MW07} and in the $\Phi_i$ case was proven in \cite[Lemma 3.1.3]{HHM16}.

\begin{proposition}\label{FACT PROP}
	Each map $\varphi$ of $\Phi_i$ has a factorization 
	$\varphi = \varphi^+ \circ \varphi^-$ as a degeneracy followed by a face. Further, this decomposition is unique up to unique isomorphism.

	Finally, the decomposition restricts to the subcategories $\Omega$ and $\Phi_w$.
\end{proposition}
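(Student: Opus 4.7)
The plan is to construct the factorization via the canonical image. Given $\varphi\colon F \to F'$ in $\Phi_i$, I take $\bar U$ to be the set-theoretic image $\varphi(F) \subset F'$ endowed with the forest structure inherited from $F$: declare $\underline g \leq g$ in $\bar U$ iff some (equivalently, by Lemma \ref{BROADTRANS LEM}, every) choice of preimages satisfies the same relation in $F$. Equivalently, choose the section $\sigma \colon \bar U \to F$ sending each edge to the $\leq_d^F$-minimum of its fiber (well-defined by Lemma \ref{LEQDCOMP LEM}, under which fibers are linearly ordered) and transfer to $\bar U$ the forest structure on the sub-forest $U = \sigma(\bar U) \subset F$ granted by Lemma \ref{SECTIONISTREE LEM}.

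For existence I verify in sequence: $\varphi^{+} := \varphi|_U \circ \sigma^{-1}$ is injective (hence a face map) and preserves broad relations by construction; the set-theoretic quotient $\varphi^{-}\colon F \to \bar U$ is surjective and is a broad poset map, the latter via Lemma \ref{BROADTRANS LEM} applied to the chosen representatives (the trivial case $\varphi(\underline f) = \varphi(e)$ is handled by reflexivity); and $\varphi^{-}$ satisfies the degeneracy leaf condition because a leaf $l \in F$ is $\leq_d^F$-minimal, hence equals $\sigma(\varphi^{-}(l))$, so $\epsilon \nleq l$ in $F$ implies $\epsilon \nleq \varphi^{-}(l)$ in $\bar U$. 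Independence (in fact wideness) of both factors is immediate from the identification of roots of $U$ with chosen representatives of preimages of roots of $F$, following the proof of Lemma \ref{SECTIONISTREE LEM}.

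For uniqueness, given two factorizations $\varphi = \varphi^{+}_i \circ \varphi^{-}_i$ with intermediates $\bar U_i$, I define $\psi \colon \bar U_1 \to \bar U_2$ on underlying sets by $\psi(\varphi^{-}_1(\tilde u)) := \varphi^{-}_2(\tilde u)$. Well-definedness and invertibility are one-line calculations using injectivity of the $\varphi^{+}_i$ together with $\varphi^{+}_1 \varphi^{-}_1 = \varphi^{+}_2 \varphi^{-}_2$, so $\psi$ is automatically a face map. The degeneracy leaf condition holds because any leaf $l \in \bar U_1$ admits a preimage $\tilde l \in F$ which is itself a leaf of $F$: taking $\tilde l$ the $\leq_d^F$-minimum of the fiber, the only alternatives (stump, or node with $\tilde l^{\uparrow}$ all mapping to $l$) respectively contradict $l$ being a leaf of $\bar U_1$ or the $\leq_d^F$-minimality of $\tilde l$; then the degeneracy condition on $\varphi^{-}_2$ yields $\epsilon \nleq \varphi^{-}_2(\tilde l) = \psi(l)$. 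Lemma \ref{FACEDEGISO LEM} then upgrades $\psi$ to an isomorphism, provided I first know it is a morphism in $\Phi_i$ at all.

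The main obstacle is exactly verifying that $\psi$ preserves broad relations, which I would address by first proving a lifting lemma for degeneracies: every broad relation in the target of a degeneracy $\pi\colon A \to B$ lifts to a relation in $A$. It suffices to treat the generating cases $\epsilon \leq b$ (walk down the linearly ordered fiber to find a stump in $A$) and $b^{\uparrow} \leq b$ (take $a$ the $\leq_d^A$-minimum preimage of $b$ and show, using Lemma \ref{LEQDCOMP LEM} together with antisymmetry from Proposition \ref{SIMPLEBROAD PROP}, that $\pi(a^{\uparrow}) = b^{\uparrow}$). With this lemma in hand, preservation of broad relations by $\psi$ is routine. The final claim, restriction to $\Omega$ and $\Phi_w$, is then formal: the $\Phi_w$ case follows since $\varphi^{-}$ sends $\underline r_F$ to $\underline r_{\bar U}$ and $\varphi^{+}$ sends $\underline r_{\bar U}$ to $\varphi(\underline r_F)$, while the $\Omega$ case amounts to observing that $\bar U$ inherits a unique root from $F$ via the same identification.
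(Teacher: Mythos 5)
Your existence argument is the paper's: you build the image $\varphi(F)$ with the broad structure transported from a section as in Lemma \ref{SECTIONISTREE LEM}, use Lemma \ref{BROADTRANS LEM} to see the relations are exactly the images of those in $F$ (hence independent of the section), and read off the degeneracy--face factorization; the verifications of the leaf condition and of wideness are the same as in the text. Where you genuinely diverge is uniqueness. The paper compares an arbitrary factorization $F\twoheadrightarrow G\rightarrowtail F'$ to the \emph{canonical} one: since every relation of $\varphi(F)$ is by construction the image of a relation of $F$, pushing it forward along the given degeneracy shows directly that the bijection $\varphi(F)\to G$ is a broad poset map, and Lemma \ref{FACEDEGISO LEM} finishes; no lifting is needed. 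You instead compare two \emph{arbitrary} factorizations, so to see that $\psi\colon \bar U_1\to\bar U_2$ preserves relations you must first lift relations of $\bar U_1$ back to $F$ along the degeneracy $\varphi_1^-$ --- i.e.\ you must prove, before Proposition \ref{FACT PROP}, the statement the paper only records afterwards as Corollary \ref{SPLITDEG COR}. This is doable and your sketch cites the right tools, but be aware the node case is where the real work sits: the easy direction gives only $\pi(a^{\uparrow})\leq b^{\uparrow}$, and to get equality you must use surjectivity to produce preimages of each letter of $b^{\uparrow}$, place them below $a$ via Lemma \ref{LEQDCOMP LEM} and $\leq_d$-antisymmetry, and then invoke the $\leq_d$-incomparability of the letters of $a^{\uparrow}$ (Proposition \ref{SIMPLEBROAD PROP}) to match them up before antisymmetry of $\leq$ on tuples closes the argument. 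Your explicit checks of the leaf condition for $\psi$ and of the degeneracy condition for $\varphi^-$ are correct and in fact more careful than the paper's ``clearly''; the restriction to $\Omega$ and $\Phi_w$ is handled as in the text. So the proposal is correct, at the cost of front-loading a lemma the paper gets for free from its choice of comparison.
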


\begin{proof}
	Given a map $F \xrightarrow{\varphi} F'$ and picking any $U$
as in Lemma \ref{SECTIONISTREE LEM}, the isomorphism $U\simeq \varphi(F)$ allows us to equip $\varphi(F)$ with a forest structure. Moreover, by Lemma \ref{BROADTRANS LEM} the broad relations of $\varphi(F)$ are exactly the image of those in $F$, and thus independent of the chosen $U$. The existence of a factorization follows.

	The uniqueness of the $F \overset{\varphi^-}{\twoheadrightarrow} G \overset{\varphi^+}{\rightarrowtail} F'$ factorization follows since by the description of the broad structure on $\varphi(F)$ there is clearly a broad poset map $\varphi(F) \to G$, which by Lemma \ref{FACEDEGISO LEM} is an isomorphism.

	Further, the decomposition restricts to $\Omega$ since the image by a degeneracy map of a tree is necessarily a tree and restricts to $\Phi_{w}$ since any edge surjective map $F \overset{\varphi^-}{\twoheadrightarrow} G$ must map roots to roots and hence it must be $\varphi^-(\underline{r}_F)=\underline{r}_G$.
\end{proof}

\begin{corollary}\label{SPLITDEG COR}
	If $F \overset{\rho}{\twoheadrightarrow} F'$ is a degeneracy in any of $\Omega,\Phi_i,\Phi_w$ then the broad relations in $F'$ are precisely the image of the broad relations of $F$. Further, any section $F'\xrightarrow{s} F$ of the underlying set map is a section in $\Omega,\Phi_i,\Phi_w$.
\end{corollary}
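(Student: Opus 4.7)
The first claim follows from the uniqueness part of Proposition \ref{FACT PROP}. Since $\rho$ is a degeneracy and $\mathrm{id}_{F'}$ a face map, $\rho = \mathrm{id}_{F'} \circ \rho$ is a valid face-degeneracy factorization, so by uniqueness up to unique isomorphism it must agree with the canonical factorization constructed in the proof of Proposition \ref{FACT PROP}. But the middle forest in that construction carries as its broad relations precisely the $\rho$-images of the broad relations in $F$ (this is where Lemma \ref{BROADTRANS LEM} does the work). Transporting along the comparison isomorphism yields the desired description of the broad structure on $F'$.

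For the second claim, let $s \colon F' \to F$ be any set-theoretic section of $\rho$. I will first verify that $s$ preserves broad relations. Given $\underline{f}' \leq e'$ in $F'$, the first claim supplies a lift $\underline{f} \leq e$ in $F$ with $\rho(\underline{f}) = \underline{f}'$ and $\rho(e) = e'$. Since $\rho$ acts letter by letter, the equality $\rho(\underline{f}) = \rho(e)$ can occur only when $\underline{f}$ is a single-edge tuple, forcing $\underline{f}' = e'$ and rendering $s(\underline{f}') \leq s(e')$ a trivial instance of reflexivity. Otherwise $\rho(\underline{f}) \neq \rho(e)$ and Lemma \ref{BROADTRANS LEM}, applied to the alternative lifts $s(\underline{f}')$, $s(e')$, yields $s(\underline{f}') \leq s(e')$ directly. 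This handles the $\Omega$ case entirely.

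For the $\Phi_i$ and $\Phi_w$ cases, only the root condition remains. The step I expect to be the main obstacle is to establish that $\rho$ restricts to a bijection between the root tuples $\underline{r}_F$ and $\underline{r}_{F'}$: one must check that $\rho$ sends roots of $F$ to roots of $F'$ (using that $\rho$ preserves $\leq_d$ by virtue of being a broad poset map, together with surjectivity, to contradict any potential strict inequality $\rho(r) <_d r'$) and that distinct roots of $F$ have distinct images (using that same-fiber edges are $\leq_d$-comparable by Lemma \ref{LEQDCOMP LEM}, hence cannot both be maximal unless equal). Granted this bijection, the preimage of each root $r' \in F'$ is linearly ordered with the corresponding root $r$ of $F$ at the top, and the ``further'' part of Lemma \ref{LEQDCOMP LEM} applied to $s(r')$ and $r$ immediately gives the single-edge broad relation $s(r') \leq r$. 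Assembling this across all roots of $F'$ produces the decomposition $s(\underline{r}_{F'}) \leq \underline{r}_F$, which is exactly the wideness condition for $\Phi_w$, and a fortiori the independence condition required for $\Phi_i$.
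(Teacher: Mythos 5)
Your proposal is correct and follows essentially the route the paper intends: the first claim is exactly the observation in the proof of Proposition \ref{FACT PROP} (via Lemma \ref{BROADTRANS LEM} and uniqueness of the factorization), and the second claim is the same combination of Lemmas \ref{BROADTRANS LEM} and \ref{LEQDCOMP LEM} that underlies Lemma \ref{SECTIONISTREE LEM}, plus the root-bijectivity observation already made in the proof of Proposition \ref{FACT PROP} to get wideness.
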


The following will be needed in \S \ref{PRESHEAFCAT SEC} when discussing dendroidal boundaries.

\begin{corollary}\label{ABSPUSH COR}
	In any of $\Omega,\Phi_i,\Phi_w$, pairs of degeneracies with common domain have absolute pushouts\footnote{Recall that an absolute colimit can be described as either a colimit that is preserved by the Yoneda embedding or (equivalently) a colimit that is preserved by any functor.}.
\end{corollary}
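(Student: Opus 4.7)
By Corollary \ref{SPLITDEG COR}, every degeneracy $\rho \colon F \twoheadrightarrow F'$ in any of $\Omega, \Phi_i, \Phi_w$ is a split epimorphism, since any set-theoretic section automatically lifts to the ambient category. My plan, given degeneracies $\rho_i \colon F \twoheadrightarrow F_i$ with sections $s_i$ for $i = 1,2$, is to construct the pushout $P$ explicitly and then to witness its absoluteness by realizing $P$ as the splitting of an idempotent on $F$ obtained by iterating $\phi := s_1 \rho_1 s_2 \rho_2$.

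\emph{Construction of $P$.} Let $\sim$ denote the equivalence relation on the edge set of $F$ generated by $x \sim_i y$ when $\rho_i(x) = \rho_i(y)$. The key combinatorial claim is that each $\sim$-class is $\leq_d$-linearly ordered in $F$: each $\rho_i$-fiber is a convex $\leq_d$-chain by Lemma \ref{LEQDCOMP LEM}, and when two such fibers share an edge $b$, the uniqueness of $b^\uparrow$ (and, dually, of the tuple directly containing $b$ when ascending) forces the fibers to agree on their overlap, because otherwise the relevant degeneracy would identify $\leq_d$-incomparable edges and hence produce a non-simple relation in $F_i$. Setting $P := F/\sim$ with broad relations the images of those of $F$, simplicity of $P$ follows since the edges in any relation $\underline{f} \leq e$ of $F$ are pairwise $\leq_d$-incomparable (Proposition \ref{SIMPLEBROAD PROP}) and therefore pairwise $\sim$-inequivalent; the nodal and root conditions pass through the quotient. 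The map $q \colon F \to P$ is then a degeneracy factoring as $q = q_i \rho_i$ for induced degeneracies $q_i \colon F_i \to P$, and the universal property is immediate since any cocone is constant on $\sim$-classes.

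\emph{Absoluteness.} Since $F$ is finite, $\phi^n$ stabilizes to an idempotent $\epsilon := \phi^N$ for $N$ large; one verifies that the image of $\epsilon$ is in bijection with $P$ (the fixed points of $\phi$ pick one representative per $\sim$-class), yielding a split coequalizer $F \overset{\mathrm{id}, \epsilon}{\rightrightarrows} F \overset{q}{\twoheadrightarrow} P$ with splitting data $(s = \mathrm{id}_F, \, t = j)$ for any section $j$ of $q$. Split coequalizers are absolute, so $H(q)$ is the coequalizer of $(\mathrm{id}, H(\epsilon))$ in the target of any functor $H$. To recover the pushout, observe that for any cocone $(g_1, g_2)$ in the target one has the identity $g_1 H(\rho_1) H(\phi) = g_1 H(\rho_1)$: expanding $\phi$, the factor $H(\rho_1) H(s_1)$ cancels (as $\rho_1 s_1 = \mathrm{id}$), and then substituting via $g_1 H(\rho_1) = g_2 H(\rho_2)$ lets one cancel the trailing $H(s_2) H(\rho_2)$ using $\rho_2 s_2 = \mathrm{id}$; iterating gives $g_1 H(\rho_1) H(\epsilon) = g_1 H(\rho_1)$, so pushout cocones correspond bijectively to coequalizer cocones, and $H(P)$ is the pushout of $H(\rho_1), H(\rho_2)$. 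The main technical hurdle throughout is the combinatorial claim that $\sim$-classes are $\leq_d$-linear chains; once this is established, the forest-theoretic bookkeeping and the idempotent-splitting argument are routine.
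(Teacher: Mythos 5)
Your construction of $P$ and your identification of the $\sim$-classes as $\leq_d$-chains are on the right track, but the absoluteness step contains a genuine error: the claim that ``the fixed points of $\phi$ pick one representative per $\sim$-class'' is false for arbitrary choices of sections, and with it the split-coequalizer structure collapses. Take $F=[2]$ with edges $0\leq_d 1\leq_d 2$, let $\rho_1$ be the degeneracy collapsing $\{0,1\}$ and $\rho_2$ the one collapsing $\{1,2\}$, so that the pushout is $\eta$ and there is a single $\sim$-class $\{0,1,2\}$. Choosing $s_1$ to select $0$ and $s_2$ to select $2$ gives $\phi(0)=0$ and $\phi(1)=\phi(2)=2$; this $\phi$ is already idempotent and has \emph{two} fixed points in the one class. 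Consequently $\epsilon\neq jq$ for any section $j$ of $q$ (which is exactly what the splitting datum $gt=se$ of a split coequalizer demands), and your cocone correspondence is not surjective: a map $g$ with $gH(\epsilon)=g$ need only identify $H(1)$ with $H(2)$, not with $H(0)$, so it need not satisfy $gH(s_i\rho_i)=g$ and need not arise from a pushout cocone. The argument is repairable, but only by choosing the sections coherently --- e.g.\ letting both $s_1$ and $s_2$ select the $\leq_d$-minimal edge of each fiber --- and then actually proving that the resulting $\epsilon$ collapses every $\sim$-class onto its minimum (this uses the chain structure of the classes and the observation that an edge minimal in both of its fibers must be the minimum of its class). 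That verification is precisely the content you dismissed as routine.

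Two smaller points. First, declaring the relations of $P$ to be ``the images of those of $F$'' does not by itself produce a broad poset: broad transitivity in $P$ composes relations along \emph{different} representatives of the same class, so you need a quotient analogue of Lemma \ref{BROADTRANS LEM} to see that such composites are again images of relations of $F$, and you must also check that the induced $q_i\colon F_i\to P$ are morphisms of $\Xi$; relatedly, your linearity claim should be phrased as ``the union of two overlapping fibers is again a chain'' rather than ``the fibers agree on their overlap''. Second, for comparison: the paper sidesteps both difficulties by never forming the quotient directly --- it iterates degeneracy--face factorizations to produce a chain of degeneracies $E_0\twoheadrightarrow E_1\twoheadrightarrow\cdots\twoheadrightarrow E_n$ inside $\Xi$, with the sections at each stage chosen to force one outstanding identification, and then checks by hand that $\Xi[\bar E]\coprod_{\Xi[F]}\Xi[E_0]\to\Xi[E_n]$ is an isomorphism of presheaves, which yields absoluteness via the Yoneda characterization without any idempotent splitting.
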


\begin{proof}
We will throughout write $\Xi$ for any of $\Omega,\Phi_i,\Phi_w$ and $\Xi[F]\in \mathsf{Set}^{\Xi^{op}}$ for the presheaf represented by $F\in \Xi$.
Given a diagram of degeneracies 
$E \overset{\bar{\rho}}{\twoheadleftarrow}
 F \overset{\rho}{\twoheadrightarrow} E_0$
we first inductively extend it to a diagram of degeneracies as on the left below
\begin{equation}\label{ABSPUSHCOR EQ}
\begin{tikzcd}
	F \ar[swap,twoheadrightarrow]{d}{\bar{\rho}} 
	\ar[twoheadrightarrow]{r}{\rho} &
	E_0 \ar[twoheadrightarrow]{r}{\rho} &
	E_1 \ar[twoheadrightarrow]{r}{\rho} &
	\cdots \ar[twoheadrightarrow]{r}{\rho} &
	E_n &
	E_i \ar{r}{s_i} 
	\ar[swap,twoheadrightarrow]{d}{\rho} &
	F \ar[twoheadrightarrow]{d}{\bar{\rho}} 
 \\
	\bar{E} \ar[twoheadrightarrow]{rrrru} &&&&&
	E_{i+1} \ar{r}[swap]{\iota} &
	\bar{E}.
\end{tikzcd}
\end{equation}
as follows: 
assuming $E_0 \to \cdots \to E_i$ have been built,
Corollary \ref{SPLITDEG COR}  implies that a (necessarily unique) compatible degeneracy
$\bar{E} \to E_i$
exists iff it exists at the level of the underlying sets;
otherwise, there must exist edges 
$e_1,e_2 \in E_i$
and lifts 
$f_1,f_2 \in F$ 
such that
$\rho^{\circ i+1}(f_k) = e_k$
and
$\bar{\rho}(f_1) = \bar{\rho}(f_2)$,
and choosing $s_i$ to be a section of 
$\rho^{\circ i+1}$
such that $s_i(e_k) = f_k$,
one defines $\rho \colon E_i \to E_{i+1}$
via the degeneracy-face factorization of $\bar{\rho} s_i$,
as in the right square in (\ref{ABSPUSHCOR EQ}). The condition on $s_i$ then guarantees that $\rho \colon E_i \twoheadrightarrow E_{i+1}$ is never an isomorphism, so that this procedure always terminates.

It remains to show that 
$\Xi[\bar{E}] \coprod_{\Xi[F]} \Xi[E_0] \to \Xi[E_n]$
is an isomorphism. Surjectivity is immediate from the existence of a section $s\colon E_n \to E_0$ of $\rho^{\circ n}$. For injectivity, the existence of a section 
$\bar{s}\colon \bar{E} \to F$ of $\bar{\rho}$ implies that one can represent any element 
of $\Xi[\bar{E}] \coprod_{\Xi[F]} \Xi[E_0]$ by an element
in $\Xi[E_0]$, 
i.e. as an equivalence class $[\varphi]$ 
for a map $\varphi \colon T \to E_0$.
It now suffices to show by induction on $i$ that
if $\rho^{\circ i} \varphi = \rho^{\circ i} \psi$
then it is $[\varphi] = [\psi]$ or, equivalently, that it is $[\varphi] = [\rho s_i \rho^{\circ i} \varphi]$. The case $i=0$ is trivial.
Otherwise, we claim that
\[
	[\varphi]
=
	[\rho s_{i-1}\rho^{\circ i-1}\varphi]
=
	[\rho s_{i-1} \rho^{\circ i} s_i \rho^{\circ i} \varphi]
=
	[\rho s_i \rho^{\circ i} \varphi].
\]
The leftmost and rightmost identities follow from the induction hypothesis for $i-1$ and the functions
$\varphi$, $\rho s_i \rho^{\circ i} \varphi$.
For the inner identity, one considers the lifts 
$ s_{i-1}\rho^{\circ i-1}\varphi \colon T \to F$,
$
 s_{i-1} \rho^{\circ i} s_i \rho^{\circ i} \varphi
\colon T \to F
$
and notes that postcomposition with $\bar{\rho}$ yields
$
\bar{\rho}s_{i-1}\rho^{\circ i-1}\varphi =
\iota \rho^{\circ i}\varphi
$
and
$
\bar{\rho} s_{i-1} \rho^{\circ i} s_i \rho^{\circ i} \varphi =
\iota \rho^{\circ i+1} s_i \rho^{\circ i} \varphi =
\iota \rho^{\circ i} \varphi
$. Injectivity now follows, finishing the proof.
\end{proof}

We now recall the usual (\cite{We12},\cite{MW08},\cite{CM11},\cite{HHM16}) description of faces as composites of maximal ``codimension 1'' faces. We first discuss some terminology.

Firstly, we will regard a face $F'$ of $F$ as a subset of $F$ together with a subset of the broad relations of $F$, and write $F' \hookrightarrow F$ to indicate this. Further, if the broad relations between edges of $F'$ in the broad posets $F'$ and $F$ coincide, then we will call $F'$ a \textit{full face} of $F$ and write $F' \subset F$ instead.

Secondly, an edge $e \in F$ is called
\textit{external} if $e$ is either a leaf or a root and \textit{internal} otherwise.

Finally, we denote a generating broad relation of $F$ by $v_e=(e^{\uparrow} \leq e)$ and refer to it as the \textit{vertex at $e$}.

\begin{notation}\label{FACETYPES NOT}
The maximal faces of $F$ in $\Omega,\Phi_i,\Phi_w$ have the following types.
\begin{itemize}
\item
The \textit{inner face} (valid for any of $\Omega,\Phi_i,\Phi_w$) associated to an inner edge $e$ is the full face $F-e \subset F$ obtained by removing $e$;
\item
The \textit{leaf vertex outer face} (valid for any of $\Omega,\Phi_i,\Phi_w$) associated to a
vertex $v_e$ such that $e^{\uparrow}$ consists of leaves is the full face $F_{\nless e} \subset F$ obtained by removing the leafs in $e^{\uparrow}$;
\item
The \textit{stump outer face} (valid for any of $\Omega,\Phi_i,\Phi_w$) associated to a stump vertex $v_e=(\epsilon = e^{\uparrow} \leq e)$ is the face $F_{\nless e} \hookrightarrow F$ with the same edges as $F$ but removing $\epsilon \leq e$ as a generating broad relation (note that this also removes some composite relations);
\item
The \textit{root vertex outer face} (valid only for $\Omega$) for an edge $e \in r^{\uparrow}$ such that the edges of $r^{\uparrow}$ other than $e$ are leaves is the full face 
$T^{\leq e} \subset T$ consisting of those edges $\bar{e}$ such that $\bar{e} \leq_d e$;
\item
The \textit{root face} (valid only for $\Phi_i,\Phi_w$) associated to a root $r_i \in \underline{r}_F$ which is\textit{ not} also a leaf is the full face $F-r_i \subset F$ obtained by removing $r_i$;
\item
The \textit{stick component face} (valid only for $\Phi_i$) associated to a stick $\eta \in F$ 
(i.e. an edge that is simultaneously a root and a leaf) is the full face $F-\eta \subset F$ obtained by removing $\eta$.
\end{itemize}
\end{notation}

\begin{remark}
	The implicit claim that an inner face $F-e$ is itself a forest can easily be checked using Lemma \ref{INEQMAXIFF NOPEN LEM}, which shows that $f^{\uparrow,F-e}$ can be defined to consist of the $\leq_d$-maximal edges $f_i \neq e$ such that $f_i \leq_d f$.
	
	Similarly, Lemma \ref{INEQMAXIFF NOPEN LEM} shows that a broad relation $f_1 \cdots f_n\leq f$ in $F$ holds in a stump outer face $F_{\nless e}$ if the condition $e \leq_d f$ implies that $e \leq_d f_i$ for some $i$.
\end{remark}

\begin{example}
	Consider the trees in Example \ref{EXAMPLEFORESTS EX}. One can write
	\[U = \left(\left(\left(\left(\left(T_{\nless c}\right)_{\nless d}\right)-f\right)-i\right)-g\right),\]
where the intermediate steps (from the inside out) are a stump face, a leaf vertex face and three inner faces. Further, both $\eta_a=V^{\leq a}$ and $\eta_b=V^{\leq b}$ are root vertex outer faces of $V$ when viewing $V$ as a tree and $\eta_a \amalg \eta_b=V-d$ is a root face of $V$ when viewing $V$ as a forest.
\end{example}

\subsection{The category of equivariant trees}\label{EQUIVTREECAT SEC}

Let $G$ be a finite group. We will denote by $\Phi^G$ the category of $G$-forests, i.e. forests equipped with a $G$-action.

\begin{definition}\label{GTREES DEF}
	The \textit{category of $G$-trees}, denoted $\Omega_G$, is the full subcategory $\Omega_G \subset \Phi^G$ of $G$-forests $F$ such that
the root tuple $\underline{r}_F$ consists of a single $G$-orbit.
\end{definition}

\begin{remark}\label{FORVSTREE REM}
	The relationship between $\Phi^G$ and $\Omega_G$
	is similar to the relationship between the category 
	$\mathsf{Fin}^G$ of finite $G$-sets and the \textit{orbital category} $\mathsf{O}_G$ consisting of the orbital $G$-sets $G/H$.
\end{remark}

Examples of equivariant trees can be found throughout 
\S \ref{EQUIVTREESINTO SEC}. The author is aware that the fact that $G$-trees often ``look life forests'' is likely counter-intuitive at first (indeed, that was a major hurdle in the development of the theory presented in this paper). However, the following two facts may assuage such concerns: 
\begin{inparaenum}
	\item[(i)] similarly to how a non-equivariant tree is a forest that can not be decomposed as a coproduct of forests, so too a $G$-tree can not be equivariantly decomposed as a coproduct of $G$-forests;
	\item[(ii)] the orbital representation of a $G$-tree (cf. \S \ref{EQUIVTREESINTO SEC}) always \textit{does} ``look like a tree''.
\end{inparaenum}

\begin{remark}
Note that $\Omega_G$, \textit{the category of $G$-trees}, is rather different from $\Omega^G$,\textit{ the category of trees with a $G$-action}. In fact, each $G$-tree is (non-canonically) isomorphic to a forest of the form $G \cdot_H T$ for some $H \leq G$ and $T \in \Omega^H$. More precisely, one has the following elementary proposition.
\end{remark}

\begin{proposition}\label{GTREESGROTH PROP}
	$\Omega_G$ is equivalent to the Grothendieck construction for the functor (where $\mathsf{O}_G$ is the orbit category; see Remark \ref{FORVSTREE REM})
\[
\begin{tikzcd}[row sep=0em]
	\mathsf{O}_G^{op} \ar{r} & \mathsf{Cat} \\
	G/H \ar[mapsto]{r} & \Omega^{\underline{G/H}}
\end{tikzcd}
\]
where $\underline{G/H}$ denotes the groupoid with objects the cosets $g H$ and arrows $gH \xrightarrow{\bar{g}} \bar{g} g H$.
\end{proposition}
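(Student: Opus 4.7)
The plan is to construct an explicit quasi-inverse $\Psi\colon \int_{\mathsf{O}_G^{op}} \Omega^{\underline{G/-}} \to \Omega_G$ and verify essential surjectivity and full faithfulness. On objects, I would send a pair $(G/H,T)$ with $T\colon \underline{G/H} \to \Omega$ to the $G$-forest
\[
\Psi(G/H,T) \;=\; \coprod_{gH \in G/H} T(gH),
\]
equipped with the $G$-action in which $g \in G$ acts on the $\bar gH$-component via the isomorphism $T(g)\colon T(\bar gH) \to T(g\bar gH)$. The root tuple is visibly the single $G$-orbit $\{r_{gH}\}_{gH}$ (equivariantly isomorphic to $G/H$), so this lies in $\Omega_G$. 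On arrows, a Grothendieck morphism $(f,\alpha)\colon (G/H,T) \to (G/K,S)$, with $f\colon G/H \to G/K$ in $\mathsf{O}_G$ and $\alpha\colon T \Rightarrow S \circ \underline{f}$, is sent to the $G$-equivariant forest map whose restriction to $T(gH)$ is the component $\alpha_{gH}\colon T(gH) \to S(f(gH))$. Functoriality is then formal.

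To prove essential surjectivity, I would use the fact that any $F \in \Omega_G$ decomposes uniquely as $F = \coprod_{r \in \underline{r}_F} F^{\leq r}$ into its tree components (cf.\ Proposition \ref{UNGRAFT PROP} and the root condition in Definition \ref{FORESTORD DEF}), with the $G$-action permuting these components. The assignment $r \mapsto F^{\leq r}$ thereby defines a functor from the action groupoid of the $G$-set $\underline{r}_F$ to $\Omega$. Choosing a base root $r_0$ and setting $H = \mathrm{Stab}_G(r_0)$ identifies $\underline{r}_F$ with $G/H$ as $G$-sets (via $gH \mapsto g \cdot r_0$) and its action groupoid with $\underline{G/H}$ as groupoids, producing an object $(G/H,T)$ with $\Psi(G/H,T) \cong F$.

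For full faithfulness, I would unpack a $G$-equivariant map $\varphi\colon \Psi(G/H,T) \to \Psi(G/K,S)$. Since broad poset maps preserve $\leq_d$ (Proposition \ref{MAPOUTTREE PROP}) and every edge of $T(gH)$ is $\leq_d$-below its root $r_{gH}$, each component $T(gH)$ is sent entirely into a single target component, namely $S(f(gH))$, where $f(gH)$ is characterized by $\varphi(r_{gH}) \in S(f(gH))$. Equivariance of $\varphi$ on root orbits forces $G$-equivariance of $f\colon G/H \to G/K$, and the restrictions $\alpha_{gH}\colon T(gH) \to S(f(gH))$ assemble to a natural transformation $\alpha\colon T \Rightarrow S \circ \underline{f}$; this manifestly produces an inverse $\varphi \mapsto (f,\alpha)$ to the assignment defined by $\Psi$ on morphisms.

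The step most demanding of care, though ultimately formal, is matching naturality of $\alpha$ against $G$-equivariance of the assembled forest map: the morphism $\bar g\colon gH \to \bar g g H$ in $\underline{G/H}$ is sent by $\underline{f}$ to $\bar g\colon f(gH) \to f(\bar g g H) = \bar g f(gH)$ in $\underline{G/K}$, and matching the resulting naturality square against the equivariance relation $\alpha_{\bar g g H} \circ T(\bar g) = S(\bar g) \circ \alpha_{gH}$ confirms that the correspondence on hom-sets is a bijection rather than merely a surjection.
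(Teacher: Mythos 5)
Your proof is correct. The paper states this proposition without proof, declaring it elementary, and your explicit quasi-inverse $\Psi$ — decomposing a $G$-forest into the tree components over its root orbit for essential surjectivity, and using that broad poset maps preserve $\leq_d$ (so each component lands in a single target component) for full faithfulness — is exactly the intended argument, so there is nothing to compare beyond noting that you have supplied the details the paper omits.
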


\begin{remark}\label{OMEGAGINC RMK}
	There is a natural inclusion $G \times \Omega \hookrightarrow \Omega_G$ given by regarding each object $(\**,T)\in G \times \Omega$ as the $G$-tree given by the $G$-free forest $G \cdot T$.
\end{remark}

\begin{remark}
While maps in $\Omega$ can be built out of two types of maps, faces and degeneracies (Proposition \ref{FACT PROP}), in $\Omega_G$ we need a third type of map: quotients. 

To see this, note that by Proposition \ref{GTREESGROTH PROP}, each $G$-tree $T$ sits (up to equivalence) inside one of the subcategories $\Omega^{\underline{G/H}}$, and that $\Omega^H$ is equivalent to  the latter. Since it is immediate by (the proof of) Proposition \ref{FACT PROP} that the degeneracy-face decomposition extends to $\Omega^H$, Proposition \ref{GTREESGROTH PROP} implies that any map in $\Omega_G$ factors as a degeneracy followed by a face (both inside one of the fibers $\Omega^{\underline{G/H}}$) followed by a cartesian map. We will prefer to refer to cartesian maps as quotients.

For a representative example, let $G=\mathbb{Z}_{/8}$ and consider the map below (represented in orbital notation, cf. \S \ref{EQUIVTREESINTO SEC}), where we follow the following conventions:
\begin{inparaenum}
	\item[(i)] edges in different trees with the same label are mapped to each other;
	\item[(ii)] if an edge is denoted $e^{+i}$, we assume that its orbit is disjoint from that of $e$ and that, if no edge labeled $e^{+i}$ appears in the target tree, then $\varphi(e^{+i})=\varphi(e)+i$.
\end{inparaenum}

\[%
	\begin{tikzpicture}[auto, grow=up, every node/.style={font=\footnotesize},level distance = 2em]%
	\tikzstyle{level 2}=[sibling distance=3.5em]%
	\tikzstyle{level 3}=[sibling distance=2em]%
		\node at (0,0) {$T$}%
			child{node [dummy] {}
				child{node [dummy] {}
					child{node [dummy] {}
						child{
						edge from parent node [swap] {$c^{+2}+G$}}
					edge from parent node [swap] {$\bar{b}^{+2}+G/4G$}}
				edge from parent node [near end,swap] {$b^{+2}+G/4G$}}
				child{node [dummy] {}
					child{node [dummy] {}
						child{
						edge from parent node {$c+G$}}
					edge from parent node {$\bar{b}+G/4G$}}
				edge from parent node [near end] {$b+G/4G$}}
			edge from parent node [swap] {$a+G/4G$}};%
		\node at (6,0){$W$}%
			child{node [dummy] {}
				child{node [dummy] {}
					child{node [dummy] {}
						child{
						edge from parent node [swap] {$c+G$}}
					edge from parent node [swap] {$b+G/4G$}}
				edge from parent node [swap] {$a+G/2G$}}
			edge from parent node [swap] {$G/G$}};%	
		\draw[->] (2.5,1.5) -- node [above] 
		{$\varphi(\bar{b}) = b$}
		(5.25,1.5);
	\end{tikzpicture}%
\]%
This map can be factored as (where, for the sake of brevity, we write $a+G/4G$ as $a/4$, etc.)

\begin{equation}\label{OMEGAG FACT}%
	\begin{tikzpicture}[auto, grow=up, every node/.style={font=\footnotesize},level distance = 2em]%
	\tikzstyle{level 2}=[sibling distance=2.5em]%
	\tikzstyle{level 3}=[sibling distance=2em]%
		\node at (-0.2,0) {$T$}%
			child{node [dummy] {}
				child{node [dummy] {}
					child{node [dummy] {}
						child{
						edge from parent node [swap] {$c^{+2}/8$}}
					edge from parent node [swap] {$\bar{b}^{+2}/4$}}
				edge from parent node [near end,swap] {$b^{+2}/4$}}
				child{node [dummy] {}
					child{node [dummy] {}
						child{
						edge from parent node {$c/8$}}
					edge from parent node {$\bar{b}/4$}}
				edge from parent node [near end] {$b/4$}}
			edge from parent node [swap] {$a/4$}};%
		\node at (3.15,0) {$U$};%
		\node at (3.15,0.3) {}%
			child{node [dummy] {}
				child{node [dummy] {}
						child{
						edge from parent node [swap] {$c^{+2}/8$}}
				edge from parent node [near end,swap] {$b^{+2}/4$}}
				child{node [dummy] {}
						child{
						edge from parent node {$c/8$}}
				edge from parent node [near end] {$b/4$}}
			edge from parent node [swap] {$a/4$}};%
		\begin{scope}
		\tikzstyle{level 2}=[sibling distance=6em]%
		\tikzstyle{level 3}=[sibling distance=1em]%
		\node at (6.9,0) {$V$}%
			child{node [dummy] {}
				child{node [dummy] {}
					child{node [dummy] {}
							child{
							edge from parent node [swap] {$c^{+3}/8$}}
					edge from parent node [near end,swap] {$b^{+3}/4$}}
					child{node [dummy] {}
							child{
							edge from parent node {$c^{+1}/8$}}
					edge from parent node [near end] {$b^{+1}/4$}}
				edge from parent node [swap,near end] {$a^{+1}/4$}}%			
				child{node [dummy] {}
					child{node [dummy] {}
							child{
							edge from parent node [swap] {$c^{+2}/8$}}
					edge from parent node [near end,swap] {$b^{+2}/4$}}
					child{node [dummy] {}
							child{
							edge from parent node {$c/8$}}
					edge from parent node [near end] {$b/4$}}
				edge from parent node[near end] {$\phantom{1^1}a/4$}}%
			edge from parent node [swap] {$G/4G$}};
		\end{scope}
		\node at (10.2,0){$W$}%
			child{node [dummy] {}
				child{node [dummy] {}
					child{node [dummy] {}
						child{
						edge from parent node [swap] {$c/8$}}
					edge from parent node [swap] {$b/4$}}
				edge from parent node [swap] {$a/2$}}
			edge from parent node [swap] {$G/G$}};%	
		\draw[->] (1.25,1.5) -- node [above] 
		{$\bar{b} \mapsto b$}
		(2,1.5);
		\draw[->] (4.45,1.5) -- (4.95,1.5);
		\draw[->] (9.15,1.5) -- (9.65,1.5);
	\end{tikzpicture}.%
\end{equation}%
It is perhaps worthwhile to unpack the last map in (\ref{OMEGAG FACT}), which is an example of a quotient. The $G$-tree labeled $V$ can be written as 
$V \simeq G \cdot_{4G} \bar{V}$ where 
$\bar{V} \in \Omega^{4G}$ is the tree with a $4G$-action pictured below.
\[
	\begin{tikzpicture}[auto, grow=up, every node/.style={font=\footnotesize},level distance = 2em]%
		\begin{scope}
		\tikzstyle{level 2}=[sibling distance=13em]%
		\tikzstyle{level 3}=[sibling distance=6em]%
		\tikzstyle{level 4}=[sibling distance=1em]%
		\node at (0,0) {$\bar{V}$}%
			child{node [dummy] {}
				child{node [dummy] {}
					child{node [dummy] {}
							child{
							edge from parent node [swap, very near end] {$c^{+3}+4$}}
							child{
							edge from parent node [very near end] {$c^{+3}$}}
					edge from parent node [near end,swap] {$b^{+3}$}}
					child{node [dummy] {}
							child{
							edge from parent node [swap, very near end] {$c^{+1}+4$}}
							child{
							edge from parent node [very near end] {$c^{+1}$}}
					edge from parent node [near end] {$b^{+1}$}}
				edge from parent node [swap] {$a^{+1}$}}%			
				child{node [dummy] {}
					child{node [dummy] {}
							child{
							edge from parent node [swap,very near end] {$c^{+2}+4$}}
							child{
							edge from parent node [very near end] {$c^{+2}$}}
					edge from parent node [near end,swap] {$b^{+2}$}}
					child{node [dummy] {}
							child{
							edge from parent node [swap,very near end] {$c+4\phantom{1^1}$}}
							child{
							edge from parent node [very near end] {$\phantom{1^2}c$}}
					edge from parent node [near end] {$b$}}
				edge from parent node {$\phantom{1^1}a$}}%
			edge from parent node [swap] {$r$}};
		\end{scope}
	\end{tikzpicture}
\]
In words, $V$ consists (non-equivariantly) of four trees identical to $\bar{V}$ which are interchanged by the action of elements of $G$ other than $0,4$. $W$, on the other hand, consists of a single (non-equivariant) tree, also shaped like $\bar{V}$, and can be thought of as the quotient of $V$ obtained by gluing the four trees so that the edges $e^{+i}+j$ and $e+i+j$ are identified.
\end{remark}

\begin{remark}\label{NOTREEDY RMK}
	One particularly convenient property of $\Omega$ is that $\Omega^{op}$ is a generalized Reedy category, in the sense of \cite{BM08}. In fact, $\Omega$ is a \textit{dualizable} generalized Reedy category, so that both $\Omega$ and $\Omega^{op}$ are generalized Reedy.

Unfortunately, this is not the case for $\Omega_G$: while indeed $\Omega_G$ itself can be shown to be generalized Reedy, the opposite category $\Omega^{op}_G$ is not. The problem is readily apparent in the factorization in (\ref{OMEGAG FACT}). Indeed, for the Reedy factorizations to hold (cf. \cite[Defn. 1.1(iii)]{BM08}), quotient maps would need to be considered the same type of maps as face maps, i.e. degree raising maps of $\Omega_G$. However, the quotient map in (\ref{OMEGAG FACT}) fails \cite[Defn. 1.1(iv')]{BM08}, since there is an automorphism of $V$ 
(given by $e^{+i} \mapsto e^{+i+1}-1$, where $e^{+0}$ is interpreted as $e$, and yet undefined $e^{+i}$ labels are interpreted by regarding $i \in \mathbb{Z}_{/k}$ as needed) compatible with the quotient map to $W$.
\end{remark}

\subsection{Presheaf categories}\label{PRESHEAFCAT SEC}

We now establish some key terminology and notation concerning the presheaf categories we will use. Recall that the category of \textit{dendroidal sets} is the presheaf category 
$\mathsf{dSet}=\mathsf{Set}^{\Omega^{op}}$.

\begin{definition}
	The category of $G$-\textit{equivariant dendroidal sets} is the category
	\[\mathsf{dSet}^G = \mathsf{Set}^{\Omega^{op} \times G}.\]
	The category of \textit{genuine $G$-equivariant dendroidal sets} is the category
	\[\mathsf{dSet}_G = \mathsf{Set}^{\Omega_G^{op}}.\]
\end{definition}

Twisting the inclusion in Remark \ref{OMEGAGINC RMK} by the inverse map $G^{op}\xrightarrow{(-)^{-1}}G$ yields an inclusion 
$u \colon \Omega^{op} \times G \hookrightarrow \Omega_G$.
\begin{proposition}\label{REFLEXCAT PROP}
The adjunction
	\begin{equation}\label{REFLEXADJ EQ}
		u^{\**} \colon \mathsf{dSet}_G \rightleftarrows \mathsf{dSet}^G \colon u_{\**}
	\end{equation}
identifies $\mathsf{dSet}^G$ as a reflexive subcategory of $\mathsf{dSet}_G$.
\end{proposition}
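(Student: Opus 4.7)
The plan is to reduce the statement to showing that $u$ is fully faithful, and then invoke the standard fact that for a fully faithful functor $u \colon C \to D$ between small categories the right Kan extension $u_{*}$ between presheaf categories is fully faithful. A fully faithful right adjoint automatically exhibits its source as a reflective subcategory, which is precisely the content of the proposition.

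First, I would verify that $u$ is fully faithful. Objects of $\Omega^{op} \times G$ (viewed through $u$) become the $G$-free $G$-trees $G \cdot T$; these indeed lie in $\Omega_G$ since their root tuple is a single (free) $G$-orbit. To compute hom-sets, I use that a $G$-equivariant morphism $G \cdot T \to G \cdot T'$ is determined by its restriction to any single constituent copy of $T$ (say the copy indexed by the identity $e \in G$) via equivariance, and that since this constituent copy is a tree and therefore connected, its image in $G \cdot T'$ lies inside exactly one of the $|G|$ components $g \cdot T'$. Recording the component label $g \in G$ together with the induced morphism $T \to T'$ in $\Omega$ produces a pair in $\Omega(T,T') \times G$, which after the $(-)^{-1}$ twist built into $u$ matches precisely the hom-set of $\Omega^{op} \times G$.

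Second, I would apply the standard Kan extension argument: the counit $u^{*} u_{*} X \to X$ evaluated at $(T,g) \in \Omega^{op} \times G$ is computed as a limit of $X$ over the slice category $u \downarrow u(T,g)$, and full faithfulness of $u$ provides $\mathrm{id}_{u(T,g)}$ as a terminal object of this slice. Hence the limit collapses to $X(T,g)$ and the counit is a natural isomorphism, so $u_{*}$ is fully faithful. Together with the already-given adjunction $u^{*} \dashv u_{*}$, this gives the desired reflective subcategory structure.

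The main obstacle, such as it is, lies in the bookkeeping of opposites in the first step, namely in checking that the $(-)^{-1}$ twist built into $u$ interacts correctly with the contravariance implicit in the notation $\Omega^{op} \times G$. The substantive content, however, reduces to the single observation that $G$-equivariance together with the connectedness of constituent trees pins down any morphism out of a $G$-free $G$-tree in the claimed form.
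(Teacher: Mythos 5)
Your proof is correct: the paper states this proposition without proof, and your argument --- full faithfulness of $u$ on the free $G$-trees (via equivariance plus the fact that a broad poset map out of a tree lands in a single component of the target forest, since every edge is $\leq_d$ the root), followed by the standard fact that the counit $u^{\**}u_{\**}X \to X$ is an isomorphism when $u$ is fully faithful --- is exactly the standard argument the paper implicitly invokes. The only wrinkle, which you already flag, is the bookkeeping of opposites, and that traces back to the paper's own notation for $u$ rather than to any gap in your reasoning.
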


\begin{remark}\label{TWODSETG REM}
	Since Theorem \ref{GINFTYOP THM} concerns $\mathsf{dSet}^G$, that category will be our main focus throughout the present paper, although $\mathsf{dSet}_G$ also plays a role in its proof (cf. \S \ref{FIBOBJCHAR SEC}).
	
	Nonetheless, $\mathsf{dSet}_G$ is arguably the most interesting category. Indeed, the adjunction (\ref{REFLEXADJ EQ}) bears many similarities to the adjunction $\mathsf{sSet}^{\mathsf{O}_G^{op}} \rightleftarrows \mathsf{sSet}^{G^{op}}$ and, as will be shown in upcoming work, the full structure of the ``homotopy operad'' of a $G$-$\infty$-operad is described as an object in $\mathsf{dSet}_G$ rather than in $\mathsf{dSet}^G$
	(more precisely, the claim is that the homotopy operad of a 
	$G$-$\infty$-operad forms a ``colored genuine equivariant operad''; (single colored) genuine equivariant operads have been recently formalized by the author and Peter Bonventre in \cite{BP17}). 
	We note that this is similar to how $\pi_n$ of a $G$-space forms a $G$-coefficient system rather than just a $G$-set.
	We conjecture that a model structure on $\mathsf{dSet}_G$ making (\ref{REFLEXADJ EQ}) into a Quillen equivalence exists, and that too is the subject of current work. 
	The presence of extra technical difficulties when dealing with $\Omega_G$ (cf. Remark \ref{NOTREEDY RMK}), however, make it preferable to address the $\mathsf{dSet}^G$ case first. 
\end{remark}

\begin{notation}
	Recall the usual notation
		\begin{equation}\label{YONEDA EQ}
		\Omega \xrightarrow{T \mapsto \Omega[T]} \mathsf{dSet}
		\end{equation}
	for the Yoneda embedding.
	
	One can naturally extend this notation to the category $\Phi$ of forests: given $F = \amalg_i T_i$, set $\Omega[F]=\amalg_i \Omega[T_i]$. Passing to the $G$-equivariant object categories and using the inclusion $\Omega_G \hookrightarrow \Phi^G$ we will slightly abuse notation and write
	\begin{equation}\label{YONEDAG EQ}
		\Omega_G \xrightarrow{T \mapsto \Omega[T]} \mathsf{dSet}^G.
	\end{equation}
More explicitly, if $T \simeq G \cdot_H T_e$ for some $T_e \in \Omega^H$, then $\Omega[T] \simeq G \cdot_H \Omega[T_e]$, where $\Omega[T_e]$ is just the Yoneda embedding of (\ref{YONEDA EQ}) together with the resulting $H$-action.
\end{notation}

\begin{remark}\label{REPEV RMK}
	Note that while (\ref{YONEDAG EQ}) defines ``representable functors'' for each $T \in \Omega_G$, given a presheaf $X \in \mathsf{dSet}^G$ the evaluations $X(U)$ are defined only for $U \in \Omega$, i.e., for $U$ a non-equivariant tree.
	
	This is in contrast with $\mathsf{dSet}_G$, where both representables and evaluations are defined in terms of $\Omega_G$. We note that to reconcile this observation with the inclusion $u_{\**}$ of (\ref{REFLEXADJ EQ}) the non-equivariant tree $U \in \Omega$ should be reinterpreted as the free $G$-tree $G \cdot U \in \Omega_G$ (cf. Remark \ref{OMEGAGINC RMK}).
\end{remark}

We end this section by discussing a category of ``forestial sets'' which, while secondary for our purposes, will greatly streamline our discussion of the dendroidal join in \S \ref{DENDJOIN SEC}.

\begin{definition}
	The category of \textit{wide forestial sets} is the category
	\[\mathsf{fSet}_w = \mathsf{Set}^{\Phi^{op}_w}.\]
\end{definition}

\begin{remark}
	The category $\mathsf{fSet}_i = \mathsf{Set}^{\Phi^{op}_i}$ of what we might call ``independent forestial sets'' was one of the main objects of study in \cite{HHM16}, where they are called simply ``forest sets''.
\end{remark}

Mimicking (\ref{YONEDA EQ}) by writing $\Phi_i[F] \in \mathsf{fSet}_i$ for the representable functor of $F \in \Phi_i$, it is shown in \cite{HHM16} that one can define a formal boundary $\partial \Phi_i[F]$ possessing the usual properties one might expect. 

We will find it desirable to be able to use the analogous construction for the representable $\Phi_w[F] \in \mathsf{fSet}_w$, but this does not quite follow from the result in \cite{HHM16}, since while $\Phi_i[F] \in \mathsf{fSet}_i$ can be forgotten to a presheaf $u^{\**}\Phi_i[F] \in \mathsf{fSet}_w$, one typically has a proper inclusion $\Phi_w[F] \hookrightarrow u^{\**}\Phi_i[F]$.

We thus instead mimic the discussion in \cite{BM08}, making use of the key technical results established in \S \ref{CATFOR SEC}.

Letting $\Xi$ denote any of $\Omega,\Phi_i,\Phi_w$ and setting 
$$|F|=\#\{\text{edges of $F$}\} + \# \{\text{stumps of $F$}\},$$ then Lemma  \ref{FACEDEGISO LEM} and Proposition \ref{FACT PROP} say that $\Xi$ is a dualizable generalized Reedy category (cf. \cite[Defn. 1.1]{BM08}).
As in \cite[\S 6]{BM08}, call an element $x \colon \Xi[F] \to X$ of a presheaf $X \in \mathsf{Set}^{\Xi^{op}}$ \textit{degenerate} if it factors through a non invertible degeneracy operator and \textit{non-degenerate} otherwise. Corollary \ref{ABSPUSH COR} then allows us to adapt the proof of 
\cite[Prop. 6.9]{BM08} to obtain the following.

\begin{proposition}\label{UNIQUEFACT LEMMA}
Let $X \in \mathsf{Set}^{\Xi^{op}}$ for $\Xi$ any of $\Omega,\Phi_i,\Phi_w$. Then any element $x \colon \Xi[F] \to X$ has a factorization, unique up to unique isomorphism, 
\[
 \Xi[F] \xrightarrow{\rho_x} \Xi[G] \xrightarrow{\bar{x}}  X
\]
as a degeneracy operator $\rho_x$ followed by a non degenerate element $\bar{x}$. 
\end{proposition}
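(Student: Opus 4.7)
The plan is to follow the strategy of \cite[Prop. 6.9]{BM08}, using the dualizable generalized Reedy structure on $\Xi$ together with the absolute pushouts of Corollary \ref{ABSPUSH COR}. Throughout, I will freely identify $F \in \Xi$ with the representable $\Xi[F]$.

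For existence, I would induct on $|F|$. If $x \colon \Xi[F] \to X$ is already non-degenerate, the trivial factorization $x = x \circ \mathrm{id}_{\Xi[F]}$ works. Otherwise, by definition of degenerate $x$ factors as $\Xi[F] \xrightarrow{\rho} \Xi[F'] \xrightarrow{x'} X$ for some non-invertible degeneracy $\rho$; since such a $\rho$ strictly lowers $|F|$ (this is part of the generalized Reedy structure established by Lemma \ref{FACEDEGISO LEM} and Proposition \ref{FACT PROP}), the inductive hypothesis applied to $x'$ yields a degeneracy-then-non-degenerate factorization of $x$.

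For uniqueness, suppose $\bar{x} \circ \rho_x = x = \bar{y} \circ \rho_y$, with $\rho_x \colon F \twoheadrightarrow G_x$ and $\rho_y \colon F \twoheadrightarrow G_y$ degeneracies and $\bar{x}, \bar{y}$ non-degenerate. By Corollary \ref{ABSPUSH COR}, the pair $(\rho_x, \rho_y)$ admits an absolute pushout in $\Xi$, giving an object $H \in \Xi$ together with degeneracies $\alpha \colon G_x \twoheadrightarrow H$ and $\beta \colon G_y \twoheadrightarrow H$ satisfying $\alpha \circ \rho_x = \beta \circ \rho_y$, and such that $\Xi[H]$ realizes the pushout $\Xi[G_x] \coprod_{\Xi[F]} \Xi[G_y]$ in $\mathsf{Set}^{\Xi^{op}}$. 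The compatibility $\bar{x} \circ \rho_x = \bar{y} \circ \rho_y$ then produces a unique $h \colon \Xi[H] \to X$ with $h \circ \alpha = \bar{x}$ and $h \circ \beta = \bar{y}$. Non-degeneracy of $\bar{x}$ now forces the degeneracy $\alpha$ to be invertible, and symmetrically for $\beta$, so that the composite $\alpha^{-1} \circ \beta \colon G_y \xrightarrow{\simeq} G_x$ supplies the sought isomorphism of factorizations. Uniqueness of this isomorphism is automatic, since degeneracies are epimorphisms in $\Xi$ (being surjective on underlying sets by Lemma \ref{LEQDCOMP LEM}), and so any two such isomorphisms must agree after precomposition with the epimorphism $\rho_y$.

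The only real subtlety, and hence the main obstacle, is ensuring that the pushout $H$ lives in $\Xi$ itself (rather than merely in $\mathsf{Set}^{\Xi^{op}}$) and that the induced legs are again degeneracies, so that the comparison map $h$ can be obtained by Yoneda and non-degeneracy of $\bar{x}, \bar{y}$ can be invoked. This is precisely what is packaged in Corollary \ref{ABSPUSH COR} via its iterative construction, and it is the crucial reason one needs \emph{absolute} (and not merely ordinary) pushouts at this step.
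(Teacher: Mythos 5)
Your proposal is correct and is essentially the paper's own argument: the paper explicitly obtains this proposition by adapting the proof of \cite[Prop. 6.9]{BM08} using the generalized Reedy structure and the absolute pushouts of Corollary \ref{ABSPUSH COR}, which is exactly the induction-for-existence and absolute-pushout-for-uniqueness scheme you describe. (Only a cosmetic quibble: surjectivity of degeneracies on underlying sets is part of Definition \ref{FACEDEG DEF} rather than a consequence of Lemma \ref{LEQDCOMP LEM}.)
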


%\begin{proof}
%The existence of such a factorization follows since non invertible degeneracy operators lower degree. 
%We now show uniqueness. Considering a second factorization $\Omega_G[T] \xrightarrow{\rho'_x} \Omega_G[S'] \xrightarrow{\bar{x}'}  X$ one can build the absolute pushout
%\[
%  \xymatrix{
%	  \Omega_G[T]   \ar@{->>}[r]^{\rho_x} \ar@{->>}[d]_{\rho'_x}  &   \Omega_G[S] \ar@{->>}[d]^{\tau'_x} \\
%	  \Omega_G[S']  \ar@{->>}[r]_{\tau_x}                         &   \Omega_G[R].
%	}
%\]
%Since there is an induced map $\Omega_G[R] \to X$ one has that $\tau_x,\tau'_x$ must be isomorphisms (or else $\bar{x},\bar{x}'$ would be degenerate), showing uniqueness up to isomorphism. That the isomorphism is itself unique follows from the fact that degeneracies are epic.
%\end{proof}

Defining skeleta as in \cite[\S 6]{BM08} the proof of \cite[Cor. 6.8]{BM08}	yields the following.

\begin{corollary}\label{SKELL COR}
Let $\Xi$ be any of $\Omega,\Phi_i,\Phi_w$. 
The counit $sk_n X \to X$ for $X \in \mathsf{Set}^{\Xi^{op}}$ is a monomorphism whose image consists of those elements of $X$ that factor through some $\Xi[F] \to X$ for $|F| \leq n$. 
\end{corollary}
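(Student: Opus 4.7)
The proof follows the approach of \cite[Cor. 6.8]{BM08}, with Propositions \ref{FACT PROP} and \ref{UNIQUEFACT LEMMA} as the key inputs. Since $sk_n X$ is the left Kan extension of the restriction of $X$ to the full subcategory $\Xi_{\leq n}$ on objects of size at most $n$, I would first describe $sk_n X(F)$ concretely as the set of equivalence classes of pairs $(y \colon \Xi[G] \to X, \psi \colon F \to G)$ with $|G| \leq n$, modulo the standard colimit relation: $(y_1, \psi_1) \sim (y_2, \psi_2)$ whenever there is $\alpha \colon G_1 \to G_2$ with $y_2 \alpha = y_1$ and $\alpha \psi_1 = \psi_2$. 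The counit sends the class of $(y, \psi)$ to $y \circ \psi \in X(F)$.

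From this description, the image characterization is essentially tautological: $x \in X(F)$ lies in the image iff it admits a factorization $\Xi[F] \xrightarrow{\psi} \Xi[G] \xrightarrow{y} X$ with $|G| \leq n$. The real content is injectivity of the counit, which I would obtain by reducing every equivalence class to a canonical form. Given a representative $(y, \psi \colon F \to G)$, first apply Proposition \ref{FACT PROP} to factor $\psi = \iota \rho$ as a degeneracy $\rho \colon F \twoheadrightarrow H$ followed by a face $\iota \colon H \hookrightarrow G$; the equivalence relation applied to $\iota$ yields $(y, \psi) \sim (y\iota, \rho)$, and $|H| \leq |G| \leq n$, so we may assume $\psi$ is a degeneracy. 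Next, apply Proposition \ref{UNIQUEFACT LEMMA} to $y$ to factor it as $y = \bar{y} \sigma$ for some degeneracy $\sigma \colon H \twoheadrightarrow K$ and non-degenerate $\bar{y} \colon \Xi[K] \to X$; applying the equivalence relation to $\sigma$ gives $(y, \psi) \sim (\bar{y}, \sigma \psi)$, and since a composition of degeneracies is a degeneracy (Proposition \ref{FACT PROP}), we arrive at a canonical representative in which $\bar{y}$ is non-degenerate and $\sigma \psi$ is a degeneracy.

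Finally, if two classes represented in this canonical form both map to the same $x \in X(F)$, then each exhibits $x$ as a degeneracy followed by a non-degenerate element, i.e., as the factorization supplied by Proposition \ref{UNIQUEFACT LEMMA}. The uniqueness-up-to-unique-isomorphism clause of that proposition then supplies an isomorphism $\alpha$ between the two $K$'s which intertwines the canonical representatives, and this isomorphism witnesses the desired equivalence in the colimit.

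The main obstacle here is purely bookkeeping: one must carefully verify that the colimit moves applied above actually realize the canonical form dictated by Proposition \ref{UNIQUEFACT LEMMA}, and that the factorization results restrict cleanly to each of $\Omega, \Phi_i, \Phi_w$. This last point is already handled by the last sentence of Proposition \ref{FACT PROP}, which is why the argument goes through uniformly for all three variants of $\Xi$.
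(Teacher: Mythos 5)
Your argument is correct and is essentially the paper's own proof: the paper simply invokes the argument of \cite[Cor. 6.8]{BM08}, which is exactly what you have spelled out, with Proposition \ref{FACT PROP} and Proposition \ref{UNIQUEFACT LEMMA} as the same two key inputs (reduction of any colimit representative to a degeneracy followed by a non-degenerate element, then uniqueness of that factorization). The only cosmetic imprecision is that the colimit relation is the equivalence relation \emph{generated by} the elementary relation you write, but your reduction only ever uses the elementary relation in the correct direction, so nothing is affected.
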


%\begin{proof}
%One has 
%\[
%  (sk_n X)_T \simeq 
%	\underset{S \to T \in (\Omega_G)^{op}_{\leq n}\downarrow T}{\colim}X_S \simeq
%	\underset{S \to T \in ((\Omega_G)^{op})^+_{\leq n}\downarrow T}{\colim}X_S \simeq
%	\underset{T \to S \in T \downarrow (\Omega_G)^-_{\leq n}}{\colim}X_S
%\]
%and it is hence clear that the image of $sk_n X \to X$ is as described. To see that this is an injective, consider the  diagram, where maps marked $-$ denote degeneracies.
%\[
%\xymatrix{
%                                 &   & \Omega[T] \ar[lld]_{-} \ar@{-->}[ld]^{-} \ar@{-->}[rd]_{-} \ar[rrd]^{-} &   &   \\
%    \Omega[S]\ar[rrd] \ar@{-->}[r]^{-}	& \Omega[R] \ar@{-->}[rd] 
%		   \ar@{<-->}[rr]^{\simeq} &                                
% &  \Omega[R'] \ar@{-->}[ld] &   \Omega[S'] \ar[lld] \ar@{-->}[l]_{-} \\
%                                    &   &  %X.                                           &   & 
%}
%\]
%The left and right solid paths represent two possible points of $sk_n X$ that are identified in $X$. $\Omega[R], \Omega[R']$ are then built from the decompositions from Lemma \ref{UNIQUEFACT LEMMA} of the two lower solid diagonal maps. But since the maps marked $-$ are degeneracy operators this shows that $\Omega[R], \Omega[R']$ also form the decompositions of the full vertical composite, so that the horizontal isomorphism must exist, showing that the two points of $sk_n X$ already coincided in $sk_n X$.
%\end{proof}

\begin{definition}
	Let $\Xi$ be any of $\Omega,\Phi_i,\Phi_w$. 
	The \textit{formal boundary} 
\[\partial \Xi[F] \hookrightarrow \Xi[F]\]
 is the subobject formed by those maps that factor through a non invertible map in $\Xi^+$, i.e. through a non invertible face map.
\end{definition}

Note that by combining the Reedy axioms \cite[Defn. 1.1]{BM08} with Corollary \ref{SKELL COR} one has
\[\partial \Xi[F] \simeq sk_{|F|-1} \Xi[F].\]

\section{Normal monomorphisms and anodyne extensions}\label{NORMONANODYNEEXT SEC}

\subsection{Equivariant normal monomorphisms}

Recalling that the cofibrations in $\mathsf{dSet}$ are not the full class of monomorphisms \cite[Prop. 1.5]{CM11}, but rather the subclass of so called \textit{normal monomorphisms}, one should expect a similar phenomenon to take place in $\mathsf{dSet}^G$.

We start by noting that for $X \in \mathsf{dSet}^G$ and $U \in \Omega$, the set $X_U=X(U)$ is acted on by the group $G \times \Sigma_U$, where $\Sigma_U$ denotes the automorphism group of $U$.

\begin{definition}
	A subgroup $N \leq G \times \Sigma_U$ is called 
	a \textit{$G$-graph subgroup} if $N \cap \Sigma_U = \**$.
\end{definition}

It is straightforward to check that a $G$-graph subgroup $N$ can equivalently be described by a partial homomorphism $G \geq H \xrightarrow{\rho} \Sigma_U$. Further, since such a $\rho$ allows us to write 
$U \in \Omega^H$, one has that each $G$-graph subgroup $N$ has an associated $G$-tree $G \cdot_H U$. More precisely, one has the following result.

\begin{proposition}
The functor $\Omega[-] \colon \Omega_G \to \mathsf{dSet}^G$ induces an equivalence between $\Omega_G$ and the full subcategory of quotients of the form $(G \cdot \Omega[U])/N$ for $U \in \Omega$ and $N\leq G \times \Sigma_U$ a $G$-graph subgroup.
\end{proposition}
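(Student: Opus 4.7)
The plan is to handle essential surjectivity (onto the stated subcategory) and fully faithfulness separately, with most of the bookkeeping going into matching the two quotient presentations of each representable.

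\textbf{Essential surjectivity.} Given any $T \in \Omega_G$, apply Proposition \ref{GTREESGROTH PROP} to write $T \simeq G \cdot_H U$ for some $H \leq G$ and $U \in \Omega^H$, and let $\rho \colon H \to \Sigma_U$ encode the $H$-action on $U$. The graph subgroup $N := \{(h,\rho(h)) : h \in H\} \leq G \times \Sigma_U$ is then a $G$-graph subgroup by construction. Extending $\Omega[-]$ from $\Omega$ to $\Phi$ via $\Omega[\coprod T_i] := \coprod \Omega[T_i]$, one has $\Omega[T] \simeq G \cdot_H \Omega[U]$ as $G$-dendroidal sets. It remains to exhibit a canonical isomorphism $G \cdot_H \Omega[U] \simeq (G \cdot \Omega[U])/N$. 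At each level $V \in \Omega$ both sides equal a quotient of $G \times \Hom_\Omega(V,U)$, and I would verify pointwise that the two equivalence relations coincide: the balanced product gives $(gh, f) \sim (g, \rho(h)\circ f)$, while the $N$-action (with $G \leq G \times \Sigma_U$ acting by right translation on the first factor---this being the unique such action commuting with the ambient $G$-object structure) gives $(g,f) \sim (gh^{-1}, \rho(h)\circ f)$, which upon the substitution $g \mapsto gh$ is the same relation.

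\textbf{Fully faithfulness.} Using the same extension $\Omega[-]\colon \Phi \to \mathsf{dSet}$, observe that any broad poset map out of a tree $V$ necessarily lands in a single connected component of its target (by $\leq_d$-transitivity from the unique root of $V$). Consequently
\[
\Hom_{\mathsf{dSet}}\bigl(\Omega[\textstyle\coprod_i T_i],\, \Omega[\textstyle\coprod_j T'_j]\bigr) = \prod_i \coprod_j \Hom_\Omega(T_i, T'_j) = \Hom_\Phi(\textstyle\coprod_i T_i, \textstyle\coprod_j T'_j),
\]
so $\Omega[-]\colon \Phi \to \mathsf{dSet}$ is fully faithful. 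Passing to $G$-equivariant objects preserves this, since the $G$-equivariant hom-sets on both sides are obtained as $G$-fixed points of the same $G$-action on the respective hom-sets. Finally, $\Omega_G \subset \Phi^G$ is a full subcategory by Definition \ref{GTREES DEF}, so the restriction $\Omega[-]\colon \Omega_G \to \mathsf{dSet}^G$ is likewise fully faithful.

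\textbf{Main obstacle.} The delicate part is keeping the two $G$-actions on $G \cdot \Omega[U]$ straight---the ambient left $G$-multiplication giving the $G$-object structure versus the right $G$-multiplication by which $N \leq G \times \Sigma_U$ acts---so that the identification $(G \cdot \Omega[U])/N \simeq G \cdot_H \Omega[U]$ is canonical and yields a $G$-equivariant isomorphism. Once this bookkeeping is pinned down, the remaining verifications are routine.
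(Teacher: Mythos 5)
Your proof is correct and follows the route the paper intends: the proposition is stated there without proof, but the two ingredients you use---the bijection between $G$-graph subgroups $N \leq G \times \Sigma_U$ and partial homomorphisms $G \geq H \to \Sigma_U$, and the identification $(G\cdot\Omega[U])/N \simeq G\cdot_H \Omega[U] \simeq \Omega[G\cdot_H U]$---are exactly the ones the paper sets up in the surrounding text and reuses in its definition of boundary inclusions. Full faithfulness via Yoneda on each tree component (a representable maps into a single summand of a coproduct, and a tree maps into a single component of a forest by the root condition), followed by passage to $G$-fixed points of the conjugation action on hom-sets, is the standard argument, and your bookkeeping of the two commuting $G$-actions on $G \cdot \Omega[U]$ is right.
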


Recalling the discussion following (\ref{YONEDA EQ}) one can, for a forest 
$F \simeq \amalg_i T_i$ in $\Phi$, define 
$\partial \Omega[F] = \amalg_i \partial \Omega[T_i]$. Carrying this discussion through to $G$-objects leads to the following definition.

\begin{definition}\label{BOUNDINCG DEF}
	The \textit{boundary inclusions} of $\mathsf{dSet}^G$ are the maps of the form
	\begin{equation}\label{BOUNDINCG EQ}
		\partial \Omega[T] \hookrightarrow \Omega [T]
	\end{equation}
	for $T \in \Omega_G$.
	
	More explicitly, if $T \simeq G \cdot_H T_e$ for some $T_e \in \Omega^H$, then the (non-equivariant) presheaf $\Omega[T_e]$ inherits a $H$-action and (\ref{BOUNDINCG EQ}) is isomorphic to the map 
	\[G \cdot_H \left(\partial\Omega[T_e] \hookrightarrow \Omega[T_e]\right)\]
or, letting $N \leq G \times \Sigma_{T_e}$ denote the $G$-graph subgroup associated to $T_e$,
\[\left(G\cdot \left( \partial \Omega[T_e] \hookrightarrow \Omega[T_e]\right)\right)/N.\]
\end{definition}

The following is an immediate generalization of \cite[Prop. 7.2]{BM08}.

\begin{proposition}\label{NORMCHAR PROP}
Let $\phi \colon X \to Y$ be a map in $\mathsf{dSet}^{G}$. Then the following are equivalent.
\begin{itemize}
\item[(i)] for each tree $U \in \Omega$, the relative latching map 
$l_U(\phi) \colon X_{U} \underset{L_{U} X}{\coprod} L_{U} Y \to Y_{U}$ 
is a $\Sigma_{U}$-free extension;
\item[(ii)] $\phi$ is a monomorphism and, for each $U \in \Omega$ and non degenerate $y \in Y_{U} - \phi(X)_{U}$, the isotropy group $\{g \in G \times \Sigma_{U} | gy=y\}$ is a $G$-graph subgroup;
\item[(iii)] for each $n \geq 0$, the relative $n$-skeleton 
$sk_n(\phi) = X \underset{sk_n X}{\coprod} sk_n Y$ 
is obtained from the relative $(n-1)$-skeleton by attaching boundary inclusions.
\end{itemize}
\end{proposition}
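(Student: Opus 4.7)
The plan is to adapt the proof of the non-equivariant analogue \cite[Prop. 7.2]{BM08}, whose structure carries over with the group $\Sigma_U$ replaced by $G \times \Sigma_U$ throughout; the key additional ingredient is the correspondence between $G$-graph subgroups $N \leq G \times \Sigma_U$ and $G$-trees $T \in \Omega_G$ (and, more to the point, between $(G\times \Sigma_U)$-orbits with $G$-graph isotropy and representables $\Omega[T]$), as discussed immediately before Definition \ref{BOUNDINCG DEF}.

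For (i) $\Leftrightarrow$ (iii): since $\Omega$ is a dualizable generalized Reedy category (with $|\cdot|$ as in the paragraph before Proposition \ref{UNIQUEFACT LEMMA}), for any map $\phi$ the transitions $sk_{n-1}(\phi) \to sk_n(\phi)$ fit into pushout squares in $\mathsf{dSet}^G$ whose right leg is assembled, over trees $U$ of degree $n$, from the relative latching maps $l_U(\phi)$ in the category of $G \times \Sigma_U$-sets. The relative latching map at $U$ being a $\Sigma_U$-free extension is equivalent to the complement of $X_U \cup_{L_U X} L_U Y$ in $Y_U$ being a disjoint union of $(G \times \Sigma_U)$-orbits with $G$-graph isotropy, i.e.\ orbits of the form $(G \times \Sigma_U)/N$. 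Each such orbit contributes, via the quotient description in Definition \ref{BOUNDINCG DEF}, one boundary inclusion $\partial \Omega[T] \hookrightarrow \Omega[T]$ for $T = G \cdot_H U$ the corresponding $G$-tree, so assembling these gives precisely the attaching description in (iii).

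For (i) $\Leftrightarrow$ (ii): by Proposition \ref{UNIQUEFACT LEMMA} every $y \in Y_U$ factors uniquely through a non-degenerate element $\bar{y} \colon \Omega[V] \to Y$ via a degeneracy $\rho_y \colon \Omega[U] \twoheadrightarrow \Omega[V]$, and this factorization is preserved by the $G \times \Sigma_U$ and $G \times \Sigma_V$ actions. Consequently $y \in \phi(X)_U$ iff $\bar y$ lies in the image of $\phi$, the degenerate elements of $Y_U \setminus \phi(X)_U$ lie in the image of $L_U Y$ (and are thus absorbed by the gluing $X_U \cup_{L_U X} L_U Y$), and the complement of the relative latching map at $U$ is precisely the union of those $(G \times \Sigma_U)$-orbits generated by non-degenerate elements of $Y_U$ not in $\phi(X)_U$. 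Thus $\Sigma_U$-freeness of this complement is equivalent to the isotropy of each such non-degenerate $y$ being a $G$-graph subgroup, which together with the injectivity of $\phi$ (automatic from (i) by induction on skeleta, or needed as a hypothesis in (ii)) gives the equivalence with (ii).

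The only point requiring genuine care beyond the non-equivariant argument is matching the orbit decomposition of the latching complement in (i) with the boundary-inclusion attaching of (iii): one must verify, using Proposition \ref{GTREESGROTH PROP} and the quotient description of $\Omega[T]$ in Definition \ref{BOUNDINCG DEF}, that a $(G \times \Sigma_U)$-orbit $(G \times \Sigma_U)/N$ attached along its boundary is exactly a boundary inclusion $\partial \Omega[T] \hookrightarrow \Omega[T]$ for the $G$-tree $T$ corresponding to $N$. Once this dictionary is in place, the three conditions are manifestly equivalent.
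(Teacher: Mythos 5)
Your proposal is correct and follows essentially the same route as the paper, which simply invokes the non-equivariant \cite[Prop. 7.2]{BM08}: replace $\Sigma_U$ by $G\times\Sigma_U$, use the unique non-degenerate factorization (Proposition \ref{UNIQUEFACT LEMMA}) to identify the latching complement with the non-degenerate elements of $Y_U\setminus\phi(X)_U$, and match $\Sigma_U$-free orbits $(G\times\Sigma_U)/N$ with the boundary inclusions of Definition \ref{BOUNDINCG DEF} via the $G$-graph-subgroup/$G$-tree dictionary. The details you supply (in particular that $N\cap\Sigma_U=\ast$ forces the whole orbit to be $\Sigma_U$-free, since $\Sigma_U$ is a normal factor) are exactly the content the paper leaves implicit.
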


\begin{definition}
	A monomorphism satisfying any of the equivalent conditions in Proposition \ref{NORMCHAR PROP} will be called a \textit{$G$-normal monomorphism}.
\end{definition}

\begin{remark}\label{GNORMALUNDER REM}
	Note that by Proposition \ref{NORMCHAR PROP}(i) a monomorphism 
in $\mathsf{dSet}^G$ is $G$-normal  iff it is normal as a map in $\mathsf{dSet}$ \cite[Prop. 7.2]{BM08}.
As such, we often drop $G$ from the terminology.
Moreover, all monomorphisms over a $G$-normal dendroidal set are hence $G$-normal monomorphisms \cite[Corollaries 1.7 and 1.8]{CM11}.
\end{remark}

\subsection{Equivariant anodyne extensions}

The key to the preceding section was the observation that if $T \in \Omega^H$ then the usual boundary $\partial \Omega[T]$ inherits a $H$-action. However, such is not the case for inner horns: if $e \in T$ is an inner edge, then $\Lambda^e[T]$ (cf. \cite[\S 5]{MW08}) will inherit a $H$-action iff $e$ is a $H$-fixed edge.

Therefore, to define $G$-inner horns, one must treat all inner edges in an \textit{inner edge orbit} in an uniform way. To do so, we 
first recall the notion of generalized inner horns (cf. \cite[Lemma 5.1]{MW08}).

\begin{definition}
	Let $E \subset Inn(T)$ be a subset of the inner edges of $T \in \Omega$. We define
		\[\Lambda^{E}[T] \hookrightarrow \partial \Omega[T] \hookrightarrow \Omega[T]\]
	to be the subpresheaf formed by the union of those faces
	\textit{other than} the inner faces of the form
$T-E'$ for $E' \subset E$.
	
	More generally, given a forest $F = \amalg_{i} T_i$ and $E=\amalg_{i} E_i$ with $E_i \subset Inn(T_i)$ we set
		\begin{equation}\label{FORESTIALGEN HORN}
			\Lambda^E[F] = \amalg_{i} \Lambda^{E_i}[T_i].
		\end{equation}
\end{definition}

\begin{remark}
	The reader of \cite{HHM16} may note that (\ref{FORESTIALGEN HORN}) clashes with \cite[\S 3.6]{HHM16}. This is because in \cite{HHM16} the presheaf being defined lives in $\mathsf{fSet}_i$ rather than in $\mathsf{dSet}$.
\end{remark}

\begin{definition}\label{INNERHORNG DEF}
	The \textit{generating $G$-inner horn inclusions} are the maps in $\mathsf{dSet}^G$ of the form
	\[\Lambda^{Ge}[T] \to \Omega[T]\]
	where $T \in \Omega_G$ is a $G$-tree and $Ge$ is the $G$-orbit of an inner edge $e$. 
\end{definition}

\begin{definition}\label{GINFTYOP DEF}
	A $G$-dendroidal set $X$ is called a \textit{$G$-$\infty$-operad} if $X$ has the right lifting property with respect to all generating $G$-inner horn inclusions.
\[
\begin{tikzcd}
	\Lambda^{Ge}[T] \ar{r} \ar{d} & X \\
	\Omega[T] \ar[dashed]{ur}
\end{tikzcd}
\]
	Further, $A \to B$ is called a \textit{$G$-inner anodyne extension} if it is in the saturation of the generating $G$-inner horn inclusions under pushouts, transfinite compositions and retracts.
\end{definition}

\begin{example} If one considers the $G=\mathbb{Z}_{/4}$-tree $T$ in Example \ref{Z4 EX}, one possible inner orbit edge is $Gb=\{b,b+1\}$. 
The following are the (inner) faces of $T$ \textit{not} included in $\Lambda^{Ge}[T]$.
\[%
	\begin{tikzpicture}[auto,grow=up, every node/.style={font=\scriptsize,inner sep=1.5pt},
	dummy/.style = {circle,draw,inner sep=0pt,minimum size=1.5mm},
	level distance = 2em]%
	\tikzstyle{level 2}=[sibling distance=3.7em]%
	\tikzstyle{level 3}=[sibling distance=1.3em]%
		\node at (0,0){$T$}%
			child{node [dummy] {}%
				child[level distance = 1.2em,sibling distance=3.8em]{node [dummy]  {}%
				edge from parent node [swap] {$c+1$}}%
				child[sibling distance=5.2em,level distance = 2.75em]{node [dummy] {}%
					child[level distance = 2.3em]{node {} edge from parent node [swap,near end] {$a+3$}}%
					child[level distance = 2.3em]{node {} edge from parent node[near end] {$a+1$}}%
				edge from parent node [swap,near end] {$b+1$}}%
				child[sibling distance=5.2em,level distance = 2.75em]{node [dummy] {}%
					child[level distance = 2.3em]{node {} edge from parent node [swap,near end] {$a+2$}}%
					child[level distance = 2.3em]{node {} edge from parent node [near end] {$\phantom{1+}a$}}%
				edge from parent node [near end] {$b$}}%
				child[level distance = 1.2em,sibling distance=3.8em]{node [dummy] {}%
				edge from parent node {$c\phantom{||}$} }%
			edge from parent node [swap] {$d$}};%
		\node at (-3.75,-1.5){$T-b$}%
			child{node [dummy] {}%
				child[level distance = 1.2em,sibling distance=2.8em]{node [dummy]  {}%
				edge from parent node [swap] {$c+1$}}%
				child[sibling distance=1.5em,level distance = 2.5em]{node [dummy] {}%
					child[level distance = 2.3em]{node {} edge from parent node [swap,near end] {$a+3$}}%
					child[level distance = 2.3em]{node {} edge from parent node[near end] {$a+1$}}%
				edge from parent node [swap,near end] {$b+1$}}%
				child[level distance = 3em]{node {} edge from parent node [very near end] {$a+2$}}%
				child[level distance = 2.5em,sibling distance=3em]{node {} edge from parent node [very near end] {$a$}}%
				child[level distance = 1.2em,sibling distance=2.8em]{node [dummy] {}%
				edge from parent node {$c\phantom{||}$} }%
			edge from parent node [swap] {$d$}};%
		\node at (3.75,-1.5){$T-(b+1)$}%
			child{node [dummy] {}%
				child[level distance = 1.2em,sibling distance=2.8em]{node [dummy]  {}%
				edge from parent node [swap] {$c+1$}}%
				child[level distance = 2.5em,sibling distance=3em]{node {} edge from parent node [swap,very near end] {$a+3$}}%
				child[level distance = 3em]{node {} edge from parent node[swap,very near end] {$a+1$}}%
				child[sibling distance=1.5em,level distance = 2.5em]{node [dummy] {}%
					child[level distance = 2.3em]{node {} edge from parent node [swap,near end] {$a+2$}}%
					child[level distance = 2.3em]{node {} edge from parent node [near end] {$\phantom{1+}a$}}%
				edge from parent node [near end] {$b$}}%
				child[level distance = 1.2em,sibling distance=2.8em]{node [dummy] {}%
				edge from parent node {$c\phantom{||}$} }%
			edge from parent node [swap] {$d$}};%
		\node at (0,-3){$T - \{b,b+1\}$}%
			child{node [dummy] {}%
				child[level distance = 1.2em,sibling distance=2.25em]{node [dummy]  {}%
				edge from parent node [swap] {$c+1$}}%
				child[level distance = 3.5em,sibling distance=2.75em]{node {} edge from parent node [swap,near end] {$a+3$}}%
				child[level distance = 4.5em,sibling distance=2em]{node {} edge from parent node[swap,very near end] {$a+1$}}%
				child[level distance = 4.5em,sibling distance=2em]{node {} edge from parent node[very near end] {$a+2$}}%
				child[level distance = 3.5em,sibling distance=2.75em]{node {} edge from parent node [near end] {$\phantom{1+}a$}}%
				child[level distance = 1.2em,sibling distance=2.25em]{node [dummy] {}%
				edge from parent node {$c\phantom{||}$} }%
			edge from parent node [swap] {$d$}};%
	\end{tikzpicture}%
\]%
We recall (cf. Remark \ref{REPEV RMK}) that since presheaves $X \in \mathsf{dSet}^G$ are only evaluated on non-equivariant trees $U \in \Omega$, the faces above 
are merely non-equivariant faces of the equivariant tree $T$: indeed, $T - b$ and $T - (b+1)$ do not admit a full compatible $G$-action. 
Rather, $G$ acts instead on the set of such faces and since 
$T - b$ admits a compatible $H=2G$-action and one can think of the disjoint union 
$\left(T-b\right) \amalg \left(T-(b+1)\right)$ as the $G$-tree 
$G \cdot_H (T-b)$.
\end{example}

\begin{remark}
	An eventual goal of the project this work belongs to is to show that there is a Quillen equivalence
	\[
	W_!
		\colon
	\mathsf{dSet}^G
		\rightleftarrows
	\mathsf{sOp}^G
		\colon
	hcN_d,
\]
generalizing \cite[Thm. 8.15]{CM13b} (where $\mathsf{sOp}^G$ is the category of $G$-equivariant colored simplicial operads and $hcN_d$ is the dendroidal homotopy coherent nerve).

While the proof of such a result is work in progress, and some of the best evidence in that direction is the subject of a current parallel write-up making Remark \ref{TWODSETG REM} precise, we provide here a first piece of evidence by generalizing \cite[Thm. 7.1]{MW08}.
\end{remark}

\begin{proposition}
Suppose that $\O \in \mathsf{sOp}^G$ is locally $G$-graph fibrant, i.e., that
$\O(\underline{a};b)^{\Gamma}$ is fibrant whenever 
$\Gamma \leq G \times \Sigma_{\underline{a}}$ stabilizes $\underline{a}$, $b$ and satisfies $\Gamma \cap \Sigma_{\underline{a}}=\**$.

Then $hcN_d(\O)$ is a $G$-$\infty$-operad.
\end{proposition}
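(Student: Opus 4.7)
The plan is to use the adjunction $W_! \dashv hcN_d$ to transport the lifting problem for generating $G$-inner horn inclusions (Definition~\ref{INNERHORNG DEF}) to $\mathsf{sOp}^G$, and then reduce it to an equivariant Kan-filling problem inside a single hom-simplicial set of $\O$, which the $G$-graph fibrancy hypothesis was designed to solve. Given a $G$-tree $T \in \Omega_G$ and an inner edge orbit $Ge$, Proposition~\ref{GTREESGROTH PROP} allows us to write $T \simeq G \cdot_H T_e$ for some $H \leq G$ and $T_e \in \Omega^H$, and under this identification $Ge$ corresponds to the $H$-orbit $He$ of some inner edge of $T_e$. The $G$-equivariant lifting problem
\[
\begin{tikzcd}
\Lambda^{Ge}[T] \ar{r} \ar{d} & hcN_d(\O) \\
\Omega[T] \ar[dashed]{ur} &
\end{tikzcd}
\]
is then equivalent, by induction/restriction, to an $H$-equivariant lifting problem for $\Lambda^{He}[T_e] \hookrightarrow \Omega[T_e]$ into $hcN_d(\O)$, which by adjunction becomes an $H$-equivariant extension of a given map $W_!(\Lambda^{He}[T_e]) \to \O$ along $W_!(\Lambda^{He}[T_e]) \hookrightarrow W_!(\Omega[T_e])$ in $H$-equivariant simplicial operads.

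Next, I would recall the combinatorial core of \cite[Thm. 7.1]{MW08}: for a single inner edge $e$ of a tree $U$, the inclusion $W_!(\Lambda^e[U]) \hookrightarrow W_!(\Omega[U])$ is the identity on the set of colours (the edges of $U$) and an isomorphism on every hom-simplicial set \emph{except} $\Hom(\underline{l}_U; r_U)$, where $\underline{l}_U, r_U$ are the leaves and root of $U$; there the map is an anodyne inclusion in simplicial sets, identified with an inner-horn inclusion of the Boardman--Vogt cube parameterizing top-level compositions. The same description applies to the generalized horn $\Lambda^{He}[T_e]$: the inclusion $W_!(\Lambda^{He}[T_e]) \hookrightarrow W_!(\Omega[T_e])$ is the identity on colours and differs only on $\Hom(\underline{l}_{T_e}; r_{T_e})$, where the missing codimension-one faces are exactly those indexed by the edges $e_i \in He$, producing an anodyne extension of simplicial sets that can be filled against any Kan complex.

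The remaining step is to carry out this filling $H$-equivariantly. Since $H$ stabilizes $T_e$ and permutes $He$, the inclusion above is $H$-equivariant with respect to a natural $H$-action on $\Hom(\underline{l}_{T_e}; r_{T_e})$. This action factors through the homomorphism $H \to H \times \Sigma_{\underline{l}_{T_e}}$, $h \mapsto (h, \rho(h))$, where $\rho \colon H \to \Sigma_{\underline{l}_{T_e}}$ records the action of $H$ on the leaves; the image of this homomorphism is precisely a $G$-graph subgroup $\Gamma \leq G \times \Sigma_{\underline{l}_{T_e}}$ stabilizing $\underline{l}_{T_e}$ and $r_{T_e}$. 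Thus the $H$-equivariant filling reduces to a non-equivariant anodyne extension problem inside the fixed-point simplicial set $\O(\underline{l}_{T_e}; r_{T_e})^{\Gamma}$, which admits a solution by the local $G$-graph fibrancy assumption.

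The main obstacle I foresee is the identification in the second paragraph: verifying that $W_!(\Lambda^{He}[T_e]) \hookrightarrow W_!(\Omega[T_e])$ really is concentrated on the single hom $\Hom(\underline{l}_{T_e}; r_{T_e})$, identifying the resulting map with an honest anodyne inclusion of simplicial sets (i.e.\ a cube missing a suitable set of parallel top faces), and checking that the $H$-action on colours \emph{and} inputs factors through a graph subgroup of $G \times \Sigma_{\underline{l}_{T_e}}$. This requires careful bookkeeping through the Boardman--Vogt construction on $T_e$ and its inner faces --- essentially the equivariant refinement of the combinatorial heart of \cite[Thm. 7.1]{MW08}. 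Once in place, the graph-fibrancy assumption completes the lift formally.
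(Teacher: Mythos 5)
Your proposal follows the paper's proof essentially step for step: reduce via the adjunction $W_! \dashv hcN_d$ to an $H$-equivariant extension problem along $W_!(\Lambda^{He}[T_e]) \hookrightarrow W_!(\Omega[T_e])$, observe (as in \cite[Thm. 7.1]{MW08}) that this inclusion is an isomorphism on colours and on all hom-spaces except the one from the leaf tuple $\underline{l}$ to the root $r$, note that operadic compatibility of the extension is then automatic, and identify the group acting on that hom-space as the graph subgroup $\Gamma \leq H \times \Sigma_{\underline{l}}$ encoding the $H$-set of leaves, so that the local $G$-graph fibrancy hypothesis is exactly what is needed. The paper additionally records the explicit form of the map of hom-spaces as the pushout-product
$\left(\{1\}\to\Delta[1]\right)^{\square He}\square\left(\partial\Delta[1]\to\Delta[1]\right)^{\square Inn(T_e)-He}$.

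The one step that is not right as stated is your final reduction: you claim the $H$-equivariant filling ``reduces to a non-equivariant anodyne extension problem inside the fixed-point simplicial set $\O(\underline{l};r)^{\Gamma}$.'' An equivariant lifting problem does not reduce to the $\Gamma$-fixed points alone: a lift produced on $\Gamma$-fixed points neither determines nor extends to a $\Gamma$-equivariant map on the whole cube. The correct formulation is that the source map is a trivial cofibration in the genuine model structure on $\Gamma$-simplicial sets (i.e., a trivial cofibration on $\Gamma'$-fixed points for \emph{every} $\Gamma' \leq \Gamma$) and the target is genuinely $\Gamma$-fibrant (Kan on every $\Gamma'$-fixed point), whence the equivariant lift exists. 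Both conditions do hold here: the hypothesis gives fibrancy of $\O(\underline{l};r)^{\Gamma'}$ for all $\Gamma'\leq\Gamma$ because subgroups of graph subgroups stabilizing $\underline{l}$ and $r$ are again such graph subgroups, and the $\Gamma'$-fixed points of the pushout-product above are again anodyne cube inclusions. So the gap is repairable with the data you already have, but the argument must be run over all subgroups of $\Gamma$, not just $\Gamma$ itself.
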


\begin{proof} Since any $G$-tree has the form $G\cdot_H T$ for some $T \in \Omega^H$, it suffices after unpacking adjunctions to solve all the lifting problems as on the left below.
\begin{equation}\label{HCNDPROP EQ}
\begin{tikzcd}
W_{!}\Lambda^{H e}[T] \ar{r}{f} \ar{d} & \O &
W_{!}\Lambda^{H e}[T](\underline{l};r) \ar{r} \ar{d} & \O(f(\underline{l});f(r))
	\\
W_{!} \Omega[T] \ar[dashed]{ru} & &
W_{!} \Omega[T] \ar[dashed]{ru}(\underline{l};r) &
\end{tikzcd}
\end{equation}
Repeating the argument in the proof of \cite[Thm. 7.1]{MW08}, it suffices to build this lift for the mapping spaces between the leaves $\underline{l}$ and root $r$ of $T$. I.e., one needs only solve the rightmost lifting problem in (\ref{HCNDPROP EQ}), which needs only be equivariant with respect to the subgroup $\Gamma \leq H \times \Sigma_{\underline{l}}$ encoding the $H$-set $\underline{l}$ (crucially, note that operadic compatibility is automatic). Since the condition on $\O$ guarantees that $\O(f(\underline{l});f(r))$ is genuinely 
$\Gamma$-fibrant and the vertical map on the right side of 
(\ref{HCNDPROP EQ}) is (generalizing the formula in the proof of \cite[Prop. 4.5]{CM13b})
\[
\left(\{1\} \to \Delta[1]\right)^{\square He}
	\square
\left(\partial \Delta[1] \to \Delta[1]\right)^{\square Inn(T)- He},
\]
which is a genuine $\Gamma$-trivial cofibration, the result follows.
\end{proof}

We will develop for $G$-inner horns most of the key results of 
\cite{MW08} and \cite{CM11}, the proofs of which turn out to need only moderate modifications in order to generalize to the equivariant context. 

The hardest of those results, concerning the tensor product, will be the subject of \S \ref{TENSORPROD SEC}. To finish this section, we collect a couple of easier results, starting with the analogue of \cite[Lemma 5.1]{MW08}.

\begin{proposition}\label{GENERALIZED_HORN_ANODYNE_PROP}
Let $T \in \Omega_G$ be a $G$-equivariant tree and $E$ a $G$-equivariant subset of the inner edges of $T$. Then the generalized $G$-horn inclusion
\[\Lambda^{E}[T] \to \Omega[T]\]
is $G$-inner anodyne.
\end{proposition}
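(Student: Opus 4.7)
My plan is to adapt the non-equivariant proof of the generalized horn lemma \cite[Lemma 5.1]{MW08}, which builds $\Omega[T]$ from $\Lambda^E[T]$ through a sequence of pushouts along inner horns. The main adaptation required is to organize these pushouts $G$-equivariantly, bundling non-equivariant pushouts into single pushouts along generating $G$-inner horns from Definition \ref{INNERHORNG DEF}.

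Using Proposition \ref{GTREESGROTH PROP}, I first write $T \simeq G \cdot_H T_e$ for some $H \leq G$ and $T_e \in \Omega^H$, identifying $E$ with an $H$-invariant subset $E_{T_e} \subseteq Inn(T_e)$ so that $\Lambda^E[T] \simeq G \cdot_H \Lambda^{E_{T_e}}[T_e]$. Fixing a representative $e$ of some $H$-orbit $He \subseteq E_{T_e}$ with stabilizer $H_e \leq H$, I index the pushouts by $H_e$-orbits $[A]_{H_e}$ of subsets $A \subseteq E_{T_e} - \{e\}$ (which, up to $H$-orbits of pairs $(A,e)$, enumerate the non-equivariant pairs used by Moerdijk--Weiss with the fixed input edge $e$). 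For each $[A]_{H_e}$ with chosen representative $A$ and $K_A := \mathrm{Stab}_H(A)$, I perform a pushout along the generating $G$-inner horn
\[
G \cdot_{K_A} \Lambda^{K_A e}[T_e - A] \;\simeq\; \Lambda^{Ge'}\bigl[G \cdot_{K_A}(T_e - A)\bigr] \;\longrightarrow\; \Omega\bigl[G \cdot_{K_A}(T_e - A)\bigr],
\]
where $Ge' = G \cdot_{K_A} K_A e$, processing orbits in order of $|A|$ decreasing. Each pushout adds, as subobjects of $\Omega[T]$, the $G$-orbits of $T_e - A$ (the representable's top) and of $T_e - A - e'$ for $e' \in K_A e$ (the horn-missing faces); the choice $K = K_A$ is essential because it ensures that both of these $G$-orbits embed injectively into $\Omega[T]$, so the pushout is realized as a genuine sub-object.

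The main verification is the compatibility of each pushout --- that at each stage the horn lifts into the current object. By $G$-equivariance this reduces to checking non-equivariantly that each face of $T_e - A$ other than the top and the horn-missing $(T_e - A) - e'$ already lies in the current object. Outer faces of $T_e - A$ remove $A$ together with non-inner structure of $T_e$ (leaves, roots, or stumps of $T_e - A$, which are not inner edges of $T_e$), so they are not of the form $T_e - E'$ for any $E' \subseteq E$ and hence lie already in $\Lambda^E[T_e]$. For inner faces $(T_e - A) - g$ with $g \in Inn(T_e - A) - K_A e$, either $g \notin E_{T_e}$ (giving the same conclusion) or $g \in E_{T_e} - A - K_A e$, in which case the $G$-orbit of $T_e - (A \cup g)$ was added previously as the top of the pushout for $[A \cup g]_{H_e}$, since $|A \cup g| = |A|+1$. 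After all pushouts, every non-degenerate element of $\Omega[T] - \Lambda^E[T]$ is realized as either a top or a missing face of some pushout in the sequence, so the colimit equals $\Omega[T]$. The main obstacle is the orbit bookkeeping when $|He| > 1$: correctly identifying the subgroup $K_A$ and its orbit $K_A e$ at each step --- so that every pushout is along a single generating $G$-inner horn and lands correctly inside $\Omega[T]$ --- is the key equivariant refinement of the Moerdijk--Weiss argument.
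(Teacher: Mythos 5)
Your filtration breaks down as soon as the orbit $He$ has more than one element, and both of the claims you use to justify the pushouts fail. First, the $G$-orbits you adjoin at stage $[A]$ are not disjoint from what earlier stages produced: for $k \in K_A$ with $ke \neq e$ one has $k\cdot(T_e - A - e) = T_e - A - ke = T_e - (A \cup \{ke\})$, so the $G$-orbit of your ``horn-missing face'' $T_e - A - e$ at stage $[A]$ coincides with the $G$-orbit of the \emph{top} of the earlier stage $[A \cup \{ke\}]$; moreover the generalized horn $\Lambda^{K_A e}[T_e - A]$ omits \emph{all} faces $(T_e-A)-E''$ with $E'' \subseteq K_A e$, not just the top and the codimension-one ones, and for $|E''| \geq 2$ these are again faces produced by earlier stages. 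Hence the squares are not pushouts. Second, the injectivity claim for $K_A$ is false, because the omitted faces have varying isotropy: in the minimal example $H = \mathbb{Z}_{/2} = \langle\sigma\rangle$, $E_{T_e} = He = \{e,\sigma e\}$ with $H_e$ trivial, the stage $A = \{\sigma e\}$ has $K_A$ trivial while its horn-missing face $(T_e - \sigma e) - e = T_e - He$ is $H$-fixed, so $G\cdot_{K_A}(-)$ maps two-to-one onto that face of $\Omega[T]$. In the same example the last stage $A = \emptyset$ is left with only the top cell $T_e$ to adjoin, i.e.\ with a boundary inclusion $\partial\Omega[T] \to \Omega[T]$, which is not inner anodyne; so the defect cannot be repaired merely by re-describing the attaching maps. (Your argument is correct exactly when $Ge$ meets each component of $T$ in a single edge, so that every $K_A e = \{e\}$ and each horn genuinely misses only a codimension-one pair.)

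The paper avoids this with a different, recursive decomposition. It first reduces to adjoining one orbit at a time, i.e.\ to the maps $\Lambda^{E}[T] \to \Lambda^{E - Ge}[T]$, arguing by induction on the number of $G$-orbits of $E$. Each such map is filtered by the $H$-poset $\mathsf{Inn}_{He}(T_e)$ of inner faces $V$ obtained by collapsing a subset of $He$ only, and the step adjoining the orbit of $V$ is a pushout of $G \cdot_{\bar{H}}\left(\Lambda^{E_e - He}[V] \to \Omega[V]\right)$ with $\bar{H}$ the isotropy of $V$: this is a generalized $G$-horn with one fewer orbit of marked edges, hence $G$-inner anodyne by the outer induction, and it attaches injectively because every face adjoined at that step contains exactly the edges $He \cap V$ of the orbit and therefore has isotropy contained in $\bar{H}$. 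In other words, the faces must be grouped so that all members of a single attachment contain the same subset of the orbit $He$ --- which is precisely what your grouping by the pairs $(A, A\cup\{e'\})$ fails to do.
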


\begin{proof}
Since $E$ consists of a union of edge orbits, one immediately reduces to proving that maps of the form
\[\Lambda^{E}[T] \to \Lambda^{E-G e}[T]\]
are $G$-inner anodyne. 
In the non-equivariant case \cite[Lemma 5.1]{MW08} such maps can be described as single pushouts, but 
here we require multiple pushouts, naturally indexed by an equivariant poset which we now describe.

Firstly, let $T_e$ denote the (non-equivariant) tree component containing the edge $e$ and set $H \leq G$ to be its isotropy, resulting in a canonical identification $G \cdot_{H} T_e \simeq T$. 
Writing $\mathsf{Inn}_{He}(T_e)$ for the $H$-poset (under inclusion) of the inner faces of $T_e$ collapsing only edges in $He$, it suffices to check that, for any $H$-equivariant convex\footnote{
Recall that a subset $B \subset \mathcal{P}$ of a poset $\mathcal{P}$ is called \textit{convex}
if $\bar{b} \leq b$ and $b \in B$ implies $\bar{b} \in B$.}
subsets $B \subset B' \subset \mathsf{Inn}_{He}(T_e)$ it is 
\begin{equation}\label{FILT EQ}
	\Lambda^{E}[T] \cup  \bigcup_{g\in G,U \in B} \Omega[g U] \to
	\Lambda^{E}[T] \cup  \bigcup_{g \in G, U \in B'} \Omega[gU]
\end{equation}
$G$-inner anodyne.
Without loss of generality, we may assume $B'$ is obtained from $B$ by adding a single orbit $H V$ and, setting $\bar{H} \leq H$ to be the isotropy of $V$ in $\mathsf{Inn}_{H e}(T_e)$, we claim that (\ref{FILT EQ}) is a pushout of
\[G \cdot_{\bar{H}} \left( \Lambda^{E_e-He} [V] \to \Omega[V] \right),\]
where $E_e = E \cap T_e$ denotes the subset of inner edges of $T_e$ that are in $E$.
This claim is straightforward except for the following: one needs to note that the $G$-isotropy of any faces in $\mathsf{Inn}_{E_e - He}(V)$ (i.e. those faces missing from $\Lambda^{E_e - He}[V]$) is indeed contained in $\bar{H}$, and this follows since 
$\bar{H}$ can also be described as the subgroup of $G$ sending the edge subset $He \cap V = Ge \cap V$ to itself.

This concludes the proof by nested induction on the order of $G$ and the number of $G$-orbits of $E$.
\end{proof}

The following is the equivariant analogue of \cite[Lemma 5.2]{MW08}. Note that edge orbits of a $G$-tree $T$ are encoded by maps 
$G/H\cdot \eta \xrightarrow{G/H \cdot e} T$
for some $H \leq G$.

\begin{proposition}\label{GRAFTANODYNE PROP}
	Suppose that $T$ has a leaf orbit and $U$ root orbit both isomorphic to $G/H$. Write $V = T\amalg_{G/H \cdot \eta} U$ for the grafted $G$-tree.
	
	Then
\begin{equation}\label{GRAFTANODYNE EQ}
	\Omega[T] \amalg_{\Omega[G/H \cdot \eta]} \Omega[U] \to \Omega[V]
\end{equation}
	is inner $G$-anodyne.
\end{proposition}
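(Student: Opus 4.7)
The first move is to reduce to a non-equivariant grafting carrying an $H$-action, using the Grothendieck-style description of $\Omega_G$ from Proposition \ref{GTREESGROTH PROP}. Write $T \simeq G \cdot_H T_e$ and $U \simeq G \cdot_H U_e$ for $T_e, U_e \in \Omega^H$ sharing a common $H$-fixed edge $\eta$ (the leaf of $T_e$ corresponding to the chosen leaf orbit, and the root of $U_e$ correspondingly). Then $V \simeq G \cdot_H V_e$ where $V_e = T_e \amalg_\eta U_e \in \Omega^H$ is the (non-equivariant) grafted tree, carrying the induced $H$-action that fixes the grafting edge $e$. The functor $G \cdot_H (-) \colon \mathsf{dSet}^H \to \mathsf{dSet}^G$ is a left adjoint, preserves colimits, and by construction sends an $H$-inner horn inclusion $\Lambda^{He'}[W] \to \Omega[W]$ (in the sense of Definition \ref{INNERHORNG DEF} applied to $H$) to the $G$-inner horn inclusion $\Lambda^{G e'}[G \cdot_H W] \to \Omega[G \cdot_H W]$. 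Thus it suffices to prove that the $H$-equivariant map
\[
\Omega[T_e] \amalg_{\Omega[\eta]} \Omega[U_e] \to \Omega[V_e]
\]
is $H$-inner anodyne with respect to horns at the singleton $H$-orbit $\{e\}$.

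Granted this reduction, the proof proceeds by the same filtration strategy as in the non-equivariant \cite[Lemma 5.2]{MW08}. The non-degenerate faces of $V_e$ that do \emph{not} factor through $T_e$ or $U_e$ are precisely those subtrees $F \subseteq V_e$ that meet both $\mathrm{Inn}(T_e)$ and $\mathrm{Inn}(U_e)$; equivalently, after using Corollary \ref{TALLOUTERFACT COR}, they are obtained from $V_e$ by deleting some collection of edges and then possibly contracting along a set of inner edges of $V_e$ that meets $e$'s neighborhood. One filters $\Omega[V_e]$ over $\Omega[T_e] \amalg_{\Omega[\eta]} \Omega[U_e]$ by a natural complexity invariant of such faces (for instance, the number of inner edges of $V_e$ present in the face, ordered lexicographically with a secondary invariant recording which neighbors of $e$ have been contracted). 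At each stage $W_k \hookrightarrow W_{k+1}$ in this filtration, one attaches an entire $H$-orbit of intermediate trees $V_e^{(k)}$ at once via a pushout of a generalized $H$-inner horn inclusion $\Lambda^{H e}[V_e^{(k)}] \to \Omega[V_e^{(k)}]$, which is $H$-inner anodyne by Proposition \ref{GENERALIZED_HORN_ANODYNE_PROP}. Inducing up by $G \cdot_H (-)$ then yields the desired $G$-inner anodyne extension.

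\textbf{Main obstacle.} The delicate step is organizing the filtration so that (a) each filtration stratum is $H$-invariant (so that it assembles into a $G$-orbit after induction), and (b) each attaching map is a \emph{single} generalized horn at the orbit $He$ of the grafting edge, rather than an interlocked combination of horns at distinct edge orbits. For (a) one must verify that the chosen complexity invariant is preserved by the $H$-action, which reduces to the fact that $H$ fixes $e$ and hence permutes the edges and nodes adjacent to $e$ compatibly. For (b) the key point, as in the non-equivariant proof, is that every new face at a given filtration stage has the grafting edge $e$ (or a contraction thereof) in its ``missing'' collection, so the missing inner faces at each stage are exactly an $H$-equivariant subset of $\{V_e^{(k)} - E' : e \in E' \subseteq He\}$; this matches the definition of a generalized $G$-horn after inducing up.
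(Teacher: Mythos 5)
Your opening reduction is incorrect, and the rest of the argument is built on it. A $G$-tree is induced from the isotropy of its \emph{root} orbit, not from the isotropy $H$ of the chosen leaf orbit: in general the root orbit of $T$ is $G/K$ with $H \leq K$ and $H \neq K$, so $T \not\simeq G \cdot_H T_e$. Concretely, take $G = \mathbb{Z}_{/2}$, $H = \{e\}$, $T$ the corolla with a $G$-fixed root and a free orbit of two leaves, and $U = G \cdot U_e$ a free forest with two components. Then $V$ is a single tree carrying a nontrivial $G$-action, not of the form $G \cdot_H V_e$ for a non-equivariant $V_e$, and the grafting takes place along an entire free orbit of two edges rather than at a single $H$-fixed edge. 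The correct reduction (the one used in the proof of Proposition \ref{EXPG PROP}) is to assume $V$ is a tree with a $G$-action; but even then $U$ remains a genuinely equivariant forest whose $|G/H|$ components are grafted onto the full leaf orbit, the grafting edges form an orbit $Ge$ of size $|G/H|$, and intermediate subtrees contain several of them. So your claim that each attaching map is a horn "at the singleton $H$-orbit $\{e\}$" cannot be arranged, and the problem stays irreducibly equivariant.

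Separately, your filtration scheme is not the one the paper uses and omits the one genuinely equivariant verification. The paper filters over $G$-equivariant convex subsets of the $G$-poset of \emph{outer} faces of $V$ contained in neither $T$ nor $U$, and attaches the orbit of each such face $S$ (with isotropy $H_S$) via the induced generalized horn $G \cdot_{H_S}\bigl(\Lambda^{Inn(S)}[S] \to \Omega[S]\bigr)$ at \emph{all} inner edges of $S$, which is $G$-inner anodyne by Proposition \ref{GENERALIZED_HORN_ANODYNE_PROP}; the outer faces of $S$ are already present by convexity, and the crucial point --- flagged explicitly in the remark following the paper's proof --- is that every inner face of $S$ has $G$-isotropy contained in $H_S$, so that the missing faces really assemble into a single induced horn. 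Your alternative of attaching via horns at the grafting-edge orbit only would instead require a characteristic-edge analysis in the style of the proof of Proposition \ref{EXPG PROP} (one must show the inner faces at the \emph{other} inner edges are already present at each stage), which is substantially more delicate and is not supplied by your sketch.
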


\begin{proof}
Let $\mathsf{Out}(V)$ denote the $G$-poset of outer faces (which  have the form $V_{\nless \underline{e}}^{\leq e}$, and are hence not inner faces of any other face) of the grafted tree $V$, and $\mathsf{Out}_{\not\subset T,U}(V)$ the $G$-subposet of those outer faces contained in neither $T$ nor $U$.

It suffices to show that for all $G$-equivariant convex subsets $B \subset B'$ of $\mathsf{Out}_{\not\subset T,U}(V)$
it is
\begin{equation} \label{WHATEVER EQ}
	\Omega[T] \cup  \Omega[U] \cup \bigcup_{R \in B} R
		\to
	\Omega[T] \cup \Omega[U]  \cup \bigcup_{R \in B'} R
\end{equation}
$G$-inner anodyne 
(indeed, (\ref{WHATEVER EQ}) recovers (\ref{GRAFTANODYNE EQ}) when $B=\emptyset$, $B'=\mathsf{Out}_{\not\subset T,U}(V)$). 

As before, we can assume $B'$ is obtained from $B$ by adding the orbit $G S$ of a single outer face $S$. Letting $H \leq G$ denote the isotropy of $S$, 
%so that $G\cdot_H S$ can be regarded as a $G$-tree,
one has that (\ref{WHATEVER EQ}) is then the pushout (note that the $G$-isotropy of an \textit{inner} face of the \textit{outer} face $S$ is at most $H$) of
\[G \cdot_H \left( \Lambda^{Inn(S)}[S] \to \Omega[S]\right),\]
finishing the proof.
\end{proof}

\begin{remark}
A key difference between the proofs of
Propositions \ref{GENERALIZED_HORN_ANODYNE_PROP} and
\ref{GRAFTANODYNE PROP}
versus their non-equivariant analogues
is the need to check that the isotropies are correct when attaching equivariant horns.
\end{remark}

\section{Tensor products}
\label{TENSORPROD SEC}

Our goal in this section is to prove equivariant analogues of \cite[Prop. 9.2]{MW08}, \cite[Thm. 5.2]{CM11} and \cite[Thm. 4.2]{CM11}, which are the key technical results in their respective papers. These results concern the interaction of anodyne extensions with the tensor product and the join constructions, which we recall in \S \ref{TENSORDEF SEC} and \S \ref{DENDJOIN SEC}.
We now present our versions of the results, starting with the analogue of \cite[Prop. 9.2]{MW08}.

\begin{theorem}\label{EXPNPROP THM}
	Let $S,T \in \Omega_G$ be $G$-trees and let $G\xi$ be an inner orbit edge of $T$. Then the map
	\[\partial \Omega[S] \otimes \Omega[T] 
			\coprod_{\partial \Omega[S] \otimes \Lambda^{G \xi}[T]}
		\Omega[S] \otimes \Lambda^{G \xi}[T] 
			\to
		\Omega[S] \otimes \Omega[T]
	\]
is a $G$-inner anodyne extension if either
\begin{itemize}
	\item[(i)] both $S$ and $T$ are open $G$-trees (i.e. have no stumps);
	\item[(ii)] at least one of $S,T$ is a linear $G$-tree (i.e. isomorphic to $G\cdot_H [n]$ for $[n] \in \Delta$).
\end{itemize}
\end{theorem}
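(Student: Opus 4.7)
The plan is to adapt the non-equivariant shuffle argument of \cite[Prop. 9.2]{MW08} to the equivariant setting, in analogy with how Propositions \ref{GENERALIZED_HORN_ANODYNE_PROP} and \ref{GRAFTANODYNE PROP} equivariantize \cite[Lemmas 5.1, 5.2]{MW08}. Recall that non-equivariantly, $\Omega[S]\otimes \Omega[T]$ is the union, inside its ambient presheaf, of the representables $\Omega[R]$ as $R$ ranges over \emph{shuffles} of $S$ and $T$, i.e. those trees obtained by iteratively ``percolating'' $T$-nodes below $S$-nodes. Because the tensor product is computed in $\mathsf{dSet}$ and both $\Omega[S]\otimes\Omega[T]$ and the relevant subobjects are $G$-stable, this collection of shuffles carries a $G$-action, and I would first observe that the equivariant tensor product decomposes as a union indexed by $G$-orbits of shuffles.

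Next I would set up a filtration on $\Omega[S]\otimes\Omega[T]$ by $G$-equivariant convex subsets of the poset of shuffles (ordered by the percolation partial order, as in \cite{MW08}), starting from the subobject $A=\partial\Omega[S]\otimes\Omega[T]\coprod_{\partial\Omega[S]\otimes\Lambda^{G\xi}[T]}\Omega[S]\otimes\Lambda^{G\xi}[T]$. For each step, pick a $G$-orbit $GR$ of a maximal shuffle in the complement and let $H\leq G$ denote the isotropy of the shuffle $R$. Following the template of Propositions \ref{GENERALIZED_HORN_ANODYNE_PROP} and \ref{GRAFTANODYNE PROP}, the attachment at this stage should be a pushout of a map of the form $G\cdot_H(\Lambda^{E}[R]\to \Omega[R])$, where $E\subset Inn(R)$ is the $H$-stable set of inner edges of $R$ that either come from the chosen $G\xi$-orbit in $T$ or are new ``percolation edges'' not already present in a previously attached shuffle. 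Provided $E$ is nonempty, Proposition \ref{GENERALIZED_HORN_ANODYNE_PROP} (applied to the $H$-tree $R$, viewed inside $\Omega_G$ as $G\cdot_H R$) then shows that each such pushout is $G$-inner anodyne, and the composition over all orbits yields the claim.

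The heart of the argument, and where the hypotheses (i) and (ii) enter, is therefore to verify that $E$ is always nonempty and that the attachment is indeed a pushout of the horn inclusion just described. In the open case one can copy the non-equivariant combinatorial analysis of percolation: a shuffle $R$ differs from its predecessors by at least one inner percolation edge, which is automatically inner in the grafting sense, and for a maximal shuffle the orbit $G\xi\subset T$ survives as a set of inner edges of $R$. In the linear case one of the factors is $G\cdot_H[n]$, and shuffles reduce to equivariant simplicial-style shuffles, so the existence of new inner edges is immediate. The isotropy check, which is the substantive equivariant subtlety, proceeds exactly as in the proofs of Propositions \ref{GENERALIZED_HORN_ANODYNE_PROP} and \ref{GRAFTANODYNE PROP}: the $G$-isotropy of any inner face of $R$ missing from $\Lambda^E[R]$ coincides with a subgroup of $H$, because such a subgroup must preserve both the shuffle $R$ and the edge set $G\xi\cap R$.

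The main obstacle I anticipate is organizing the two-step filtration (outer over $G$-orbits of shuffles, inner over $G$-orbits of inner edges within one shuffle) so that at every stage the attachment is cleanly a pushout of a single generalized $G$-horn. In particular one must verify that when a shuffle $R$ is being attached, all the inner faces that it contributes and which are not already in the previously attached subobject are precisely indexed by the $H$-stable set $E$ described above, with no interference from faces shared with other $G$-translates $gR$. This is a purely combinatorial but delicate bookkeeping that parallels \cite{MW08} but must be carried out at the level of $H$-orbits of edges; the case split (i)/(ii) is then what guarantees that no stump-related faces intrude into this bookkeeping.
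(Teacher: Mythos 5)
Your overall strategy---filter $\Omega[S]\otimes\Omega[T]$ by $G$-orbits of percolation schemes and attach each piece via an equivariant horn whose isotropy is controlled as in Propositions \ref{GENERALIZED_HORN_ANODYNE_PROP} and \ref{GRAFTANODYNE PROP}---is the right family of argument, and the isotropy check you highlight is indeed one of the genuine equivariant points. But the specific mechanism you propose, namely that each $G$-orbit of a \emph{maximal} shuffle $R$ is attached by a single pushout of $G\cdot_H\left(\Lambda^{E}[R]\to\Omega[R]\right)$ with $E$ the set of $\xi$-coloured or new percolation edges, is false; the ``delicate bookkeeping'' you defer to in your last paragraph is the entire content of the proof, and it does not resolve in the way you describe. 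Already for $G=\ast$, $S$ a $2$-corolla with leaves $l_1,l_2$ and $T=[2]$ with $\xi=1$, two things go wrong. First, the leaf-vertex outer face of the shuffle $R_1$ with the $S$-vertex at the root, obtained by deleting the single leaf $(l_1,0)$, still uses every colour of $S$ and of $T$ (colour $0$ survives on the $l_2$-branch) and contains the edge $(l_1,2)$, hence is a face of no other shuffle and does not lie in $A$; since every generalized horn $\Lambda^{E}[R_1]$ contains \emph{all} outer faces, the map $\Lambda^{E}[R_1]\to(\text{previous stage})$ does not even exist, no matter in which order the shuffles are attached. Second, the inner face $R_1-\{(l_1,2),(l_2,2)\}$ coincides with $R_2-(r_S,1)$ for the middle shuffle $R_2$, so inner faces are shared between shuffles; with your $E$ this face is one of those \emph{excluded} from $\Lambda^{E}[R_1]$, i.e.\ claimed to be newly attached, so the square cannot be a pushout.

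The paper repairs both defects by changing the shape of the filtration. The outer index is not the set of maximal shuffles but the poset $\mathsf{IE}_{G\xi}(S\otimes T)$ of \emph{all} $\xi$-internal initial elementary subtrees, ordered by the combination of the percolation order $\leq_{lex}$ with inclusion; the problematic outer face above is itself a smaller element of this poset and is attached earlier, which is why convexity in the combined order matters. Each such subtree $U$ is then attached through a second filtration over its inner faces, and each step there is a pushout of a horn at a \emph{single characteristic edge orbit} $\bar H\xi_s$ (Lemmas \ref{CHAREDGE1 LEM} and \ref{CHAREDGE2 LEM}), not a generalized horn on all of $E$; shared faces are handled by Lemma \ref{COMMONFACE LEM}. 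Finally, hypotheses (i) and (ii) enter quite differently from what you suggest: openness of $T$ is what makes $\leq_{lex}$ together with inclusion antisymmetric (Proposition \ref{PARTIALORDER PROP}) and guarantees that the subtree $U^{\leq\xi_s}_{\nless\xi_s^{\uparrow<s}}$ used to detect the characteristic edge actually has inner edges, while the linear case with stumps forces the reversed order $\mathsf{IE}^{oplex}$ and a different characteristic edge (Lemma \ref{CHAREDGE2 LEM}). None of this machinery is visible in your sketch, so the proposal as written has a genuine gap at its central step.
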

The proof of Theorem \ref{EXPNPROP THM} will be the subject of \S \ref{GANODYNEPROOF SEC}. More specifically, the result will follow from Proposition \ref{EXPG PROP} when $B=\emptyset$ 
and $B'=\mathsf{IE}_{G \xi}(S \otimes T)$.

The following is the equivariant analogue of \cite[Thm. 5.2]{CM11}.

\begin{theorem}\label{OUTERIN THM}
	Let $S \in \Omega^G$ be a tree with a $G$-action such that $S \neq \eta$ and denote by 
	$A \to \Omega[S] \otimes \Omega[1]$ the pushout product map
	\[\partial \Omega[S] \otimes \Omega[1] 
			\coprod_{\partial \Omega[S] \otimes \{1\}}
		\Omega[S] \otimes \{1\}
			\to
		\Omega[S] \otimes \Omega[1].
	\]
	Then there is a factorization $A \to B \to \Omega[S] \otimes \Omega[1]$ such that
	\begin{itemize}
		\item[(a)] $A \to B$ is a $G$-inner anodyne extension;
		\item[(b)] there is a pushout (the join $S \star \eta$ is introduced in Definition \ref{JOIN DEF})
			\begin{equation}\label{OUTPUSH EQ}
				\begin{tikzcd}
				\Lambda^{\eta}[S \star \eta] \ar[r] \ar[d] 	& B \ar[d] \\
				\Omega[S \star \eta]                \ar[r] 	& \Omega[S] \otimes \Omega[1]; 
				\end{tikzcd}
			\end{equation}
		\item[(c)] letting $\eta \xrightarrow{r} S$ denote the root edge, the composite
			\[\Omega[1] \simeq \Omega[\eta \star \eta] 
					\xrightarrow{r \star id} 
				\Omega[S \star \eta] \to \Omega[S] \otimes \Omega[1]\]
			coincides with the composite 
			\[
				\Omega[1] \simeq \eta \otimes \Omega[1] 
			  \xrightarrow{r \otimes id}
				\Omega[S] \otimes \Omega[1].
				\]
	\end{itemize}
\end{theorem}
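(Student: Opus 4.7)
The plan is to mimic the proof of the non-equivariant analogue \cite[Thm.~5.2]{CM11} while carefully tracking the $G$-action on $S$. I would first recall that the non-degenerate dendrices of $\Omega[S]\otimes\Omega[1]$ are classified by the \emph{shuffles} of $S$ with the linear tree $[1]$: each such shuffle is a tree $R$ whose edges are labeled by $\{0,1\}$, with the $0$- and $1$-labeled edge subsets each recovering an upper subtree of $S$ and glued together along binary ``transition'' nodes percolating the $0/1$ interface through $S$. The set $\mathrm{Sh}(S)$ of such shuffles carries a partial order (by how far the transition has percolated upward) and a $G$-action induced from $S$ compatible with this order, whose unique maximum is precisely $R_{\mathrm{top}} = S\star\eta$: the shuffle in which every original edge of $S$ is labeled $0$ and a single new binary transition node at the root introduces the new root edge $\eta$ with the opposite label.

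Next, I would choose a $G$-equivariant linear refinement of $\mathrm{Sh}(S)\setminus\{R_{\mathrm{top}}\}$ and inductively attach shuffles to $A$ one $G$-orbit at a time in this order. For a shuffle $R$ with isotropy $G_R\leq G$, the attachment is realized as the $G$-orbit pushout of
\[
	\Lambda^{G_R\cdot E_R}[R]\hookrightarrow\Omega[R],
\]
where $E_R$ is a $G_R$-equivariant set of transition edges of $R$ chosen so that collapsing any subset of $E_R$ yields a face lying either in a strictly smaller shuffle or in $A$. Using Proposition \ref{GENERALIZED_HORN_ANODYNE_PROP} together with the orbit-attachment argument used in its proof, each such pushout is $G$-inner anodyne, so the resulting intermediate $B$ satisfies (a).

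For (b), one verifies that every face of $R_{\mathrm{top}}\cong S\star\eta$ \emph{except} the outer face at the new edge $\eta$ (namely the upper subtree $S\hookrightarrow S\star\eta$) already lies in $B$: inner faces collapse a transition edge and hence land in the maximum proper shuffle, while the remaining outer faces either land in $\partial\Omega[S]\otimes\Omega[1]$ (those removing a leaf vertex of $S$) or in $\Omega[S]\otimes\{1\}$, both of which are contained in $A$. This yields the pushout square (\ref{OUTPUSH EQ}). Statement (c) then follows by tracing the map: the linear subtree $\eta\star\eta\subseteq S\star\eta$ spanned by the new root $\eta$, the new node, and the old root $r$ of $S$ maps into $\Omega[S]\otimes\Omega[1]$ exactly through $r\otimes id\colon\eta\otimes\Omega[1]\to\Omega[S]\otimes\Omega[1]$.

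The main obstacle is the equivariance bookkeeping in the second step. Concretely, for each $G$-orbit of shuffles $G\cdot R$ one must check that the set $E_R$ admits a $G_R$-equivariant choice and, more delicately, that the $G$-isotropy of any face of $R$ obtained by collapsing edges \emph{not} in $E_R$ is contained in $G_R$ -- otherwise the $G$-orbit pushout would inadvertently attach unintended $G$-translates prematurely, destroying both anodyneness and the filtration itself. Both points are handled by the same isotropy-tracking technique used in the proofs of Propositions \ref{GENERALIZED_HORN_ANODYNE_PROP} and \ref{GRAFTANODYNE PROP}, suitably adapted to the shuffle decomposition of $\Omega[S]\otimes\Omega[1]$.
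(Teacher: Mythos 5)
There is a genuine gap in your second step: attaching a whole shuffle $R$ by a single pushout of a generalized horn $\Lambda^{G_R\cdot E_R}[R]\hookrightarrow\Omega[R]$ cannot work, because the intersection of $\Omega[R]$ with the union of $A$ and the strictly smaller shuffles is not a generalized horn. By definition $\Lambda^{E}[R]$ contains \emph{every} outer face of $R$ (only inner faces of the form $R-E'$ with $E'\subseteq E$ are omitted), so for your pushout to compute the union one would need all outer faces of $R$ to lie in the previous stage. This already fails for $S=C_2$ the $2$-corolla (leaves $a,b$, root $r$) with trivial $G$-action: the unique proper shuffle $R$ has edges $(a,0),(b,0),(a,1),(b,1),(r,1)$, and its leaf-vertex outer face $R_{\nless(a,1)}$, with edges $(b,0),(a,1),(b,1),(r,1)$, uses all three colours $a,b,r$ of $S$ and contains a $0$-coloured edge, so it lies neither in $\partial\Omega[S]\otimes\Omega[1]$ nor in $\Omega[S]\otimes\{1\}$, i.e.\ not in $A$, and there is no smaller shuffle. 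Hence $\Lambda^{E}[R]\not\subseteq A$ for any choice of $E$, and your very first attachment is not a pushout along the claimed map. (Relatedly, your stated condition on $E_R$ is backwards: for a generalized-horn pushout the faces $R-E'$ with $E'\subseteq E_R$ must be exactly the faces \emph{not yet} present, rather than faces already lying in $A$ or in a smaller shuffle.)

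The paper avoids this in two ways, and you need at least one of them. First, the indexing poset is enlarged from the maximal shuffles to \emph{all} $\xi$-internal initial elementary subtrees, i.e.\ $\mathsf{IE}_\xi(S\otimes[1])$ of Definition \ref{IEETA DEF}, so that outer faces such as $R_{\nless(a,1)}$ above get attached as earlier, strictly smaller stages. Second, within each stage $U$ the missing faces are not attached all at once but in pairs $(X,\,X-\xi_s)$ indexed by $\mathsf{Inn}_{\hat\xi_s}(U)$, each pair being a pushout of a single (orbit of an) inner horn $\Lambda^{\xi_s}[X]\to\Omega[X]$; this is the characteristic-edge argument of Lemmas \ref{CHAREDGE1 LEM} and \ref{CHAREDGE2 LEM}. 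The paper's actual proof of Theorem \ref{OUTERIN THM} is then a short rerun of the proof of Proposition \ref{EXPG PROP} with $T=[1]$ and $\xi=1$: the characteristic-edge argument works everywhere except at the maximum $U=S\star\eta$ of the poset, where $\xi_r=(r,1)$ is the root rather than an inner edge, which is exactly why one stops at $B$ and the final cell is the outer horn pushout (\ref{OUTPUSH EQ}). Your identification of $R_{\mathrm{top}}=S\star\eta$ and the analysis in (b)--(c) are essentially right in spirit, but they rest on the faulty construction of $B$; note also that the inner faces of $S\star\eta$ collapsing an edge $(s,0)$ with $s$ inner in $S$ do not collapse the transition edge and land in $\partial\Omega[S]\otimes\Omega[1]$ rather than in a smaller shuffle.
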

Theorem \ref{OUTERIN THM} will be proven at the end of \S \ref{GANODYNEPROOF SEC} as a direct consequence of the arguments used in the proof of Proposition \ref{EXPG PROP}.
We note that $\Lambda^{\eta}[S \star \eta]$ is an \textit{outer} horn, lacking only the outer face $(S \star \eta) - \eta = (S \star \eta)^{\leq r} \simeq S$.

The following is the equivariant analogue of \cite[Thm. 4.2]{CM11}. Note that we use the notation 
$i \colon \Delta \to \Omega$
for the inclusion and 
$i^{\**}\colon \mathsf{dSet} \to \mathsf{sSet}$
for the restriction.

\begin{theorem}\label{JOINLIFT THM}
	Let $S \in \Omega^G$ be a tree with a $G$-action. Assume further that $S$ has at least two vertices and unary root vertex 
	$G/G \cdot \Omega[1] \xrightarrow{v_r} S$. Then a lift exists in any commutative diagram
	\begin{equation}\label{JOINLIFT EQ}
		\begin{tikzcd}
		\Lambda^r[S] \ar{r}{f} \ar[d] & X \ar[d]\\
		\Omega[S] \ar[r] \ar[dashed]{ur}   & Y
		\end{tikzcd}
	\end{equation}
such that $X \to Y$ is a $G$-inner fibration between $G$-$\infty$-operads and $f(v_r)$ is an equivalence in the $\infty$-category $i^{\**}(X^G)$.
\end{theorem}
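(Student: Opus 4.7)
The plan is to adapt \cite[Thm. 4.2]{CM11} to the equivariant setting, using Theorem \ref{OUTERIN THM} as the main technical input. Since $v_r$ is a $G$-fixed unary root vertex and $S$ has at least two vertices, I write $S \simeq S' \star \eta$, where $S' \in \Omega^G$ is the non-trivial tree with $G$-action obtained by removing $v_r$ from $S$ (so $S' = S^{\leq v_r^{\uparrow}}$). Under this identification, $\Lambda^r[S]$ becomes the outer horn $\Lambda^\eta[S' \star \eta]$ of Theorem \ref{OUTERIN THM}, whose missing face is $(S' \star \eta)^{\leq v_r^{\uparrow}} \simeq S'$.

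Applying Theorem \ref{OUTERIN THM} to $S'$ provides a pushout square
\[
\begin{tikzcd}
\Lambda^r[S] \ar{r} \ar{d} & B \ar{d} \\
\Omega[S] \ar{r} & \Omega[S'] \otimes \Omega[1]
\end{tikzcd}
\]
together with a $G$-inner anodyne extension $A \to B$, where $A = \partial \Omega[S'] \otimes \Omega[1] \cup_{\partial \Omega[S'] \otimes \{1\}} \Omega[S'] \otimes \{1\}$. By part (c) of that theorem, the subobject $\Omega[1] \hookrightarrow \Omega[S]$ corresponding to $v_r$ agrees, via $\Omega[S] \hookrightarrow \Omega[S'] \otimes \Omega[1]$, with the canonical inclusion $\eta \otimes \Omega[1] \hookrightarrow \Omega[S'] \otimes \Omega[1]$ at the root of $S'$. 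My strategy is then to enlarge the lifting problem to one of constructing a lift $\tilde{H} : \Omega[S'] \otimes \Omega[1] \to X$ of a suitable extension $\phi : \Omega[S'] \otimes \Omega[1] \to Y$ of $g$, and to obtain the desired filler as the restriction of $\tilde{H}$ along $\Omega[S] \hookrightarrow \Omega[S'] \otimes \Omega[1]$.

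To build $\tilde{H}$, I first extend $g$ to $\phi$ in such a way that $\phi$ restricted to $\Omega[S'] \otimes \{1\}$ is degenerate at the target of $f(v_r)$. I then construct $f_A : A \to X$ compatibly extending $f$ and lifting $\phi|_A$: on $\partial \Omega[S'] \otimes \Omega[1]$ the values are prescribed by the faces of $\Lambda^r[S]$ other than the missing one under $f$, while on $\Omega[S'] \otimes \{1\}$ the map is assembled by induction over the vertices of $S'$, the equivalence property of $f(v_r)$ providing fillers for the outer horns encountered along the way. The $G$-inner anodyne property of $A \to B$ combined with $p : X \to Y$ being a $G$-inner fibration then yields an extension $f_B : B \to X$, and gluing $f_B$ with $f$ along $\Lambda^r[S] \simeq B \cap \Omega[S]$ via the pushout produces $\tilde{H}$.

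The main obstacle will be the equivariant construction of $f_A$ on $\Omega[S'] \otimes \{1\}$, where the equivalence hypothesis on $f(v_r)$ is essential. The non-equivariant argument of \cite{CM11} performs an iterated filling of outer horns one of whose marked edges is the equivalence; each filling here must additionally respect the $G$-action. This is possible because $v_r$ is $G$-fixed, so the equivalence lives in $i^*(X^G)$ and each auxiliary horn appears in $G$-equivariant form, allowing the required fillers to be supplied by the $G$-$\infty$-operad property of $X$ together with the generalized $G$-anodyne extensions of Proposition \ref{GENERALIZED_HORN_ANODYNE_PROP}.
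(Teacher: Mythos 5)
Your argument is circular, and the step where the equivalence hypothesis must do its work is never actually performed. The pushout square of Theorem \ref{OUTERIN THM}(b) exhibits $B \hookrightarrow \Omega[S']\otimes\Omega[1]$ as a \emph{pushout of} the root horn inclusion $\Lambda^{r}[S]\hookrightarrow\Omega[S]$; extending a map from $B$ over $\Omega[S']\otimes\Omega[1]$ is therefore exactly equivalent to the original lifting problem, not a reduction of it. Your final ``gluing'' step is where this surfaces: a map out of the pushout $\Omega[S']\otimes\Omega[1]=B\cup_{\Lambda^{r}[S]}\Omega[S]$ requires a map on $\Omega[S]$ agreeing with $f_B$ on $\Lambda^{r}[S]$, and a map on $\Omega[S]$ extending $f$ is precisely what the theorem asks for; gluing $f_B$ with $f$ only recovers a map on $B$, since $\Lambda^{r}[S]\subset B$ already. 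Neither the construction of $f_A$ nor the $G$-inner anodyne extension to $f_B$ ever touches the dendrices of $\Omega[S]$ missing from $\Lambda^{r}[S]$ (in particular $S$ itself and the missing outer face $S'$). Two subsidiary claims are also false: the values of $f_A$ on $\partial\Omega[S']\otimes\Omega[1]$ are \emph{not} prescribed by $f$, since that subobject contains many dendrices not lying in $\Omega[S]$ at all (e.g.\ the percolation schemes of $F\otimes[1]$ for a proper face $F\subsetneq S'$ with a vertex, which use level-$1$ edges other than $(r_{S'},1)$); and the ``fillers for the outer horns encountered along the way'' on $\Omega[S']\otimes\{1\}$ are themselves root-horn lifting problems of the type being proven, so invoking them assumes the conclusion. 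Since the statement is false without the equivalence hypothesis, any proof must consume that hypothesis through a genuine mechanism, and yours does so only in this circular fashion.

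The paper's proof avoids the tensor product entirely. It writes $S\simeq F\star[1]$ for $F\in\Phi_w^G$, identifies $\Lambda^{r}[S]\to\Omega[S]$ with the join pushout-product $(\emptyset\to\Phi_w[F])\square^{\star}(\Lambda^1[1]\to\Delta[1])$, and passes by adjunction to the problem of lifting $\Lambda^1[1]\to\Delta[1]$ against the slice map $\Phi_w[F]\backslash X\to\Phi_w[F]\backslash Y\times_{i^{\**}(Y^G)}i^{\**}(X^G)$. Proposition \ref{JOINPUSHOUT PROPOSITION} shows the relevant maps are left fibrations between $\infty$-categories, and the equivalence hypothesis is then consumed by Joyal's theorem that equivalences lift along left fibrations. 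This is why the forest category $\Phi_w$ and its boundaries were set up in \S\ref{CATFOR SEC}; Theorem \ref{OUTERIN THM} is an input to Theorem \ref{XDELTA1 THM} (where the data on $\partial\Omega[S]\otimes\Omega[1]$ and on $\Omega[S]\otimes\{1\}$ is actually given), not to Theorem \ref{JOINLIFT THM}.
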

We will prove Theorem \ref{JOINLIFT THM} at the end of \S \ref{DENDJOIN SEC}.

\subsection{Tensor product} \label{TENSORDEF SEC}

To keep the proofs of Theorems \ref{EXPNPROP THM} and \ref{OUTERIN THM} compact we will prefer to use broad poset language throughout. We start by defining
tensor products in this framework.

Given $\underline{s} \in S^+$, $\underline{t} \in T^+$, we will let 
$\underline{s} \times \underline{t} \in (S \times T)^{+}$ denote the obvious tuple whose elements 
$(s,t) \in \underline{s} \times \underline{t}$
are those pairs with $s \in \underline{s}$, $t \in \underline{t}$.

\begin{definition}
  Given pre-broad posets $S$, $T$ (cf. Remark \ref{PREBROAD REM}), their \textit{tensor product} $S \otimes T$ is the pre-broad poset whose underlying set is $S \times T$ and whose relations are generated by relations of the form
	$\underline{s} \times t \leq (s,t)$ 
(resp. $s \times \underline{t} \leq (s,t)$)
	for $s \in S$, $t \in T$ and 
$\underline{s} \leq s$ (resp. $\underline{t} \leq t$) 
a broad relation in $S$ (resp. T).
\end{definition}

\begin{proposition}\label{TENSORCHILDREN UNOPEN PROP1}
If $S,T$ are simple broad posets then so is $S \otimes T$.
Further, for any non identity broad relation
$(s_1,t_1)\cdots (s_n,t_n) \leq (s,t)$ in $S \otimes T$
one has:
\begin{itemize}
	\item[(i)] for all $0 \leq i,j \leq n$ it is $s_i\leq_d s$, $t_j\leq_d t$;
	\item[(ii)] for $i \neq j$ either there exists
	$\underline{s}$ such that $s_i s_j \underline{s} \leq s$ 
	or there exists 
	$\underline{t}$ such that $t_i t_j \underline{t} \leq t$
	(or both);
	\item[(iii)] if both of the pairs $s_i, s_j$ and $t_i, t_j$ are $\leq_d$-comparable then $i=j$.
\end{itemize}
\end{proposition}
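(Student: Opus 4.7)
My plan is to establish (i), (ii), (iii) in the order stated and then derive the simple property of $S \otimes T$ as a direct corollary of (iii). The central technique throughout will be induction on the derivation of the broad relation $(s_1,t_1)\cdots(s_n,t_n) \leq (s,t)$ from the generating relations via broad transitivity. Concretely, any such relation can be built starting from a single generating relation at the root, either of ``$S$-type'' $\underline{s} \times t \leq (s,t)$ or ``$T$-type'' $s \times \underline{t} \leq (s,t)$, and then iteratively substituting each left-hand letter by a smaller broad relation of the same shape. I would formalize this as a rooted derivation tree whose leaves are the $(s_i,t_i)$ and whose internal nodes are labeled ``$S$'' or ``$T$''.

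For (i), an immediate induction on the derivation tree yields $s_i \leq_d s$ and $t_j \leq_d t$: at an $S$-step the first coordinate strictly descends in $\leq_d^S$ while the second coordinate is preserved, and symmetrically at a $T$-step. For (ii), given $(s_i,t_i)$ and $(s_j,t_j)$ with $i\neq j$, I would pick their lowest common ancestor $(s',t')$ in the derivation tree. If this ancestor is an $S$-step applying $\underline{s}' \leq s'$, then by (i) applied to the two subtrees, $s_i$ and $s_j$ descend to distinct letters $s''_{k_i}, s''_{k_j} \in \underline{s}'$, and broad transitivity produces a relation $s_i s_j \underline{\sigma} \leq s'$ in $S$. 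A second application of broad transitivity — chaining through the generating relations applied along the path from $(s,t)$ down to $(s',t')$, where any $T$-step contributes the trivial substitution $s' \leq s'$ on the first coordinate — then lifts this relation to $s_i s_j \underline{s} \leq s$. The $T$-step case is symmetric.

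For (iii), if both pairs $s_i,s_j$ and $t_i,t_j$ were $\leq_d$-comparable, then (ii) would supply (say) a broad relation $s_i s_j \underline{s} \leq s$ in $S$; by the ``further'' clause of Proposition \ref{SIMPLEBROAD PROP} applied with simplicity of $S$, the letters $s_i, s_j$ would have to be distinct and $\leq_d$-incomparable, contradicting comparability. Finally, simplicity of $S \otimes T$ is immediate: if $(s_i,t_i) = (s_j,t_j)$ appeared in a common broad relation, then both coordinate pairs are trivially comparable, and (iii) forces $i=j$. The main obstacle I anticipate is precisely the bookkeeping in the lifting step of (ii): one must verify carefully that $T$-steps along the spine from $(s',t')$ to $(s,t)$ do not interfere with the $S$-chain being assembled, which I expect to handle cleanly by using reflexive substitutions $e \leq e$ at every letter other than the one carrying the descent.
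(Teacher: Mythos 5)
Your treatment of (i), (ii), (iii) and of simplicity follows essentially the same route as the paper: (i) by induction on the generation of the relation, (ii) by reducing to the generating relation where the two letters first separate and then lifting via broad transitivity (your lowest-common-ancestor formulation is a perfectly good way to make precise the paper's terser ``holds for generating relations and is preserved by transitivity, using (i)''), and (iii) from (ii) together with the $\leq_d$-incomparability clause of Proposition \ref{SIMPLEBROAD PROP}. The lifting step you worry about does go through exactly as you describe: projecting the spine from $(s,t)$ to $(s',t')$ onto the $S$-coordinate yields a chain of witnessing relations in $S$ whose composite gives $s'\underline{g}\leq s$, into which you substitute $s_i s_j \underline{\sigma}\leq s'$.

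There is, however, one genuine omission: you never verify \emph{antisymmetry} of $S\otimes T$. The tensor product is only defined as a \emph{pre-broad} poset, so the claim ``$S\otimes T$ is a simple broad poset'' requires checking antisymmetry in addition to simplicity, and this is not vacuous --- the remark following the proposition exhibits non-simple $S,T$ with $(a,c)\leq(b,c)\leq(a,c)$ in $S\otimes T$, so antisymmetry really does use simplicity of the factors. Fortunately your own item (i) supplies the missing step in one line: if $(s,t)\leq(s',t')$ and $(s',t')\leq(s,t)$ are non-identity relations, then $s\leq_d s'\leq_d s$ and $t\leq_d t'\leq_d t$, and since $\leq_d$ is an order relation on a simple broad poset (Proposition \ref{SIMPLEBROAD PROP}) this forces $(s,t)=(s',t')$. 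Add that sentence and the proof is complete.
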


\begin{proof}
Note first that the ``further'' conditions suffice to check that $S\otimes T$ is a simple broad poset, i.e. that they imply antisymmetry and simplicity. Indeed, antisymmetry follows from combining (i) with the fact that $\leq_d^S$, $\leq_d^T$
are order relations
while simplicity is a particular case of (iii).

(i) follows since the condition holds for generating relations and is preserved by transitivity.
Similarly, (ii) holds for generating relations and is readily  seen to be preserved by transitivity when applied to relations satisfying (i). Lastly, (iii) follows from (ii)
and the $\leq_d$-incomparability result in Proposition \ref{SIMPLEBROAD PROP}. 
\end{proof}

\begin{remark}
	The main claim in Proposition \ref{TENSORCHILDREN UNOPEN PROP1} fails for non simple broad posets.
	As an example, let $S$ be the broad poset $\{a,b\}$ with generating relations $ab\leq a$, $ab \leq b$ 
	(antisymmetry holds since $\leq$ decreases the size of tuples)
	and $T$ be $\{c\}$ with generating relation $\epsilon \leq c$. 
	Then $(a,c) \leq (b,c) \leq (a,c)$ hold in $S \otimes T$.
\end{remark}

\begin{proposition}\label{TENSORCHILDREN UNOPEN PROP2}
Let $S,T$ be trees. An edge $(s,t) \in S \otimes T$ has one of five types:
  \begin{itemize}
	 \item[(leaf)] it is a leaf if both $s \in S$, $t \in T$ are leaves;
	 \item[(stump)] it is a stump if $s \in S$ is a leaf and $t \in T$ is a stump or vice versa, or if both $s \in S$, $t \in T$ are stumps;
	 \item[(leaf node)] it is a node if $s \in S$ is a node and $t \in T$ is a leaf or vice versa. In fact 
	$(s,t)^{\uparrow}=s^{\uparrow} \times t$ or 
	$(s,t)^{\uparrow}=s \times t^{\uparrow}$, accordingly;
	 \item[(null node)] it is a node such that $\epsilon \leq (s,t)$ if $s \in S$ is a node and $t \in T$ a stump or vice versa. In fact 
	$(s,t)^{\uparrow}=s^{\uparrow} \times t$ or 
	$(s,t)^{\uparrow}=s \times t^{\uparrow}$, accordingly;
 	 \item[(fork)] if $s\in S$, $t \in T$ are both nodes then there are exactly two maximal $\underline{f}$ such that $\underline{f} < (s,t)$, namely $s \times t^{\uparrow}$ and $s^{\uparrow} \times t$. We call such $(s,t)$ a \textit{fork}.
	\end{itemize}
\end{proposition}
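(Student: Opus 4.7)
The plan is case analysis on the types of $s$ and $t$, resting on a single structural fact about relations in $S \otimes T$. First I would establish this structural observation: for any broad relation $\underline{f} \leq (s, t)$ with $\underline{f} \neq (s, t)$ one has either $\underline{f} \leq \underline{s} \times t$ for some proper $\underline{s} < s$ in $S$, or $\underline{f} \leq s \times \underline{t}$ for some proper $\underline{t} < t$ in $T$. Indeed, broad transitivity never alters the codomain of a relation, so any derivation of a proper $\underline{f} < (s,t)$ must begin with one of the two types of generators from the definition of $\otimes$, and subsequent refinement only shrinks the tuple, so $\underline{f}$ is $\leq$ that initial generator.

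Next I would enumerate the maximal proper generators with codomain $(s, t)$ according to each factor's type: on the $s$-side, a leaf contributes no proper generator, a stump contributes the generator $\epsilon \leq (s, t)$, and a node contributes the maximal proper generator $s^{\uparrow} \times t \leq (s, t)$; symmetrically for $t$. Combining the two sides handles four of the five enumerated cases directly. Leaf-leaf yields only identity relations, so $(s, t)$ is a leaf. Any pairing of a leaf with a stump or of two stumps yields exactly the generator $\epsilon \leq (s, t)$, so $(s, t)$ is a stump. Leaf-node (or node-leaf) yields the unique proper maximum $s \times t^{\uparrow}$ (resp.\ $s^{\uparrow} \times t$), identifying $(s, t)$ as a node with the stated $(s, t)^{\uparrow}$. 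Stump-node (or node-stump) yields both $\epsilon \leq (s, t)$ (from the $s$-side, establishing the null-node property) and the maximum $s \times t^{\uparrow}$ (from the $t$-side).

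The hard part will be the node-node case, where both maximal proper generators $s^{\uparrow} \times t$ and $s \times t^{\uparrow}$ are proper, and I must show they are $\leq$-incomparable in $(S \otimes T)^+$ in order to conclude that $(s, t)$ admits no unique maximum proper descendant, i.e.\ that $(s, t)$ is a fork. By the definition of $\leq$ on tuples, an inequality $s^{\uparrow} \times t \leq s \times t^{\uparrow}$ would force every edge $(s_i, t)$ with $s_i \in s^{\uparrow}$ to be $\leq_d$-below some $(s, t_j)$ with $t_j \in t^{\uparrow}$. To rule this out I would prove the auxiliary claim: whenever $(a, b) \leq_d (s', t')$ in $S \otimes T$, one has $b \leq_d t'$ in $T$. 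This follows by induction on the derivation of the witnessing relation using the structural observation (any generator at a descendant edge keeps the second coordinate $\leq_d^T$-below the original $t'$). Applied with $(a, b) = (s_i, t)$ and $t' = t_j$ this would yield $t \leq_d t_j$ in $T$, contradicting $t_j <_d t$ by antisymmetry of $\leq_d^T$ (Proposition \ref{SIMPLEBROAD PROP}); the symmetric claim rules out the reverse inequality.
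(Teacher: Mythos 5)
Your proof is correct and follows essentially the same route as the paper: the first four cases are read off from the generators, and the fork case is settled by showing the two candidate maxima are incomparable via a coordinatewise $\leq_d$ comparison plus antisymmetry (Proposition \ref{SIMPLEBROAD PROP}). Your auxiliary claim that $(a,b)\leq_d(s',t')$ forces $b\leq_d t'$ is exactly part (i) of Proposition \ref{TENSORCHILDREN UNOPEN PROP1}, so you could cite it rather than re-derive it by induction.
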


\begin{proof}
%Since the generating relations satisfy both $s_i\leq_d s$ and $t_j\leq_d t$ with at most one equality, the  
%``further'' claim of (i) is clear, and suffices for antisymmetry.
%
%For (ii), note first that the ``further'' condition suffices for simplicity and that the generating relations satisfy it.
%But by Corollary \ref{INCOMPPLANARAX COR} relations satisfying both of the conditions in (i) and (ii) are closed under composition.

Only the fork case requires proof. In fact, it is tautological that only $s \times t^{\uparrow}$ and $s^{\uparrow} \times t$ can possibly be maximal, hence one needs only verify that neither 
$s \times t^{\uparrow} \leq s^{\uparrow} \times t$ 
nor 
$s^{\uparrow} \times t \leq s \times t^{\uparrow}$. This follows since the $S$ coordinates of the pairs in the tuple $s^{\uparrow} \times t$ are $<_d$ than those in $s \times t^{\uparrow}$ and vice versa.
 \end{proof}

In order to simplify notation, we will henceforth write  
$e^{\uparrow S} = s^{\uparrow} \times t$, 
$e^{\uparrow T} = s \times t^{\uparrow}$ and 
$e^{\uparrow S,T} = s^{\uparrow} \times t^{\uparrow}$.

\begin{proposition} \label{NODEFACTCHAR PROP}
Let $S$, $T$ be trees and consider the broad relation 
\[\underline{e} = (s_1,t_1)(s_2,t_2) \cdots (s_n,t_n) \leq (s,t) = e.\]
in $S \otimes T$. Then:
\begin{itemize}
\item[(i)]   
$\underline{e} \leq e^{\uparrow S}$ (resp. $\underline{e} \leq e^{\uparrow T})$ if and only if $s_i \neq s, \forall i$ (resp. $t_i \neq t, \forall i$);
\item[(ii)] $\underline{e} \leq e^{\uparrow S,T}$ if and only if both $s_i \neq s$ and $t_i \neq t, \forall i$.
\end{itemize}
\end{proposition}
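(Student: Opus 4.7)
My plan is to prove both parts by strong induction on $e=(s,t)$ with respect to the order $\leq_d$ on $S \otimes T$ (well-founded and antisymmetric by Propositions \ref{SIMPLEBROAD PROP} and \ref{TENSORCHILDREN UNOPEN PROP1}). The ``only if'' directions follow immediately from Proposition \ref{TENSORCHILDREN UNOPEN PROP1}(i): any witnessing decomposition $\underline{e} = \underline{e}_1 \cdots \underline{e}_m$ of $\underline{e} \leq e^{\uparrow S}$ with $\underline{e}_p \leq (s'_p, t)$, $s'_p \in s^{\uparrow}$, forces $s_i \leq_d s'_p <_d s$ for each $(s_i,t_i) \in \underline{e}_p$, so $s_i \neq s$; the argument for (ii) is analogous, yielding both $s_i \neq s$ and $t_i \neq t$.

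For the ``if'' direction of (i), assume $s_i \neq s$ for all $i$. Since then $\underline{e} \neq e$, the relation $\underline{e} \leq e$ must be obtained by broad transitivity from one of the two generators at $e$, producing either $\underline{e} \leq e^{\uparrow S}$ (in which case the conclusion is immediate) or $\underline{e} \leq e^{\uparrow T} = s \times t^{\uparrow}$. In the latter case I write $\underline{e} = \underline{e}_1 \cdots \underline{e}_k$ with $\underline{e}_j \leq (s, t_j)$ for $t_j \in t^{\uparrow}$; each $\underline{e}_j$ inherits the ``no $s$-coordinate equals $s$'' condition, so the inductive hypothesis applied at $(s,t_j) <_d e$ yields $\underline{e}_j \leq (s,t_j)^{\uparrow S} = s^{\uparrow} \times t_j$. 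Concatenating produces $\underline{e} \leq s^{\uparrow} \times t^{\uparrow}$, and the final step is the auxiliary relation $s^{\uparrow} \times t^{\uparrow} \leq s^{\uparrow} \times t = e^{\uparrow S}$, which follows from the decomposition $s^{\uparrow} \times t^{\uparrow} = \prod_{s' \in s^{\uparrow}} (s' \times t^{\uparrow})$ together with the generating relations $s' \times t^{\uparrow} \leq (s', t)$ of $S \otimes T$.

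For the ``if'' direction of (ii), I plan to apply (i) twice: the hypothesis $s_i \neq s$ yields a decomposition $\underline{e} = \underline{e}_1 \cdots \underline{e}_m$ with $\underline{e}_p \leq (s'_p, t)$, $s'_p \in s^{\uparrow}$; the additional hypothesis $t_i \neq t$ is inherited by each $\underline{e}_p$, so (i) with the roles of $S$ and $T$ exchanged, applied to $\underline{e}_p \leq (s'_p, t)$, gives $\underline{e}_p \leq s'_p \times t^{\uparrow}$, and concatenation yields $\underline{e} \leq s^{\uparrow} \times t^{\uparrow} = e^{\uparrow S,T}$.

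The main subtlety I anticipate lies in the stump edge cases: when $t$ is a stump one has $t^{\uparrow} = \epsilon$, forcing degenerate sub-relations of the form $\underline{e}_j = \epsilon \leq (s, t_j)$, and the generating relations $s' \times t^{\uparrow} = \epsilon \leq (s',t)$ used in the final step become empty; one must then verify that these degenerate pieces interact correctly with the $m$-fold trivial decomposition $\epsilon = \epsilon \cdots \epsilon$ in the concatenation. This is a bookkeeping matter, not a genuine obstacle, but deserves explicit mention in the writeup.
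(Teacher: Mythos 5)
Your proof is correct and takes essentially the same route as the paper's: the same upward $\leq_d$-induction, the same case split on whether the non-identity relation factors through $e^{\uparrow S}$ or $e^{\uparrow T}$, and the same concatenation $\underline{e} \leq s^{\uparrow} \times t^{\uparrow} \leq s^{\uparrow} \times t = e^{\uparrow S}$. The only cosmetic difference is in (ii), where you apply (i) twice (once with the roles of $S$ and $T$ exchanged) while the paper simply truncates its chain of inequalities at $s^{\uparrow} \times t^{\uparrow}$; these amount to the same computation.
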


\begin{proof}
Only the ``if'' directions need proof, and the proof follows by upward $\leq_d$ induction on $s,t$. The base cases of either $s$ or $t$ a leaf are obvious.

Otherwise, let $\underline{e}$ satisfy the ``if'' condition in (i). Since it must be either 
$\underline{e} \leq e^{\uparrow S}$ or 
$\underline{e} \leq e^{\uparrow T}$
we can assume it is the latter case. 
Writing
$t^{\uparrow} = u_1 \cdots u_k$, 
$\underline{e} = \underline{e}_1 \cdots \underline{e}_k$ 
so that $\underline{e}_i \leq (s,u_i)$ (note that possibly $k=0$), the induction hypothesis now yields 
$\underline{e}_i \leq (s,u_i)^{\uparrow S} = s^{\uparrow} \times u_i$, 
and hence
\begin{equation}\label{INEQUALS EQ}
 \underline{e} = 
\underline{e}_1 \cdots \underline{e}_k \leq (s^{\uparrow} \times u_1) \cdots (s^{\uparrow} \times u_k) = 
 s^{\uparrow} \times t^{\uparrow} \leq s^{\uparrow} \times t = e^{\uparrow S}.
\end{equation}
The proof of (ii) simply disregards the last inequality in (\ref{INEQUALS EQ}).
\end{proof}

\begin{corollary}
$\underline{e} \leq e^{\uparrow S,T}$ 
 if and only if both 
$\underline{e} \leq e^{\uparrow S}$  and 
$\underline{e} \leq e^{\uparrow T}$.
\end{corollary}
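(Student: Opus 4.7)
The plan is to read this corollary off directly from Proposition \ref{NODEFACTCHAR PROP}, which has already established bi-conditional combinatorial criteria for each of the three inequalities in play. By part (i) of that proposition, the relation $\underline{e} \leq e^{\uparrow S}$ holds precisely when $s_i \neq s$ for every $i$, and analogously $\underline{e} \leq e^{\uparrow T}$ holds precisely when $t_i \neq t$ for every $i$. By part (ii), the relation $\underline{e} \leq e^{\uparrow S,T}$ holds precisely when, for every $i$, both $s_i \neq s$ and $t_i \neq t$ hold simultaneously.

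The proof then reduces to the trivial logical observation that a universal quantifier distributes over conjunction: the statement ``$s_i \neq s$ for all $i$, and $t_i \neq t$ for all $i$'' is equivalent to ``for all $i$, both $s_i \neq s$ and $t_i \neq t$''. Concretely, I would write a single chain of equivalences: $\underline{e} \leq e^{\uparrow S,T}$ iff $s_i \neq s$ and $t_i \neq t$ for all $i$ (by part (ii)), iff $s_i \neq s$ for all $i$ and separately $t_i \neq t$ for all $i$, iff $\underline{e} \leq e^{\uparrow S}$ and $\underline{e} \leq e^{\uparrow T}$ (by part (i) applied twice). That is the entire argument.

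There is no substantive obstacle here. All of the genuine broad-poset content, namely the upward $\leq_d$-induction on $(s,t)$ that reduces an arbitrary broad relation $\underline{e} \leq e^{\uparrow T}$ into the form $s^{\uparrow} \times t^{\uparrow}$ (cf.\ the display (\ref{INEQUALS EQ}) in the proof of Proposition \ref{NODEFACTCHAR PROP}), has already been absorbed into the preceding proposition. The corollary is just a convenient repackaging of its two halves and I would expect to dispatch it in a single sentence referring back to Proposition \ref{NODEFACTCHAR PROP}.
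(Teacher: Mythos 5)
Your proposal is correct and is exactly the argument the paper intends: the corollary is stated without proof as an immediate consequence of Proposition \ref{NODEFACTCHAR PROP}, obtained by matching the criterion in part (ii) against the conjunction of the two criteria in part (i), with the universal quantifier distributing over the conjunction. The only implicit point worth noting is that either side of the biconditional already forces $\underline{e}\leq e$ (by composing with $e^{\uparrow S,T}\leq e$, resp.\ $e^{\uparrow S}\leq e$), so the proposition's hypotheses are indeed available, and you handle the substance correctly.
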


\begin{lemma}\label{INEQMAXIFF NOPENTENSOR LEM}
Let $S$, $T$ be trees. For any $e = (s,t) \in S \otimes T$ there exists a minimum $e^{\lambda} \in (S \times T)^+$ such that $e^{\lambda} \leq e$. In fact, $e^{\lambda} = s^{\lambda} \times t^{\lambda}$.
\end{lemma}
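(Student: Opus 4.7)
My plan is to prove both the existence of a minimum and its explicit description simultaneously, by nested $\leq_d$-induction on $(s,t) \in S \otimes T$. Since $S \otimes T$ is simple by Proposition \ref{TENSORCHILDREN UNOPEN PROP1}, Proposition \ref{SIMPLEBROAD PROP} guarantees this induction is well-founded. Concretely, I will prove the two statements (a) $s^{\lambda} \times t^{\lambda} \leq (s,t)$ and (b) $s^{\lambda} \times t^{\lambda} \leq \underline{e}'$ for every $\underline{e}' \leq (s,t)$, at once.

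The base case is $(s,t)$ a leaf, which by Proposition \ref{TENSORCHILDREN UNOPEN PROP2} forces both $s$ and $t$ to be leaves, so that $s^{\lambda} \times t^{\lambda} = (s,t)$ and (b) follows from the fact that the only $\underline{e}' \leq (s,t)$ is $(s,t)$ itself (a consequence of Proposition \ref{SIMPLEBROAD PROP} combined with Proposition \ref{TENSORCHILDREN UNOPEN PROP1}(i)). For the inductive step, suppose without loss of generality that $s$ is not a leaf in $S$, and write $s^{\uparrow} = s_1 \cdots s_p$ with $p \geq 0$ (allowing the stump case $p=0$, where $s^{\lambda} = \epsilon$). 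Using Lemma \ref{INEQMAXIFF NOPEN LEM}, $s^{\lambda} = s_1^{\lambda} \cdots s_p^{\lambda}$, and one checks directly that $s^{\lambda} \times t^{\lambda} = (s_1^{\lambda} \times t^{\lambda}) \cdots (s_p^{\lambda} \times t^{\lambda})$ as tuples in $(S \times T)^+$.

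For (a), apply the inductive hypothesis to each $(s_i,t) <_d (s,t)$ to obtain $s_i^{\lambda} \times t^{\lambda} \leq (s_i, t)$; then broad transitivity applied to the generating relation $s^{\uparrow} \times t = (s_1,t)\cdots(s_p,t) \leq (s,t)$ produces
\[
s^{\lambda} \times t^{\lambda} \;=\; \prod_i (s_i^{\lambda} \times t^{\lambda}) \;\leq\; \prod_i (s_i,t) \;\leq\; (s,t),
\]
where the $p=0$ case degenerates correctly to $\epsilon \leq (s,t)$ via the generating relation $\epsilon \times t = \epsilon \leq (s,t)$. For (b), let $\underline{e}' \leq (s,t)$; if $\underline{e}' = (s,t)$ we are done by (a). Otherwise, Proposition \ref{TENSORCHILDREN UNOPEN PROP2} tells us that the only maximal strict sub-tuples below $(s,t)$ are $s^{\uparrow} \times t$ and $s \times t^{\uparrow}$, so $\underline{e}' \leq s^{\uparrow} \times t$ or $\underline{e}' \leq s \times t^{\uparrow}$; by symmetry WLOG the former. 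Writing $\underline{e}' = \underline{e}'_1 \cdots \underline{e}'_p$ with $\underline{e}'_i \leq (s_i, t)$ (using the definition of $\leq$ on $(S \times T)^+$), the inductive hypothesis for (b) gives $s_i^{\lambda} \times t^{\lambda} \leq \underline{e}'_i$, and concatenation yields $s^{\lambda} \times t^{\lambda} \leq \underline{e}'$.

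The step I expect to be most delicate is the assertion that any non-reflexive $\underline{e}' < (s,t)$ lies below one of the two generating tuples $s^{\uparrow} \times t$, $s \times t^{\uparrow}$; this is exactly where Proposition \ref{TENSORCHILDREN UNOPEN PROP2}'s enumeration of maximal sub-tuples is essential (and is precisely the point that distinguishes the tensor case from the tree case treated in Lemma \ref{INEQMAXIFF NOPEN LEM}, since the fork case forces us to choose between two incomparable maxima). Once that structural fact is invoked, the rest of the argument parallels the tree case almost verbatim, with the bookkeeping identity $(s_1^{\lambda} \times t^{\lambda}) \cdots (s_p^{\lambda} \times t^{\lambda}) = s^{\lambda} \times t^{\lambda}$ ensuring the inductive step closes cleanly even in the stump/null-node degeneracies.
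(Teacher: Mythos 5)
Your proof is correct and follows essentially the same route as the paper's: $\leq_d$-induction on $e=(s,t)$, using the fact that any non-identity relation below $(s,t)$ factors through $e^{\uparrow S}$ or $e^{\uparrow T}$, then decomposing the tuple accordingly and concatenating the inductive inequalities exactly as in the proof of Lemma \ref{INEQMAXIFF NOPEN LEM}. The only (cosmetic) caveat is that your two ``WLOG'' reductions pull in different directions --- the relation $\underline{e}'$ may factor through $s\times t^{\uparrow}$ even after you have fixed $s$ to be a non-leaf --- so the symmetry should be invoked per relation rather than once at the outset; this does not affect the validity of the argument.
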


\begin{proof}
The proof is by $\leq_d$ induction on $e$. The case of $s$, $t$ both leaves is obvious. Otherwise, for any non-identity relation either
$\underline{f} \leq e^{\uparrow S} < e$
or 
$\underline{f} \leq e^{\uparrow T} < e$,
and the analysis in the proof of Lemma \ref{INEQMAXIFF NOPEN LEM} applies in either case to show that indeed $s^{\lambda} \times t^{\lambda} \leq \underline{f}$.
\end{proof}

\subsection{Subtrees}

\begin{definition}
 Let $S$, $T$ be trees.
A \textit{subtree} of $S \otimes T$ is a tree $U$ together with a broad poset map $U \hookrightarrow S \otimes T$ that is an underlying monomorphism. Further, a subtree is called \textit{full} if the relations in $U$ coincide with those in its image and in that case we instead write $U \subset S \otimes T$.
\end{definition}

 We have the following characterization.

\begin{proposition}\label{CLOSEDSUBTREE PROP}
$U \overset{\varphi}{\hookrightarrow} S \otimes T$ is full iff for each leaf $l \in U$ it is not $\epsilon \leq \varphi(l)$.
\end{proposition}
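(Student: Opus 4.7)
My plan is to prove the forward implication by unpacking definitions, and the converse by applying the characterization of broad relations from Lemma~\ref{INEQMAXIFF NOPEN LEM} (available since $S \otimes T$ is simple by Proposition~\ref{TENSORCHILDREN UNOPEN PROP1}) and transferring each of its three equivalent conditions from $S \otimes T$ to $U$.

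For the forward direction, if $l$ is a leaf of $U$, then $\epsilon \nleq^U l$, and fullness directly translates this non-relation into $\epsilon \nleq^{S \otimes T} \varphi(l)$, since $\epsilon$ is trivially a tuple in $\varphi(U)^+$.

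For the converse, I would assume each leaf $l$ of $U$ satisfies $\epsilon \nleq \varphi(l)$ in $S \otimes T$ and, given a broad relation $\varphi(f_1) \cdots \varphi(f_n) \leq^{S \otimes T} \varphi(e)$ with $f_i, e \in U$, aim to produce the corresponding relation $\underline{f} \leq^U e$. The key preliminary step is to establish the equivalence
\[
 f \leq_d^U e \quad\Longleftrightarrow\quad \varphi(f) \leq_d^{S \otimes T} \varphi(e) \qquad (f, e \in U),
\]
whose ``only if'' is automatic. For the ``if'' direction, assuming $f \neq e$: if $f, e$ were $\leq_d^U$-incomparable, Lemma~\ref{INCOMPEDGES LEM} applied to $U$ would yield a broad relation $f \, e \, \underline{g} \leq^U r_U$ whose $\varphi$-image, via the ``further'' part of Proposition~\ref{SIMPLEBROAD PROP}, forces $\varphi(f)$ and $\varphi(e)$ to be $\leq_d^{S \otimes T}$-incomparable, contradicting the hypothesis; while $e <_d^U f$ combined with $\varphi(f) \leq_d^{S \otimes T} \varphi(e)$ would contradict antisymmetry of $\leq_d^{S \otimes T}$.

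Once this equivalence is in hand, the conditions ``$\varphi(f_i) \leq_d \varphi(e)$'' and ``the $\varphi(f_i)$ are pairwise $\leq_d$-incomparable'' from Lemma~\ref{INEQMAXIFF NOPEN LEM} transfer immediately to $U$, so it remains to verify the leaf-tuple identity $f_1^{\lambda_U} \cdots f_n^{\lambda_U} = e^{\lambda_U}$. Writing $L_U(v) = \{l \text{ leaf of } U : l \leq_d^U v\}$, the inclusion $\bigsqcup_i L_U(f_i) \subset L_U(e)$ follows from transitivity of $\leq_d^U$, and pairwise disjointness from Corollary~\ref{INCOMPPLANARAX COR} applied in $U$. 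The reverse inclusion, which I expect to be the main obstacle, is the only place the leaf hypothesis enters — precisely because a leaf of $U$ is generally \emph{not} a leaf of $S \otimes T$, so one cannot directly read off the leaf-tuple identity in $S \otimes T$. Concretely, given $l \in L_U(e)$, the hypothesis $\epsilon \nleq \varphi(l)$ forces $\varphi(l)^\lambda \neq \epsilon$ in $S \otimes T$, so one may pick an actual $S \otimes T$-leaf $m$ with $m \leq_d \varphi(l) \leq_d \varphi(e)$; the $S \otimes T$ leaf-tuple identity then places $m \leq_d \varphi(f_i)$ for some $i$; Corollary~\ref{INCOMPPLANARAX COR} applied in $S \otimes T$ to $m \leq_d \varphi(f_i)$ and $m \leq_d \varphi(l)$ forces $\varphi(l)$ and $\varphi(f_i)$ to be $\leq_d^{S \otimes T}$-comparable, and the preliminary equivalence combined with $l$ being a $U$-leaf then gives $l \in L_U(f_i)$. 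Lemma~\ref{INEQMAXIFF NOPEN LEM} applied in $U$ then concludes the proof.
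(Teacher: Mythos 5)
Your overall route is the same as the paper's: reduce fullness to the criterion of Lemma~\ref{INEQMAXIFF NOPEN LEM} applied to the tree $U$, transfer conditions (i) and (ii) via the compatibility of the two $\leq_d$-relations, and invoke the hypothesis $\epsilon \nleq \varphi(l)$ only to rescue the leaf-tuple condition (iii); you simply spell out more of the details. However, one step is genuinely invalid: the appeal to Corollary~\ref{INCOMPPLANARAX COR} ``applied in $S \otimes T$''. That corollary is stated and proved only for trees (its proof runs through Lemma~\ref{INCOMPEDGES LEM}, which needs the nodal and root structure), and it is \emph{false} in $S \otimes T$ precisely because of forks. Concretely, if $S$ and $T$ are $2$-leaf corollas with leaves $a,b$ and $c,d$ and roots $r_S$, $r_T$, then $(a,r_T)$ and $(r_S,c)$ are $\leq_d$-incomparable in $S \otimes T$ (they never occur together in the tuple of a broad relation, by Proposition~\ref{TENSORCHILDREN UNOPEN PROP1}(ii) and Proposition~\ref{SIMPLEBROAD PROP}), yet both have $(a,c)$ as a descendant. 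So from $m \leq_d \varphi(l)$ and $m \leq_d \varphi(f_i)$ you cannot conclude that $\varphi(l)$ and $\varphi(f_i)$ are $\leq_d$-comparable. (A milder instance of the same slip: Lemma~\ref{INEQMAXIFF NOPEN LEM} also requires a tree, and $S\otimes T$ is not one; the facts you need about it, namely $\varphi(e)^{\lambda} = \prod_i \varphi(f_i)^{\lambda}$ and $m \in \varphi(e)^{\lambda}$, do hold but must be extracted from Lemma~\ref{INEQMAXIFF NOPENTENSOR LEM} together with the observation that leaves admit no non-identity relations.)

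The conclusion you want at that point is nevertheless true, but the comparability has to be produced inside $U$ rather than inside $S \otimes T$. If $l \in L_U(e)$ were $\leq_d^U$-incomparable with every $f_i$ (the only alternative to $l \leq_d^U f_i$, a leaf of $U$ being $\leq_d^U$-minimal), then $l, f_1, \dots, f_n$ would be pairwise incomparable in $U$, and Lemma~\ref{INCOMPEDGES LEM} applied to $U^{\leq e}$ yields a relation $l f_1 \cdots f_n \underline{g} \leq^U e$. Pushing this forward and comparing leaf-tuples with those of the given relation $\varphi(f_1)\cdots\varphi(f_n) \leq \varphi(e)$ gives
\[
\textstyle\prod_i \varphi(f_i)^{\lambda} \;=\; \varphi(e)^{\lambda} \;=\; \varphi(l)^{\lambda}\prod_i \varphi(f_i)^{\lambda}\prod_j \varphi(g_j)^{\lambda},
\]
forcing $\varphi(l)^{\lambda}=\epsilon$, i.e. $\epsilon \leq \varphi(l)$, which is exactly what the hypothesis forbids. (Equivalently: incomparability in $U$ places $\varphi(l)$ and $\varphi(f_i)$ jointly in the tuple of a broad relation of $S\otimes T$, so by simplicity their $\lambda$-tuples are disjoint, contradicting the common leaf $m$.) With this substitution your argument closes and agrees with the paper's.
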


\begin{proof}
To simplify notation we will simply write $u$ for both an edge $u \in U$ and its image 
$\varphi(u) \in S \otimes T$ and instead decorate the broad relations as $\leq^U$, $\leq^{S \otimes T}$, and similarly write $u^{\lambda,U}$, $u^{\lambda,S \otimes T}$
following Lemmas \ref{INEQMAXIFF NOPEN LEM} and \ref{INEQMAXIFF NOPENTENSOR LEM}.

We need to show that a broad relation 
$\underline{u} = u_1 \cdots u_n \leq^{S \otimes T} u$ 
can not fail the conditions in Lemma \ref{INEQMAXIFF NOPEN LEM} with respect to $U$. Further, we note that, since $S \otimes T$ has a $\leq_d$-maximal element $(r_S,r_T)$, the proof of Lemma \ref{LEQDCOMP LEM} applies to show that the two $\leq_d$-relations on $\varphi(U)$ coincide. Thus, only condition (iii) of Lemma \ref{INEQMAXIFF NOPEN LEM} could possibly fail, and this would happen only if $u_1^{\lambda,U} \cdots u_n^{\lambda,U}$ lacked some of the leaves in $u^{\lambda,U}$. But our hypothesis is that $l^{\lambda,S \otimes T} \neq \epsilon$ for $l$ any leaf of $U$, hence this is impossible.
\end{proof}

%\begin{definition}
%For a subtree $U \hookrightarrow S \otimes T$ we define its 
%\textit{closure} $\bar{U}$ to be the smallest closed subtree 
%$\bar{U} \subset S \otimes T$ containing $U$.

%Note that by Propositions \ref{MAPOUTTREE PROP} and \ref{CLOSEDSUBTREE PROP} $\bar{U}$ is obtained from $U$ by simply adding generating stump relations $\epsilon \leq^{\bar{U}} l$ for each leaf $l$ of $U$ such that $\epsilon \leq^{S \otimes T} l$.
%\end{definition}

\begin{definition}\label{ELEMTENS DEF}
Let $S$, $T$ be trees. A subtree $U \hookrightarrow S \otimes T$ is called
\begin{itemize}
 \item \textit{elementary} if all of its generating broad relations are 
of the form $e^{\uparrow S} \leq e$ or  $e^{\uparrow T} \leq e$;
 \item \textit{initial} if $U$ contains the ``double root'' 
$(r_S,r_T) \in S \otimes T$.
\end{itemize}
Further, a \textit{maximal elementary subtree} is a subtree that is not contained in any other. 
Note that, since one can graft new root vertices to $U$,  Proposition \ref{MAPOUTTREE PROP} implies that maximal elementary  subtrees are necessarily initial.
\end{definition}

\begin{remark} Maximal elementary subtrees are called \textit{percolation schemes} in \cite[Example 9.4]{MW08}.
\end{remark}

\begin{remark}\label{ELEMINNERFACE REM}
As noted in Proposition \ref{TENSORCHILDREN UNOPEN PROP2}, the relation $\epsilon \leq e$ is a decomposable relation of $S \otimes T$ whenever $e$ is a null node. As a consequence, any elementary tree containing such a generating relation is in fact an inner face of a larger elementary tree.  
\end{remark}

%\begin{remark}\label{MAXIMALSUBFACESLEAVES REM}
%Given initial subtrees 
%$U \hookrightarrow U' \hookrightarrow S \otimes T$
%such that $\bar{U} = \bar{U}'$, one has 
%$(r_S,r_T)^{\lambda,U'} \leq (r_S,r_T)^{\lambda,U}$ with 
%$(r_S,r_T)^{\lambda,U'}$ obtained from $(r_S,r_T)^{\lambda,U}$ by removing those leaves of $U$ which become stumps in $U'$.
%\end{remark}

\begin{lemma}\label{CLOSEDSUBTREE LEM}
If $U$ is a full face of $S$, then $U \otimes T \subset S \otimes T$, i.e., $U \otimes T$ contains all broad relations in its image.
\end{lemma}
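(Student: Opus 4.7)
The strategy is to apply the fullness criterion of Proposition \ref{CLOSEDSUBTREE PROP} to the inclusion $U \otimes T \hookrightarrow S \otimes T$: concretely, I must verify that no leaf $(u,t)$ of $U \otimes T$ satisfies $\epsilon \leq (u,t)$ in $S \otimes T$. (That $U \otimes T \hookrightarrow S \otimes T$ is an underlying monomorphism is immediate from the definition of $\otimes$, since $U \hookrightarrow S$ is already one.)

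First I would identify the leaves of $U \otimes T$: by Proposition \ref{TENSORCHILDREN UNOPEN PROP2} these are precisely the pairs $(u,t)$ with $u$ a leaf of $U$ and $t$ a leaf of $T$. Next, Lemma \ref{INEQMAXIFF NOPENTENSOR LEM} lets me compute the minimum tuple below such a leaf in $S \otimes T$ as
\[(u,t)^{\lambda,S\otimes T} = u^{\lambda,S} \times t^{\lambda,T} = u^{\lambda,S} \times t,\]
using $t^{\lambda,T} = t$. Hence $\epsilon \leq^{S\otimes T}(u,t)$ would force $u^{\lambda,S} = \epsilon$, i.e.\ $\epsilon \leq^{S} u$.

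The crux is then that $\epsilon \leq^{S} u$ cannot happen when $U$ is a full face of $S$: since the empty tuple $\epsilon$ trivially involves only edges of $U$, the hypothetical relation $\epsilon \leq^{S} u$ would, by fullness of $U$ in $S$, also be a relation in $U$, contradicting $u$ being a leaf of $U$. Thus $u^{\lambda,S} \neq \epsilon$, so $(u,t)^{\lambda,S\otimes T} = u^{\lambda,S} \times t \neq \epsilon$, and the criterion of Proposition \ref{CLOSEDSUBTREE PROP} is satisfied. I do not anticipate any real obstacle here; the argument is essentially a bookkeeping exercise combining the explicit descriptions of leaves and minimum tuples in $S \otimes T$ with the standard reformulation of fullness in terms of non-stumpness of leaves.
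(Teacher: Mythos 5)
Your argument founders at its very first step: Proposition \ref{CLOSEDSUBTREE PROP} is a statement about \emph{subtrees} of $S \otimes T$, i.e.\ about broad poset monomorphisms $V \hookrightarrow S \otimes T$ whose source $V$ is a \emph{tree}, and its proof works by checking the candidate relation against the leaf-tuple characterization of broad relations in a tree (Lemma \ref{INEQMAXIFF NOPEN LEM}). The broad poset $U \otimes T$ is essentially never a tree: as soon as $U$ and $T$ each contain a node, $U \otimes T$ contains forks (Proposition \ref{TENSORCHILDREN UNOPEN PROP2}), hence fails the nodal condition of Definition \ref{DENRORD DEF}. You therefore have no characterization of which broad relations hold in $U \otimes T$ in terms of leaves, and the implication ``no leaf of the sub-poset is a stump of the ambient poset $\Rightarrow$ fullness'' is exactly what would have to be proved for $U \otimes T$; proving it is essentially equivalent to the lemma itself. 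The paper is careful about precisely this kind of issue elsewhere: compare Lemma \ref{BROADREL STUMPS} and Corollary \ref{ROOTCOND COR}, where deciding whether a relation of $S \otimes T$ descends to a sub-tensor-product requires a condition on the whole leaf tuple of the target, not merely on whether individual leaves become stumps.

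Your subsidiary computations are correct --- the leaves of $U \otimes T$ are the pairs of leaves, $(u,t)^{\lambda, S\otimes T} = u^{\lambda,S} \times t$ by Lemma \ref{INEQMAXIFF NOPENTENSOR LEM}, and fullness of $U$ in $S$ does rule out $\epsilon \leq^S u$ for $u$ a leaf of $U$ --- but they verify the hypothesis of a criterion you are not entitled to invoke. The paper instead takes an arbitrary relation $(s_1,t_1)\cdots(s_n,t_n) \leq (s,t)$ of $S \otimes T$ with all $s_i$ and $s$ in $U$ and shows directly, by $\leq_d$-induction on $(s,t)$ using the factorizations through $e^{\uparrow S}$ and $e^{\uparrow T}$ supplied by Proposition \ref{NODEFACTCHAR PROP}, that it is generated by relations of $U \otimes T$. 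Some inductive argument of this type, tracking how composite relations of $S \otimes T$ can be rerouted through edges of $U \times T$, appears unavoidable here.
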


\begin{proof}
Given a relation $\underline{e}=(s_1,t_1)\cdots(s_n,t_n)\leq (s,t) =e$
with all $s_i \in U$ (but not necessarily $s \in U$), 
we first claim that there is a factorization 
$\underline{e} \leq \underline{u} \times t \leq (s,t)$
with $\underline{u}$ in $U$ and $\underline{u} \leq s$ in $S$.
If $s \in U$ one simply takes $\underline{u}=s$. Otherwise
Proposition \ref{NODEFACTCHAR PROP} yields
$\underline{e} \leq e^{\uparrow S} \leq e$ and the claim follows by 
$\leq_d$ induction on $e$.

To check the desired claim that $\underline{e}\leq e$ will be in $U \otimes T$ if $s_i,s\in U$, we again argue by $\leq_d$ induction on $e$, with the case $\underline{e}\leq e^{\uparrow T}\leq e$ being immediate and the case $\underline{e}\leq e^{\uparrow S}\leq e$ following by the result in the  previous paragraph.
%
%Note first that by Proposition \ref{CLOSEDSUBTREE PROP} any maximal elementary subtree $V$ of $S \otimes T$ is full.  
%
%It therefore suffices, for each broad relation 
%$e_1 \cdots e_n =\underline{e} \leq^{S \otimes T} e$ between elements in the image of $U \otimes T$, to show that $e_1, \cdots, e_n, e$ lie in one such maximal elementary $V$ that contains a maximal elementary subtree of $U \otimes T $ as a full subface.
% The outer face cases (cf. Notation \ref{FACETYPES NOT}) are straightforward,
% hence we focus on the harder case of $U = S-s$ for $s$ an inner edge. 
%Letting $p_s \in S$ denote the \textit{parent} of $s$, i.e. the unique edge such that $s \in (p_s)^{\uparrow}$, note that, setting $e=(p_s,t)$, $e^{\uparrow (S-s)}\leq e$ is the only
%generating relation of $(S-s) \otimes T$ that is not also a generating relation of $S \otimes T$. It hence suffices to check that if 
%$e_1 \cdots e_n \leq e^{\uparrow S}$, then it is also $e_1 \cdots e_n \leq e^{\uparrow (S-s)}$. This now follows by Proposition \ref{NODEFACTCHAR PROP} applied to the edge $(s,t) \in e^{\uparrow S}$.
\end{proof}

\begin{definition}
Let $S$, $T$ be trees and 
$A=\{a_i\}$, $B=\{b_j\}$ subsets of the sets of stumps of $S$, $T$, respectively, and let 
$v_A=\{\epsilon \leq a_i\}$, $v_B=\{\epsilon \leq b_j\}$
denote the corresponding vertices.

We say that a subtree $U \hookrightarrow S \otimes T$ \textit{misses $v_A$ and $v_B$} 
if one has a factorization 
$U \hookrightarrow (S - v_A) \otimes (T - v_B) \hookrightarrow S \otimes T$.

Further, if $B = \emptyset$ (resp. $A = \emptyset$) we say simply that ``$U$ misses $v_A$'' (resp. ``$U$ misses $v_B$'').
\end{definition}

\begin{remark}
In \cite{MW08} similar notions of ``$U$ missing an inner edge/leaf vertex are also defined, but we note that 
(due to Lemma \ref{CLOSEDSUBTREE LEM})
 those notions are far more straightforward. In fact, as explained in the erratum to the follow up paper \cite{CM11}, the earlier treatment overlooked some subtle properties of stumps.

For instance, note that Corollary \ref{ROOTCOND COR} below
implies that the notion 
``$U$ misses $v_A$ and $v_B$'' does not coincide with the notion 
``$U$ misses $v_A$ and $U$ misses $v_B$'' whenever $A$ and $B$ are both non-empty. 
\end{remark}

\begin{lemma}\label{BROADREL STUMPS}
Let $S$, $T$ be trees and 
$A=\{a_i\}$, $B=\{b_j\}$
subsets of the stumps of $S$, $T$, respectively.
Then a broad relation 
\[
 \underline{f} = f_1 f_2 \cdots f_k \leq e
\]
in $S \otimes T$ is a broad relation in $(S-v_A) \otimes (T-v_B)$
if and only if 
\[e^{\lambda , (S - v_A) \otimes (T - v_B)} \leq \underline{f}.\]
\end{lemma}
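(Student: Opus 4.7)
The plan is to reduce the claim to the leaf-tuple characterization of broad relations in Lemma \ref{INEQMAXIFF NOPEN LEM}, exploiting the observation that deleting stump vertices does not alter the descendance preorder $\leq_d$. Writing $W = (S-v_A) \otimes (T-v_B)$ for brevity, I would first note that passing from $S$ to $S - v_A$ leaves $\leq_d^S$ unchanged because the discarded relations $\epsilon \leq a$ have empty left-hand tuple and so generate no $\leq_d$ comparisons; the same applies to $T$. Inspecting the generating relations $\underline{s} \times t \leq (s,t)$ and $s \times \underline{t} \leq (s,t)$ of the tensor product then shows that $\leq_d$ on $S \otimes T$ and on $W$ coincide.

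Applying Lemma \ref{INEQMAXIFF NOPEN LEM} in $W$, the relation $\underline{f} \leq e$ holds there iff (i) each $f_i \leq_d e$, (ii) the $f_i$ are $\leq_d$-incomparable, and (iii) $f_1^{\lambda,W} \cdots f_k^{\lambda,W} = e^{\lambda,W}$. Since $\underline{f} \leq e$ holds in $S \otimes T$ by hypothesis, conditions (i) and (ii) hold there (again by Lemma \ref{INEQMAXIFF NOPEN LEM}) and transfer to $W$ by the preceding paragraph; thus only (iii) must be checked. For the ``only if'' direction, $\underline{f} \leq e$ in $W$ together with the minimality of $e^{\lambda,W}$ guaranteed by Lemma \ref{INEQMAXIFF NOPEN LEM} immediately yields the desired inequality $e^{\lambda,W} \leq \underline{f}$.

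For the ``if'' direction, assume $e^{\lambda,W} \leq \underline{f}$. By Proposition \ref{INDECOMPUNIQUE PROP} this decomposes uniquely as $e^{\lambda,W} = \underline{h}_1 \cdots \underline{h}_k$ with $\underline{h}_i \leq f_i$, so each element $x$ of $e^{\lambda,W}$ satisfies $x \leq_d f_i$ for some $i$. As $x$ is by construction a leaf of $W$, this gives $x \in f_i^{\lambda,W}$; combined with the reverse inclusion $f_i^{\lambda,W} \subseteq e^{\lambda,W}$ (immediate from $f_i \leq_d e$), condition (iii) follows. The main subtlety I expect is the bookkeeping needed to keep the leaves of $W$, which by Proposition \ref{TENSORCHILDREN UNOPEN PROP2} are precisely the pairs $(s,t)$ with $s$ a leaf of $S-v_A$ and $t$ a leaf of $T-v_B$, properly distinguished from the (smaller) leaf set of $S \otimes T$, and correspondingly to keep $e^{\lambda,W}$ distinct from $e^{\lambda,S\otimes T}$; once $\leq_d$ is identified across the two posets, the remainder is a routine application of the leaf-tuple characterization.
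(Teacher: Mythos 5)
There is a genuine gap at the heart of your reduction. Your strategy rests on applying the ``further'' part of Lemma \ref{INEQMAXIFF NOPEN LEM} --- the characterization of broad relations $\underline{f}\le e$ by the three leaf-tuple conditions --- inside $W=(S-v_A)\otimes(T-v_B)$. But that lemma is stated and proved only for \emph{trees}, and a tensor product of trees is not a tree: a fork edge $(s,t)$ with $s$ and $t$ both nodes has \emph{two} maximal predecessor tuples $e^{\uparrow S}$ and $e^{\uparrow T}$ (Proposition \ref{TENSORCHILDREN UNOPEN PROP2}), violating the nodal condition, and the proof of Lemma \ref{INEQMAXIFF NOPEN LEM} uses the uniqueness of $e^{\uparrow}$ essentially. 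The paper only extends the \emph{first} half of that lemma (existence and the formula for $e^{\lambda}$) to tensor products, as Lemma \ref{INEQMAXIFF NOPENTENSOR LEM}; the ``if'' half of the characterization is precisely what is at stake here, and establishing it for $W$ requires the same $\leq_d$-induction on $e$, splitting on whether $\underline{f}\le e^{\uparrow S}\le e$ or $\underline{f}\le e^{\uparrow T}\le e$, that the paper's proof of the present lemma carries out directly. So the reduction does not bypass the induction --- it presupposes an unproved tensor-product analogue of the tree lemma.

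A second, smaller gap sits in your verification of condition (iii). Showing that every $x\in e^{\lambda,W}$ lies in some $f_i^{\lambda,W}$ and that each $f_i^{\lambda,W}\subseteq e^{\lambda,W}$ only matches the underlying sets, whereas (iii) is an equality in the free abelian monoid; you also need the $f_i^{\lambda,W}$ to be pairwise disjoint. Unlike in a tree, $\leq_d$-incomparable edges of a tensor product can share descendant leaves --- in $[2]\otimes[2]$ the edges $(0,2)$ and $(2,0)$ are incomparable yet both have $(0,0)$ as their unique leaf descendant --- so disjointness does not follow from incomparability alone and must instead be extracted from the hypothesis $\underline{f}\le e$ in $S\otimes T$ via Proposition \ref{TENSORCHILDREN UNOPEN PROP1}(ii). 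Your ``only if'' direction and the observation that deleting stump vertices leaves $\leq_d$ unchanged are both fine, though the minimality you invoke there is Lemma \ref{INEQMAXIFF NOPENTENSOR LEM} rather than Lemma \ref{INEQMAXIFF NOPEN LEM}.
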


\begin{proof}
Only the ``if'' direction needs proof. 
We argue by $\leq_d$ induction on $e = (s,t)$.
The base case, that of $s$ a leaf of $S-v_A$ and $T$ a leaf of $T-v_B$, is obvious (we note that the proof will follow even when this case is vacuous).

Otherwise, either $\underline{f} \leq e^{\uparrow S} \leq e$ or 
$\underline{f} \leq e^{\uparrow T} \leq e$ and our assumption ensures, respectively, that $s \not\in A$ or $t \not\in B$. 
Writing $e^{\uparrow \**}$ to denote either $e^{\uparrow S}$ or $e^{\uparrow T}$ as appropriate, this last observation guarantees that the relation $e^{\uparrow \**} \leq e$ is in $(S - v_A) \otimes (T - v_B)$. 
Further, writing 
$\underline{f}=\underline{f}_1 \cdots \underline{f}_k$
and 
$e^{\uparrow \**} = e_1 \cdots e_k$
so that $\underline{f}_i \leq e_i$,
the induction hypothesis shows that these last relations are also in  
$(S - v_A) \otimes (T - v_B)$.
\end{proof}

Recalling Proposition \ref{INDECOMPUNIQUE PROP} hence yields the following.

\begin{corollary}\label{FACTORIZATION COR}
A collection of broad relations of the form $\underline{g}_i \leq f_i$, $f_1 \cdots f_k \leq e$ are all in $(S-v_A) \otimes (T-v_B)$ if and only if the composite relation $\underline{g}_1 \cdots \underline{g}_k \leq e$ is.
\end{corollary}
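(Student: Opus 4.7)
The ``only if'' direction is immediate: broad transitivity is an axiom of any pre-broad poset, so applying it inside $(S - v_A) \otimes (T - v_B)$ to the assumed individual relations yields the composite $\underline{g}_1 \cdots \underline{g}_k \leq e$ there.

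For the converse, I would apply Lemma \ref{BROADREL STUMPS}, writing $\lambda'$ for $\lambda_{(S - v_A) \otimes (T - v_B)}$ for brevity. The hypothesis becomes $e^{\lambda'} \leq \underline{g}_1 \cdots \underline{g}_k$ in the sub-poset, and the two goals reduce to $e^{\lambda'} \leq f_1 \cdots f_k$ and $f_i^{\lambda'} \leq \underline{g}_i$ for each $i$. Unpacking the tuple-inequality, the hypothesis provides a decomposition $e^{\lambda'} = \underline{L}_1 \cdots \underline{L}_k$ with $\underline{L}_i \leq \underline{g}_i$ in the sub-poset; composing with $\underline{g}_i \leq f_i$ in $S \otimes T$ gives $\underline{L}_i \leq f_i$ in $S \otimes T$. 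The crux is then to identify $\underline{L}_i$ with $f_i^{\lambda'}$.

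The main step is an adaptation of Proposition \ref{INDECOMPUNIQUE PROP} to the present tensor-product setting. Starting from the relation $f_1 \cdots f_k \leq e$ in $S \otimes T$ and using simplicity of $S \otimes T$ (Proposition \ref{TENSORCHILDREN UNOPEN PROP1}), each leaf $l$ of $e^{\lambda'}$ must be $\leq_d$-below a unique $f_i$: if $l \leq_d f_i$ and $l \leq_d f_j$ with $i \neq j$, one can pick tuples $\underline{p}_i \leq f_i$ and $\underline{p}_j \leq f_j$ with $l$ occurring in both, then successively substitute them into $f_1 \cdots f_k \leq e$ via broad transitivity to produce a broad relation in $S \otimes T$ with $l$ appearing twice, contradicting simplicity. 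Hence the $f_i^{\lambda'}$ partition $e^{\lambda'}$; combined with the inclusions $\underline{L}_i \subseteq f_i^{\lambda'}$ (from $\underline{L}_i \leq f_i$ in $S \otimes T$) and the identities $\bigsqcup_i \underline{L}_i = e^{\lambda'} = \bigsqcup_i f_i^{\lambda'}$, cardinality forces $\underline{L}_i = f_i^{\lambda'}$.

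The two required statements now fall out: $f_i^{\lambda'} = \underline{L}_i \leq \underline{g}_i$ yields $\underline{g}_i \leq f_i$ in $(S-v_A)\otimes(T-v_B)$ via Lemma \ref{BROADREL STUMPS}, and $f_1^{\lambda'} \cdots f_k^{\lambda'} = e^{\lambda'}$ similarly yields $f_1 \cdots f_k \leq e$ there. The main obstacle I anticipate is the partition claim itself: one cannot directly invoke Proposition \ref{INDECOMPUNIQUE PROP}, whose proof uses Lemma \ref{INCOMPEDGES LEM} inside a tree/forest, since $(S-v_A)\otimes(T-v_B)$ has forks and hence is neither; instead, one must substitute a direct broad-transitivity argument exploiting the specific broad relation $f_1 \cdots f_k \leq e$ to extract the simplicity contradiction.
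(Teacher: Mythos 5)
Your proof is correct and follows the same route the paper takes, namely combining Lemma \ref{BROADREL STUMPS} with the unique-decomposition principle of Proposition \ref{INDECOMPUNIQUE PROP}; the paper's own justification is just the one-line remark preceding the corollary. Your observation that Proposition \ref{INDECOMPUNIQUE PROP} cannot be cited verbatim (its proof runs through Lemma \ref{INCOMPEDGES LEM}, a statement about trees/forests, whereas $S \otimes T$ is neither) is well taken, and your substitution argument --- using the given relation $f_1 \cdots f_k \leq e$ in place of the relation Lemma \ref{INCOMPEDGES LEM} would supply, and then contradicting simplicity of $S \otimes T$ --- is exactly the right repair.
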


\begin{corollary}\label{ROOTCOND COR}
 A subtree $U \hookrightarrow S \otimes T$ misses $v_A$ and $v_B$  
if and only if, for $r$ the root of $U$, one has
\[r^{\lambda, (S-v_A) \otimes (T-v_B)} \leq r^{\lambda,U}.\]
\end{corollary}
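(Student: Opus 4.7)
The plan is to chain together Proposition \ref{MAPOUTTREE PROP}, Corollary \ref{FACTORIZATION COR}, and Lemma \ref{BROADREL STUMPS} in order to translate the ``missing'' condition into a single statement about the root-to-leaves relation $r^{\lambda,U} \leq r$.

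First, I would observe that $U$ misses $v_A$ and $v_B$ precisely when the underlying set inclusion $U \hookrightarrow S \otimes T$ factors as a broad poset map through $(S - v_A) \otimes (T - v_B)$. Since $U$ is a tree, Proposition \ref{MAPOUTTREE PROP} reduces this to the condition that every generating relation $e^{\uparrow,U} \leq e$, as $e$ ranges over the nodes and stumps of $U$, is also a broad relation in $(S - v_A) \otimes (T - v_B)$.

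Next, I would combine these individual generating relations into a single composite. By (the proof of) Lemma \ref{INEQMAXIFF NOPEN LEM}, the relation $r^{\lambda,U} \leq r$ arises from broad transitivity by successively expanding each non-leaf edge via its $e^{\uparrow,U} \leq e$ relation, where stump edges contribute the empty tuple $\epsilon$ and thereby drop out. Iterating Corollary \ref{FACTORIZATION COR} along this expansion then shows that all the generating relations $e^{\uparrow,U} \leq e$ lie in $(S-v_A)\otimes(T-v_B)$ if and only if the single composite relation $r^{\lambda,U} \leq r$ does.

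Finally, applying Lemma \ref{BROADREL STUMPS} to the relation $r^{\lambda,U} \leq r$ itself, and recalling that the leaves of $r^{\lambda,U}$ are edges of $U \subset S \otimes T$, produces the equivalent criterion $r^{\lambda,(S-v_A)\otimes(T-v_B)} \leq r^{\lambda,U}$. Chaining the three equivalences yields the stated iff. The only delicate point is the bookkeeping in the middle step: one must verify that iterated application of Corollary \ref{FACTORIZATION COR} really produces \emph{every} generating relation of $U$ (including stump relations, which are precisely those that get replaced by $\epsilon$ in the unfolding); once this is observed, no genuine obstacle remains and the proof is essentially formal.
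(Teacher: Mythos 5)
Your proposal is correct and follows essentially the same route as the paper: the paper's (very terse) proof likewise reduces the missing condition to the generating relations of $U$, observes that each is a factor of $r^{\lambda,U}\leq r$ so that Corollary \ref{FACTORIZATION COR} collapses them into that single composite relation, and then (implicitly) invokes Lemma \ref{BROADREL STUMPS} to convert membership of $r^{\lambda,U}\leq r$ in $(S-v_A)\otimes(T-v_B)$ into the stated inequality. The bookkeeping point you flag is exactly the content of the paper's remark that ``any generating relation in $U$ is a factor of $r^{\lambda,U}\leq r$,'' so nothing is missing.
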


\begin{proof}
This follows from Corollary \ref{FACTORIZATION COR} 
since any generating relation in $U$ is a factor of $r^{\lambda,U} \leq r$.
\end{proof}

The following was first stated in \cite[Prop 1.9 in erratum]{CM11} and proven via a careful combinatorial analysis in   \cite{CM14}. We include here a short broad poset proof.
Recall that $\mathsf{dSet} \times \mathsf{dSet} \xrightarrow{\otimes} \mathsf{dSet}$
is defined by setting $\Omega[S] \otimes \Omega[T] =
Hom(-,S\otimes T)$ together with the requirement that
$\otimes$ commutes with colimits in each variable.

\begin{proposition}\label{MONOPUSHOUTPROD PROP}
Let $S, T \in \Omega$ be trees which are either
\begin{inparaenum}
\item[(i)] both open;
\item[(ii)] $S=[n]$ is linear.
\end{inparaenum}
Then the square
\begin{equation}\label{MONOMORSQUARE EQ}
\begin{tikzcd}[row sep=1.3em,column sep=1.3em]
	\partial \Omega[S] \otimes \partial \Omega[T] \ar[hook]{r} \ar[hook]{d} &
	\Omega[S] \otimes \partial \Omega[T] \ar[hook]{d}
\\
	\partial \Omega[S] \otimes \Omega[T] \ar[hook]{r} &
	 \Omega[S] \otimes \Omega[T]
\end{tikzcd}
\end{equation}
consists of normal monomorphisms. Further, 
\begin{equation}\label{MONOPUSHOUT EQ}
\partial \Omega[S] \otimes \Omega[T] 
   \underset{\partial \Omega[S] \otimes \partial \Omega[T]}{\coprod}
	\Omega[S] \otimes \partial \Omega[T]
	 \hookrightarrow
	\Omega[S] \otimes \Omega[T]
\end{equation}
is also a normal monomorphism.

In particular, (\ref{MONOMORSQUARE EQ}) is a pullback square.
\end{proposition}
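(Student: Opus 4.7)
The plan is to leverage Remark \ref{GNORMALUNDER REM} in the non-equivariant case ($G = \**$): any monomorphism whose codomain is a normal dendroidal set is automatically a normal monomorphism. Hence the proof reduces to establishing (a) that the ambient object $\Omega[S] \otimes \Omega[T]$ is a normal dendroidal set, and (b) that all the maps appearing in (\ref{MONOMORSQUARE EQ}) and the pushout-product map (\ref{MONOPUSHOUT EQ}) are monomorphisms of presheaves. Once (a) and (b) are in place, the ``in particular'' assertion that (\ref{MONOMORSQUARE EQ}) is a pullback will follow from the standard observation that, in a presheaf category, a pushout of sub-presheaves injects into the ambient object precisely when the pushout square is a pullback, i.e.\ when the intersection of the two sub-presheaves coincides with their common bottom corner.

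For (a), I would appeal to Proposition \ref{UNIQUEFACT LEMMA} to reduce normality of $\Omega[S] \otimes \Omega[T]$ to the claim that every non-degenerate element $\varphi \colon U \to S \otimes T$ has trivial $\Sigma_U$-isotropy. This in turn holds as soon as the underlying set map of $\varphi$ is injective on edges, since any $\sigma \in \Aut(U)$ with $\varphi \sigma = \varphi$ would then need to fix each edge of $U$. Using Definition \ref{FACEDEG DEF} and Proposition \ref{FACT PROP}, non-degeneracy of $\varphi$ rules out collapsing non-stump linear $\leq_d$-sections of $U$; together with the five-way classification of edges of $S \otimes T$ in Proposition \ref{TENSORCHILDREN UNOPEN PROP2}, one checks that the only mechanism producing distinct $\leq_d$-comparable edges $u_1 <_d u_2$ of $U$ with $\varphi(u_1) = \varphi(u_2)$ would require a null-node in $S \otimes T$, that is, a stump on one side paired with a node on the other. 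Hypothesis (i) excludes all stumps, whereas (ii) forces the $S$-side to contribute no node that could pair against a $T$-stump to form a collapsible section; in either case edge-injectivity is immediate.

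For (b), the four inclusions in (\ref{MONOMORSQUARE EQ}) are readily seen to be monomorphisms once one describes the sub-presheaves $\partial \Omega[S] \otimes \Omega[T]$ and $\Omega[S] \otimes \partial \Omega[T]$ as unions, over proper full faces, of the corresponding sub-tensor products. The hard part will be the pullback / pushout-product claim: given a non-degenerate $\varphi \colon U \hookrightarrow S \otimes T$ whose image has $S$-coordinates contained in some $S' \subsetneq S$ and $T$-coordinates in some $T' \subsetneq T$, one must verify that $\varphi$ actually factors as a broad poset map through $S' \otimes T'$. Set-theoretically this is automatic, so the real content is that every generating relation $e^{\uparrow,U} \leq e$ of $U$ must persist as a broad relation in the smaller pre-broad poset $S' \otimes T'$. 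For inner and leaf-vertex faces this is handled by Lemma \ref{CLOSEDSUBTREE LEM}; the delicate case is that of stump-vertex faces, to be dispatched using the criterion of Corollary \ref{ROOTCOND COR} together with Lemma \ref{BROADREL STUMPS}. Hypothesis (i) sidesteps the issue by removing stumps entirely, while under (ii) the linear structure of $S$ ensures that any broad relation in $\varphi(U)$ touching a stump-vertex face already lives in $S' \otimes T'$, completing the argument.
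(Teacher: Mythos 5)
Your architecture matches the paper's: normality via edge-injectivity of non-degenerate elements, the pullback claim via the pushout-product map being a monomorphism, and the intersection of the two boundary sub-presheaves handled by Lemma \ref{CLOSEDSUBTREE LEM} for full faces and Corollary \ref{ROOTCOND COR} for stump outer faces. One remark on part (a): the openness/linearity hypotheses play no role there, since by the degeneracy--face factorization (Proposition \ref{FACT PROP}, whose proof extends to maps into $S\otimes T$) any non-injective broad poset map $U\to S\otimes T$ already factors through a non-invertible degeneracy; hence non-degenerate elements are injective for arbitrary $S,T$ and your detour through null nodes is unnecessary.

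The genuine gap is in part (b). You declare the four maps of (\ref{MONOMORSQUARE EQ}) ``readily seen to be monomorphisms once one describes the sub-presheaves as unions, over proper full faces, of the corresponding sub-tensor products.'' This begs the question twice over. First, in case (ii) the faces of $T$ are not all full: a stump outer face $T_{\nless e}$ has the same edge set as $T$ but fewer relations, so neither ``union over full faces'' nor your later criterion ``$T$-coordinates contained in $T'\subsetneq T$'' can even detect factorization through such a face. Second, $\partial\Omega[S]\otimes\Omega[T]$ is by definition the colimit $\colim_{F\subsetneq S}\Omega[F]\otimes\Omega[T]$, and the assertion that this colimit injects into $\Omega[S]\otimes\Omega[T]$ (i.e.\ ``is a union'') is exactly the monomorphism claim; it holds only because for each subtree $U\hookrightarrow S\otimes T$ the poset of faces $F$ with $U\hookrightarrow F\otimes T$ has a minimum. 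Establishing that minimum is the bulk of the paper's proof: one shows that the edge-projection $\{s\mid\exists_t\,(s,t)\in U\}$ is a face of $S$ (clear for elementary subtrees, and in general because every subtree is an inner face of one), and, for $\Omega[n]\otimes\partial\Omega[T]\to\Omega[n]\otimes\Omega[T]$, that the broad poset projection $\pi\colon[n]\otimes T\to T$ carries $U$ to the minimal face of $T$ through which it factors --- the stump outer face case of this minimality claim being precisely where Corollary \ref{ROOTCOND COR} enters. Your proposal supplies no substitute for these arguments, and since your pushout-product and pullback claims are derived from the monomorphism claims, they remain unsupported as written.
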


\begin{proof}
Note first that since $\otimes$ commutes with colimits in each variable, 
\begin{equation}\label{BOUNDARYTENSOR EQ}
\partial \Omega[S] \otimes \Omega[T] = 
\colim_{F \in \mathsf{Faces}(S)-\{S\}} \Omega[F] \otimes \Omega[T].
\end{equation}
In the open case, since all faces are full, 
Lemma \ref{CLOSEDSUBTREE LEM} implies that if
$U \subset F_S \otimes T$ and $U \subset S \otimes F_T$ then it will be $U \subset F_S \otimes F_T$, showing that 
(\ref{MONOPUSHOUT EQ}) is a monomorphism whenever 
(\ref{MONOMORSQUARE EQ}) consists of monomorphisms. By skeletal induction, it thus suffices to check that the right and bottom maps in (\ref{MONOMORSQUARE EQ}) are monomorphisms, and this will follow if for any $U \subset S \otimes T$ there exists a minimal face $F \subset S$ such that $U \subset F \otimes T$. Clearly $F = \{s|\exists_t (s,t) \in U\}$ will work once we show that this is indeed a face. This is clear when $U$ is elementary
(in which case each vertex of $U$ either adds a vertex to $F$ or nothing at all) and holds in general since any $U$ is an inner face of an elementary subtree.

In the $S = [n]$ linear case the fact that $S$ is open still suffices to conclude that (\ref{MONOPUSHOUT EQ}) being a monomorphism will follow once we show that
(\ref{MONOMORSQUARE EQ}) consists of monomorphisms.
And, similarly, that 
$\partial \Omega[n] \otimes \Omega[T] \to 
\Omega[n] \otimes \Omega[T]$ is a monomorphism follows by the same argument, building $F$ in the same way.
It remains to show that $ \Omega[n] \otimes \partial \Omega[T] \to 
\Omega[n] \otimes \Omega[T]$
is a monomorphism. We note that the projection 
$\pi \colon [n]\otimes T \to T$ given by $\pi(k,e)=e$ is a map of broad posets. Given $U \hookrightarrow [n] \otimes T$ we claim that $\pi(U)$ is the minimal face such that 
$U \hookrightarrow [n] \otimes \pi(U)$, noting that this is implied by the more general claim that $U \hookrightarrow [n] \otimes F$ iff $\pi(U) \hookrightarrow F$. It now suffices to check this when $F$ is a maximal face, with the case of $F$ full being obvious from Lemma \ref{CLOSEDSUBTREE LEM} and the stump outer face case following from
Corollary \ref{ROOTCOND COR}.

The claim that (\ref{MONOMORSQUARE EQ}) is a pullback square is elementary
(compare with the proof of \cite[Prop. 1.9]{CM11}).
\end{proof}

\subsection{Pushout product filtrations}\label{GANODYNEPROOF SEC}

This section features our main technical proofs, namely the proof of Theorem \ref{EXPNPROP THM} and the related but simpler proof of Theorem \ref{OUTERIN THM}.

The majority of the ideas in this section are adapted from the (rather long) proof of 
\cite[Prop. 9.2]{MW08}, but here we will need to significantly repackage those ideas. To explain why, we note that the filtrations in the proof of \cite[Prop. 9.2]{MW08} are actually divided into three nested tiers: an outermost tier described immediately following 
\cite[Cor. 9.3]{MW08}, an intermediate tier described in the proof of \cite[Lemma 9.9]{MW08} and an innermost tier described in the proof of \cite[Lemma 9.7]{MW08}. However, in the equivariant case $G$ acts transversely to these tiers, i.e. one can not attach dendrices at an inner tier stage without also attaching dendrices in a different outer tier stage.

Our solution will be to encode the top two filtration tiers as a poset 
$\mathsf{IE}_{G \xi}(S \otimes T)$ on which $G$ acts (to handle the lower tier). To improve readability, however, we first describe our repackaged proof in the non-equivariant case, then indicate the (by then minor) necessary equivariant modifications.

\vskip 5pt

We will make use of an order relation on elementary subtrees (cf. Definition \ref{ELEMTENS DEF})
 of $S \otimes T$. 
\begin{definition} \label{LEX DEF}
 Write $V \leq_{lex} U$
whenever $U$ is obtained from $V$ by replacing the intermediate edges in a string of broad relations 
$e^{\uparrow S,T} \leq e^{\uparrow S} \leq e$
occurring in $V$ with the intermediate edges in 
$e^{\uparrow S,T} \leq e^{\uparrow T} \leq e$
occurring in $U$. An illustrative diagram follows.
\[
	\begin{tikzpicture}[auto,grow=up, level distance = 2em, every node/.style={font=\scriptsize},dummy/.style = {circle,draw,inner sep=0pt,minimum size=1.75mm}]
		\node at (0,0.2) {$S$}
			child{node [dummy,fill=black]{}
				child[sibling distance = 1.25em]{
				edge from parent node [swap,near end] {$2$}}
				child[sibling distance = 1.25em]{
				edge from parent node [near end] {$1$}}
			edge from parent node [swap] {$3$}};
		\node at (1.55,0.2) {$T$}
			child{node [dummy]{}
				child[sibling distance = 1.5em]{
				edge from parent node [swap,near end] {$c$}}
				child[sibling distance = 2.25em,level distance = 2.2em]{
				edge from parent node [swap,near end] {$b$}}
				child[sibling distance = 1.5em]{
				edge from parent node [near end] {$a$}}
			edge from parent node [swap] {$e$}};
		\node at (5,0) {$V$}
			child{node [dummy,fill=black] {}
				child[sibling distance = 5.5em]{node [dummy] {}
					child[sibling distance = 1.75em]{
					edge from parent node [swap,near end] {$c_2$}}
					child[level distance = 2.3em]{
					edge from parent node [swap,very near end] {$b_2$}}
					child[sibling distance = 1.75em]{
					edge from parent node [near end] {$a_2$}}
				edge from parent node [swap] {$e_2$}}
				child[sibling distance = 5.5em]{node [dummy] {}
					child[sibling distance = 1.75em]{
					edge from parent node [swap,near end] {$c_1$}}
					child[level distance = 2.3em]{
					edge from parent node [swap,very near end] {$b_1$}}
					child[sibling distance = 1.75em]{
					edge from parent node [near end] {$a_1$}}				
				edge from parent node {$e_1$}}
			edge from parent node [swap] {$e_3$}};
		\node at (9.5,0) {$U$}
			child{node [dummy] {}
				child[sibling distance = 3.75em]{node [dummy,fill=black] {}
					child[sibling distance = 0.75em]{
					edge from parent node [swap,very near end] {$c_2$}}
					child[sibling distance = 0.75em]{
					edge from parent node [very near end] {$c_1$}}
				edge from parent node [swap] {$c_3$}}
				child[level distance = 2.3em]{node [dummy,fill=black] {}
					child[sibling distance = 0.75em]{
					edge from parent node [swap,very near end] {$b_2$}}
					child[sibling distance = 0.75em]{
					edge from parent node [very near end] {$b_1$}}
				edge from parent node [near end,swap] {$b_3$}}
				child[sibling distance = 3.75em]{node [dummy,fill=black] {}
					child[sibling distance = 0.75em]{
					edge from parent node [very near end,swap] {$a_2$}}
					child[sibling distance = 0.75em]{
					edge from parent node [very near end] {$a_1$}}				
				edge from parent node {$a_3$}}
			edge from parent node [swap] {$e_3$}};
		\node at (7.2,0.75) [font=\large] {$\leq_{lex}$};
	\end{tikzpicture}
\]
\end{definition}

\begin{remark}
	The $\leq_{lex}$ relation is compatible with the grafting procedure in Remark \ref{GENGRAFTDEF REM}. In particular, we will throughout assume that a relation $V \leq_{lex} U$ can be built by first ungrafting $U$ (Proposition \ref{UNGRAFT PROP}), then applying $\leq_{lex}$ relations to each piece, and finally regrafting the pieces to obtain $V$.
\end{remark}

In what follows we refer to  generating relations of the form 
$e^{\uparrow S} \leq e$ (resp. $e^{\uparrow T} \leq e$)
in an elementary subtree as $S$-vertices (resp. $T$-vertices). Also, given vertices $v=(e^{\uparrow *} \leq e)$, 
$w=(f^{\uparrow *} \leq f)$ we write $v\leq_d w$ if $e \leq_d f$.

\begin{proposition}\label{PARTIALORDER PROP}
Suppose $T$ is open (i.e. has no stumps). Then
$\leq_{lex}$ induces a partial order on the set of elementary subtrees of $S \otimes T$.

Further, $\leq_{lex}$ together with the inclusion $\hookrightarrow$ assemble into a partial order as well, and we denote this latter order simply by $\leq$.
\end{proposition}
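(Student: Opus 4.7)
The plan is to interpret $\leq_{lex}$ as the reflexive-transitive closure of the basic swap relation described in Definition \ref{LEX DEF}. Under this convention reflexivity and transitivity are automatic, so the substantive task is antisymmetry, for which I will exhibit a strictly monotone numerical invariant.

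Since $T$ is open, an easy induction on the generating relations of an elementary subtree, together with Proposition \ref{TENSORCHILDREN UNOPEN PROP2}, shows that every leaf of any elementary subtree $V \hookrightarrow S \otimes T$ has the form $(s_l, t_l)$ with $s_l$ a leaf of $S$ and $t_l$ a leaf of $T$, and that the unique path in $V$ from the double root $(r_S, r_T)$ to such a leaf has length $d_S(s_l) + d_T(t_l)$, where $d_S, d_T$ denote depth. Reading off whether each vertex traversed is an $S$-vertex or a $T$-vertex yields a word $w_V(l) \in \{S,T\}^*$, and I let $\Psi(V)$ denote the multiset of such words, ordered by the multiset-lexicographic extension of the word-lex order on $\{S,T\}^*$ with the convention $S < T$.

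A single basic swap at a fork $e^* = (s^*, t^*)$ replaces the pattern $e^{\uparrow S,T} \leq e^{\uparrow S} \leq e^*$ by $e^{\uparrow S,T} \leq e^{\uparrow T} \leq e^*$. The top edges $(s_i, t_j)$ and the bottom edge $e^*$ are unaffected, so the leaf sets of $V$ and $U$ coincide; in the leaf words $w_V(l)$ associated to $l = (s_i, t_j)$ with $s_i \in s^{*\uparrow}$, $t_j \in t^{*\uparrow}$, the two consecutive letters at positions $d_S(s^*)$ and $d_S(s^*)+1$ flip from $ST$ to $TS$, while all other leaf words remain unchanged. Since $ST < TS$ lexicographically, $\Psi$ strictly increases in the multiset-lex order under every basic swap, and antisymmetry of $\leq_{lex}$ follows.

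For the further claim, I combine $\Psi$ with the auxiliary invariant $L(V) = \sum_{l \in \mathrm{leaves}(V)} \bigl(d_S(s_l) + d_T(t_l)\bigr)$. Swaps preserve leaves, so $L$ is constant under them. On the other hand, a proper inclusion $V \subsetneq W$ of elementary subtrees is exactly a ``chopping off'' above some leaves of $V$: both subtrees contain the double root, and at every commonly occurring edge the single generating relation of an elementary subtree forces $V$ and $W$ to agree, so each leaf of $V$ is either a leaf of $W$ or an internal edge of $W$, with at least one leaf of the latter type; hence $L(V) < L(W)$ strictly. Ordering elementary subtrees by the pair $(L(V), \Psi(V))$ lexicographically then produces a single invariant strictly monotone under both basic moves, yielding antisymmetry of the combined order $\leq$. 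The main obstacle I expect is the careful bookkeeping in the swap step, verifying via openness of $T$ (to rule out null-node pathologies, cf.\ Remark \ref{ELEMINNERFACE REM}) that a swap truly amounts to the advertised local $ST \mapsto TS$ flip of leaf words while leaving all other leaf-path data intact.
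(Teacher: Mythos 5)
Your overall strategy---strictly monotone invariants for the generating moves---is exactly the paper's (which uses the count $g(U)$ of pairs of an $S$-vertex lying above a $T$-vertex, and $h(U)=\#\{\text{stumps}\}+\sum d(l,r)$ over leaves \emph{and stumps}), but your specific invariants break down because they ignore stumps, and the proposition assumes only that $T$ is open: $S$ may have stumps, which is precisely the situation needed for case (ii) of Theorem \ref{EXPNPROP THM} with $T$ linear and $S$ arbitrary. Two concrete failures. First, openness of $T$ does \emph{not} rule out null nodes: an edge $e=(s,t)$ with $s$ a stump of $S$ and $t$ a node of $T$ is a null node (Proposition \ref{TENSORCHILDREN UNOPEN PROP2}), and the generating $\leq_{lex}$ move at such an $e$ replaces the stump vertex $\epsilon=e^{\uparrow S}\leq e$ by the $T$-vertex $e^{\uparrow T}\leq e$ together with stumps $\epsilon\leq(s,t_j)$; this touches no leaf, so every leaf word is unchanged and $\Psi$ does not strictly increase. (The paper's $g$ still strictly increases here; alternatively, the paper observes this move is itself an instance of $\hookrightarrow$.) Second, $L$ need not strictly increase under a proper inclusion $V\subsetneq W$: $W$ may extend $V$ only along branches terminating in stumps, so that $V$ and $W$ have the same leaves and $L(V)=L(W)$---indeed $\Psi$ can then strictly \emph{decrease}, since a leaf of $V$ may acquire a stump vertex in $W$ and vanish from the multiset (e.g.\ $V$ the double root alone, $W$ the double root with one stump vertex above it). So the combined invariant $(L,\Psi)$ is not monotone, and antisymmetry of $\leq$ is not established.

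There are also smaller inaccuracies. Leaves of a general elementary subtree need not be pairs of leaves of $S$ and $T$ (that holds only for maximal ones, cf.\ Definition \ref{ELEMTENS DEF}); this does not hurt the word construction itself but your path-length formula is wrong. Your description of proper inclusions as ``chopping off above some leaves'' presumes initial subtrees and a common root, whereas the proposition concerns all elementary subtrees, so root outer faces (e.g.\ along a unary root vertex) must be handled; there too $L$ is constant and one must fall back on $\Psi$ with a convention making proper prefixes smaller. If both $S$ and $T$ are open these issues largely evaporate and your word-multiset argument can be repaired into a valid, if heavier, alternative to the paper's counting argument; as written it does not prove the proposition in the generality in which it is stated and used.
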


\begin{proof}
One needs only check antisymmetry. Let $g(U)$ count pairs $(v_S,v_T)$ of a $S$-vertex and $T$-vertex in $U$ such that
$v_S \leq_d v_T$. 
Since the generating relations of $\leq_{lex}$ strictly increase $g$,
$\leq_{lex}$ is a partial order. 
Similarly, letting 
$h(U) = \# \{ \text{stumps of }U\} + 
 \sum_{l \in \{\text{leaves and stumps of }U\}}{d(l,r)}$ (where $d(-,r)$ denotes ``distance to the root'', measured in generating $\leq_d$ relations), $\hookrightarrow$ increases $h$ and 
$\leq_{lex}$ either 
\begin{enumerate*}
 \item[(i)] preserves $h$ if $e$ (as in Definition \ref{LEX DEF}) is a fork;
 \item[(ii)] is an instance of $\hookrightarrow$ if $e$ is a null node 
(since $T$ is assumed open).
\end{enumerate*}
Thus  $\leq$ is a partial order.
\end{proof}

\begin{remark}
Note that if $T$ is not open, then by Remark \ref{ELEMINNERFACE REM} it is possible for the combination of $\leq_{lex}$ and $\hookrightarrow$ to fail antisymmetry.
\end{remark}

Henceforth we will let $\xi$ denote a \textit{fixed} inner edge of $T$.

\begin{definition}\label{IEETA DEF}
An initial elementary subtree $U \hookrightarrow S \otimes T$
(cf. Definition \ref{ELEMTENS DEF})
 is called \textit{$\xi$-internal}
if it contains an edge of the form $(s,\xi)$, abbreviated as $\xi_s$, and the $T$-vertex $\xi_s^{\uparrow T} \leq \xi_s$.

For $T$ open, we will denote the subposet of such trees by $(\mathsf{IE}_{\xi}(S \otimes T),\leq)$.

Further, when $S$ is open one can modify the order in $\mathsf{IE}_{\xi}(S \otimes T)$ by reversing the $\leq_{lex}$ order (but not the 
$\hookrightarrow$ order). The resulting poset will be denoted $\mathsf{IE}_{\xi}^{oplex}(S \otimes T)$.
\end{definition}

\begin{lemma}\label{INTERCHANGE LEM} Suppose $T$ is open.
Let $U \hookrightarrow S \otimes T$ be an elementary subtree with root vertex a $T$-vertex $e^{\uparrow T} \leq e$ and suppose that 
$e^{\lambda, U} \leq e^{\uparrow S}$ (or, by Proposition \ref{NODEFACTCHAR PROP}, that none of the leaves in $e^{\lambda, U}$ have the same $S$ coordinate as $e$).

Then there exists an elementary subtree $V$ such that $V \leq_{lex} U$ and $V$ contains the relations 
$e^{\uparrow S,T} \leq e^{\uparrow S} \leq e$.
\end{lemma}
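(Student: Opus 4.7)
The plan is to proceed by induction on $|U|$, reducing to a single $\leq_{lex}$ swap at the root $e$ after recursively rearranging the subtrees above each child of $e$. First, by Proposition \ref{NODEFACTCHAR PROP} the hypothesis $e^{\lambda,U} \leq e^{\uparrow S}$ means no leaf of $U$ shares the $S$-coordinate $s$ of $e$; in particular $s$ is not a leaf of $S$, and moreover $s$ is not a stump of $S$ (for otherwise $e^{\uparrow S} = \epsilon$ would force $U$ to have no leaves below $e$, which together with the existence of the root $T$-vertex forces $U$ to have some leaf, a contradiction). Hence $s$ is a node with $s^{\uparrow} = s'_1 \cdots s'_m$ nonempty. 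Writing $t^{\uparrow} = t_1 \cdots t_k$, each child $(s,t_i)$ of $e$ in $U$ must carry a further vertex in $U$ (lest it be a forbidden leaf), and that vertex can only be an $S$-vertex or a $T$-vertex, since a stump-vertex at $(s,t_i)$ would force $s$ to be a stump.

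For the induction, let $U_i$ denote the subtree of $U$ rooted at $(s,t_i)$; since leaves of $U_i$ are leaves of $U$, the $S$-coordinate condition is inherited. If $U_i$'s root vertex is the $T$-vertex $s \times t_i^{\uparrow} \leq (s,t_i)$, the inductive hypothesis applied to $U_i$ (which has $|U_i|<|U|$) produces $V_i \leq_{lex} U_i$ whose root vertex is the $S$-vertex $s^{\uparrow} \times t_i \leq (s,t_i)$. Otherwise $U_i$'s root vertex is already this $S$-vertex (as we ruled out leaves and stump-vertices), and we set $V_i = U_i$. Because every elementary $\leq_{lex}$ move is a local modification of a length-two vertex string (Definition \ref{LEX DEF}), performing the swaps witnessing each $V_i \leq_{lex} U_i$ inside the corresponding subtree of $U$ yields an intermediate elementary tree $U' \leq_{lex} U$ in which every $(s,t_i)$ now carries the $S$-vertex $s^{\uparrow} \times t_i \leq (s,t_i)$.

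Finally, define $V$ to agree with $U'$ above the common layer of edges $(s'_j,t_i)$, but with root vertex the $S$-vertex $e^{\uparrow S} \leq e$ and, above each $(s'_j,t)$, the $T$-vertex $s'_j \times t^{\uparrow} \leq (s'_j,t)$. A single $\leq_{lex}$ swap at $e$ then converts $V$ into $U'$, so transitivity of $\leq_{lex}$ gives $V \leq_{lex} U' \leq_{lex} U$; by construction $V$ contains the required relations $e^{\uparrow S,T} \leq e^{\uparrow S} \leq e$.

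The main obstacle is to justify the ``lifting'' of each inductively constructed $\leq_{lex}$ chain $V_i \leq_{lex} U_i$ into $U$. This boils down to verifying that replacing the subtree $U_i$ by $V_i$ inside $U$ modifies only the generating relations strictly above $(s,t_i)$, leaving the root relation $e^{\uparrow T} \leq e$ and all other subtrees untouched; this is precisely the locality of the $\leq_{lex}$ move. Openness of $T$ is used throughout (via Proposition \ref{PARTIALORDER PROP}) to guarantee that $\leq_{lex}$ is a genuine partial order so that chain composition is well-defined, and to prevent stump-vertex complications at the edges $(s,t_i)$ when some $t_i$ is a leaf of $T$ (forcing the $S$-vertex case immediately and making the base of the induction automatic).
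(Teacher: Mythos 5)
Your overall strategy --- recursively converting the root vertex of each subtree $U^{\leq(s,t_i)}$ into an $S$-vertex and then performing a single generating $\leq_{lex}$ swap at $e$ --- is exactly the paper's argument (the paper inducts on the total distance from the leaves and stumps of $U$ to its root rather than on $|U|$, which is immaterial). However, your preliminary reduction contains a genuine error: the claim that $s$ cannot be a stump of $S$ is false, and the justification offered does not hold. When $s$ is a stump of $S$ (and $T$ is open), every edge $(s,l)$ with $l$ a leaf of $T$ is a stump of $S\otimes T$ and every $(s,t')$ with $t'$ a node is a null node (Proposition \ref{TENSORCHILDREN UNOPEN PROP2}), so all branches of $U$ above $e$ may terminate in stumps and $U$ need not have any leaves at all; the root $T$-vertex does \emph{not} force the existence of a leaf. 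In that situation $e^{\lambda,U}=\epsilon=e^{\uparrow S}$, the hypothesis of the lemma is satisfied, and the conclusion is the degenerate but non-vacuous statement that the tree $V$ whose root carries the vertex $\epsilon=e^{\uparrow S}\leq e$ satisfies $V\leq_{lex}U$. This case genuinely occurs in the applications: Lemma \ref{COMMONFACE LEM} imposes no openness condition on $S$, and Lemma \ref{CHAREDGE1 LEM} is also invoked with $T$ linear and $S$ allowed to have stumps, so the inner edge $s$ there may well be a stump.

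Your final construction breaks down precisely in that case, since it presupposes $s^{\uparrow}=s'_1\cdots s'_m$ nonempty and assembles $V$ out of the layer of edges $(s'_j,t_i)$. The repair is to drop the case distinction rather than to exclude the stump case: the recursion on the children $(s,t_i)$ and the final swap at $e$ go through verbatim when $s^{\uparrow}=\epsilon$, with the string $e^{\uparrow S,T}\leq e^{\uparrow S}\leq e$ collapsing to $\epsilon\leq\epsilon\leq e$ and the swap at $e$ being the instance of $\leq_{lex}$ that is simultaneously an inclusion (case (ii) in the proof of Proposition \ref{PARTIALORDER PROP}). With that correction your argument coincides with the paper's proof.
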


\begin{proof}
We argue by induction on the sum of the distances (cf. proof of Proposition \ref{PARTIALORDER PROP}) between the leaves and stumps of $U$ and its root $e$. 
The base case, that of $U$ the elementary tree generated by $e^{\uparrow S,T} \leq e^{\uparrow T} \leq e$, is obvious.

 Otherwise, writing $e=(s,t)$,
for each $t_i \in t^{\uparrow}$ either 
$(s,t_i)^{\uparrow S} \leq^U (s,t_i)$ or
$(s,t_i)^{\uparrow T} \leq^U (s,t_i)$. Applying the induction hypothesis 
to each of the subtrees $U^{\leq (s,t_i)}$ 
(cf. Proposition \ref{UNGRAFT PROP})
in the latter case yields trees $W_{i}\leq_{lex} U^{\leq (s,t_i)}$, which after grafted yield a tree
$W \leq_{lex} U$
such that $W$ contains all relations 
$(s,t_i)^{\uparrow S} \leq (s,t_i)$. 
But now $W$ contains the relations 
$e^{\uparrow S,T} \leq e^{\uparrow T} \leq e$ 
and hence a final generating $\leq_{lex}$ relation yields 
the desired
$V \leq_{lex} W \leq_{lex} U$.
\end{proof}

\begin{example}A typical illustration of the previous result follows.
\[
	\begin{tikzpicture}[auto,grow=up, level distance = 2.25em, every node/.style={font=\scriptsize},dummy/.style = {circle,draw,inner sep=0pt,minimum size=1.75mm}]
		\node at (6.5,0) {$U$}
				child[level distance = 2.25em]{node [dummy,fill=white] {}
					child[sibling distance = 4em,level distance = 1.75em]{node [dummy] {}
						child[sibling distance = 1.em]{node [dummy,fill=black] {}
							child[sibling distance = 1.25em]
							child[sibling distance = 1.25em]
						edge from parent node [swap] {}}
					edge from parent node [swap] {}}
					child[sibling distance = 1.5em,level distance = 2.25em]{node [dummy,fill=black] {}
						child[sibling distance = 1.25em]
						child[sibling distance = 1.25em]
					edge from parent node [swap] {}}
					child[sibling distance = 4em,level distance = 1.75em]{node [dummy] {}
						child[sibling distance = 2em]{node [dummy,fill=black] {}
							child[sibling distance = 1.25em]
							child[sibling distance = 1.25em]
						edge from parent node [swap,near end] {}}
						child[sibling distance = 2.5em]{node [dummy,fill=black] {}
							child[sibling distance = 1.25em]
							child[sibling distance = 1.25em]
						edge from parent node [near end] {}}
					edge from parent node {}}
				edge from parent node [swap] {}};
		\node at (0,0) {$V$}
			child{node [dummy,fill=black] {}
				child[level distance = 1.75em,sibling distance = 8em]{node [dummy,fill=white] {}
					child[sibling distance = 2.5em,level distance = 1.75em]{node [dummy] {}
						child[sibling distance = 1.em]
					edge from parent node [swap] {}}
					child[sibling distance = 1.5em,level distance = 2.25em]
					child[sibling distance = 2.5em,level distance = 1.75em]{node [dummy] {}
						child[sibling distance = 2em]
						child[sibling distance = 2em]
					edge from parent node {}}
				edge from parent node [swap] {}}
				child[level distance = 1.75em,sibling distance = 8em]{node [dummy,fill=white] {}
					child[sibling distance = 2.5em,level distance = 1.75em]{node [dummy] {}
						child[sibling distance = 1.em]
					edge from parent node [swap] {}}
					child[sibling distance = 1.5em,level distance = 2.25em]
					child[sibling distance = 2.5em,level distance = 1.75em]{node [dummy] {}
						child[sibling distance = 2em]
						child[sibling distance = 2em]
					edge from parent node {}}
				edge from parent node [swap] {}}
			};
		\node at (3.5,1)[font=\large] {$\leq_{lex}$};
	\end{tikzpicture}
\]

\end{example}

\begin{lemma}\label{INTERTREE LEM}
Suppose $U \hookrightarrow S \otimes T$, 
$V \hookrightarrow S \otimes T$ 
are subtrees with common leaves and root. 
Then $F = U \cap V$ defines a full face of both $U$ and $V$.
\end{lemma}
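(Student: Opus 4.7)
The plan is to first identify the common descent relation on $F$ inherited from $U$ and $V$, and then to use it to build a tree structure on $F$ realizing it simultaneously as a full face of both.

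First I would show that for any $a, b \in F$, one has $a \leq_d^U b$ if and only if $a \leq_d^V b$. Both relations project to $\leq_d^{S \otimes T}$ via the subtree inclusions. Conversely, if $a, b$ are $\leq_d^U$-incomparable, Lemma \ref{INCOMPEDGES LEM} applied in $U$ at the common root $r$ extends them to a broad relation $a b \underline{h} \leq^U r$, which is then also a broad relation in $S \otimes T$; the ``further'' part of Proposition \ref{SIMPLEBROAD PROP} (applicable since $S \otimes T$ is simple by Proposition \ref{TENSORCHILDREN UNOPEN PROP1}) forces $a, b$ to be $\leq_d^{S \otimes T}$-incomparable, and symmetrically for $V$. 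Hence $\leq_d$-incomparability in $U$, in $S \otimes T$, and in $V$ are all equivalent, and combining this with antisymmetry of $\leq_d^{S \otimes T}$ rules out $a <_d^U b$ together with $b <_d^V a$, concluding this step.

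Next, I would equip $F = U \cap V$ with the broad poset structure whose generating relations are, for each $e \in F$ not lying in the common leaf tuple $\underline{l}$, $e^{\uparrow, F} \leq e$, where $e^{\uparrow, F}$ is the tuple of $\leq_d$-maximal elements of $\{f \in F : f <_d^U e\}$ (well-defined as a tuple in $F^+$ thanks to the previous step). Each such relation can be witnessed inside $U$ by iteratively expanding $e^{\uparrow, U}$ and replacing any entry outside $F$ by its $\uparrow$-tuple, terminating once every entry lies in $F$; by step 1, the same procedure carried out inside $V$ produces the same tuple. This makes $F$ into a simple broad poset with root $r$; the nodal condition is built in, as leaves of $U$ remain leaves of $F$ while every other edge $e$ becomes either a node or a stump according to whether $e^{\uparrow, F} \neq \epsilon$ or $= \epsilon$. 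Hence $F$ is a tree.

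For the full-face claim, one uses the characterization from Lemma \ref{INEQMAXIFF NOPEN LEM}: a broad relation $f_1 \cdots f_n \leq e$ in $F$ (respectively in $U$) is determined by $f_i \leq_d e$, the $f_i$ being pairwise $\leq_d$-incomparable, and $f_1^{\lambda} \cdots f_n^{\lambda} = e^{\lambda}$. The coincidence of $\leq_d^F$ and $\leq_d^U$ on elements of $F$ follows from step 1 together with the fact that $F$'s generating relations are composed relations of $U$; and since the leaves of $F$ are exactly the common leaves $\underline{l}$, the identity $e^{\lambda, F} = e^{\lambda, U}$ holds for every $e \in F$. Hence the two characterizations agree, so $F$ is a full face of $U$ and, by symmetry, of $V$. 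The main delicate point throughout is the treatment of stumps: an edge $e \in F$ may be a node of $U$ all of whose descent paths terminate at stumps outside $F$, becoming a stump in $F$; it is precisely the use of the $\lambda$-characterization from Lemma \ref{INEQMAXIFF NOPEN LEM}, rather than a step-by-step recursion on children, that makes this case transparent.
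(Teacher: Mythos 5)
Your proof is correct and takes essentially the same route as the paper's: the explicit broad poset structure you put on $F$ via the tuples $e^{\uparrow,F}$ is exactly the composite inner face of $U$ (equivalently of $V$) removing the inner edges not shared by both, and the paper likewise reduces everything to the agreement of the $\leq_d$-relations (via simplicity of $S\otimes T$) together with the characterization of broad relations in Lemma \ref{INEQMAXIFF NOPEN LEM}. Your write-up just fills in the details that the paper leaves implicit, including the correct handling of edges that become stumps in $F$.
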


\begin{proof}
As a set, $F$ could alternatively be defined as the underlying set of the composite inner face of $U$ that removes all inner edges of $U$ not in $V$, or vice versa. Thus, the real claim is that both constructions yield the same broad relations. Noting that the $\leq_d$ order relations on $U$, $V$ are induced from $S \otimes T$ (as argued in the proof of Proposition \ref{CLOSEDSUBTREE PROP}), this follows from Lemma \ref{INEQMAXIFF NOPEN LEM}.
\end{proof}

\begin{lemma}\label{COMMONFACE LEM}
Suppose $T$ is open.
 If $F$ is a common face (resp. inner face) of two elementary subtrees $U,V$, then $F$ is also a face (resp. inner face) of an elementary subtree $W$ such that $W \leq U$, $W \leq V$ (resp. $W \leq_{lex} U$, $W \leq_{lex} V$). 
In fact, in the inner face case the $\leq_{lex}$ inequalities factor through generating $\leq_{lex}$ inequalities involving only trees having $F$ as an inner face. 
\end{lemma}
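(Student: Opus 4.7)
The strategy is to reduce the face case to the inner face case, then explicitly construct $W$ as the canonical ``$S$-first'' elementary expansion of $F$.

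For the reduction, given the face inclusion $F \hookrightarrow U$ with $F$ having root $r_F$ and leaf tuple $\underline{l}_F$, Corollary \ref{TALLOUTERFACT COR} provides a factorization $F \hookrightarrow U^{\leq r_F}_{\nless \underline{l}_F} \hookrightarrow U$, and analogously for $V$. A check against the definitions of $T^{\leq e}$ and $T_{\nless \underline{e}}$ from Proposition \ref{UNGRAFT PROP} (using that direct children of surviving edges also survive, by the tree structure argument) shows that $U^{\leq r_F}_{\nless \underline{l}_F}$ remains an elementary subtree of $S \otimes T$. Then $F$ is a common \emph{inner} face of the elementary subtrees $U^{\leq r_F}_{\nless \underline{l}_F}$ and $V^{\leq r_F}_{\nless \underline{l}_F}$, so the inner face case applied here produces $W$ with $W \leq_{lex} U^{\leq r_F}_{\nless \underline{l}_F} \hookrightarrow U$, hence $W \leq U$ in the combined order, and analogously $W \leq V$.

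For the inner face case, where $U, V$ share the leaves and root of $F$, I would define $W$ recursively: for each generating broad relation $\underline{f} \leq f$ of $F$, and for each sub-relation $\underline{e}' \leq e$ encountered during the recursion, use Proposition \ref{NODEFACTCHAR PROP}(i) to test whether $\underline{e}' \leq e^{\uparrow S}$; if so insert the $S$-vertex $e^{\uparrow S} \leq e$, otherwise insert the $T$-vertex $e^{\uparrow T} \leq e$, and recurse on the resulting sub-relations. Since every non-trivial broad relation factors through at least one of $e^{\uparrow S}$ or $e^{\uparrow T}$ (its top generator is an $S$- or $T$-vertex), the rule is well-defined, and the resulting $W$ is by construction an elementary subtree of $S \otimes T$ having $F$ as an inner face, recovered by collapsing the added intermediate edges.

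To establish $W \leq_{lex} U$ I would iterate Lemma \ref{INTERCHANGE LEM} by matching the two expansions top-down, one generating relation of $F$ at a time. Maintaining the invariant that $U$ and $W$ agree on all levels strictly above the current edge $e$, the subtrees $U^{\leq e}$ and $W^{\leq e}$ share the same leaf tuple $\underline{e}'$: if both use the same vertex type at $e$, recurse on the sub-expansions; if $U$ uses a $T$-vertex while $W$ uses an $S$-vertex, the $S$-first condition $\underline{e}' \leq e^{\uparrow S}$ holds, which coincides with the hypothesis $e^{\lambda, U^{\leq e}} \leq e^{\uparrow S}$ of Lemma \ref{INTERCHANGE LEM}, and that lemma produces $V^{\leq e} \leq_{lex} U^{\leq e}$ containing an $S$-vertex at the top, which grafted back yields $U' \leq_{lex} U$. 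The opposite mismatch is impossible: if $W$ uses a $T$-vertex at $e$, the $S$-first condition fails there, hence any elementary $S$-vertex expansion at $e$ is likewise invalid for $U$. Every intermediate tree in the resulting chain has $F$ as inner face, because each swap rearranges intermediate edges of the affected subtree while preserving its root and leaf tuple, and broad transitivity then preserves the composed relation $\underline{f} \leq f$ that $F$ records at that vertex; this is exactly the ``in fact'' claim. Termination of the iteration follows from the strict increase, under each generating $\leq_{lex}$-move, of the pair-counting invariant from the proof of Proposition \ref{PARTIALORDER PROP}.

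The main obstacle will be verifying that $F$ remains an inner face of each intermediate tree produced along the chain; this reduces to tracking that the output of Lemma \ref{INTERCHANGE LEM} preserves the root and leaf tuple of the affected subtree, so that by broad transitivity the composed broad relation at the corresponding vertex of $F$ is retained.
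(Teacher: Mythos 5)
Your proof is correct, and in the essential (inner face) case it takes a genuinely different route from the paper's. The paper makes the same reduction via Corollary \ref{TALLOUTERFACT COR}, then uses Lemma \ref{INTERTREE LEM} to assume $F = U \cap V$ and runs an induction that converges $U$ and $V$ towards each other: whenever the root vertices disagree it applies Lemma \ref{INTERCHANGE LEM} to $V_{\nless r^{\uparrow F}}$ to produce a tree $\leq_{lex} V$ with strictly larger intersection with $U$. You instead build a canonical candidate -- the ``$S$-first'' elementary expansion $W$ of $F$, depending on $F$ alone -- and show $W \leq_{lex} U$ for \emph{every} elementary expansion $U$ by a root-to-leaves matching that invokes Lemma \ref{INTERCHANGE LEM} at each mismatch; your observation that the opposite mismatch cannot occur is the right one, since an $S$-vertex at the root of a piece $U^{\leq e}_{\nless \underline{e}'}$ already forces $\underline{e}' \leq e^{\uparrow S}$. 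Your approach yields a slightly stronger statement (the $S$-first expansion is the \emph{minimum} of all elementary expansions of $F$, so the common lower bound is canonical and Lemma \ref{INTERTREE LEM} is not needed), at the cost of verifying that the $S$-first rule is well defined and that the matching terminates, both of which you handle correctly via Proposition \ref{NODEFACTCHAR PROP} and the counting invariant from Proposition \ref{PARTIALORDER PROP}. Your treatment of the ``in fact'' clause -- each generating $\leq_{lex}$ move only rearranges intermediate edges of a piece whose root and leaf tuple lie in $F$ and are preserved, so the composite relations recorded by $F$ survive by broad transitivity -- makes explicit what the paper leaves implicit.
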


\begin{proof}
Letting $r$, $\underline{l}$ denote the root and leaves of $F$,
by Corollary \ref{TALLOUTERFACT COR} one can replace $U$, $V$ with $U^{\leq r}_{\nless \underline{l}}$, $V^{\leq r}_{\nless \underline{l}}$, reducing to the case where $F,U,V$ have exactly the same leaves and root. Thus, by Lemma 
\ref{INTERTREE LEM} we are free to assume $F = U \cap V$.

If the root vertices $r^{\uparrow U} \leq r$, $r^{\uparrow V} \leq r$ coincide, the result follows by induction on $\leq$. Otherwise, we can assume that the root vertex of $U$ is $r^{\uparrow S} \leq r$ and that of $V$ is $r^{\uparrow T} \leq r$. 
Lemma \ref{INTERCHANGE LEM} now applies to 
$V_{\nless r^{\uparrow F}}$, and one can hence build
 $W \leq_{lex} V$ with a strictly larger intersection with $U$, finishing the proof. 
\end{proof}

Recall that a subset $B$ of a poset $\mathcal{P}$ is called \textit{convex} if $\bar{b} \leq b$ and $b \in B$ implies $\bar{b} \in B$.

\begin{proposition}\label{ANODYNEEXT PROP}
Let $S,T$ be trees and $\xi \in T$ an inner edge. Further, assume that either both $S$ and $T$ are open or that one of them is linear.
Set 
\[A =  \Omega[S] \otimes \Lambda^{\xi}[T] 
\underset{ \partial \Omega[S] \otimes  \Lambda^{\xi}[T]}{\coprod}
\partial \Omega[S] \otimes  \Omega[T]\]
and regard $A$ and the $\Omega[V]$ below as subpresheaves of $\Omega[S] \otimes \Omega[T]$. 
 
Then, for any convex subsets $B \subset B'$ of the poset $\mathsf{IE}_{\xi}(S \otimes T)$ (or, in the special case of $S$ a linear tree and $T$ not open, of the poset $\mathsf{IE}^{oplex}_{\xi}(S \otimes T)$), one has that 
\[A \cup \bigcup_{V \in B} \Omega[V] \to
  A \cup \bigcup_{V \in B'} \Omega[V]\]
is an inner anodyne extension.
\end{proposition}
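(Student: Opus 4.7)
The plan is to induct on $|B' \setminus B|$, reducing to the case $B' = B \cup \{U\}$ for $U$ minimal in $B' \setminus B$ with respect to the order on $\mathsf{IE}_{\xi}(S \otimes T)$ (or $\mathsf{IE}_{\xi}^{oplex}(S\otimes T)$ in the $S$-linear case). Convexity of $B$ and $B'$ ensures that one may choose such $U$ with $B \cup \{U\}$ still convex: if $W < U$ in the poset then $W \in B'$ by convexity of $B'$ and $W \notin B' \setminus B$ by minimality of $U$, so $W \in B$.

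For the inductive step, I define $E_U \subset \mathrm{Inn}(U)$ to be the set of inner edges of $U$ of the form $(s,\xi)$ whose generating vertex in $U$ is a $T$-vertex $(s,\xi)^{\uparrow T} \leq (s, \xi)$. By the $\xi$-internality hypothesis of Definition \ref{IEETA DEF}, $E_U$ is non-empty, so the generalized inner horn inclusion $\Lambda^{E_U}[U] \hookrightarrow \Omega[U]$ is inner anodyne by Proposition \ref{GENERALIZED_HORN_ANODYNE_PROP}. The core claim is that
\[A \cup \bigcup_{V \in B} \Omega[V] \hookrightarrow A \cup \bigcup_{V \in B \cup \{U\}} \Omega[V]\]
is a pushout along $\Lambda^{E_U}[U] \hookrightarrow \Omega[U]$.

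Verifying this pushout requires two matching checks. First, every face of $U$ that lies in $\Lambda^{E_U}[U]$ already factors through $A \cup \bigcup_{V \in B} \Omega[V]$: outer faces of $U$ land in $A$ (a vertex dropped at an $S$-vertex of $U$ reduces the $S$-projection, landing in $\partial\Omega[S] \otimes \Omega[T]$, while a vertex dropped at a $T$-vertex produces a face of $T$ other than $T-\xi$, landing in $\Omega[S] \otimes \Lambda^\xi[T]$); and an inner face $U - F'$ with $F' \not\subset E_U$ contains an inner edge whose collapse corresponds to a $\leq_{lex}$ modification, so Lemma \ref{COMMONFACE LEM} exhibits it as an inner face of some strictly smaller $W < U$ in the poset, which lies in $B$ by the convexity paragraph above. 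Second, an inner face $U - F'$ with $\emptyset \neq F' \subset E_U$ cannot be obtained from smaller data: the mixed vertex configurations arising at the collapsed $\xi$-edges uniquely mark $U$ among $\leq_{lex}$-related trees, and since at least one $\xi$-edge is collapsed while no leaves or root of $U$ are dropped, the face factors through neither $\Omega[S] \otimes \Lambda^\xi[T]$ (the $T$-projection still contains $\xi$) nor $\partial \Omega[S] \otimes \Omega[T]$.

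The main obstacle is controlling outer-face and inner-face membership in the presence of stumps. When $T$ is not open, a null-node vertex $(s,t)$ with $t$ a stump produces an additional relation $\epsilon \leq (s, t)$ whose interaction with $\leq_{lex}$ breaks antisymmetry of $\leq$ in general (cf.\ Remark after Proposition \ref{PARTIALORDER PROP}), which is precisely why case (i) demands both $S$ and $T$ open. The $S$-linear case (ii) is salvaged by using $\mathsf{IE}_{\xi}^{oplex}$: the rigidity of linear $S$ (each $\leq_d$-incomparable pair of edges in $U$ differs only in the $T$-coordinate) makes the relevant $\leq_{lex}$ modifications run opposite to the stump-monotonicity, and Corollary \ref{ROOTCOND COR} then governs which outer faces genuinely land in $\Omega[S] \otimes \Lambda^\xi[T]$. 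In both cases the pivotal ingredient is Lemma \ref{COMMONFACE LEM}, which supplies the smaller witnesses $W$ used above to recognize that the ``boundary'' inner faces are already attached.
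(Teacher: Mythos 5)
Your reduction to $B' = B \cup \{U\}$ for a $\leq$-minimal new element matches the paper, but the core of your argument --- that attaching $U$ is a \emph{single} pushout of the generalized horn $\Lambda^{E_U}[U] \hookrightarrow \Omega[U]$ --- is false, and the justification you give for it (``an inner face $U - F'$ with $F' \not\subset E_U$ contains an inner edge whose collapse corresponds to a $\leq_{lex}$ modification, so Lemma \ref{COMMONFACE LEM} exhibits it as an inner face of some strictly smaller $W$'') does not hold. A generating $\leq_{lex}$ relation at a fork $e$ replaces the \emph{entire} tuple $e^{\uparrow T}$ (or $e^{\uparrow S}$) at once, so the common face of $U$ with a $\leq_{lex}$-neighbour collapses all of those edges simultaneously, not just one. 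Concretely, take $S = [1]$ with leaf $a$ and root $b$, and $T$ open with root $r$, $r^{\uparrow} = \xi$, $\xi^{\uparrow} = t_1 t_2$. Let $U$ be the $\leq$-maximal element of $\mathsf{IE}_{\xi}(S \otimes T)$, with vertices $(b,\xi) \leq (b,r)$, $(b,t_1)(b,t_2) \leq (b,\xi)$ and $(a,t_i) \leq (b,t_i)$, so that $E_U = \{(b,\xi)\}$. The inner face $U - (b,t_1)$ lies in $\Lambda^{E_U}[U]$, yet it is contained in no other initial elementary subtree (any elementary subtree containing the edge $(b,t_2)$ must contain the vertex $(b,t_1)(b,t_2) \leq (b,\xi)$ and hence all of $U$), and both its $S$- and $T$-projections are all of $S$ and $T$, so it is not in $A$ either. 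Hence $\Lambda^{E_U}[U] \not\subset A \cup \bigcup_{V \in B}\Omega[V]$ and your pushout square does not exist. This is precisely why the paper filters further: it fixes one characteristic edge $\xi_s$, attaches the faces $X \in \mathsf{Inn}_{\hat{\xi}_s}(U)$ one at a time along the one-edge horns $\Lambda^{\xi_s}[X] \to \Omega[X]$, and uses Lemmas \ref{CHAREDGE1 LEM} and \ref{CHAREDGE2 LEM} to verify that at each such stage exactly $X$ and $X - \xi_s$ are missing.

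A secondary error: your claim that all outer faces of $U$ land in $A$ because dropping an $S$-vertex (resp.\ $T$-vertex) shrinks the $S$- (resp.\ $T$-) projection is also incorrect --- in the example above the leaf-vertex outer face $U_{\nless (b,t_1)}$ still projects onto all of $S$ and all of $T$. Such faces are already present for a different reason: they are themselves $\xi$-internal initial elementary subtrees, strictly smaller than $U$ in the $\hookrightarrow$ component of the order $\leq$, hence lie in $B$ by convexity. Only the root-vertex outer faces and the face $U_{\nless \xi_s}$ that destroys $\xi$-internality are absorbed by $A$, as in cases (i) and (ii) of the paper's proof.
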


To simplify notation we will throughout the proof suppress $\Omega$ from the notation, e.g., 
$A \cup \bigcup_{V \in B} \Omega[V]$
will be denoted simply as $A \cup \bigcup_{V \in B} V$.

The key to the proof is given by the following couple of lemmas.

\begin{lemma}\label{CHAREDGE1 LEM}
Suppose that either $S$, $T$ are both open or that $T$ is linear.

Then, for $U \in \mathsf{IE}_{\xi}(S \otimes T)$ and $B$ convex such that $\{V|V<U\}\subset B$, any edge $\xi_s$ of $U$ with vertex $\xi_s^{\uparrow T} \leq \xi_s$
is a characteristic edge (in the sense of \cite[Lemma 9.7]{MW08}), i.e., for each inner face $F$ of $U$ containing the edge $\xi_s$, then 
$F$ is in $A \cup \bigcup_{V \in B}V$ if and only if $F - \xi_s$ is. 
\end{lemma}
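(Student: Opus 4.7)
\textbf{The ``only if'' direction} is immediate: both $A$ and each $\Omega[V]$ are sub-presheaves of $\Omega[S \otimes T]$ stable under passage to inner faces, and $F - \xi_s$ is an inner face of $F$.

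\textbf{For the ``if'' direction}, the key preliminary observation is that since $\xi$ is an inner edge of $T$, the tuple $\xi^\uparrow$ is nonempty, and the $T$-vertex $\xi_s^{\uparrow T} \leq \xi_s$ of $U$ is preserved in $F$ (as $\xi_s \in F$). Removing $\xi_s$ produces a composite vertex of $F - \xi_s$ whose inputs include the edges $(s, t_i)$ for each $t_i \in \xi^\uparrow$; consequently $s \in \pi_S(F - \xi_s)$, so $\pi_S(F) = \pi_S(F - \xi_s)$, and the only potential new $T$-coordinate of $F$ over $F - \xi_s$ is $\xi$ itself. These observations immediately dispose of the two ``easy'' sub-cases in which $F - \xi_s \in A$: for $F - \xi_s$ factoring through $\partial\Omega[S] \otimes \Omega[T]$, the preservation of $\pi_S$ gives the same for $F$; for $F - \xi_s$ factoring through $\Omega[S] \otimes \Omega[\tau]$ inside $\Omega[S] \otimes \Lambda^\xi[T]$ (so $\tau \neq T - \xi$ is a face of $T$), either $\xi \in \tau$ (and $F$ factors through the same) or an easy inspection forces $\tau \subsetneq T - \xi$, whence $\pi_T(F)$ still misses a non-$\xi$ edge of $T$ and thus sits in some face $\tau' \neq T - \xi$ of $T$ containing $\xi$.

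\textbf{The principal case} is $F - \xi_s \subseteq V$ for some $V \in B$. If $V$ already contains $\xi_s$ with its $T$-vertex, then $F \subseteq V$ directly. Otherwise, the plan is to construct an elementary subtree $V' \in \mathsf{IE}_\xi(S \otimes T)$ (or, when $T$ is not open and $S$ is linear, in $\mathsf{IE}^{oplex}_\xi(S \otimes T)$) satisfying (i) $V' \leq V$, (ii) $F \subseteq V'$, and (iii) $V'$ contains $\xi_s$ together with its $T$-vertex $\xi_s^{\uparrow T} \leq \xi_s$. The convexity of $B$ and the assumption $\{V'' \mid V'' < U\} \subseteq B$ then place $V'$ in $B$. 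Following the template of \cite[Lemma 9.7]{MW08}, this construction iteratively applies Lemma \ref{INTERCHANGE LEM} at carefully chosen internal edges of $V$ (specifically, at the edge immediately below $\xi_s$ and possibly further down) in order to ``rearrange'' the elementary $S$/$T$-vertex pattern of $V$ so as to introduce the $T$-vertex at $\xi_s$ while preserving all broad relations of $F - \xi_s$.

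\textbf{The main obstacle} is this last construction. One must simultaneously verify that the chosen rearrangement (i) produces a genuinely smaller element of the poset (rather than cycling back to $V$ or returning $U$), (ii) lies in the appropriate subposet $\mathsf{IE}_\xi$ (or $\mathsf{IE}^{oplex}_\xi$), and (iii) retains all edges and broad relations of $F - \xi_s$. The openness/linearity hypotheses on $S$ and $T$ play a double role: they control the stump-related subtleties of Lemma \ref{INTERCHANGE LEM} and guarantee that $\leq$ defines a genuine partial order on $\mathsf{IE}_\xi$ (cf.\ Proposition \ref{PARTIALORDER PROP}), both of which are essential for the iterative argument.
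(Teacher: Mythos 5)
There is a genuine gap in your treatment of the case $F-\xi_s \in A$, stemming from the false assertion that the $S$-coordinates occurring in $F$ and in $F-\xi_s$ agree. Your justification assumes that $F$ still contains the edges $(s,t_i)$ for $t_i\in\xi^{\uparrow}$, but $F$ is an arbitrary inner face of $U$ containing $\xi_s$, and those edges are typically inner edges of $U$ that $F$ may well have removed. The critical scenario --- which your argument silently excludes and which is the heart of the lemma --- is when $\xi_s$ is the \emph{only} edge of $F$ with $S$-coordinate $s$, so that $F-\xi_s$ lands in $(S-s)\otimes T\subseteq \partial\Omega[S]\otimes\Omega[T]\subseteq A$ while $F$ does not land in $A$ at all. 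In that situation one must show instead that $F$ is a face of some $V<_{lex}U$, hence lies in $\bigcup_{V\in B}\Omega[V]$ by the hypothesis $\{V\,|\,V<U\}\subset B$. The paper does this by introducing the subtree $U^{\leq \xi_s}_{\nless \xi_s^{\uparrow<s}}$, all of whose inner edges have $S$-coordinate $s$ and are therefore all absent from $F$, and then invoking Lemma \ref{INTERCHANGE LEM}; this step (together with ruling out $s$ being a root or leaf of $S$, which is where openness of $T$ enters) has no counterpart in your proposal.

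Your ``principal case'' is also not actually proved: you state a plan to build $V'\leq V$ by iterated interchanges but explicitly defer the construction as ``the main obstacle''. The paper's route is different and avoids this construction entirely: by Lemma \ref{COMMONFACE LEM} (and its ``in fact'' clause) one may assume $F-\xi_s$ is an inner face of $V$ with $V<_{lex}U$ through \emph{generating} $\leq_{lex}$ relations, and since such a relation only deletes edges of $U$ whose vertex is an $S$-vertex, the edge $\xi_s$ (whose vertex is a $T$-vertex) survives into $V$, giving $F\hookrightarrow V$ directly. I would recommend replacing the interchange plan with this argument rather than attempting to complete it.
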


\begin{proof}
Suppose first that $F-\xi_s$ is in $A$ but that $F$ is not.

Either $F-\xi_s \hookrightarrow S' \otimes T$ or 
$F -\xi_s \hookrightarrow S \otimes T'$ for $S'\hookrightarrow S$, $T' \hookrightarrow T$ some maximal subface where in the latter case $T'\neq T-\xi$. Considering the cases in Notation \ref{FACETYPES NOT}, the stump cases are excluded by Corollary \ref{ROOTCOND COR} and in the full cases Lemma
\ref{CLOSEDSUBTREE LEM} implies that the only possibility is for $F-\xi_s$ to have no edge with $S$-coordinate $s$ while $F$ does. Further, if   $s$ were
to be a root or leaf of $S$, $F - \xi_s$ would still contain a root or leaf with $S$-coordinate $s$ (this latter case uses the fact that $T$ is open). Thus, the only possibility is $F - \xi_s \hookrightarrow (S - s) \otimes T$ for $s$ an inner edge of $S$.

Now let $\xi^{\uparrow <s}_s =e_1 \cdots e_k$ consist of the $\leq_d$-maximal $e_i=(s_i,t_i)$ such that both $e_i<_d\xi_s$ and $s_i<_d s$ and consider the subtree 
$U^{\leq \xi_s}_{\nless \xi^{\uparrow <s}_s}$. Then:
\begin{inparaenum}
\item[(i)] this tree has no leaf with $S$ coordinate $s$, or else that would be a leaf of $U$ (Remark \ref{OUTERFACELEAVES REM}), and thus also of $F$, so that it could not be $F - \xi_s \hookrightarrow (S - s) \otimes T$;
\item[(ii)] the leaf tuple of this tree is hence $\xi^{\uparrow <s}_s$;
\item[(iii)] by definition of $\xi^{\uparrow <s}_s$, all inner edges of this tree have $S$ coordinate $s$.
\end{inparaenum}
But the condition $F - \xi_s \hookrightarrow (S - s) \otimes T$ now implies that 
$F$ contains none of the inner edges of $U^{\leq \xi_s}_{\nless \xi^{\uparrow <s}_s}$, so that 
Lemma \ref{INTERCHANGE LEM} implies that 
$F$ is a subface of some 
$V <_{lex} U$, hence contained in $A \cup \bigcup_{V \in B} V$.

Suppose now that $F-\xi_s$ is a subface of some $V\in B$. By Lemma \ref{COMMONFACE LEM} (and its proof) we can assume that in fact $F-\xi_s$ is an inner subface of $V$ and $V <_{lex} U$. Further,  by the ``in fact'' part of Lemma \ref{COMMONFACE LEM} one can also assume that this is a generating $\leq_{lex}$ relation. But then $V$ necessarily contains $\xi_s$, since generating $\leq_{lex}$ relations do not add edges whose vertex is a $T$-vertex. Thus $F-\xi_s \hookrightarrow V$ implies 
$F \hookrightarrow V$.
\end{proof}

\begin{example}
The following is a typical tree illustration of $U^{\leq \xi_s}_{\nless \xi^{\uparrow <s}_s}$. In words, this subtree always:
\begin{inparaenum} 
\item[(i)] has $S$-vertices (in black), all of the same arity, below its leaves;
\item[(ii)] has a $T$-vertex above its root $\xi_s$ (in white);
\item[(iii)] all its remaining vertices are $T$-vertices (so that the edges marked $s$ have $S$-coordinate $s$).
\end{inparaenum}
\[
	\begin{tikzpicture}[auto,grow=up, level distance = 2.25em, every node/.style={font=\scriptsize},dummy/.style = {circle,draw,inner sep=0pt,minimum size=1.75mm}]
		\node at (0,0) {}
			child[level distance = 2.25em]{node [dummy,fill=white] {}
				child[sibling distance = 4em,level distance = 1.75em]{node [dummy] {}
					child[sibling distance = 1.em]{node [dummy,fill=black] {}
						child[sibling distance = 1.25em]
						child[sibling distance = 1.25em]
					edge from parent node [swap] {$s$}}
				edge from parent node [swap] {$s$}}
				child[sibling distance = 1.5em,level distance = 2.25em]{node [dummy,fill=black] {}
					child[sibling distance = 1.25em]
					child[sibling distance = 1.25em]
				edge from parent node [swap] {$s$}}
				child[sibling distance = 4em,level distance = 1.75em]{node [dummy] {}
					child[sibling distance = 2em]{node [dummy,fill=black] {}
						child[sibling distance = 1.25em]
						child[sibling distance = 1.25em]
					edge from parent node [swap,near end] {$s$}}
					child[sibling distance = 2.5em]{node [dummy,fill=black] {}
						child[sibling distance = 1.25em]
						child[sibling distance = 1.25em]
					edge from parent node [near end] {$s$}}
				edge from parent node {$s$}}
			edge from parent node [swap] {$\xi_s$}};
	\end{tikzpicture}
\]
\end{example}

\begin{remark}
For the previous proof to work it is crucial for the tree $U^{\leq \xi_s}_{\nless \xi^{\uparrow <s}_s}$ to in fact have inner edges, as is ensured by the fact that $\xi$ is not a stump of $T$.
In this latter case we will instead need to use 
the following alternative lemma.
\end{remark}

\begin{lemma} \label{CHAREDGE2 LEM}
Suppose that $S$ is a linear tree (i.e. $S \simeq [n]$ for $[n] \in \Delta$).

Then for $U \in \mathsf{IE}_{\xi}^{oplex}(S \otimes T)$ the $\leq_d$-maximal edge of $U$ of the form $\xi_s$ is a characteristic edge (in the sense of Lemma \ref{CHAREDGE1 LEM}). 
\end{lemma}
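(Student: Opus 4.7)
The plan is to follow the two-direction strategy of Lemma \ref{CHAREDGE1 LEM}, adapted to the oplex setting. Let $s_0 \in [n]$ be minimal with $\xi_{s_0} \in U$, so that $\xi_{s_0}$ is the $\leq_d$-maximal $\xi$-fiber edge of $U$, and fix an inner face $F$ of $U$ containing $\xi_{s_0}$. Under the standing convexity hypothesis $\{V \mid V <_{oplex} U\} \subseteq B$, the goal is to prove the biconditional
\[
F \in A \cup \bigcup_{V \in B} V \quad \Longleftrightarrow \quad F - \xi_{s_0} \in A \cup \bigcup_{V \in B} V.
\]

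For the reverse direction, suppose $F - \xi_{s_0}$ is a subface of some $V \in B$. By an oplex analogue of Lemma \ref{COMMONFACE LEM}, one reduces to a chain of generating oplex relations from $V$ to $U$, each preserving $F - \xi_{s_0}$. A generator is either an inclusion or a reverse-lex swap exchanging a segment $e^{\uparrow S,T} \leq e^{\uparrow S} \leq e$ for $e^{\uparrow S,T} \leq e^{\uparrow T} \leq e$. For the latter, the deleted intermediate edge is $e^{\uparrow S} = (s^{\uparrow}, t)$; for this edge to be $\xi_{s_0}$ one needs $t = \xi$ and $e = (s_0 - 1, \xi) = \xi_{s_0 - 1} \in U$, contradicting the minimality of $s_0$. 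Hence $\xi_{s_0} \in V$, and $F \hookrightarrow V$.

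For the forward direction, assume $F - \xi_{s_0} \in A \cup \bigcup_{V \in B} V$ while $F$ is not. The $T$-side subcases for $F - \xi_{s_0} \in A$ are handled exactly as in Lemma \ref{CHAREDGE1 LEM}: stump outer faces of $T$ are excluded by Corollary \ref{ROOTCOND COR}, and the remaining $T$-faces either contain $\xi$ (forcing $F$ also into $A$, a contradiction) or are impossible. The $S$-side subcase yields $F - \xi_{s_0} \hookrightarrow S' \otimes T$ with the missing edge of $S'$ equal to $s_0$. The boundary cases are excluded: $s_0 = 0$ is impossible since the root $(0, r_T)$ persists in $F - \xi_{s_0}$; and $s_0 = n$ is impossible since the $T$-vertex $\xi_n^{\uparrow T} \leq \xi_n$ forces leaf-or-stump edges at $S$-coordinate $n$ in $U$, and these are not removed by any inner face. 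Thus $s_0$ must be inner in $[n]$, and we analyze the subtree $U^{\leq \xi_{s_0}}_{\nless \xi^{\uparrow <s_0}_{s_0}}$, whose inner edges all have $S$-coordinate $s_0$ and which $F$ must entirely avoid. Whereas Lemma \ref{CHAREDGE1 LEM} concluded via Lemma \ref{INTERCHANGE LEM} by producing a lex-smaller tree, here we instead locate a proper elementary $\xi$-internal inclusion-subtree $V \subsetneq U$ containing $F$ as a face: removing an inner edge of $U^{\leq \xi_{s_0}}_{\nless \xi^{\uparrow <s_0}_{s_0}}$ (together with the attached sub-branch above) produces such a $V$, $\xi$-internality being preserved because the $T$-vertex $\xi_{s_0}^{\uparrow T} \leq \xi_{s_0}$ is untouched. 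Then $V <_{oplex} U$ via inclusion, so $V \in B$ by convexity and $F \hookrightarrow V$, contradicting $F \notin \bigcup_{V \in B} V$.

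The main obstacle is the forward direction's inner case: in the lex setting Lemma \ref{INTERCHANGE LEM} gave $V$ ``sideways'' via $\leq_{lex}$, but oplex requires working ``downward'' via inclusion. The delicate point is carefully identifying which edges of $U$ above $\xi_{s_0}$ can be safely removed so as to produce an elementary initial subtree that is strictly smaller than $U$, still $\xi$-internal, and still has $F$ as a face; this is precisely what distinguishes the linear-$S$/oplex case from the argument of Lemma \ref{CHAREDGE1 LEM}.
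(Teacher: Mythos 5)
Your reverse direction (the case where $F-\xi_s$ is a subface of some $V\in B$) is correct and, modulo your reversed labelling of $[n]$, is actually more explicit than the paper's: you rightly observe that a generating $\leq_{oplex}$ swap only deletes an edge of the form $e^{\uparrow S}$, and that $e^{\uparrow S}=\xi_s$ would force $e$ to be a $\xi$-edge one step closer to the root of $S$, contradicting the $\leq_d$-extremality of $\xi_s$. The forward direction (the case $F-\xi_s\in A$), however, has two genuine gaps. First, you discard the case where $\xi_s$ sits at the \emph{leaf} level of $S$, but this case really occurs and is precisely what the oplex variant exists to handle: when $T$ is not open, the portion of $U$ below $\xi_s$ at that $S$-level may terminate entirely in stumps, and stumps are \emph{internal} edges (neither leaves nor roots), so an inner face $F$ may delete all of them. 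The paper explicitly keeps this case (``which must be considered if $T$ is not open'').

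Second, and more seriously, your substitute for Lemma \ref{INTERCHANGE LEM} does not work. You propose to produce $V<_{oplex}U$ through the \emph{inclusion} part of the order by deleting an inner edge of $U^{\leq \xi_s}_{\nless \xi^{\uparrow <s}_s}$ together with the branch above it. But (a) that branch terminates in leaves (or stumps) of $U$, and every inner face $F$ of $U$ retains every leaf of $U$, so $F$ is in general \emph{not} a face of the truncated tree $V$; and (b) in the crucial case where $\xi$ is a stump of $T$, the subtree $U^{\leq \xi_s}_{\nless \xi^{\uparrow <s}_s}$ has no inner edges at all --- this is exactly the failure mode, flagged in the remark following Lemma \ref{CHAREDGE1 LEM}, that forces the introduction of Lemma \ref{CHAREDGE2 LEM} --- so there is nothing to delete. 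The paper's argument instead works on the \emph{root side} of $\xi_s$: since $\xi_s$ is not at the root level of $S$, take the $\leq_d$-minimal edge $a$ of $U$ with $\xi_s<_d a$ and $S$-coordinate one step closer to the root. Minimality of $a$ forces the $S$-vertex $a^{\uparrow S}\leq a$ to lie in $U$; the presence of $\xi_s$ strictly below $a^{\uparrow S}$ (at the same $S$-level) forces the vertex at $a^{\uparrow S}$ to be the $T$-vertex, so $U$ contains the full string $a^{\uparrow S,T}\leq a^{\uparrow S}\leq a$; and since $F$ must collapse $a^{\uparrow S}$ (it has $S$-coordinate $s$ and, by maximality of $\xi_s$, is not $\xi_s$), the single generating oplex swap at $a$ produces $V<_{oplex}U$ with $F\hookrightarrow V$, whence $F\in\bigcup_{V\in B}\Omega[V]$ by convexity. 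This step is what your proof is missing.
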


\begin{proof}
Since $S$ is linear we will for simplicity label its edges as $0 \leq 1 \leq \cdots \leq n$.

Suppose first that $F-\xi_s$ is in $A$, so that repeating the argument in the proof of Lemma \ref{CHAREDGE1 LEM} we conclude it must be 
$F-\xi_s \hookrightarrow (S-s) \otimes T$ for $s<n$ (note that this makes sense even if $s=0$ is the leaf of $S$, which must be considered if $T$ is not open).

Since $s \neq n$, one can choose a $\leq_d$-minimal edge $a_{s+1}$ of $U$ such that $\xi_s \leq_d a_{s+1}$.
Then:
\begin{inparaenum}
\item[(i)] the characterization of $\xi_s$ implies $a \neq \xi$;
\item[(ii)] the characterization of $a_{s+1}$ implies that $U$ contains the $S$-vertex $a_s = a_{s+1}^{\uparrow S} \leq a_{s+1}$;
\item[(iii)] since $U$ contains both $\xi_s$ and $a_s$ it contains the $T$-vertex  $a_s^{\uparrow T} \leq a_s$, which can be rewritten as $a_{s+1}^{\uparrow S,T} \leq a_{s+1}^S$.
\end{inparaenum}
$U$ therefore contains the relations $a_{s+1}^{\uparrow S,T} \leq a_{s+1}^{\uparrow S} \leq a_{s+1}$, which since $F$ must collapse 
$a_s=a_{s+1}^{\uparrow S}$
yields that $F$ is a subface of the tree $V \leq_{oplex} U$ obtained by replacing $a_s = a_{s+1}^{\uparrow S}$ with $a_{s+1}^{\uparrow T}$.

The case of $F-\xi_s$ a subface of some $V \in B$ follows by an argument identical to that in the proof of Lemma \ref{CHAREDGE1 LEM}, except now noting that generating $\leq_{oplex}$ relations do not add edges whose vertex is a $S$-vertex.
\end{proof}

\begin{proof}[Proof of Proposition \ref{ANODYNEEXT PROP}]

Without loss of generality we can assume that $B'$ is obtained from $B$ by adding a single $\xi$-internal initial elementary tree $U$ with $\xi_s$ its corresponding edge.

We first note that the outer faces of $U$ are in $A \cup \bigcup_{V \in B} V$. 
Since a maximal outer face $\bar{U} \hookrightarrow U$ is always still elementary, $\bar{U}$ will be $\xi$-internal initial elementary unless
\begin{enumerate*}
  \item[(i)] $\bar{U}=U^{\leq(e,r_T)}$ (resp. $\bar{U}=U^{\leq(r_S,e)}$) is a root vertex face, in which case 
	$\bar{U} \hookrightarrow (S^{\leq e}) \otimes T$
	(resp. $\bar{U} \hookrightarrow S \otimes (T^{\leq e})$)
	and is hence in $A$; 
 \item[(ii)] $\bar{U}=U_{\nless \xi_s}$ \textit{and} $\bar{U}$ is no longer $\xi$-internal since it contains no $T$-vertices of the form $\xi_{\tilde{s}}^{\uparrow T} \leq \xi_{\tilde{s}}$.
But then it would be 
$\bar{U} \hookrightarrow S \otimes T_{\nless \xi}$
(by either Lemma \ref{CLOSEDSUBTREE LEM} or Corollary \ref{ROOTCOND COR})
and thus $\bar{U}$ is in $A$.
\end{enumerate*}

Finally, we let $\mathsf{Inn}_{\hat{\xi}_s}(U)$ denote the poset of inner faces of $U$ removing only edges \textit{other than} $\xi_s$. We claim that for any convex subsets 
$ C \subset C' \subset \mathsf{Inn}_{\hat{\xi}_s}(U)$ the map
\begin{equation}\label{ATTACHTREE EQ}
  A \cup \bigcup_{V \in B} V \cup \bigcup_{W \in C} W
   \to 
 A \cup \bigcup_{V \in B} V \cup \bigcup_{W \in C'} W
\end{equation}
is inner anodyne. 
We argue by induction on $C$ and again we can assume that $C'$ is obtained from $C$ by adding a single 
$X \in \mathsf{Inn}_{\hat{\xi}_s}(U)$ not yet in the domain of (\ref{ATTACHTREE EQ}). 
The concavity of $C,C'$ and the characteristic edge condition in Lemmas \ref{CHAREDGE1 LEM}, \ref{CHAREDGE2 LEM} then
imply that the only faces of $X$ not in the source of (\ref{ATTACHTREE EQ}) are precisely $X$ and $X - \xi_s$, showing that (\ref{ATTACHTREE EQ}) is a pushout of 
$\Lambda^{\xi_s}[X] \to \Omega[X]$, finishing the proof.
\end{proof}

In the $G$-equivariant case, given an inner edge orbit $G \xi$, we write $\mathsf{IE}_{G\xi}(S \otimes T)$ for the poset of initial elementary trees containing at least one $T$-vertex of the form
$(g\xi)^{\uparrow T}_s \leq(g\xi)_s$ (alternatively, one has $\mathsf{IE}_{G\xi}(S \otimes T)
=\bigcup_{g \in G}\mathsf{IE}_{g\xi}(S \otimes T)$). Note that in this case the group $G$ acts on the poset $\mathsf{IE}_{G\xi}(S\otimes T)$ as well. The following is the equivariant version of Proposition \ref{ANODYNEEXT PROP}.

\begin{proposition}\label{EXPG PROP}
Let $S,T \in \Omega_G$ be $G$-trees and $\xi \in T$ an inner edge. Further, assume that either both $S$ and $T$ are open or that one of them is linear (i.e. of the form $G/H \cdot [n]$).
Set 
\[
	A = \Omega[S]\otimes \Lambda^{G \xi}[T] 
	\coprod_{\partial \Omega[S]\otimes \Lambda^{G \xi}[T]}
	\partial \Omega[S] \otimes  \Omega[T].
\]
and regard $A$ and the $\Omega[V]$ below as subpresheaves of $\Omega[S] \otimes \Omega[T]$.

Then for any $G$-equivariant convex subsets $B \subset B'$ of $\mathsf{IE}_{G\xi}(S \otimes T)$ (or, in the special case of $S$ a linear tree and $T$ not open, of $\mathsf{IE}^{oplex}_{G\xi}(S \otimes T)$) one has that
\[
	A \cup \bigcup_{V \in B} \Omega[V] \to A \cup \bigcup_{V \in B'} \Omega[V]
\]
is a $G$-inner anodyne extension.
\end{proposition}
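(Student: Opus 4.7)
My plan is to extend the two-tier filtration underlying the proof of Proposition~\ref{ANODYNEEXT PROP} by equipping it with an additional layer of $G$-equivariant bookkeeping modeled on the isotropy arguments appearing in Propositions~\ref{GENERALIZED_HORN_ANODYNE_PROP} and~\ref{GRAFTANODYNE PROP}. As a first reduction I will assume that $B' \setminus B$ consists of a single $G$-orbit, which after a suitable $G$-translation we may write as $G \cdot U$ for some $U \in \mathsf{IE}_{\xi}(S \otimes T)$; let $H = \mathrm{Stab}_{G}(U)$. The verification that every outer face of $U$ already lies in $A \cup \bigcup_{V \in B} \Omega[V]$ then transcribes directly from the non-equivariant proof, since that step depends only on the $G$-equivariance of $A$ and of $\bigcup_{V \in B} \Omega[V]$.

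For the inner tier I will pick an edge $\xi_{s} \in U$ with $\xi_{s}^{\uparrow T} \leq \xi_{s}$ a vertex of $U$ and denote by $E = H \cdot \xi_{s}$ its $H$-orbit inside $U$, so that each edge of $E$ remains a characteristic edge in the sense of Lemma~\ref{CHAREDGE1 LEM} (or Lemma~\ref{CHAREDGE2 LEM} in the $\mathsf{IE}^{oplex}$ case). Writing $\mathcal{I} = \mathsf{Inn}_{\hat{E}}(U)$ for the $H$-poset of inner faces of $U$ collapsing no edge of $E$, for each $H$-equivariant convex $C \subset \mathcal{I}$ I set
\[
  \Psi(C) = A \cup \bigcup_{V \in B} \Omega[V] \cup \bigcup_{[g] \in G/H} \bigcup_{W \in C} \Omega[g \cdot W],
\]
and reduce to the case $C' = C \sqcup H \cdot X$ with $K_{X} = \mathrm{Stab}_{H}(X)$. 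The central claim is then that $\Psi(C) \to \Psi(C')$ is a pushout of the map
\[
  G \cdot_{K_{X}} \bigl(\Lambda^{E}[X] \to \Omega[X]\bigr),
\]
which is $G$-inner anodyne by Proposition~\ref{GENERALIZED_HORN_ANODYNE_PROP}.

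The hard part will be verifying this pushout description, which unpacks to showing $\Lambda^{E}[X] \subset \Psi(C)$ within each copy $g \cdot X$, i.e.\ that every face of $X$ other than $X$ itself and the inner faces $X - E'$ for $\emptyset \neq E' \subset E$ already lies in $\Psi(C)$. Outer faces of $X$ fall under the first-step analysis combined with the convexity of $C$. For an inner face $F = X - \{e_{1},\ldots,e_{k}\}$ with some $e_{i} \notin E$, I plan to iterate the characteristic edge property for each $e_{j} \in E \cap \{e_{1},\ldots,e_{k}\}$ in turn, reducing to the face collapsing only the edges outside $E$; this latter face belongs to $\mathcal{I}$ and is strictly below $X$, hence lies in $C$ by convexity. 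The remaining subtlety, mirroring the isotropy checks in the proofs of Propositions~\ref{GENERALIZED_HORN_ANODYNE_PROP} and~\ref{GRAFTANODYNE PROP}, is to confirm that the $G$-stabilizer of each copy $g \cdot X$ viewed inside $\Omega[S] \otimes \Omega[T]$ coincides with $K_{X}$, so that the $G \cdot_{K_{X}}$ construction indexes the attachment faithfully rather than over-counting.
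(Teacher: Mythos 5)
Your overall architecture --- the single-orbit reduction, the outer-face step, the inner filtration by $H$-equivariant convex subsets of $\mathsf{Inn}_{\widehat{H\xi_s}}(U)$, and the closing isotropy check --- is exactly the paper's. The gap sits at the crux. You assert that ``each edge of $E = H\cdot\xi_s$ remains a characteristic edge in the sense of Lemma~\ref{CHAREDGE1 LEM}'', i.e.\ the single-edge equivalence ``$F$ is in the union iff $F-\xi_s$ is''. The paper deliberately does not claim this: its equivariant analogue is a characteristic edge \emph{orbit} property, relating $F$ to $F-\bar{H}e$ where $\bar{H}$ is the isotropy of the face $F$ itself, and its proof genuinely uses that condition --- in the case $F-\bar{H}e \in A$ one finds that $F$ misses the inner edges of one conjugate of $U^{\leq \xi_s}_{\nless \xi^{\uparrow <s}_s}$ and then invokes the $\bar{H}$-stability of $F$ to propagate this to all conjugates before applying Lemma~\ref{INTERCHANGE LEM}. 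If you insist on the single-edge version for arbitrary (non-stable) faces $F$, you must redo that case against the equivariant $A$ built from $\Lambda^{G\xi}[T]$ rather than $\Lambda^{\xi}[T]$, and check that the $V<_{lex}U$ produced by a single interchange still lies in $\mathsf{IE}_{G\xi}(S\otimes T)$ so that convexity places it in $B$; neither point is automatic, and neither appears in your plan.

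More seriously, you have misidentified which half of the pushout verification is hard. The inclusion $\Lambda^{E}[X]\subseteq \Psi(C)$ is the easy half: an inner face $X-D$ with $D\not\subseteq E$ is a face of $X-(D\setminus E)$, which lies in $C$ by convexity, so no characteristic-edge argument is needed there at all. What the pushout genuinely requires, and what your proposal never addresses, is the reverse containment $\Omega[X]\cap\Psi(C)\subseteq\Lambda^{E}[X]$: no face $X-E'$ with $E'\subseteq E$ may already lie in $\Psi(C)$, otherwise the square is not a pushout of the stated horn. This is precisely where the characteristic-edge property is indispensable (contrapositively, $X-E'\in\Psi(C)$ must force $X\in\Psi(C)$) and where the orbit-versus-single-edge issue concentrates. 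Note also that your horn $\Lambda^{H\xi_s}[X]$ differs from the paper's $\Lambda^{\bar{H}\xi_s}[X]$ (with $\bar{H}=K_X$) whenever $\bar{H}\xi_s\subsetneq H\xi_s$; the two exclude different collections of faces, so you must determine, for every $E'\subseteq H\xi_s$, whether $X-E'$ is genuinely new --- your plan contains no argument settling this. Finally, the isotropy check you defer at the end is resolved in the paper by Lemma~\ref{COMMONFACE LEM}: a face fixed by some $g\notin H$ would be a common face of $U$ and $gU$ and hence already in the domain.
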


Again we will suppress $\Omega$ from the notation of the proof.

\begin{proof}
Note that we are free to assume $S,T \in \Omega^G \subset \Omega_G$, i.e. that $S$, $T$ are actual trees with a $G$-action rather than $G$-indecomposable forests. Indeed, otherwise writing $S \simeq G \cdot_{H} S_e$, $T \simeq G \cdot_{K} T_e$ for $S_e \in \Omega^H, T_e \in \Omega^K$ yields a decomposition
\[S \otimes T \simeq \coprod_{[g]\in H \backslash G /K} 
G \cdot_{H \cap gKg^{-1}} S_e \otimes gT_e,\]
where when regarding $S_e, g T_e \in \Omega^{H \cap gKg^{-1}}$ we omit the forgetful functors.

In analogy with the non-equivariant case we can assume $B'$ is obtained from $B$ by adding the \textit{$G$-orbit} of a single $\xi$-internal initial elementary tree $U$ with $\xi_s$ the corresponding edge. Let $H\leq G$ denote the $G$-isotropy of $U$
in $\mathsf{IE}_{G \xi}(S\otimes T)$.

That the outer faces of any of the conjugates $gU$ are in 
$A \cup \bigcup_{V \in B} V$ follows by the corresponding non-equivariant argument in the proof of Proposition \ref{ANODYNEEXT PROP}.

The key is now to prove the equivariant analogues of Lemmata \ref{CHAREDGE1 LEM} and \ref{CHAREDGE2 LEM}, stating that $H \xi_s$ is a \textit{characteristic edge orbit} of $U$, i.e., that for each inner face $F$ of $U$ 
with isotropy 
$\bar{H} \leq H$ and
containing an edge $(h \xi)_{(h s)} \in H \xi_s$, then 
$F$ is in $A \cup \bigcup_{V \in B} V$ iff 
$F - \bar{H} (h\xi)_{(h s)}$ is (note the condition on isotropy).
By equivariance, we may without loss of generality assume that $F$ contains $\xi_s$ itself. 

When proving the equivariant analogue of Lemma \ref{CHAREDGE1 LEM}, in the case of $F - \bar{H} \xi_s$ in $A$ the argument in that proof yields that $F$ itself must already lack all the inner edges of at least one of the 
$\bar{H}$-conjugates of the $U^{\leq \xi_s}_{\nless \xi^{\uparrow <s}_s}$ subtree (and hence, by $\bar{H}$-equivariance, all of them) and therefore applying Lemma \ref{INTERCHANGE LEM} again shows that $F$ is a subface of some $V <_{lex} U$. 
In the case $F - \bar{H} \xi_s$ in some $V < U$, repeating the argument in the proof we can again assume $V <_{lex} U$ via a generating $\leq_{lex}$ relation and such $V$ must likewise contain all the edges in $H \xi_s$. 

Proving the equivariant analogue of Lemma \ref{CHAREDGE2 LEM} requires no changes to the proof, since defining $a_{s+1}$ in the same way one still concludes that $F$ must lack $a_s$ (in fact, $F$ must lack $\bar{H} a_s$, though that fact in not needed).

Lastly, we equivariantly modify the last two paragraphs in the proof of Proposition \ref{ANODYNEEXT PROP}: 
setting $\mathsf{Inn}_{\widehat{H \xi_s}}(U)$ to be the $H$-poset of inner faces of $U$ lacking only edges \textit{not in} $H \xi_s$, we show by induction on 
$H$-equivariant concave subsets 
$C \subset C' \subset \mathsf{Inn}_{\widehat{H \xi_s}}(U)$
that the map
\begin{equation}\label{ATTACHTREE EQ2}
	A \cup \bigcup_{V \in B} V \cup \left( \bigcup_{g \in G,W \in C} gW \right)
		\to
	A \cup \bigcup_{V \in B} V \cup \left( \bigcup_{g \in G,W \in C'} gW \right)
\end{equation}
is inner $G$-anodyne.
Again we can assume that $C'$ is obtained from $C$ by adding the $H$-orbit of a single $X$ with $H$-isotropy $\bar{H} \leq H$  and not in the domain of (\ref{ATTACHTREE EQ2}) (further, $X$ can be chosen to contain $\xi_s$).
At this point an extra equivariant argument is needed: one needs to know that the $G$-isotropy of $X$ coincides with its $H$-isotropy $\bar{H}$. To see this, note that if it was otherwise then $X$ would be contained in both $U$ and a distinct conjugate $gU$, and Lemma \ref{COMMONFACE LEM} would imply that $X$ is already in the domain of (\ref{ATTACHTREE EQ2}).
Finally, 
 repeating the ``characteristic edge (orbit)'' argument we see that the map (\ref{ATTACHTREE EQ2}) is a pushout of
\[
	G \cdot_{\bar{H}} \left( \Lambda^{\bar{H}\xi_{s}}[X] \to \Omega[X] \right),
\]
finishing the proof.
\end{proof}

We now adapt the previous proof to deduce the easier Theorem \ref{OUTERIN THM}.

\begin{proof}[Proof of Theorem \ref{OUTERIN THM}]
	The argument follows by attempting to follow the proof of Proposition \ref{EXPG PROP} when $T=[1]$ and $\xi=1$ (note that Definition \ref{IEETA DEF} still makes sense, although the edge $1_s$ of $U$ may now possibly be the root, hence not internal). The only case where (the equivariant analogue) of Lemma \ref{CHAREDGE1 LEM} does not provide a characteristic edge orbit is when the root vertex of $U$ is $1_r^{\uparrow T} \leq 1_r$,
	in which case $U$ will be in $A$ unless (using the notation in the proof of Proposition \ref{MONOPUSHOUTPROD PROP}) it is $\pi(U)=S$, so that in fact it is $U = S \otimes \{0\} \cup \{1_r\}$.
Denoting this latest $U$ as $S \star \eta$ and noting that it is the maximum of the poset 
$\mathsf{IE}_{\xi}(S \otimes [1])$ one concludes that letting 
$A \to B$ denote
\[A \to A \cup 
 \bigcup_{V \in \mathsf{IE}_{\xi}(S \otimes [1]),V \neq S \star \eta} V
\]
this is indeed inner $G$-anodyne. The pushout (\ref{OUTPUSH EQ}) follows by noting that the only face of $S \star \eta$ not in $B$ is $(S \star \eta)-\eta$
(in fact, the only other face $F$ such that $\pi(F)=S$ is $(S \star \eta)-0_r$, which is a common face of the elementary tree $V$ obtained by applying a $\leq_{lex}$ relation to the root of $S \star \eta$).
\end{proof}

\subsection{Dendroidal join}\label{DENDJOIN SEC}

We now turn to the equivariant version of the dendroidal join $\star$ discussed in \cite[\S 4]{CM11}, which will be needed to understand the last piece of the filtration in Theorem \ref{OUTERIN THM}. We recall that several categories of forests were discussed in \S \ref{CATFOR SEC}.

\begin{definition}\label{JOIN DEF}
Given an object $F \in \Phi$ and $[n] \in \Delta$ we define $F \star [n] \in \Omega$ as the broad poset having underlying set $F \amalg [n]$ and relations
\begin{itemize}
	\item $e_1 \cdots e_n \leq e$ if $e_i,e \in F$ and $e_1 \cdots e_n \leq^F e$;
	\item $i \leq j$ if $i,j \in [n]$ and $i \leq^{[n]} j$;
	\item $e_1 \cdots e_n \leq i$ if $e_j \in F$, $i \in [n]$ and $e_1 \cdots e_n \leq^F \underline{r}_F$.
\end{itemize}
\end{definition}

\begin{example} %
	As explained in \cite[\S 4.3]{CM11}, one can readily visualize $\star$ when using tree diagrams, such as in the following example. %
\[%
	\begin{tikzpicture}[auto,grow=up, every node/.style={font=\scriptsize},level distance = 1.5em,dummy/.style    = {circle,draw,inner sep=0pt,minimum size=1.25mm}]%
	\begin{scope}%
	\tikzstyle{level 2}=[sibling distance=1.5em]%
		\node at (0,0) {} %
			child{node [dummy] {} %
				child{node [dummy] {}} %
				child %
				child %
			edge from parent node[swap]{$a$}}; %
		\node at (1,0) {} %
			child{edge from parent node[swap]{$b$}}; %
		\node at (2,0) {} %
			child{node [dummy] {} %
				child %
				child %
			edge from parent node[swap]{$c$}}; %
		\draw[decorate,decoration={brace,amplitude=2.5pt}] (2.1,0) -- (-0.1,0) node[midway]{$F$}; %
	\end{scope} %
	\begin{scope} %
		\node at(4,-0.3) {$[2]$}%
			child{node [dummy] {}%
				child{node [dummy] {}%
					child{%
					edge from parent node[swap]{$0$}}%
				edge from parent node[swap]{$1$}}	%
			edge from parent node[swap]{$2$}};%
	\end{scope}
	\begin{scope} %
		\tikzstyle{level 4}=[sibling distance=2.25em]%
		\tikzstyle{level 5}=[sibling distance=1em]%
		\node at(7,-0.8) {$F \star [2]$}%
			child{node [dummy] {}%
				child{node [dummy] {}%
					child{node [dummy] {}%
						child{node [dummy] {}%
							child
							child
						edge from parent node[swap]{$c$}}
						child{edge from parent node[swap,near end]{$b$}}
						child{node [dummy] {}%
							child{node [dummy] {}}%
							child
							child
						edge from parent node{$a$}}
					edge from parent node[swap]{$0$}}%
				edge from parent node[swap]{$1$}}	%
			edge from parent node[swap]{$2$}};%
	\end{scope}
	\end{tikzpicture}
\]
Further, note that when $F=\emptyset$ is the empty forest, it is 
$\underline{r}_F=\epsilon$, and since $\epsilon \leq \epsilon$, $\emptyset \star [n]$ adds a stump at the top of $[n]$.
\[%
	\begin{tikzpicture}[auto,grow=up, every node/.style={font=\scriptsize},level distance = 1.5em,dummy/.style    = {circle,draw,inner sep=0pt,minimum size=1.25mm}]%
	\begin{scope}%
	\tikzstyle{level 2}=[sibling distance=1.5em]%
		\node at(4,-0.3) {$\emptyset \star [2]$}%
			child{node [dummy] {}%
				child{node [dummy] {}%
					child{node [dummy] {}%
					edge from parent node [swap] {$0$}}%
				edge from parent node [swap] {$1$}}	%
			edge from parent node [swap] {$2$}};%
	\end{scope}%
	\end{tikzpicture}
\]
\end{example}
We now discuss the functoriality of $\star$. As implicit in the discussion in \cite[\S 4.5]{CM11}, $\star$ is only functorial with respect to some maps of forests. Indeed, it is clear from the third condition in Definition \ref{JOIN DEF} that $\star$ will be functorial in $F$ precisely with respect to the maps in $\Phi_w$. 

Moreover, the canonical inclusions $[n] \to F \star [n]$, $F \to F \star [n]$ can be encoded thusly: letting $\Delta_+$, $\Phi_{w+}$, $\Phi_{i+}$ denote the categories $\Delta$, $\Phi_w$, $\Phi_i$ together with an additional initial object $+$, one has the following.

\begin{proposition}
	$\star$ defines a bifunctor
	\[\Phi_{w+} \times \Delta_+ \xrightarrow{-\star-} \Phi_{i+}\]
	such that $+ \star [n] = [n]$, $F \star + = F$.
\end{proposition}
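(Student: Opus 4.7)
The plan is to verify four claims: \textbf{(a)} each $F \star [n]$ is a tree, hence lies in $\Phi_{i+}$; \textbf{(b)} a pair of morphisms $\varphi \colon F \to F'$ in $\Phi_{w}$ and $\psi \colon [n] \to [m]$ in $\Delta$ induces a well-defined broad poset map $\varphi \star \psi \colon F \star [n] \to F' \star [m]$; \textbf{(c)} this induced map is independent, so lies in $\Phi_{i}$; and \textbf{(d)} the basepoint conventions are compatible with morphisms, so that bifunctoriality (preservation of identities and composition) holds.

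For \textbf{(a)}, I would verify directly from Definition \ref{JOIN DEF} that the broad poset axioms together with the simple, nodal, and root conditions hold: antisymmetry and simplicity follow from the corresponding properties of $F$ and $[n]$ together with the observation that no relation mixes directions across the two pieces, while $n \in [n]$ is the unique $\leq_d$-maximal element of $F \star [n]$. Thus $F \star [n] \in \Omega \subset \Phi_i$.

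For \textbf{(b)}, by Proposition \ref{MAPOUTTREE PROP} it suffices to check that each generator $e^{\uparrow} \leq e$ of $F \star [n]$ is sent to a broad relation of $F' \star [m]$. Generators internal to $F$ (resp. to $[n]$) are preserved because $\varphi$ (resp. $\psi$) is a map in $\Phi$ (resp. in $\Delta \subset \Omega$). The nontrivial case, and precisely what forces the choice of $\Phi_{w}$ rather than the larger $\Phi_i$, is a cross-generator $\underline{e} \leq i$ arising from $\underline{e} \leq^{F} \underline{r}_{F}$ with $i \in [n]$: applying $\varphi$ gives $\varphi(\underline{e}) \leq^{F'} \varphi(\underline{r}_{F})$, and composing with the wideness hypothesis $\varphi(\underline{r}_{F}) \leq^{F'} \underline{r}_{F'}$ via transitivity of $\leq$ on $(F')^+$ (Proposition \ref{SIMPLEBROAD PROP}) yields $\varphi(\underline{e}) \leq^{F'} \underline{r}_{F'}$, so that the third clause of Definition \ref{JOIN DEF} now delivers $\varphi(\underline{e}) \leq^{F' \star [m]} \psi(i)$.

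For \textbf{(c)} and \textbf{(d)}: the root of $F \star [n]$ is the singleton $\{n\}$ and monotonicity of $\psi$ gives $\psi(n) \leq^{[m]} m$, a broad relation that persists in $F' \star [m]$, so independence of $\varphi \star \psi$ is witnessed by the empty tuple $\epsilon$. The basepoint equalities $+ \star [n] = [n]$ and $F \star + = F$ force any morphism involving the unique arrow out of $+$ to be interpreted as the canonical inclusion $[n] \hookrightarrow F \star [n]$ or $F \hookrightarrow F \star [n]$, both of which are manifestly broad poset maps, and preservation of identities and composition for the remaining morphisms is immediate from the componentwise definition. The only step I expect to require real care is the transitivity bookkeeping in \textbf{(b)}, where it is exactly the wideness (and not merely independence) of $\varphi$ that lets one promote a cross-generator of the source to a broad relation of the target.
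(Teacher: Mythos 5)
Your proof is correct, and it fills in exactly the verification the paper leaves implicit: the paper states this proposition without proof, remarking only that the third clause of Definition \ref{JOIN DEF} makes $\star$ functorial in $F$ precisely for maps in $\Phi_w$, which is the same key point (wideness promoting $\varphi(\underline{e}) \leq \underline{r}_{F'}$ for the cross-generators) on which your argument turns. The one cosmetic quibble is that transitivity of $\leq$ on $(F')^+$ is just an instance of broad transitivity and does not need Proposition \ref{SIMPLEBROAD PROP}; note also that your computation actually shows the induced maps are wide, so landing in $\Phi_{i+}$ is automatic.
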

Note that $\star$ usually lands in $\Omega_+$, the only exceptions occurring when the second input is the additional initial object.

We now extend the join operation to presheaves by defining 
\[\mathsf{fSet}_w \times \mathsf{sSet} \xrightarrow{-\star-} \mathsf{dSet}\]
to be the composite (writing $u \colon \Xi \hookrightarrow \Xi_+$ for the inclusion and $u^{\**} \colon \mathsf{Set}^{\Xi_+^{op}}\rightleftarrows \mathsf{Set}^{\Xi^{op}} \colon u_{\**}$ for the standard adjunction)
\begin{equation} \label{STAR DEF}
	\mathsf{fSet}_w \times \mathsf{sSet}
		\xrightarrow{u_{*}\times u_{*}} 
	\mathsf{Set}^{\Phi_{w+}^{op}} \times \mathsf{Set}^{\Delta_+^{op}}
		\xrightarrow{\times}
	\mathsf{Set}^{\Phi_{w+}^{op} \times \Delta_+^{op}}
		\xrightarrow{Lan_{\star}}
	\mathsf{Set}^{\Phi_{i+}^{op}}
		\xrightarrow{u^{*}}
	\mathsf{dSet}.
\end{equation}

\begin{remark}
	Unpacking (\ref{STAR DEF}) one can write (cf. \cite[Defn. 1.2.8.1]{Lu09})
	\begin{equation}\label{JOINFORMULA EQ}
		(X\star Y)(T) = X(T) \amalg Y(T) \amalg \coprod_{F\to T,[n]\to T,T\simeq F\star [n]} X(F) \times Y([n]),
	\end{equation}
	where $Y(T)=\emptyset$ when $T$ is not linear.
\end{remark}

\begin{remark}\label{CONNCOLIM REM}
Due to the passage through the $(-)_+$ categories in (\ref{STAR DEF}), $\mathsf{fSet}_w \times \mathsf{sSet} \xrightarrow{-\star-} \mathsf{dSet}$ does not preserve colimits in each variable. 
Rather, the functors $F \star (-)$, $(-) \star [n]$ preserve colimits when mapping into the under categories 
$\mathsf{dSet}_{F/}$, $\mathsf{dSet}_{[n]/}$. Therefore, $\star$ does nonetheless preserve \textit{connected colimits} in each variable.
\end{remark}

The following is an equivariant generalization of the key technical lemma 
\cite[Lemma 4.10]{CM11} combined with key arguments in the proof of \cite[Thm. 4.2]{CM11}.

\begin{proposition}\label{JOINPUSHOUT PROPOSITION}
	Let $A \xrightarrow{f} B$ be a normal monomorphism in $\mathsf{fSet}_w^G$ (defined as in Proposition
\ref{NORMCHAR PROP}(iii))
 and $C \xrightarrow{g} D$ be a left anodyne map in $\mathsf{sSet}$.
	Then
	\[
		f\square^{\star}g \colon A \star D \coprod_{A \star C} B \star C \to C\star D
	\]
	is $G$-inner anodyne.
\end{proposition}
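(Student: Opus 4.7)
The argument adapts the strategy of [CM11, Lemma 4.10] to the equivariant setting, with isotropy bookkeeping following the pattern of our proofs of Propositions \ref{GENERALIZED_HORN_ANODYNE_PROP} and \ref{GRAFTANODYNE PROP}. Since $\star$ preserves connected colimits in each variable (Remark \ref{CONNCOLIM REM}), the standard pushout-product saturation calculus reduces the problem to generating morphisms. By Proposition \ref{NORMCHAR PROP}, together with an induction on forest components, it suffices to take $f = G \cdot_H (\partial \Omega[T_e] \hookrightarrow \Omega[T_e])$ for some $H$-tree $T_e \in \Omega^H$ and $H \leq G$; and the left anodyne $g$ may be taken to be a horn inclusion $\Lambda^k[n] \hookrightarrow \Delta[n]$ with $0 \leq k < n$ and trivial $G$-action. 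The plan is then to prove that
\[
\Omega[T_e] \star \Lambda^k[n] \coprod_{\partial \Omega[T_e] \star \Lambda^k[n]} \partial \Omega[T_e] \star \Delta[n] \to \Omega[T_e \star [n]]
\]
is $H$-inner anodyne, and then apply $G \cdot_H (-)$ while verifying that the resulting horn orbits have $G$-graph subgroup isotropies (so that Definition \ref{INNERHORNG DEF} is satisfied).

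For the inner-horn case $0 < k < n$, the missing simplicial face $d_k[n]$ corresponds to an inner face of $T_e \star [n]$ collapsing the $[n]$-stem vertex between edges $k-1$ and $k$; a filtration of the missing non-degenerate dendrices by $H$-orbits, ordered by decreasing size, exhibits each attachment as a pushout of an $H$-equivariant generalized inner horn inclusion $\Lambda^{He}[U] \hookrightarrow \Omega[U]$, handled exactly as in Proposition \ref{GENERALIZED_HORN_ANODYNE_PROP}. For the outer-like case $k = 0$, the face $d_0[n]$ likewise corresponds to an inner collapse in $T_e \star [n]$ (of the vertex joining $T_e$'s roots to the stem), but its interaction with $\partial \Omega[T_e]$ is more subtle. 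The plan is to induct on $n$: for the base case $n = 1$, identify $\Omega[T_e \star [1]] = \Omega[T_e \star \eta]$ inside $\Omega[T_e] \otimes \Omega[1]$ and invoke Theorem \ref{OUTERIN THM}, whose factorization $A \to B \to \Omega[T_e] \otimes \Omega[1]$ combined with the outer-horn pushout \eqref{OUTPUSH EQ} produces the required $H$-inner anodyne filtration of the join pushout-product. For $n \geq 2$, factor $\Lambda^0[n] \hookrightarrow \Delta[n]$ through an intermediate stage adding an appropriate inner simplex, reducing to the already handled inner case ($k > 0$) together with a smaller instance of the outer case.

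The principal obstacle is the equivariant bookkeeping throughout both subcases: at each stage of the filtration one must ensure that the $H$-stabilizer $\bar{H} \leq H$ of the elementary subtree orbit $U$ coincides with its $G$-stabilizer, so that the induced horn $G \cdot_{\bar{H}} (\Lambda^{\bar{H} e}[U] \hookrightarrow \Omega[U])$ is a genuine $G$-inner anodyne extension with graph-subgroup isotropy in the sense of Definition \ref{INNERHORNG DEF}. This parallels the isotropy verifications in the proofs of Propositions \ref{GENERALIZED_HORN_ANODYNE_PROP} and \ref{GRAFTANODYNE PROP}, now applied in the more intricate join setting where the interaction between root-attached $[n]$-stem vertices and Theorem \ref{OUTERIN THM}-style outer-horn pushouts must be carefully reconciled with the $H$-action on $T_e$.
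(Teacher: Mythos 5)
There is a genuine gap, and it sits exactly at the point the proposition is engineered around. Since $f$ is a normal monomorphism in $\mathsf{fSet}_w^G$, the generating cofibrations you must reduce to are the \emph{forestial} boundary inclusions $\partial \Phi_w[F] \hookrightarrow \Phi_w[F]$ for $G$-forests $F$, not the dendroidal boundaries $G\cdot_H(\partial\Omega[T_e]\hookrightarrow \Omega[T_e])$ of single trees; moreover no ``induction on forest components'' is available, because neither $\partial\Phi_w[F]$ nor $F\star[n]$ decomposes componentwise. The distinction is not cosmetic: $\partial\Phi_w[F]$ contains the root faces $F-r_i$ of Notation \ref{FACETYPES NOT}, which under $-\star[n]$ give precisely the inner faces $(F\star[n])-r_i$ of the joined tree, and these are \emph{not} contained in $\partial\Omega[T_e]\star\Delta[n]$ (e.g. $(T_e\star[n])-r$ is missing whenever the root vertex of $T_e$ is not unary). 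Hence the map you display is not an inner horn inclusion: besides $(T_e\star[n])-k$ its source is also missing all the faces obtained by deleting roots, and your filtration, which only accounts for the dendrices coming from $d_k[n]$, does not close up. With the correct generators the proof collapses to a single observation, which is the paper's proof: by formula (\ref{JOINFORMULA EQ}) the only nondegenerate dendrices of $\Omega[F\star[n]]=\Phi_w[F]\star\Delta[n]$ absent from $\Phi_w[F]\star\Lambda^k[n]\cup\partial\Phi_w[F]\star\Delta[n]$ are $F\star[n]$ itself and $F\star d_k[n]=(F\star[n])-k$, so the pushout-product \emph{is} the $G$-inner horn $\Lambda^{k}[F\star[n]]\to\Omega[F\star[n]]$ ($[n]$ has trivial action, so the inner edge $k$ is $G$-fixed). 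No filtration is required.

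Two further misdirections. First, $k=0$ is not an outer case for the join: $d_0[n]$ corresponds to deleting the edge $0$ of $F\star[n]$, which is an \emph{inner} edge (it carries the vertex $\underline{r}_F\leq 0$, or a stump when $F=\emptyset$), so it is handled uniformly with $0<k<n$; Theorem \ref{OUTERIN THM} concerns the tensor product $\Omega[S]\otimes\Omega[1]$ and plays no role here --- you appear to be conflating the join with the tensor product. Second, since the resulting horn is taken at a $G$-fixed inner edge of a tree with $G$-action, none of the isotropy bookkeeping of Propositions \ref{GENERALIZED_HORN_ANODYNE_PROP} and \ref{GRAFTANODYNE PROP} is needed, so the ``principal obstacle'' you identify is not where the content of this proposition lies. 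What you do get right is the reduction to generators via Remark \ref{CONNCOLIM REM} and the choice $0\leq k<n$ for the left horns; you should also record, as the paper does, that $f\square^{\star}g$ is a monomorphism, which again follows from (\ref{JOINFORMULA EQ}).
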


\begin{proof}
	That $f \square^{\star} g$ is indeed a monomorphism whenever $f$, $g$ are monomorphisms follows directly from (\ref{JOINFORMULA EQ}).

	In lieu of Remark \ref{CONNCOLIM REM} concerning connected colimits, it suffices to consider the case where $f$, $g$ have the form $\partial \Phi_w[F] \to \Phi_w[F]$ and $\Lambda^i[n] \to \Delta[n]$, $0 \leq i <n$.
But in that case $f\square^{\star}g$ is simply the inner horn inclusion 
	\[\Lambda^i(F \star [n]) \to \Omega(F \star [n]).\]
\end{proof}

\begin{proof}[Proof of Theorem \ref{JOINLIFT THM}]
	First note that the conditions on $S$ are equivalent to saying that $S \simeq F \star [1]$ for some $F \in \Phi_w^G$. One can thus rewrite the left vertical map in Theorem \ref{JOINLIFT THM} as
	\[(\Lambda^1[1] \to \Delta[1]) \square^{\star} (\emptyset \to \Phi_w[F])\]
and denoting by $A\backslash(-)$ the right adjoint to 
$\mathsf{sSet} \xrightarrow{A \star (-)} \mathsf{dSet}^G_{A\star +/}$ standard adjunction arguments allows us to convert (\ref{JOINLIFT EQ}) into the equivalent lifting problem
\begin{equation}\label{JOINLIFT2 EQ}
\begin{tikzcd}
	\Lambda^1[1] \ar[r] \ar[d]     & \Phi_w[F] \backslash X \ar[d] &
	\{0\} \ar[r] \ar[d]     & \Phi_w[F] \backslash X \ar[d]
\\
	\Delta[1] \ar[r] \ar[dashed]{ru} & 
	\Phi_w[F] \backslash Y 
	\underset{\emptyset \backslash Y}{\times}
	\emptyset \backslash X &
	\Delta[1] \ar[r] \ar[dashed]{ru} & 
	\Phi_w[F] \backslash Y
	\underset{i^{\**}(Y^G)}{\times}	i^{\**}(X^G)
\end{tikzcd}
\end{equation}
where the right hand diagram merely simplifies the notation on the left: 
 $\emptyset \backslash Z \simeq i^{\**}(Z^G)$, where we caution that $\emptyset \not\simeq \Phi_w[\emptyset]$, the former being the \textit{empty presheaf} and the latter the representable presheaf on the \textit{empty forest}.

Standard repeated applications of Proposition \ref{JOINPUSHOUT PROPOSITION} (setting $A = \emptyset$ or $Y = *$ as needed) yield that:
\begin{inparaenum}
	\item[(i)] $\Phi_w[F]\backslash X \to i^{\**}(X^G)$, $\Phi_w[F]\backslash Y \to i^{\**}(Y^G)$ are left fibrations;
	\item[(ii)] $\Phi_w[F]\backslash X$, $\Phi_w[F]\backslash Y$ are left fibrant and thus $\infty$-categories;
	\item[(iii)] the rightmost map 
	$\Phi_w[F]\backslash X \to \Phi_w[F]\backslash Y \times_{i^{\**}(Y^G)}	i^{\**}(X^G)$ in (\ref{JOINLIFT2 EQ}) is a left fibration.
\end{inparaenum}	
Therefore, the map 
	$\Phi_w[F]\backslash Y \times_{i^{\**}(Y^G)}	i^{\**}(X^G) \to i^{\**}(X^G)$ is itself a left fibration (it is a pullback of $\Phi_w[F]\backslash Y \to i^{\**}(Y^G)$) and since left fibrations conserve equivalences 
\cite[Prop. 2.7]{Joy02} the
image of the lower map in (\ref{JOINLIFT2 EQ}) is a equivalence.
The result now follows since equivalences can be lifted over left fibrations between $\infty$-categories
\cite[Props. 2.4 and 2.7]{Joy02}.
\end{proof}

\begin{remark}
	The interested reader may note that Proposition \ref{JOINPUSHOUT PROPOSITION} and Theorem \ref{JOINLIFT THM} have notably shorter proofs than the analogue results \cite[Lemma 4.10]{CM11} and \cite[Thm. 4.2]{CM11}. In fact, some of our arguments closely resemble the proof for the simplicial case as found in \cite[Prop. 1.2.4.3]{Lu09}.
	The brevity of these arguments is the reason for our introduction of the forest category $\Phi_w$: from our  perspective, many of the arguments in \cite{CM11} are replaced with the task of showing that the boundaries $\partial \Phi_w[F]$ satisfy the usual formal properties.
\end{remark}

\section{Model structure on equivariant dendroidal sets}
\label{MODELSTR SEC}

We now adapt the treatment in \cite{CM11} to equip the category $\mathsf{dSet}^G$ 
with a model structure where the cofibrations are the normal monomorphisms (cf. Prop. \ref{NORMCHAR PROP}) and the fibrant objects are the $G$-$\infty$-operads (cf. Defn. \ref{GINFTYOP DEF}). Since we have already established the equivariant analogues of the main technical results needed (Theorems \ref{EXPNPROP THM}, \ref{OUTERIN THM} and 
 \ref{JOINLIFT THM}), the proofs in \cite{CM11} will now carry over to our context with only minor changes needed.

\subsection{Existence of the model structure}\label{EXIST SEC}

Our goal in this first section is to establish Proposition \ref{DSETGMODEL PROP},
which abstractly builds the desired model structure on 
$\mathsf{dSet}^G$ by generalizing \cite[Prop. 3.12]{CM11}.

As it turns out, adapting the treatment in \cite[\S 3]{CM11} requires only minimal modifications 
(essentially ``adding $G$ to the statements therein''). As such, we will be brief in our discussion, and refer the reader to \cite[\S 3]{CM11} for extra details.

\vskip 5pt

Recall the notation $J = N (0 \rightleftarrows 1)$ for the nerve of the groupoid generated by a single isomorphism between two distinct objects. As in \cite[\S 3.2]{CM11} we write $J_d=i_{!}(J)$ when regarding $J$ as a dendroidal set, and we will further regard $J_d$ as a $G$-dendroidal set by equipping it with the \textit{trivial} $G$-action.

Following \cite[\S 3.2]{CM11}, we define $\mathsf{An}$, the class of $J$-anodyne extensions, to be the saturation of the $G$-inner horn inclusions together with the maps
\begin{equation}\label{JANODEF EQ}
%	(\{i\} \to J_d)\square(\partial \Omega[T] \to \Omega[T])
	\{i\}\otimes \Omega[T]
	\underset{\{i\}\otimes \partial \Omega[T]}{\coprod}
	J_d \otimes \partial \Omega[T]
		\to
	J_d \otimes \Omega[T],
\qquad i=0,1,\quad T \in \Omega_G.
\end{equation}
A $G$-dendroidal set $X$ (resp. map $X \to Y$) is then called $J$-fibrant (resp. $J$-fibration) if it has the right lifting property with respect to the maps in $\mathsf{An}$. 

The following generalizes the 
necessary parts of \cite[Prop. 3.3]{CM11}
(note that that result is slightly corrected in the
erratum to \cite{CM11}).

\begin{proposition} \label{SLIGHTCOR PROP}
Let $A \to B$ be a normal monomorphism in 
$\mathsf{dSet}^G$. Then:
\begin{itemize}
	\item[(i)] for $i=0,1$ the map 
$\{i\} \otimes B \amalg_{\{i\} \otimes A} J_d \otimes A	\to J_d \otimes B$ is in $\mathsf{An}$;
	\item[(ii)] 
$\{0,1\} \otimes B \amalg_{\{0,1\} \otimes A} J_d \otimes A	\to J_d \otimes B$ is a normal monomorphism, 
which is $\mathsf{An}$ if  $A \to B$ is.
\end{itemize}
\end{proposition}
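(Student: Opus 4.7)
The plan is to establish both parts via saturation arguments that reduce everything to the generating normal monomorphisms $\partial \Omega[T] \hookrightarrow \Omega[T]$ (Proposition \ref{NORMCHAR PROP}(iii)) together with the defining generators of $\mathsf{An}$.

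For (i), the pushout product of $\{i\} \hookrightarrow J_d$ with a generating normal monomorphism $\partial \Omega[T] \hookrightarrow \Omega[T]$ is, by inspection, precisely the map \eqref{JANODEF EQ}, a generator of $\mathsf{An}$ by definition. Since $\mathsf{An}$ is closed under pushouts and transfinite compositions, and since every normal monomorphism is a cell complex of such boundary inclusions, (i) extends to arbitrary $A \to B$.

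For the first claim of (ii), the monomorphism assertion is immediate from the explicit union formula, and normality is verified via Proposition \ref{NORMCHAR PROP}(ii): since $J_d$ carries the trivial $G$-action, the $G$-isotropy of any non-degenerate cell of $J_d \otimes B$ coincides with that of its $B$-coordinate, which is a $G$-graph subgroup whenever $A \to B$ is $G$-normal.

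For the second claim of (ii), assume $f \colon A \to B$ itself lies in $\mathsf{An}$ and argue by saturation on the two types of generators. When $f$ is an inner horn $\Lambda^{Ge}[T] \hookrightarrow \Omega[T]$, filter $J_d$ by its simplicial skeleta, whose attaching cells are boundary inclusions $\partial \Omega[T'] \hookrightarrow \Omega[T']$ of linear dendrices $T' \simeq [n]$. This reduces the desired pushout product to maps of the form
\[
\partial \Omega[T'] \otimes \Omega[T] \cup_{\partial \Omega[T'] \otimes \Lambda^{Ge}[T]} \Omega[T'] \otimes \Lambda^{Ge}[T] \to \Omega[T'] \otimes \Omega[T]
\]
with $T'$ linear, which lie in $\mathsf{An}$ by Theorem \ref{EXPNPROP THM}(ii). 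When $f$ is itself of type \eqref{JANODEF EQ}, use associativity of the pushout product to rewrite the construction as the pushout product of a double $J_d$-pushout product with $\partial \Omega[T] \hookrightarrow \Omega[T]$; a combined skeletal filtration of both copies of $J_d$, together with (i) applied to the resulting cellular decomposition, concludes this case.

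The principal obstacle is the second case above: since $\mathsf{An}$ does not enjoy 2-out-of-3, one cannot conclude $P \to J_d \otimes B \in \mathsf{An}$ directly from (i) and the (easy) fact that the inclusion $(\{0\} \otimes B) \cup (J_d \otimes A) \hookrightarrow P$ is $\mathsf{An}$ whenever $f$ is. The resolution depends on carefully tracking the generator-level pushout products and exploiting that Theorem \ref{EXPNPROP THM}(ii) is always applicable because the simplicial skeletal filtration of $J_d$ always contributes a linear-tree factor.
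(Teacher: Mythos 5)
Your route is essentially the paper's. Part (i) is argued identically: reduce a normal monomorphism to a cell complex of boundary inclusions $\partial\Omega[T]\to\Omega[T]$, whose pushout product with $\{i\}\to J_d$ is literally a generator (\ref{JANODEF EQ}). For the second claim of (ii) you, like the paper, reduce to the generators of $\mathsf{An}$, filter $J_d$ by simplicial skeleta so that the auxiliary tensor factor is always linear, and invoke Theorem \ref{EXPNPROP THM}(ii); your explicit treatment of the (\ref{JANODEF EQ})-type generators, regrouping the triple pushout product by associativity so that (i) applies to each resulting cell, is a correct elaboration of what the paper leaves implicit.

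The one step that needs repair is the first claim of (ii). The assertion that $\{0,1\}\otimes B\amalg_{\{0,1\}\otimes A}J_d\otimes A\to J_d\otimes B$ is a monomorphism is \emph{not} ``immediate from the explicit union formula'': injectivity of the map out of the pushout requires the square analogous to (\ref{MONOMORSQUARE EQ}) to be a pullback square of monomorphisms, and for tensor products of dendroidal sets this is a genuinely non-formal combinatorial fact --- it is exactly the ``$S$ linear'' case of Proposition \ref{MONOPUSHOUTPROD PROP}, the statement whose subtlety occasioned the erratum to \cite{CM11} and the note \cite{CM14}. The paper cites Proposition \ref{MONOPUSHOUTPROD PROP} at precisely this point; once that citation is in place it also yields normality of the pushout product directly, so your separate isotropy argument via Proposition \ref{NORMCHAR PROP}(ii) is not wrong but is subsumed.
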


\begin{proof}
	(i) follows since normal monomorphisms are built by attaching boundary inclusions $\partial \Omega[T] \to \Omega[T]$ for $T \in \Omega_G$. Similarly, (ii) follows from the ``$S$ linear'' cases of
	Proposition \ref{MONOPUSHOUTPROD PROP} and
	Theorem \ref{EXPNPROP THM} (recall that $J_d$ is a simplicial set).
\end{proof}

As in \cite[\S 3.4]{CM11},
for a $G$-dendroidal set $B$ we now write 
$\mathsf{dSet}^G/B$ for the category of $G$-dendroidal sets over $B$ and let $\mathsf{An}_B$
denote the class of maps in 
$\mathsf{dSet}^G/B$ whose image in $\mathsf{dSet}^G$ is in $\mathsf{An}$.

\begin{proposition} Write 
$\partial^0 \colon \{0\} \to J_d$, 
$\partial^1 \colon \{1\} \to J_d$,
$\sigma \colon J_d \to \{\**\}$
for the standard maps\footnote{Note that $\{\**\} = i_{!}(\**)$
 denotes the terminal simplicial set, not the terminal dendroidal set.}
and abbreviate
$\mathcal{J} = (J_d \otimes -, \partial^0 \otimes -, 
	\partial^1 \otimes -, \sigma \otimes -)$.

Then whenever $B$ is normal the pair $(\mathcal{J}, \mathsf{An}_B)$ is a homotopical structure on $\mathsf{dSet}^G/B$ as defined in \cite[Def. 1.3.14]{Ci06}.
\end{proposition}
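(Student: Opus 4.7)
The plan is to unpack the axioms of \cite[Def. 1.3.14]{Ci06} into two essentially independent verifications and reduce each to statements that either are immediate from the definition of $\mathsf{An}$ or have already been established as Proposition \ref{SLIGHTCOR PROP}. Recall that a homotopical structure consists of (a) a functorial cylinder $\mathcal{J}$ and (b) a class of maps (the anodyne extensions) satisfying closure axioms together with a compatibility axiom relating the cylinder to monomorphisms.

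First, I would verify that $\mathcal{J}$ is a functorial cylinder on $\mathsf{dSet}^G/B$. Every object $X\to B$ receives a cylinder $J_d\otimes X\to B$ via the composite $J_d\otimes X\xrightarrow{\sigma\otimes X} X\to B$, with inclusions $\partial^{0}\otimes X,\partial^{1}\otimes X\colon X\to J_d\otimes X$ over $B$ and retraction $\sigma\otimes X$. The key point is that $X\amalg X\to J_d\otimes X$ is a monomorphism; more is true, namely that it is a \emph{normal} monomorphism in $\mathsf{dSet}^G/B$, and since $B$ is $G$-normal any monomorphism over $B$ is automatically $G$-normal (Remark \ref{GNORMALUNDER REM}), so it suffices to observe that this map is an instance of Proposition \ref{SLIGHTCOR PROP}(ii) applied to $\emptyset\hookrightarrow X$. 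Functoriality of $\mathcal{J}$ in $\mathsf{dSet}^G/B$ and compatibility with small colimits follow from the fact that $J_d\otimes(-)$ preserves colimits in its second variable, together with the observation that the cylinder is formed in the under-over categories via the canonical structural maps $\partial^{i}$, $\sigma$.

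Next, I would verify that $\mathsf{An}_B$ satisfies the axioms of a class of anodyne extensions relative to $\mathcal{J}$. Closure under pushout, transfinite composition and retracts in $\mathsf{dSet}^G/B$ is immediate from the corresponding closure of $\mathsf{An}$ in $\mathsf{dSet}^G$ together with the fact that the forgetful functor $\mathsf{dSet}^G/B\to \mathsf{dSet}^G$ creates these colimits. The nontrivial axiom (the so-called DH axiom, \cite[Def. 1.3.10]{Ci06}) requires that for any normal monomorphism $A\to A'$ in $\mathsf{dSet}^G/B$ and any $\epsilon\in\{0,1\}$, the map
\[
\{\epsilon\}\otimes A' \coprod_{\{\epsilon\}\otimes A} J_d\otimes A \longrightarrow J_d\otimes A'
\]
is in $\mathsf{An}_B$; but this is exactly Proposition \ref{SLIGHTCOR PROP}(i), since the underlying map in $\mathsf{dSet}^G$ lies in $\mathsf{An}$ by the cited result and the cylinder structure maps are compatible with the projection to $B$. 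Finally, one also needs the axiom requiring that the generators of $\mathsf{An}_B$ (the chosen generating set) are normal monomorphisms: the generating $G$-inner horns $\Lambda^{Ge}[T]\to \Omega[T]$ are normal as they factor through the boundary $\partial \Omega[T]\hookrightarrow \Omega[T]$, and the maps (\ref{JANODEF EQ}) are normal by Proposition \ref{SLIGHTCOR PROP}(ii) applied to $\partial \Omega[T]\hookrightarrow \Omega[T]$.

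The hardest step is really packaging: the substantive content has already been extracted into Proposition \ref{SLIGHTCOR PROP}, so the proof reduces to recognizing which clause of Cisinski's definition each assertion corresponds to and ensuring that the passage from $\mathsf{dSet}^G$ to the slice $\mathsf{dSet}^G/B$ preserves all relevant features. The normality hypothesis on $B$ enters precisely to guarantee that monomorphisms over $B$ coincide with normal monomorphisms, so that the generating class for $\mathsf{An}_B$ is the right one; without this, one would need to separately track normality of maps, which is the only potential obstacle worth highlighting.
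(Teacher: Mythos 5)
Your overall architecture matches the paper's: split \cite[Def. 1.3.14]{Ci06} into the elementary homotopical datum axioms for $\mathcal{J}$ and the anodyne-extension axioms for $\mathsf{An}_B$, and reduce both to the pushout-product results already established, using that every monomorphism over a normal $B$ is automatically normal. But you never verify axiom An2 of \cite[Def. 1.3.10]{Ci06}. What you check is An1: for a normal monomorphism $A \to A'$ and $\epsilon \in \{0,1\}$, the pushout-product of $\{\epsilon\} \to J_d$ with $A \to A'$ lies in $\mathsf{An}_B$ (Proposition \ref{SLIGHTCOR PROP}(i)). An2 is a different statement: for every map $A \to A'$ that is itself in $\mathsf{An}_B$, the pushout-product with \emph{both} endpoints, i.e.
\[
\{0,1\}\otimes A' \coprod_{\{0,1\}\otimes A} J_d\otimes A \longrightarrow J_d\otimes A',
\]
must again be in $\mathsf{An}_B$. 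This is exactly what the second clause of Proposition \ref{SLIGHTCOR PROP}(ii) (``which is $\mathsf{An}$ if $A \to B$ is'') was stated for; you invoke that clause only to get normality of the generators (\ref{JANODEF EQ}), which is not one of Cisinski's axioms. Without An2 the homotopical structure is not established (it is the axiom that ultimately forces $J_d$-homotopy equivalences to be weak equivalences). The repair is a one-line appeal to the clause you already have in hand.

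Two lesser discrepancies with the paper's route. For the elementary homotopical datum, the paper derives DH1 and DH2 from Proposition \ref{MONOPUSHOUTPROD PROP}; your observation that $X \amalg X \to J_d \otimes X$ is a monomorphism does yield DH2 (disjointness of the two ends of the cylinder, i.e. the cartesian square), but DH1 also demands that $J_d \otimes (-)$ preserve \emph{all} monomorphisms, not merely colimits -- this again needs the linear case of Proposition \ref{MONOPUSHOUTPROD PROP} (or a skeletal induction from Proposition \ref{SLIGHTCOR PROP}(ii)), and you should say so. Finally, An0 asks that $\mathsf{An}_B$ be of the form $l(r(S))$ for a \emph{set} $S$; closure under pushouts, transfinite compositions and retracts is necessary but not literally this axiom, and the paper disposes of it by citing \cite[Lemma 1.3.52]{Ci06}.
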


\begin{proof}
There are two parts: showing that $\mathcal{J}$
is an \textit{elementary homotopical datum} as in
\cite[Def. 1.3.6]{Ci06} and that 
$\mathsf{An}_B$ is a \textit{class of anodyne extensions with respect to $\mathcal{J}$} as in \cite[Def. 1.3.10]{Ci06}. We will refer to the axiom names used therein.

For the first claim, both axioms DH1 and DH2 follow from Proposition \ref{MONOPUSHOUTPROD PROP} since when $B$ is normal all monomorphisms over $B$ are normal monomorphisms.

For the second claim, axiom An0 follows by \cite[Lemma 1.3.52]{Ci06} while An1,An2 follow from Proposition \ref{SLIGHTCOR PROP}.
\end{proof}

Combining \cite[Thm. 1.3.22, Prop. 1.3.31, Prop. 1.3.36, Lemma 1.3.52]{Ci06} now yields the following (this generalizes \cite[Prop. 3.5, Remark 3.6]{CM11}).

\begin{proposition}\label{FIRSTMOD PROP}
	For any normal $G$-dendroidal set $B$, 
	the category $\mathsf{dSet}^G/B$  has a left proper cofibrantly generated model structure such that
	\begin{itemize}
	%	\item weak equivalences are the $B$-equivalences;
		\item the cofibrations are the monomorphisms;
		\item $J$-anodyne extensions over $B$ are trivial cofibrations;
		\item the fibrant objects are the $X \xrightarrow{p} B$
		such that $p$ is a $J$-fibration in $\mathsf{dSet}^G$;
		\item a map $X \xrightarrow{f} X'$ between fibrant
		$X \to B$, $X'\to B$ is a fibration in $\mathsf{dSet}^G/B$
		iff $f$ is a $J$-fibration in $\mathsf{dSet}^G$.
	\end{itemize}	
\end{proposition}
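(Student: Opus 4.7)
The plan is to apply the Cisinski machinery cited just before the proposition as a black box. Having established in the previous proposition that $(\mathcal{J}, \mathsf{An}_B)$ is a homotopical structure in the sense of \cite[Def. 1.3.14]{Ci06} on the slice category $\mathsf{dSet}^G/B$, essentially everything follows formally, and one just needs to translate the abstract conclusions of \cite[Thm. 1.3.22, Prop. 1.3.31, Prop. 1.3.36, Lemma 1.3.52]{Ci06} into the concrete language of the statement.

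First I would point out that the ambient hypotheses of Cisinski's machinery are satisfied: $\mathsf{dSet}^G = \mathsf{Set}^{(\Omega^{op}\times G)}$ is a Grothendieck topos, hence so is any slice $\mathsf{dSet}^G/B$, so in particular it is a locally presentable category in which monomorphisms form a cofibrantly generated weakly saturated class. Then \cite[Thm. 1.3.22]{Ci06} produces from the homotopical structure $(\mathcal{J},\mathsf{An}_B)$ a cofibrantly generated model structure on $\mathsf{dSet}^G/B$ whose cofibrations are exactly the monomorphisms and whose trivial cofibrations contain the class $\mathsf{An}_B$. This immediately yields the first two bullet points: the cofibration assertion is tautological, while $J$-anodyne extensions over $B$ are trivial cofibrations by construction.

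Next I would identify the fibrant objects: by \cite[Prop. 1.3.36]{Ci06}, an object $X \to B$ of $\mathsf{dSet}^G/B$ is fibrant precisely when it has the right lifting property against the class $\mathsf{An}_B$. Unwinding definitions, a lifting problem against a generator of $\mathsf{An}_B$ is equivalent to a lifting problem in $\mathsf{dSet}^G$ against the corresponding generator of $\mathsf{An}$, so this says exactly that $p \colon X \to B$ is a $J$-fibration in $\mathsf{dSet}^G$. For the fibration characterization between fibrant objects, \cite[Prop. 1.3.31]{Ci06} says that a map between fibrant objects is a fibration iff it has the right lifting property against all anodyne extensions (using that with a normal base all monomorphisms over $B$ are normal by Remark \ref{GNORMALUNDER REM}, so the class of trivial cofibrations is generated by $\mathsf{An}_B$ together with trivial cofibrations between cofibrant objects, and the latter lift automatically against fibrations between fibrant objects); this translates to $f$ being a $J$-fibration in $\mathsf{dSet}^G$.

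Finally, left properness is handled by \cite[Lemma 1.3.52]{Ci06}: since $B$ is normal, every object of $\mathsf{dSet}^G/B$ is cofibrant (monomorphisms into a normal dendroidal set are automatically normal, again by Remark \ref{GNORMALUNDER REM}), and left properness is immediate from this together with the fact that cofibrations are monomorphisms in a topos. The only potential obstacle in this whole argument is checking that Cisinski's hypotheses really do apply in the equivariant setting and that the translation between $\mathsf{An}_B$ and $J$-fibrations in $\mathsf{dSet}^G$ is clean; but both are straightforward once one recognizes that $\mathsf{dSet}^G/B$ is a presheaf topos and that the slice structure only interacts with the underlying category through restriction of lifting problems.
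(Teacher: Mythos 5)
Your proposal is correct and is essentially identical to the paper's treatment: the paper likewise deduces the proposition by combining \cite[Thm. 1.3.22, Prop. 1.3.31, Prop. 1.3.36, Lemma 1.3.52]{Ci06} with the preceding proposition establishing that $(\mathcal{J},\mathsf{An}_B)$ is a homotopical structure on $\mathsf{dSet}^G/B$. Your write-up merely spells out which Cisinski result yields which bullet point, a translation the paper leaves implicit.
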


The following generalizes \cite[Lemma 3.7]{CM11}
again following from \cite[Cor. 1.3.35]{Ci06}.

\begin{lemma}\label{AN LEMMA}
Let $X \to Y$ be a trivial fibration between normal $G$-dendroidal sets. Then any section $s \colon Y \to X$ is in $\mathsf{An}$.
\end{lemma}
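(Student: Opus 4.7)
The plan is to mirror the abstract argument of \cite[Cor.~1.3.35]{Ci06} by exhibiting $s$ as a retract of one of the generators of $\mathsf{An}$ provided by Proposition \ref{SLIGHTCOR PROP}(i). First, since $ps = \mathrm{id}_Y$ the section $s$ is monomorphic, and as a mono into the normal object $X$ it is a normal monomorphism by Remark \ref{GNORMALUNDER REM}. Applying Proposition \ref{SLIGHTCOR PROP}(i) to $s$ (at $i=1$) then shows that the inclusion
\[
A := \{1\}\otimes X \coprod_{\{1\}\otimes Y} J_d \otimes Y \hookrightarrow J_d \otimes X
\]
belongs to $\mathsf{An}$.

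The key step is to build a homotopy $H \colon J_d \otimes X \to X$ from $\mathrm{id}_X$ to $sp$ which is in addition \emph{stationary on the image of $s$}, meaning that $H \circ (\mathrm{id}_{J_d} \otimes s) = s \circ \sigma$. Such an $H$ is obtained by solving a single lifting problem of $p$ against the pushout--product
\[
\{0,1\}\otimes X \coprod_{\{0,1\}\otimes Y} J_d \otimes Y \to J_d \otimes X,
\]
which is a normal monomorphism by Proposition \ref{SLIGHTCOR PROP}(ii). The top map is prescribed to be $(\mathrm{id}_X, sp)$ on $\{0,1\}\otimes X$ and $s\sigma$ on $J_d \otimes Y$ (these agree on the intersection thanks to $ps = \mathrm{id}_Y$), while the bottom map is $\sigma \otimes p$; the lift $H$ exists since $p$ is a trivial fibration.

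With such an $H$, the diagram
\[
\begin{tikzcd}
Y \ar[r] \ar[d,"s"'] & A \ar[d] \ar[r,"{(p,\sigma)}"] & Y \ar[d,"s"] \\
X \ar[r,"\partial^1"'] & J_d \otimes X \ar[r,"H"'] & X
\end{tikzcd}
\]
displays $s$ as a retract of the anodyne map $A \hookrightarrow J_d \otimes X$, where $Y \to A$ is $\partial^1 \otimes s$ (landing in either summand, the two agreeing on $\{1\}\otimes Y$) and $A \to Y$ is the combined map $(p,\sigma)$; closure of $\mathsf{An}$ under retracts then gives $s \in \mathsf{An}$. The stationarity condition on $H$ is precisely what makes the right-hand square commute on the $J_d \otimes Y$ summand of $A$, and arranging it is the only subtle point of the argument, as the naive homotopy between $\mathrm{id}_X$ and $sp$ obtained from lifting against $\{0,1\}\otimes X \to J_d \otimes X$ alone would not in general satisfy this compatibility.
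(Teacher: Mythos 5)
Your strategy is the right one --- it is precisely the proof of \cite[Cor.\ 1.3.35]{Ci06}, which the paper invokes by citation --- and the two main ingredients are correctly assembled: $s$ is a normal monomorphism since it is a mono into the normal object $X$, and the stationary homotopy $H$ exists because the pushout--product of Proposition \ref{SLIGHTCOR PROP}(ii) applied to $s$ is a normal monomorphism, against which the trivial fibration $p$ lifts; the boundary data $(\mathrm{id}_X, sp)$ and $s\sigma$ are compatible exactly because $ps=\mathrm{id}_Y$. The one genuine error is in the final retract diagram. With your conventions $H\partial^0=\mathrm{id}_X$ and $H\partial^1=sp$, the bottom row $X \xrightarrow{\partial^1} J_d\otimes X \xrightarrow{H} X$ composes to $sp$, not to $\mathrm{id}_X$. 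The diagram you drew does commute (both squares check out, and the top row composes to $\mathrm{id}_Y$), but it is not a retract diagram, so it does not exhibit $s$ as a retract of $A\hookrightarrow J_d\otimes X$ and the conclusion does not follow as written.

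The fix is local: the horizontal inclusions must use the end of $J_d$ at which $H$ restricts to the identity. Keep $A=\{1\}\otimes X\coprod_{\{1\}\otimes Y} J_d\otimes Y$ and the map $(p,\sigma)\colon A\to Y$, but take the bottom-left map to be $\partial^0\colon X\to J_d\otimes X$ (so the bottom row composes to $H\partial^0=\mathrm{id}_X$) and the top-left map to be $Y\cong\{0\}\otimes Y\hookrightarrow J_d\otimes Y\to A$, landing in the $J_d\otimes Y$ summand rather than in $\{1\}\otimes X$. The left square then commutes because $A\to J_d\otimes X$ restricts to $\mathrm{id}_{J_d}\otimes s$ on that summand, so the composite is $\partial^0\circ s$; the top row composes to $\sigma\partial^0=\mathrm{id}_Y$; and the right square commutes for exactly the reason you identified, namely that the stationarity of $H$ forces $H|_A=s\circ(p,\sigma)$. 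Note that the choice $i=1$ in Proposition \ref{SLIGHTCOR PROP}(i) is still the correct one --- the anodyne map is attached at the $1$-end --- it is only the retract inclusions that must live at the $0$-end.
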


Fix once and for all a normalization $E_{\infty}$ of the terminal $G$-dendroidal set $\**$, i.e. a trivial fibration 
$E_{\infty} \to \**$ with $E_{\infty}$ a normal $G$-dendroidal set.

The following generalizes \cite[Lemma 3.9]{CM11}.

\begin{lemma}\label{ANYNOR LEM}
For any normal $G$-dendroidal set $X$ and map $a\colon X \to E_{\infty}$, the map $(a,id) \colon X \to E_{\infty} \times X$
is in $\mathsf{An}$.
\end{lemma}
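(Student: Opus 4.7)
My plan is to exhibit $(a,\mathrm{id})\colon X\to E_{\infty}\times X$ as a section of a trivial fibration between normal $G$-dendroidal sets and then to invoke Lemma~\ref{AN LEMMA}. The second projection $\pi\colon E_{\infty}\times X \to X$ is the pullback of $E_{\infty}\to \**$ along $X\to \**$, and hence a trivial fibration by the standard stability of the right lifting property under pullback. That $(a,\mathrm{id})$ is a section of $\pi$ is tautological, so the only substantive task is to verify that $E_{\infty}\times X$ is normal.

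For this I would appeal to the general principle that a cartesian product of two normal $G$-dendroidal sets is again normal. Using the characterization in Proposition~\ref{NORMCHAR PROP}(ii), the task reduces to checking that for each non-degenerate $(e,x) \in (E_{\infty}\times X)(T) = E_{\infty}(T)\times X(T)$ the $G\times \Sigma_T$-isotropy is a $G$-graph subgroup. Since this isotropy is the intersection $\mathrm{Iso}(e)\cap\mathrm{Iso}(x)$ and intersections preserve the defining condition $\Gamma \cap \Sigma_T = \**$, the argument succeeds immediately whenever at least one component is non-degenerate. In the remaining case one uses the unique degeneracy/non-degeneracy factorization (Proposition~\ref{UNIQUEFACT LEMMA}) of $e$ and $x$ separately, together with the absolute pushout of a pair of degeneracies from Corollary~\ref{ABSPUSH COR}: non-degeneracy of the pair $(e,x)$ in the product forces the two factorizing degeneracies out of $T$ to have trivial common refinement in the degeneracy poset, and this combinatorial constraint is exactly what prevents a non-trivial $\sigma\in\Sigma_T$ from simultaneously fixing the non-degenerate representatives $\bar{e}, \bar{x}$ (which would otherwise violate the individual normalities of $E_{\infty}$ and $X$).

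Once normality of $E_{\infty}\times X$ is in hand, Lemma~\ref{AN LEMMA} applied to $\pi$ immediately yields that the section $(a,\mathrm{id})$ lies in $\mathsf{An}$. The main obstacle I foresee is precisely the normality verification in the preceding paragraph: the naive coordinate-wise argument fails because a pair $(e,x)$ can be non-degenerate in $E_{\infty}\times X$ while both components are individually degenerate, so the purely formal subobject argument underlying Remark~\ref{GNORMALUNDER REM} does not apply (the projections $E_{\infty}\times X \to E_{\infty}$ and $E_{\infty}\times X \to X$ are not monomorphisms) and one must genuinely rely on the combinatorial absolute pushout input from Corollary~\ref{ABSPUSH COR} to handle isotropy at degenerate pairs.
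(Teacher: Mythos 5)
Your proof is correct and follows exactly the paper's argument: exhibit $(a,\mathrm{id})$ as a section of the trivial fibration $E_{\infty}\times X \to X$ (a pullback of $E_{\infty}\to\**$) and apply Lemma~\ref{AN LEMMA}. The only comment worth making is that your normality verification for $E_{\infty}\times X$ is more laboured than necessary: the isotropy of \emph{any} dendrex of $E_{\infty}\times X$ is contained in the isotropy of its image under the other projection $E_{\infty}\times X\to E_{\infty}$, and since degeneracies admit no nontrivial automorphisms over their codomain, normality of $E_{\infty}$ forces all of its dendrices (degenerate or not) to have $G$-graph isotropy — so the formal argument of \cite[Cor.~1.8]{CM11} behind Remark~\ref{GNORMALUNDER REM} does apply here, as it only requires a map to a normal object rather than a monomorphism.
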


\begin{proof}
Since $(a,id)$ is a section of the projection 
$X \times E_{\infty} \to X$, this follows from Lemma \ref{AN LEMMA}.
\end{proof}

The following generalizes \cite[Lemma 3.10]{CM11}.

\begin{lemma}\label{IMPLYFIB LEM}
Let $i \colon A \to B$ be a map of normal $G$-dendroidal sets and 
$p \colon X \to Y$ any map of $G$-dendroidal sets.
Then $p$ has the right lifting property with respect to $i$ iff, for any map $B \to E_{\infty}$, 
the map 
$E_{\infty} \times X
 \xrightarrow{id \times p}
 E_{\infty} \times Y$
 has the right lifting property with respect to $i$ in 
 $\mathsf{dSet}^G/E_{\infty}$.
\end{lemma}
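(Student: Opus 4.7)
\textbf{Proof plan for Lemma \ref{IMPLYFIB LEM}.} Both implications reduce to a purely formal argument about lifting against products with $E_{\infty}$; the one nontrivial input is the existence of a structure map $B \to E_{\infty}$, which is available because $B$ is normal and $E_{\infty} \to \**$ is a trivial fibration (so by the definition of $E_{\infty}$ and standard cofibrant-object lifting, any map $B \to \**$ admits a lift through $E_{\infty}$).

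For the ``only if'' direction, assume $p$ has the right lifting property with respect to $i$, and fix a structure map $B \to E_{\infty}$. A lifting problem in $\mathsf{dSet}^G/E_{\infty}$ with $i$ on the left and $id \times p$ on the right consists of compatible maps $A \to E_{\infty} \times X$ and $B \to E_{\infty} \times Y$ whose composites with the projection to $E_{\infty}$ agree with the given structure maps. Post-composing with the projections $E_{\infty} \times X \to X$ and $E_{\infty} \times Y \to Y$ yields an ordinary lifting problem for $p$ against $i$, which by assumption admits a solution $\ell \colon B \to X$. Pairing $\ell$ with the fixed structure map $B \to E_{\infty}$ produces a map $B \to E_{\infty} \times X$ that solves the original lifting problem (the two compatibilities are immediate from the construction).

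For the ``if'' direction, suppose that for every map $B \to E_{\infty}$ the induced map $id \times p$ has the right lifting property with respect to $i$ in $\mathsf{dSet}^G/E_{\infty}$. Given a lifting problem for $p$ against $i$ with bottom map $b \colon B \to Y$ and top map $a \colon A \to X$, use that $B$ is normal and $E_{\infty} \to \**$ is a trivial fibration to choose some map $\beta \colon B \to E_{\infty}$, and let $\alpha = \beta \circ i \colon A \to E_{\infty}$. The pairs $(\alpha, a) \colon A \to E_{\infty} \times X$ and $(\beta, b) \colon B \to E_{\infty} \times Y$ assemble into a lifting problem for $id \times p$ in $\mathsf{dSet}^G/E_{\infty}$, which by hypothesis admits a lift $B \to E_{\infty} \times X$. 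Composing with the projection $E_{\infty} \times X \to X$ yields the desired lift $B \to X$; commutativity of both triangles follows directly from inspecting the $X$-components of the two pairs.

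The main obstacle, insofar as there is one, is simply to book-keep the structure maps to $E_{\infty}$ correctly; the genuine content (existence of $B \to E_{\infty}$) is already packaged in the normalization $E_{\infty} \to \**$, which is exactly why this lemma lets one replace lifting problems against $p$ by lifting problems against the ``trivialized'' map $id \times p$ over $E_{\infty}$.
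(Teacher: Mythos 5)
Your proposal is correct and is exactly the argument the paper gives (the paper compresses it to "choose any $B\to E_{\infty}$, which exists since $B$ is normal; the two lifting problems are clearly equivalent"); you have simply spelled out the equivalence in both directions, including the pairing/projection bookkeeping and the role of normality of $B$ in producing the structure map $\beta$.
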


\begin{proof}
Given a lifting problem as on the left below, one obtains a lifting problem as on the right by arbitrarily choosing a map 
$B\to E_{\infty}$ (such a map always exists since $B$ is assumed normal). It is clear that the lifting problems are equivalent.
	\[
		\begin{tikzcd}
			A \ar{r} \ar{d}[swap]{i} & X \ar{d}{p}
		& &
			A \ar{r} \ar{d}[swap]{i} & E_{\infty} \times  X \ar{d}{id \times p}
		\\
			B \ar{r} \ar[dashed]{ru} & Y
		& &
			B \ar{r} \ar[dashed]{ru} & E_{\infty} \times Y
		\end{tikzcd}
	\]
\end{proof}

We finally obtain the model structure on $\mathsf{dSet}^G$,  generalizing \cite[Prop. 3.12]{CM11}.

\begin{proposition}\label{DSETGMODEL PROP}
	$\mathsf{dSet}^G$ is equipped with a left proper cofibrantly generated model structure such that
	\begin{itemize}
		\item the cofibrations are the normal monomorphisms;
		\item inner $G$-anodyne extensions are trivial cofibrations;
		\item the fibrant objects are the $J$-fibrant objects;
		\item the fibrations between fibrant objects are the $J$-fibrations.
	\end{itemize}
\end{proposition}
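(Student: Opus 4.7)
The plan is to transport the model structure on $\mathsf{dSet}^G/E_\infty$ from Proposition \ref{FIRSTMOD PROP} onto $\mathsf{dSet}^G$, following the strategy of \cite[Prop.~3.12]{CM11}. The cofibrations are already given (the $G$-normal monomorphisms, which admit a small generating set by Proposition \ref{NORMCHAR PROP}(iii)), so I need to specify the weak equivalences and fibrations and check the axioms. Define a map $X \to Y$ in $\mathsf{dSet}^G$ to be a \emph{weak equivalence} (resp.\ \emph{fibration}) if for some (equivalently, any) lift to a map $E_\infty \times X \to E_\infty \times Y$ over $E_\infty$, the induced map is a weak equivalence (resp.\ fibration) in the model category $\mathsf{dSet}^G/E_\infty$ of Proposition \ref{FIRSTMOD PROP}. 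Since $E_\infty \times (-)$ preserves colimits and pullbacks and since $E_\infty \times X$ is always normal (being normal over the normal object $E_\infty$, cf.\ Remark \ref{GNORMALUNDER REM}), this is a well-posed definition.

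First I would verify the closure properties of these classes. The 2-out-of-3 axiom and the retract axiom for weak equivalences and fibrations are inherited directly from $\mathsf{dSet}^G/E_\infty$. For the lifting axioms, Lemma \ref{IMPLYFIB LEM} gives that a map $p \colon X \to Y$ has the right lifting property against a normal monomorphism $A \to B$ in $\mathsf{dSet}^G$ if and only if $E_\infty \times p$ does so in $\mathsf{dSet}^G/E_\infty$ (after choosing any structure map $B \to E_\infty$); combined with the characterization of (trivial) fibrations between fibrant objects in Proposition \ref{FIRSTMOD PROP} and Lemma \ref{AN LEMMA}, this yields that trivial fibrations in $\mathsf{dSet}^G$ lift against all normal monomorphisms, and that fibrations with fibrant target (in particular, fibrations between fibrant objects) lift against trivial normal monomorphisms.

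For the factorization axioms, I would use the cofibrantly generated machinery: the generating cofibrations are the boundary inclusions $\partial \Omega[T] \hookrightarrow \Omega[T]$ for $T \in \Omega_G$, and a generating set of trivial cofibrations can be obtained as the union of the generating $G$-inner horn inclusions, the maps in (\ref{JANODEF EQ}), and sufficiently small $J$-anodyne extensions over $E_\infty$ (since $\mathsf{dSet}^G/E_\infty$ is cofibrantly generated by Proposition \ref{FIRSTMOD PROP}). The small object argument then produces the required (cofibration, trivial fibration) and (trivial cofibration, fibration) factorizations; one checks that a map built as a transfinite composition of pushouts of generating trivial cofibrations becomes a trivial cofibration in $\mathsf{dSet}^G/E_\infty$ after applying $E_\infty \times (-)$, using Lemma \ref{ANYNOR LEM} to replace $X$ by $E_\infty \times X$ up to an anodyne extension.

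The three bullet-point identifications in the statement then follow: inner $G$-anodyne extensions are trivial cofibrations because they lie in $\mathsf{An}$ and hence become trivial cofibrations in $\mathsf{dSet}^G/E_\infty$ after crossing with $E_\infty$; a fibrant object is precisely one for which $E_\infty \times X \to E_\infty$ is a $J$-fibration, which by the projection argument together with Lemma \ref{ANYNOR LEM} is equivalent to $X$ itself being $J$-fibrant; and the fibrations between fibrant objects are $J$-fibrations by the same translation via Lemma \ref{IMPLYFIB LEM}. Left properness is inherited from $\mathsf{dSet}^G/E_\infty$ since pushouts of cofibrations in $\mathsf{dSet}^G$ remain cofibrations after crossing with $E_\infty$.

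The main obstacle I anticipate is confirming that the class of weak equivalences is genuinely independent of auxiliary choices (structure maps $X \to E_\infty$, and the specific normalization $E_\infty$) and interacts well with the factorization machinery; this is where Lemma \ref{ANYNOR LEM} plays the key role, since it implies that any map $X \to X$ equipped with two different composites to $E_\infty$ differs by a $J$-anodyne, hence weak, equivalence, ensuring that the definition of weak equivalence is unambiguous. Once this is in place, the rest of the argument is a direct translation of \cite[\S 3]{CM11} thanks to the equivariant analogues established in Section \ref{EXIST SEC}.
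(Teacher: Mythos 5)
Your proposal follows essentially the same route as the paper: the model structure is transferred from $\mathsf{dSet}^G/E_\infty$ along the adjunction $p_!\dashv p^{\ast}$ with $p^{\ast}=E_\infty\times(-)$, the key transfer condition being supplied by Lemma \ref{ANYNOR LEM} and the fibrancy identifications by Lemma \ref{IMPLYFIB LEM}; the paper simply packages the axiom-checking by citing \cite[Prop.~1.4.23]{Ci06} where you unpack it by hand. One caveat in your unpacking: the generating trivial cofibrations cannot be taken to be ($p_!$ of) $J$-anodyne maps alone, since the trivial cofibrations strictly contain the $J$-anodyne extensions (cf.\ the remark following the proposition) and the small object argument applied to $J$-anodyne generators would only produce $(J\text{-anodyne},\,J\text{-fibration})$ factorizations, with the $J$-fibration half not known to be a fibration; instead one must take $p_!$ of an actual generating set of trivial cofibrations of $\mathsf{dSet}^G/E_\infty$, which exists since that category is cofibrantly generated by Proposition \ref{FIRSTMOD PROP}.
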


\begin{proof}
	As in \cite[Prop. 3.12]{CM11}, the model structure is built via the adjunction
	\[
	p_{!}\colon \mathsf{dSet}^G/E_{\infty}
		\rightleftarrows
	\mathsf{dSet}^G \colon p^{\**}	
	\]
where $p_{!}(X \to B) = X$, $p^{\**}(X) = (E_{\infty} \times X \to E_{\infty})$. Noting that both $p_!$ and $p^{\**}$ preserve all colimits it follows that condition (iii) in
\cite[Prop. 1.4.23]{Ci06}
reduces to verifying that $p^{\**}p_{!}(j)$
is a trivial cofibration in $\mathsf{dSet}^G/E_{\infty}$ whenever
$j$ is. Since this follows from 
Lemma \ref{ANYNOR LEM}, the transfer model structure exists.

	The claim that cofibrations are normal monomorphisms follows since all monomorphisms over $E_{\infty}$ are normal. The converse follows since all boundary inclusions 
	$\partial \Omega[T] \to \Omega[T]$ are in the image of $p_!$,
and the claim concerning anodyne extensions follows by the same argument. The fibrancy claims follow from Lemma \ref{IMPLYFIB LEM}.
Lastly, left properness follows from that in 
Proposition \ref{FIRSTMOD PROP} together with the observation that
$p^{\**}$ preserves both cofibrations and colimits and detects weak equivalences.
\end{proof}

\begin{remark}
	As in the case of the model structure on $\mathsf{dSet}$ or of the Joyal model structure on $\mathsf{sSet}$, the trivial cofibrations (resp. fibrations) in Proposition \ref{DSETGMODEL PROP} do not coincide with the inner $G$-anodyne extensions (resp. $J$-fibrations), but merely contain (resp. are contained in) them.
\end{remark}

\subsection{Characterization of fibrant objects}\label{FIBOBJCHAR SEC}

Much as in \cite{CM11}, the bulk of the work is now that of characterizing the fibrant objects as indeed being the $G$-$\infty$-operads.

We will need to make use of the adjunction
\[
	u^{\**} \colon \mathsf{dSet}_G 
		\rightleftarrows 
	\mathsf{dSet}^{G}\colon u_{\**}
\]
discussed in Proposition \ref{REFLEXCAT PROP}.

\begin{definition}\label{NORMALMONDSETLOWG DEF}
	The class of \textit{normal monomorphims} of $\mathsf{dSet}_G$ is the saturation of the maps of the form $u_{\**}(A\to B)$ for $A \to B$ a normal monomorphism in $\mathsf{dSet}^G$.
	Further, a map $X \to Y$ in $\mathsf{dSet}_G$ is called a \textit{trivial fibration} if it has the right lifting property with respect to normal monomorphisms.
\end{definition}

We now extend some notation from \cite[\S 6.1]{CM11}. 

As usual, for $X$ an $\infty$-category, $k(X)$ will denote the maximal Kan complex inside $X$ and $\tau(X)\in \mathsf{Cat}$
 will denote its homotopy category.

\begin{notation}\label{XTOTHEPARX NOT}
For a $G$-$\infty$-operad $X$ and simplicial set $K$ (thought of as having a trivial $G$-action), we define $X^{(K)}\in \mathsf{dSet}_G$ to have $T$-dendrices (recall $T \in \Omega_G$) the maps 
	\[i_{!}(K)\otimes \Omega[T] \xrightarrow{a} X\]
such that for each edge orbit $G/H \cdot \eta \xrightarrow{G/H \cdot e} T$ the induced map 
	\[K \xrightarrow{a_e} i^{\**} \left( X^H \right)\]
factors through $k\left(i^{\**}\left(X^H\right)\right)=k\left(\left(i^{\**}(X)\right)^H\right)$.
\end{notation}

\begin{remark}
Note that for $X$ a $G$-$\infty$-category one always has $k(X^G) \subset k(X)^G$, but that this inclusion is rarely an equality.
\end{remark}

\begin{notation}\label{KAX NOT}
For a normal $G$-dendroidal set $A \in \mathsf{dSet}^G$ and a $G$-$\infty$-operad $X$ 
we define $k(A,X) \in \mathsf{sSet}$ to have $n$-simplices the maps
	\[i_!(\Delta[n]) \otimes A \xrightarrow{b} X\]
such that, for all element orbits $G/H \cdot \eta \xrightarrow{G/H \cdot a} A$ the induced map
	\[ \Delta[n] \xrightarrow{b_a} i^{\**} \left( X^H \right) \]
factors through $k\left(i^{\**}\left(X^H\right)\right)=k\left(\left(i^{\**}(X)\right)^H\right)$.
\end{notation}

Note that there are canonical isomorphisms
\begin{equation}\label{CANISODSETG EQ}
Hom_{\mathsf{sSet}}(K,k(A,X)) \simeq 
Hom_{\mathsf{dSet}_G} \left( u_{\**}(A),X^{(K)} \right).
\end{equation}

The following is the analogue of \cite[Thm. 6.4]{CM11}. Recall that a map $\mathcal{C} \to \mathcal{D}$ in $\mathsf{Cat}$
is called a \textit{categorical fibration} if it has the right lifting property against the inclusion $1 \to [1]$, where $[1]=(0\to 1)$.

\begin{theorem}\label{XDELTA1 THM}
	Let $p \colon X \to Y$ be a $G$-inner fibration between $G$-$\infty$-operads. Then 
	\begin{equation}\label{EV1MAP EQ}
	ev_1 \colon X^{(\Delta[1])} \to Y^{(\Delta[1])} \times_{u_{\**}(Y)} u_{\**}(X)
	\end{equation}
 has the right lifting property with respect to inclusions 
	$u_{\**}(\partial \Omega[S] \to \Omega[S])$ for any $G$-tree $S \in \Omega_G$ with at least one vertex.
	
	Consequently, (\ref{EV1MAP EQ})	is a trivial fibration 
	in $\mathsf{dSet}_G$ iff all maps $\tau i^{\**}\left(X^H \to Y^H\right)$ for $H\leq G$ are categorical fibrations.
\end{theorem}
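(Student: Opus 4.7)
The plan is to follow the non-equivariant argument of \cite[Thm. 6.4]{CM11}, leveraging our equivariant tensor-product and join results. For the lifting claim, write $S \simeq G \cdot_H \bar{S}$ for some $\bar{S} \in \Omega^H$. Combining the identification (\ref{CANISODSETG EQ}) with the $G \cdot_H (-) \dashv (\text{$H$-restriction})$ adjunction, the lifting problem against $u_{\**}(\partial \Omega[S] \to \Omega[S])$ translates into an $H$-equivariant lifting problem of the pushout-product cofibration
\[
\{1\} \otimes \Omega[\bar{S}] \coprod_{\{1\} \otimes \partial \Omega[\bar{S}]} \Delta[1] \otimes \partial \Omega[\bar{S}] \to \Delta[1] \otimes \Omega[\bar{S}]
\]
against $p \colon X \to Y$, subject to the extra requirement that each edge orbit of $\bar{S}$, in the $\Delta[1]$-direction, land in an equivalence of the corresponding fixed-point $\infty$-category $i^{\**}(X^{H'})$.

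To solve this, apply Theorem \ref{OUTERIN THM} (with $H$ playing the role of $G$) to factor the cofibration above as an $H$-inner anodyne extension $A \to B$ followed by a pushout along the outer horn $\Lambda^{\eta}[\bar{S} \star \eta] \to \Omega[\bar{S} \star \eta]$. The $H$-inner anodyne piece lifts directly against $p$, since $p$ being a $G$-inner fibration restricts, via $G \cdot_H (-)$, to $H$-inner fibrancy. For the outer horn, apply Theorem \ref{JOINLIFT THM} with $H$ in place of $G$: its hypothesis that the new unary root vertex $v_r$ maps to an equivalence in $i^{\**}(X^H)$ holds precisely by the defining condition on $X^{(\Delta[1])}$ applied to the $H$-fixed root orbit of $\bar{S}$, since by Theorem \ref{OUTERIN THM}(c) the edge $v_r$ is identified with the $\Delta[1]$-direction over that root.

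For the ``consequently'' clause, normal monomorphisms in $\mathsf{dSet}_G$ are by Definition \ref{NORMALMONDSETLOWG DEF} generated by $u_{\**}(\partial \Omega[T] \to \Omega[T])$ for $T \in \Omega_G$. The first half handles all such $T$ with at least one vertex, so $ev_1$ is a trivial fibration iff it additionally has the RLP against $\emptyset \to u_{\**}(\Omega[G/H \cdot \eta])$ for each $H \leq G$. Unpacking, this latter RLP requires that every equivalence $\alpha \in i^{\**}(Y^H)_1$ together with a lift $x_1 \in i^{\**}(X^H)_0$ of $\alpha(1)$ admits an extension to an equivalence $\tilde{\alpha} \in i^{\**}(X^H)_1$ covering $\alpha$ with $\tilde{\alpha}(1) = x_1$. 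Since each $i^{\**}(p^H)$ is an inner fibration between $\infty$-categories (again via the $G \cdot_H (-)$ adjunction), this target-endpoint isomorphism-lifting property is equivalent, by a standard result of \cite{Joy02}, to $\tau i^{\**}(X^H \to Y^H)$ being a categorical fibration. The main obstacle is not any single deep step but rather the careful equivariant bookkeeping in the first paragraph, where one must translate the pullback form of $ev_1$ into a pushout-product shape to which Theorems \ref{OUTERIN THM} and \ref{JOINLIFT THM} can be applied.
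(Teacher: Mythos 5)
Your proposal is correct and follows essentially the same route as the paper, which simply reduces to $S \in \Omega^H$ via the $G\cdot_H(-)$ adjunction and then runs the argument of \cite[Thm.\ 6.4]{CM11} with Theorems \ref{OUTERIN THM} and \ref{JOINLIFT THM} in place of their non-equivariant analogues; you have merely written out the details that the paper leaves implicit (the translation of the lifting problem into the pushout-product form, the verification of the equivalence hypothesis for the unary root vertex via Theorem \ref{OUTERIN THM}(c), and the reduction of the ``consequently'' clause to the vertex-free trees $G/H\cdot\eta$ and Joyal's isofibration criterion).
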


\begin{proof}
	Noting that it is $S \simeq G \cdot_H S_e$ for some $S_e \in \Omega^H$, $H \leq G$, it suffices to deal with the case $S \in \Omega^G$.
	
	The proof of the main claim now follows exactly as in the proof of \cite[Thm. 6.4]{CM11} by replacing uses of \cite[Thm. 5.2]{CM11} and \cite[Thm. 4.2]{CM11} with the equivariant analogues Theorems \ref{OUTERIN THM} and 
\ref{JOINLIFT THM}.

For the ``consequently'' part, one needs only note that in the equivariant context there are now multiple $G$-trees with no vertices, namely the $G$-trees of the form $G/H \cdot \eta$.
\end{proof}

The following is the analogue of \cite[Prop. 6.7]{CM11}.

\begin{proposition}\label{KANFIB PROP}
	Let $p \colon X \to Y$ be a $G$-inner fibration between $G$-$\infty$-operads.
	If $\tau i^{\**}\left(X^H \to Y^H\right)$ is a categorical fibration for all $H \leq G$  then, for any monomorphism between normal dendroidal sets $A \to B$, the map 
	\begin{equation}\label{KISKAN EQ}
	k(B,X) \to k(B,Y) \times_{k(A,Y)} k(A,X)
	\end{equation}
is a Kan fibration between Kan complexes.
\end{proposition}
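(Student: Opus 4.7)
The plan is to translate the Kan fibration property of \eqref{KISKAN EQ} into a lifting problem in $\mathsf{dSet}_G$ via the adjunction \eqref{CANISODSETG EQ}. A lifting problem for \eqref{KISKAN EQ} against $\Lambda^i[n]\hookrightarrow\Delta[n]$ unfolds into a square
\[
\begin{tikzcd}
u_{\ast}(A) \ar{r} \ar[hook]{d} & X^{(\Delta[n])} \ar{d} \\
u_{\ast}(B) \ar{r} \ar[dashed]{ur} & Y^{(\Delta[n])}\times_{Y^{(\Lambda^i[n])}} X^{(\Lambda^i[n])}
\end{tikzcd}
\]
in $\mathsf{dSet}_G$, so it suffices to show that the right vertical map, call it $(\star_n^i)$, is a trivial fibration in $\mathsf{dSet}_G$ in the sense of Definition \ref{NORMALMONDSETLOWG DEF}. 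For $n=1$, $(\star_1^0)$ and $(\star_1^1)$ agree with the map \eqref{EV1MAP EQ}, so the hypothesis that $\tau i^{\ast}(X^H\to Y^H)$ is a categorical fibration for every $H\leq G$, combined with the ``consequently'' clause of Theorem \ref{XDELTA1 THM}, gives the claim at level $n=1$.

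For the inner horn case $0<i<n$ with $n\geq 2$, I would further unpack $(\star_n^i)$ into a dendroidal lifting of
\[
i_{!}(\Lambda^i[n])\otimes B\;\cup_{i_{!}(\Lambda^i[n])\otimes A}\;i_{!}(\Delta[n])\otimes A \;\to\; X
\]
along $i_{!}(\Delta[n])\otimes B\to Y$, subject to a $k$-constraint over $i_{!}(\Delta[n])\otimes B$. Theorem \ref{EXPNPROP THM}(ii) with the linear tree $[n]$ ensures that the left-hand pushout-product is $G$-inner anodyne, so the lift exists in $\mathsf{dSet}^G$ because $p$ is a $G$-inner fibration. The $k$-constraint is then automatic: for every edge orbit of $B$, the restriction of the filler to $i^{\ast} X^H$ has all of its edges already in $\Lambda^i[n]$ when $n\geq 3$ (and hence in $k(i^{\ast} X^H)$), while for $n=2$ the only new edge is a composite of two equivalences, itself an equivalence in $\tau i^{\ast} X^H$.

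For the outer horn case $i\in\{0,n\}$ with $n\geq 2$, I would reduce to the $n=1$ case by a Joyal-type argument: the missing face $d_i\Delta[n]$ can be built up by successive inner horn fillings (already handled above), leaving a single $\Delta[1]$-level check that is precisely of the form covered by Theorem \ref{XDELTA1 THM}, applied at the distinguished edge whose image must be an equivalence by the $k$-constraint. The Kan complex claim for $k(B,X)$ and for $k(B,Y)\times_{k(A,Y)}k(A,X)$ follows by specializing the above argument to $Y=\ast$ and to $(X=\ast,\,A=\emptyset)$ respectively.

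The main obstacle will be the outer horn reduction for $n\geq 2$: one must verify that the Joyal-type decomposition interacts correctly with $u_{\ast}(A\to B)$ and that each inner anodyne stage preserves the $k$-constraint imposed on the eventual extension over $i_{!}(\Delta[n])\otimes B$. This ultimately rests on the fact that the maximal Kan complexes $k(i^{\ast} X^H)$ are closed under composition of equivalences, so that the single remaining edge-equivalence check is exactly what Theorem \ref{XDELTA1 THM} provides.
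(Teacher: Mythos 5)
Your reduction through the adjunction \eqref{CANISODSETG EQ} and your treatment of the inner horns are essentially sound: the pushout product of $A\to B$ with $i_!(\Lambda^i[n]\to\Delta[n])$, $0<i<n$, is $G$-inner anodyne by Theorem \ref{EXPNPROP THM}(ii) and skeletal induction, and your closure-under-composition check of the $k$-constraint for $n=2$ is the same ``$2$-out-of-$3$'' step that appears in the paper's proof. The gap is in the outer horns, and it is concentrated in the claim that ``$(\star_1^0)$ and $(\star_1^1)$ agree with the map \eqref{EV1MAP EQ}''. Only $(\star_1^1)$ is \eqref{EV1MAP EQ}: the map $(\star_1^0)$ is the evaluation at $0$, with the fiber product taken over $ev_0$ rather than $ev_1$, and Theorem \ref{XDELTA1 THM} says nothing about it. Proving the $ev_0$ statement directly would require duals of Theorems \ref{OUTERIN THM} and \ref{JOINLIFT THM} (a ``co-join'' grafting an edge above a leaf rather than below the root, and lifting of equivalences with prescribed \emph{source}), none of which are established in the paper. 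Since your Joyal-type reduction of $\Lambda^0[n]\to\Delta[n]$ for $n\geq 2$ terminates precisely at the missing case $(\star_1^0)$, the left outer horns are not actually handled.

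The paper circumvents this one-sidedness rather than confronting it. It first observes, using Theorem \ref{EXPNPROP THM}(ii) applied to the simplicial mapping spaces $\underline{Hom}(B,X)$, that \eqref{KISKAN EQ} is an inner fibration between $\infty$-categories, and then invokes \cite[Lemma 6.5]{CM11} to reduce the Kan fibration property to the right lifting property against the one-sided pushout products
$\partial\Delta[n]\times\Delta[1]\cup_{\partial\Delta[n]\times\{1\}}\Delta[n]\times\{1\}\to\Delta[n]\times\Delta[1]$
only. These adjoint over (keeping the $\Delta[1]$ factor on the simplicial side) to lifting problems for \eqref{EV1MAP EQ} against $u_{\ast}\bigl(\partial\Omega[n]\otimes B\cup\Omega[n]\otimes A\to\Omega[n]\otimes B\bigr)$, which Theorem \ref{XDELTA1 THM} does solve, followed by the $2$-out-of-$3$ verification of the $k$-constraint. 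In effect it is the invertibility of all edges of the resulting Kan complexes that converts $ev_1$-lifting into $ev_0$-lifting, but this conversion must be carried out (as in \cite[Lemma 6.5]{CM11}); it cannot be obtained by identifying $(\star_1^0)$ with \eqref{EV1MAP EQ}. To repair your argument you would either have to insert that reduction explicitly, or prove the dual lifting theorems.
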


\begin{proof}
	We will mainly refer to the proof of \cite[Prop. 6.7]{CM11} while indicating the main changes. First, note that it follows 
from Theorem \ref{EXPNPROP THM}(ii)	that the map
$\underline{Hom}(B,X) \to \underline{Hom}(B,Y)
\times_{\underline{Hom}(A,Y)} \underline{Hom}(A,X)$
of simplicial mapping spaces is a $G$-inner fibration between $G$-$\infty$-categories, and thus so is (\ref{KISKAN EQ}).
	As in \cite[Prop. 6.7]{CM11}, it now suffices to check that (\ref{KISKAN EQ}) has the right lifting property against the ``left pushout products''
	\[
		\partial \Delta[n] \times \Delta[1] \cup_{\partial \Delta[n] \times \{1\}} \Delta[n]\times \{1\} 
		\to \Delta[n] \times \Delta[1]
	\]
(i.e. thanks to \cite[Lemma 6.5]{CM11} one needs only consider the case $i=1$ of (\ref{JANODEF EQ})).
A lifting problem 
	\begin{equation}\label{KANFIBLIFT EQ1}
		\begin{tikzcd}
		\partial \Delta[n] \times \Delta[1] \cup \Delta[n]\times \{1\} \ar[r] \ar[d] & k(B,X)  \ar[d] \\
		\Delta[n] \times \Delta[1] \ar[r] \ar[dashed]{ru}[swap]{h} & k(B,Y) \times_{k(A,Y)} k(A,X) \\
		\end{tikzcd}
	\end{equation}
induces a (a priori non equivalent) lifting problem
	\begin{equation}\label{KANFIBLIFT EQ2}
		\begin{tikzcd}
			u_{\**}\left(\partial \Omega[n] \otimes B \cup \Omega[n] \otimes A\right) \ar[r] \ar[d] & X^{(\Delta[1])} \ar[d] \\
			u_{\**}\left( \Omega[n] \otimes B \right) 
			\ar[r] \ar[dashed]{ru}[swap]{\bar{h}} & Y^{(\Delta[1])} \times_{u_{\**}(Y)} u_{\**}(X).
		\end{tikzcd}
	\end{equation}
That the lift $\bar{h}$ in (\ref{KANFIBLIFT EQ2}) exists  follows from Theorem \ref{XDELTA1 THM} and it hence remains to check that the adjoint map $i_{!}(\Delta[n]\times \Delta[1])\otimes B \to X$ indeed provides the map $h$ in (\ref{KANFIBLIFT EQ1}). I.e., one must check that for any element orbit $G/H \cdot b$ of $B$ the induced map 
$\Delta[n]\times \Delta[1] \to i^{\**}(X^H)$ factors through $k(i^{\**}(X^H))$ (note that the existence of $\bar{h}$ only guarantees such a factorization for the restriction along 
$\{0,1,\cdots,n\}\times \Delta[1] \subset \Delta[n] \times \Delta[1]$). The $n=0$ case is immediate and the $n>0$ case follows by arguing using the ``$2$-out-of-$3$ property'', just as in the penultimate paragraph of the proof of \cite[Prop. 6.7]{CM11}.

Standard arguments (setting $A=\emptyset$ of $Y=\**$ just as in \cite[Prop. 6.7]{CM11} or as at the end of the proof of Theorem \ref{JOINLIFT THM}) finish the proof.
\end{proof}

Combining the previous result with (\ref{CANISODSETG EQ})
now yields the following.

%The following is now immediate from (\ref{CANISODSETG EQ}).

\begin{corollary}\label{TRIVFIB COR}
Let $p \colon X \to Y$ be a $G$-inner fibration between $G$-$\infty$-operads such that $\tau i^{\**}\left(X^H \to Y^H\right)$ is a categorical fibration for all $H \leq G$. Then, for any anodyne extension of simplicial sets $K \to L$,
\[X^{(L)} \to Y^{(L)} \times_{Y^{(K)}} X^{(K)}\]
is a trivial fibration in $\mathsf{dSet}_G$.
\end{corollary}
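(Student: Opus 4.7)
The plan is to use the adjunction~(\ref{CANISODSETG EQ}) to convert lifting problems against $X^{(L)} \to Y^{(L)} \times_{Y^{(K)}} X^{(K)}$ into lifting problems against $k(B,X) \to k(B,Y) \times_{k(A,Y)} k(A,X)$, and then appeal to Proposition~\ref{KANFIB PROP}.

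First I would reduce to checking the lifting property against the generators of the class of normal monomorphisms of $\mathsf{dSet}_G$ (Definition~\ref{NORMALMONDSETLOWG DEF}), i.e.\ against maps of the form $u_{\**}(A \hookrightarrow B)$ for $A \to B$ a normal monomorphism in $\mathsf{dSet}^G$. Since $\mathsf{dSet}_G$ is presheaves on $\Omega_G$ and trivial fibrations are defined by RLP against the saturation of these generators, it suffices to solve lifting problems
\[
\begin{tikzcd}
u_{\**}(A) \ar{r} \ar{d} & X^{(L)} \ar{d} \\
u_{\**}(B) \ar{r} \ar[dashed]{ur} & Y^{(L)} \times_{Y^{(K)}} X^{(K)}.
\end{tikzcd}
\]

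Next I would translate this via the natural bijection~(\ref{CANISODSETG EQ}), $\Hom_{\mathsf{sSet}}(K, k(A,X)) \simeq \Hom_{\mathsf{dSet}_G}(u_{\**}(A), X^{(K)})$, which is functorial in $A$, $K$, $X$. Because the target of the vertical map on the right is a pullback (computed objectwise in $\mathsf{dSet}_G$), the adjunction carries the displayed square to a square of simplicial sets
\[
\begin{tikzcd}
K \ar{r} \ar{d} & k(B, X) \ar{d} \\
L \ar{r} \ar[dashed]{ur} & k(B, Y) \times_{k(A, Y)} k(A, X),
\end{tikzcd}
\]
and these two lifting problems are equivalent. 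The only subtlety to verify is that the adjunction~(\ref{CANISODSETG EQ}) really converts the pullback $Y^{(L)} \times_{Y^{(K)}} X^{(K)}$ into the pullback $k(B,Y) \times_{k(A,Y)} k(A,X)$; this is formal from the fact that both $X^{(-)}$ and $k(-,X)$ are defined as subobjects cut out by the same fixed-point/invertibility condition on edge orbits (Notations~\ref{XTOTHEPARX NOT} and~\ref{KAX NOT}), so representable functors preserve the relevant limits.

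Finally, the hypotheses of Proposition~\ref{KANFIB PROP} are exactly those of the corollary, so the right-hand vertical map is a Kan fibration between Kan complexes, and since $K \to L$ is an anodyne extension of simplicial sets the required lift exists. No step here is difficult: the only potential obstacle is the bookkeeping to confirm that the adjunction~(\ref{CANISODSETG EQ}) is compatible with the pullbacks, but once Notations~\ref{XTOTHEPARX NOT} and~\ref{KAX NOT} are unpacked this is immediate from the fact that both constructions are defined by the same fibered conditions on edge orbits.
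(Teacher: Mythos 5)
Your proposal is correct and is essentially the paper's own argument: the paper's entire proof consists of the remark that the corollary follows by combining Proposition \ref{KANFIB PROP} with the natural isomorphism (\ref{CANISODSETG EQ}), which is exactly the transposition of lifting problems you carry out. The compatibility with pullbacks that you flag is indeed the only (formal) point to check, and your justification of it is fine.
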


%\begin{proof}
%This is a formal consequence of Proposition \ref{KANFIB PROP}.
%\end{proof}

We now obtain our sought generalization of \cite[Thm. 6.10]{CM11}.

\begin{theorem}\label{FIBRANTOBJ THM}
A $G$-dendroidal set $X$ is $J$-fibrant iff it is a $G$-$\infty$-operad.
Further, a $G$-inner fibration $p \colon X \to Y$ between $G$-$\infty$-operads is a $J$-fibration iff 
$\tau i^{\**}\left(X^H \to Y^H\right)$ is a categorical fibration for all $H \leq G$.
\end{theorem}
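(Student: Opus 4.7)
The plan is to follow the strategy of \cite[Thm. 6.10]{CM11}, deducing both claims from Corollary \ref{TRIVFIB COR} together with careful adjunction bookkeeping between $\mathsf{dSet}_G$ and $\mathsf{dSet}^G$ via $u^{\**} \dashv u_{\**}$.

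The easy directions are almost immediate. Since the generating $G$-inner horn inclusions lie in $\mathsf{An}$, any $J$-fibrant object (resp.\ $J$-fibration) is automatically a $G$-$\infty$-operad (resp.\ $G$-inner fibration). Moreover, for a $J$-fibration $p$ and $H \leq G$, specializing the generators (\ref{JANODEF EQ}) to $T = G/H \cdot \eta$ (for which $\partial \Omega[T] = \emptyset$) shows that $p$ has the RLP against $G/H \cdot (\{i\} \to J_d)$. Taking $H$-fixed points and restricting to simplicial sets yields that $i^{\**}(X^H) \to i^{\**}(Y^H)$ has the RLP against $\{i\} \to J$, which, upon passing to $\tau$ and using that isomorphisms in the homotopy category of an $\infty$-category are classes of maps out of $J$, gives that $\tau i^{\**}(X^H \to Y^H)$ is a categorical fibration.

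For the harder converses, the key is the identification $u_{\**}(X^{J_d}) = X^{(J)}$ valid whenever $X$ is a $G$-$\infty$-operad, where $X^{J_d}$ is the internal mapping object in $\mathsf{dSet}^G$ and $X^{(J)}$ is as in Notation \ref{XTOTHEPARX NOT}. One inclusion is by definition; the other holds because any $G$-$\infty$-operad $X$ yields $\infty$-categories $i^{\**}(X^H)$ (by restricting $G$-inner horn RLP to trivially-acted linear trees $G/G \cdot [n]$ and taking $H$-fixed points), so that each map $J \to i^{\**}(X^H)$ automatically factors through $k(i^{\**}(X^H))$ since $J$ encodes an isomorphism. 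With this identification in hand, the $J$-fibrancy of $X$ is equivalent, via the internal hom adjunction in $\mathsf{dSet}^G$ and $u^{\**} \dashv u_{\**}$, to the statement that $X^{(J)} \to u_{\**}(X)$ is a trivial fibration in $\mathsf{dSet}_G$, and this is exactly the output of Corollary \ref{TRIVFIB COR} applied to $X \to \**$ with the anodyne extension $\{i\} \hookrightarrow J$ (where the categorical fibration hypothesis is vacuous for terminal target). The fibration case is formally identical: applying Corollary \ref{TRIVFIB COR} to $p$ yields that $X^{(J)} \to Y^{(J)} \times_{u_{\**}(Y)} u_{\**}(X)$ is a trivial fibration in $\mathsf{dSet}_G$, which unwinds to RLP of $p$ against (\ref{JANODEF EQ}).

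The main obstacle I expect is the bookkeeping in the identification $u_{\**}(X^{J_d}) = X^{(J)}$ and its subsequent unwinding, requiring one to juggle representable presheaves over $\Omega_G$ against internal homs in $\mathsf{dSet}^G$ and to verify that the factorization-through-$k$ condition at each edge orbit $G/H \cdot \eta \to T$ is automatic rather than an extra constraint. While none of these steps is individually deep, lining up all the adjunctions correctly is the most delicate part of the argument.
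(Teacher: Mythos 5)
Your proof is correct and follows essentially the same route as the paper's: both arguments identify $u_{\**}$ of the map $X^{J_d} \to Y^{J_d}\times_{Y^{\{0\}}} X^{\{0\}}$ with $X^{(J)} \to Y^{(J)}\times_{u_{\**}(Y)} u_{\**}(X)$ (using that maps out of $J_d$ automatically factor through the maximal Kan complex), apply Corollary \ref{TRIVFIB COR}, and conclude via full faithfulness of $u_{\**}$ from the reflexive-subcategory property, the only cosmetic difference being that the paper cites \cite[Cor. 1.6]{Joy02} for the easy direction where you argue directly from the generators (\ref{JANODEF EQ}). One tiny slip: to see that $i^{\**}(X^H)$ is an $\infty$-category you should test against the linear $G$-trees $G/H\cdot[n]$, not $G/G\cdot[n]$.
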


\begin{proof} It suffices to prove the ``further'' claim.
	Moreover, the ``only if'' direction is a direct consequence of
	\cite[Cor. 1.6]{Joy02}.
	Unwinding definitions and adjunction properties it thus remains to show that
	\[X^{J_d} \to Y^{J_d} \times_{Y^{\{0\}}} X^{\{0\}}\]
	is a trivial fibration in $\mathsf{dSet}^G$
	if $\tau i^{\**}\left(X^H \to Y^H\right)$ is a categorical fibration for all $H \leq G$. We now note that since any map $J_d \to Z$ necessarily factors through $k(Z)$ the map
	\[u_{\**}\left(X^{J_d} \to Y^{J_d} \times_{Y^{\{0\}}} X^{\{0\}}\right)\]
	coincides with the map
	\[X^{(J_d)} \to Y^{(J_d)} \times_{Y^{(\{0\})}} X^{(\{0\})}\]
	which is a trivial fibration in $\mathsf{dSet}_G$ by Corollary \ref{TRIVFIB COR}. The result now follows since 
$\mathsf{dSet}^G$ is a reflexive subcategory of $\mathsf{dSet}_G$, so that
 $u_{\**}(f)$ is a trivial fibration iff $f$ is.
\end{proof}

The following follows exactly as in \cite[Cor. 6.11]{CM11}.

\begin{corollary}\label{WEAKEQUIVCHAR}
The weak equivalences in $\mathsf{dSet}^G$ are the smallest class containing the inner $G$-anodyne extensions, the trivial fibrations and closed under ``$2$-out-of-$3$''.
\end{corollary}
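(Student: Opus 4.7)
The containment $W \subseteq \mathsf{W}$ (where $\mathsf{W}$ denotes the class of weak equivalences) is immediate from Proposition \ref{DSETGMODEL PROP}: inner $G$-anodyne extensions are trivial cofibrations, trivial fibrations are weak equivalences by definition, and $\mathsf{W}$ satisfies 2-out-of-3.

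For the reverse inclusion, given a weak equivalence $f \colon X \to Y$, I plan to mimic the strategy of \cite[Cor. 6.11]{CM11}. First, the small object argument applied to the generating $G$-inner horn inclusions produces $G$-inner anodyne extensions $j_X \colon X \to X^{\ast}$ and $j_Y \colon Y \to Y^{\ast}$ with $X^{\ast}, Y^{\ast}$ being $G$-$\infty$-operads, together with a lift $\bar{f} \colon X^{\ast} \to Y^{\ast}$ satisfying $j_Y f = \bar{f} j_X$ (obtained from $X \to Y^{\ast}$ lifting against the $G$-inner fibration $Y^{\ast} \to \ast$ along the inner anodyne $j_X$). Since $j_X, j_Y \in W$, two applications of 2-out-of-3 reduce the claim to $\bar{f} \in W$. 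A second invocation of the small object argument yields a factorization $\bar{f} = p \circ i$ with $i$ inner $G$-anodyne and $p$ a $G$-inner fibration; the intermediate object $Z$ is itself a $G$-$\infty$-operad (being the source of a $G$-inner fibration into one), and since $i \in W$ one further use of 2-out-of-3 reduces the problem to showing $p \in W$.

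The decisive step is to argue that $p \colon Z \to Y^{\ast}$, being a weak equivalence and $G$-inner fibration between $G$-$\infty$-operads, is automatically a trivial fibration and hence in $W$ by definition. The key auxiliary fact is that, since the model structure on $\mathsf{dSet}^{G}$ is transferred from the Cisinski homotopy structure $(\mathcal{J},\mathsf{An}_{E_{\infty}})$ via $p_{!} \dashv p^{\ast}$ (in the notation of Proposition \ref{DSETGMODEL PROP}), any weak equivalence between fibrant objects is a $J_{d}$-homotopy equivalence. Because $J_{d}$ carries the trivial $G$-action, the composite functor $\tau \, i^{\ast} \, (-)^{H}$ sends $J_{d}$-homotopy equivalences to equivalences of categories, so $\tau i^{\ast}(p^{H})$ is an equivalence, and in particular a categorical fibration, for every $H \leq G$. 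Theorem \ref{FIBRANTOBJ THM} then yields that $p$ is a $J$-fibration; combined with being a weak equivalence, this gives the desired trivial fibration.

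The anticipated main obstacle is the careful verification of the auxiliary fact that weak equivalences between fibrant objects in the transferred model structure are $J_{d}$-homotopy equivalences. This is, however, a standard feature of Cisinski's general theory \cite{Ci06}: the analogous statement holds in $\mathsf{dSet}^{G}/E_{\infty}$ by construction of the model structure from the homotopical datum $(\mathcal{J},\mathsf{An}_{E_{\infty}})$, and it transfers to $\mathsf{dSet}^{G}$ because $p^{\ast}$ preserves fibrant objects and detects weak equivalences, as recorded in the proof of Proposition \ref{DSETGMODEL PROP}.
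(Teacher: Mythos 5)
Your two opening reductions (fibrant replacement by $G$-inner anodynes, then ``$2$-out-of-$3$'') are correct and are indeed how the argument of \cite[Cor. 6.11]{CM11} begins, but the decisive step contains a genuine error. An equivalence of categories is \emph{not} ``in particular a categorical fibration'': it need not be surjective on objects, so it need not satisfy the lifting property against $1 \to [1]$ (nor be an isofibration). Correspondingly, the statement you reduce to --- that a $G$-inner fibration between $G$-$\infty$-operads which is a weak equivalence is a trivial fibration --- is false. Already with trivial $G$-action, $p \colon \{0\} = \Omega[G/G\cdot\eta] \to J_d$ is a $G$-inner fibration between $G$-$\infty$-operads and is a weak equivalence (it is the case $T = G/G\cdot\eta$ of (\ref{JANODEF EQ}), hence $J$-anodyne, hence a trivial cofibration), yet it fails the lifting property against $\emptyset = \partial\Omega[G/G\cdot\eta] \to \Omega[G/G\cdot\eta]$ and so is not a trivial fibration; here $\tau i^{\ast}(p^H)$ is the inclusion of one object into the contractible groupoid on two objects, an equivalence of categories which is not a categorical fibration. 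Thus Theorem \ref{FIBRANTOBJ THM} cannot be invoked as you propose, and the factorization $\bar f = p\circ i$ does not reduce the problem to a trivial fibration.

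The step that actually needs proving is that \emph{any} weak equivalence $\bar f \colon X^{\ast} \to Y^{\ast}$ between $G$-$\infty$-operads lies in the class $W$, and the standard route (which is what the citation of \cite[Cor. 6.11]{CM11} points at) is the mapping path object factorization rather than a second inner-anodyne/inner-fibration factorization. Form $P = X^{\ast} \times_{Y^{\ast}} (Y^{\ast})^{J_d}$ using $ev_0$, so that $\bar f = q \circ s$ with $s$ induced by the constant homotopy and $q = ev_1 \circ pr$. Since $Y^{\ast}$ is $J$-fibrant by Theorem \ref{FIBRANTOBJ THM}, Proposition \ref{SLIGHTCOR PROP}(i) shows by adjunction that $ev_0 \colon (Y^{\ast})^{J_d} \to Y^{\ast}$ is a trivial fibration, so its pullback $P \to X^{\ast}$ is one as well; since $(P \to X^{\ast}) \circ s = \mathrm{id}$, ``$2$-out-of-$3$'' puts $s$ in $W$. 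Proposition \ref{SLIGHTCOR PROP}(ii) shows that $(Y^{\ast})^{J_d} \to Y^{\ast}\times Y^{\ast}$ is a fibration between fibrant objects, whence $q$ is a composite of fibrations; by ``$2$-out-of-$3$'' in the model structure $q$ is also a weak equivalence, hence a trivial fibration, hence in $W$. This exhibits $\bar f \in W$. Your auxiliary fact (weak equivalences between fibrant objects are $J_d$-homotopy equivalences) is true but is not the ingredient that closes the argument.
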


\section{Indexing system analogue results}
\label{INDEXSYS SEC}

In this section we follow the lead of \cite{BH15} and build variant model structures on $\mathsf{dSet}^G$ associated to  indexing systems, a notion originally introduced in  \cite[Def. 3.22]{BH15}, 
which we repackage (and slightly extend) in 
Definition \ref{WEAKINDEXSYS DEF}.

\begin{definition}
 	A \textit{$G$-graph subgroup} of $G\times \Sigma_n$ is a subgroup $K \leq G \times \Sigma_n$ such that $K \cap \Sigma_n = \**$.
\end{definition}

\begin{remark}
$G$-graph subgroups are graphs of homomorphisms
$G \geq H \to \Sigma_n$.
\end{remark}

\begin{definition}
A \textit{$G$-vertex family} is a collection
\[\mathcal{F} = \coprod_{n\geq 0} \mathcal{F}_{n}\]
where each $\mathcal{F}_n$ is a family of $G$-graph subgroups of $G\times \Sigma_n$ closed under subgroups and conjugation.

Further, a $H$-set $X$ for a subgroup $H\leq G$ is called a 
\textit{$\mathcal{F}$-set} if for some (and hence any) choice of isomorphism $X\simeq \{1,\cdots,n\}$ the graph subgroup of $G \times \Sigma_n$ encoding the $H$-action on $\{1,\cdots,n\}$ is in $\mathcal{F}$.
\end{definition}

\begin{definition}
	Let $\mathcal{F}$ be a $G$-vertex family.	
	A $G$-tree $T$ is called a \textit{$\mathcal{F}$-tree} if for all edges $e \in T$ with isotropy $H$ one has that the $H$-set $e^{\uparrow}$ is a $\mathcal{F}$-set.
\end{definition}

It is clear that whenever $T \to S$ is either an outer face or a quotient, $S$ being a $\mathcal{F}$-tree implies that so is $T$. However, the same is typically not true for inner faces and degeneracies.

\begin{definition}\label{WEAKINDEXSYS DEF}
A $G$-vertex family $\mathcal{F}$ is called a 
\textit{weak indexing system} if $\mathcal{F}$-trees form 
a sieve of $\Omega_G$, i.e. if for any map $T \to S$ with $S$ a
$\mathcal{F}$-tree then it is also 
$T$ a $\mathcal{F}$-tree.
In this case we denote the sieve of $\mathcal{F}$-trees by $\Omega_{\mathcal{F}} \subset \Omega_G$.

Additionally, $\mathcal{F}$ is called an \textit{indexing system} if every $\mathcal{F}_n$ contains all subgroups $H \times \** \leq G \times \Sigma_n$ for $H \leq G$, $n\geq 0$.
\end{definition}

\begin{remark}
 Closure under degeneracies is simply the statement that $\mathcal{F}_1$ contains all subgroups 
 $H =
H \times \Sigma_1 
 \leq G \times \Sigma_1$ for $H \leq G$.
\end{remark}

\begin{remark}
Since Definition \ref{WEAKINDEXSYS DEF} may at first seem to be quite different from the original \cite[Def. 3.22]{BH15}, we now address the equivalence between the two.
To a $H$-set with orbital decomposition $H/K_1 \amalg \cdots \amalg H/K_n$ one can associate the $G$-corolla with orbital representation as follows.
\[
\begin{tikzpicture}
[grow=up,auto,level distance=2.3em,every node/.style = {font=\footnotesize},dummy/.style={circle,draw,inner sep=0pt,minimum size=1.75mm}]
	\node at (0,0) {}
		child{node [dummy] {}
			child{
			edge from parent node [swap,near end] {$G/K_n$} node [name=Kn] {}}
			child{
			edge from parent node [near end] {$G/K_1$}
node [name=Kone,swap] {}}
		edge from parent node [swap] {$G/H$}
		};
		\draw [dotted,thick] (Kone) -- (Kn) ;
\end{tikzpicture}
\]
Note that for any of its roots $r$ one has that $r^{\uparrow}$ is a $G$-conjugate of the $H$-set $H/K_1 \amalg \cdots \amalg H/K_n$. The conditions (cf. \cite[Def. 3.22]{BH15}) that indexing systems are closed under disjoint unions \cite[Def. 3.19]{BH15}
 and sub-objects \cite[Def. 3.21]{BH15} of
$\mathcal{F}$-sets are then encoded by taking inner faces of $\mathcal{F}$-trees of the form
\[
\begin{tikzpicture}
[grow=up,auto,level distance=2.3em,every node/.style = {font=\footnotesize},dummy/.style={circle,draw,inner sep=0pt,minimum size=1.75mm}]
	\tikzstyle{level 2}=[sibling distance = 8.5em]
	\tikzstyle{level 3}=[sibling distance = 3.5em]
	\node at (0,0) {}
		child{node [dummy] {}
			child{node [dummy] {}
				child{
				edge from parent node [swap,near end] {$G/\bar{K}_m$} node [name=Kn] {}}
				child{
				edge from parent node [near end] {$G/\bar{K}_1$}
node [name=Kone,swap] {}}
			edge from parent node [swap] {$G/H$}}
			child{node [dummy] {}
				child{
				edge from parent node [swap,near end] {$G/K_n$} node [name=K2n] {}}
				child{
				edge from parent node [near end] {$G/K_1$}
node [name=K2one,swap] {}}
			edge from parent node {$G/H$}}
		edge from parent node [swap] {G$/H$}		
		};
		\draw [dotted,thick] (Kone) -- (Kn)	;
		\draw [dotted,thick] (K2one) -- (K2n)	;
	\begin{scope}
	\tikzstyle{level 2}=[sibling distance = 5.5em]
	\node at (6,0) {}
		child{node [dummy] {}
			child{node [dummy] {}
			edge from parent node [swap,near end] {$G/K_n$} node [name=Kn] {}}
			child{node [dummy] {}
			edge from parent node [near end] {$G/K_2$}
node [name=Kone,swap,near end] {}}
			child{
			edge from parent node [near end] {$G/K_1$}}
		edge from parent node [swap] {$G/H$}
		};
		\draw [dotted,thick] (Kone) -- (Kn) ;
	\end{scope}
\end{tikzpicture}
\]
while the closure under self-induction \cite[Def. 3.20]{BH15}
is similarly encoded by $\mathcal{F}-$trees as on the left below.
\[
\begin{tikzpicture}
[grow=up,auto,level distance=2.3em,every node/.style = {font=\footnotesize},dummy/.style={circle,draw,inner sep=0pt,minimum size=1.75mm}]
	\tikzstyle{level 2}=[sibling distance = 8.5em]
	\tikzstyle{level 3}=[sibling distance = 3.5em]
	\node at (0,0) {}
		child{node [dummy] {}
			child{node [dummy] {}
				child{
				edge from parent node [swap,near end] {$G/L_n$} node [name=Kn] {}}
				child{
				edge from parent node [near end] {$G/L_1$}
node [name=Kone,swap] {}}
			edge from parent node [swap] {$G/K$}}
		edge from parent node [swap] {$G/H$}		
		};
		\draw [dotted,thick] (Kone) -- (Kn)	;
%	\begin{scope}
%	\tikzstyle{level 2}=[sibling distance = 9.5em]
%	\tikzstyle{level 3}=[sibling distance = 3.5em]
%	\node at (7,0) {}
%		child{node [dummy] {}
%			child{node [dummy] {}
%				child{
%				edge from parent node [swap,near end] {$G/\bar{K}_m$} node [name=Kn] {}}
%				child{
%				edge from parent node [near end] {$G/\bar{K}_1$}
%node [name=Kone,swap] {}}
%			edge from parent node [swap] {$G/K_n$} node [name=Knn] {} }
%			child{node [dummy] {}
%				child{
%				edge from parent node [swap,near end] {$G/K_1\cap$} node [name=K2n] {}}
%				child{
%				edge from parent node [near end] {$G/K_1\cap L_1$}
%node [name=K2one,swap] {}}
%			edge from parent node {$G/K_1$} node [name=Knnn,swap] {}}
%		edge from parent node [swap] {$G/H$}		
%		};
%		\draw [dotted,thick] (Kone) -- (Kn) ;
%		\draw [dotted,thick] (K2one) -- (K2n);
%		\draw [dotted,thick] (Knn) -- (Knnn);
%	\end{scope}
	\node at (5,0) {}
		child{node [dummy] {}
			child{node [dummy] {}
				child{
				edge from parent node [swap,near end] {$G/K\cap H^{g_n}$} node [name=Kn] {}}
				child{
				edge from parent node [near end] {$G/K\cap H^{g_1}$}
node [name=Kone,swap] {}}
			edge from parent node [swap] {$G/K$}}
		edge from parent node [swap] {$G/H$}		
		};
		\draw [dotted,thick] (Kone) -- (Kn)	;
\end{tikzpicture}
\]
Closure under cartesian products \cite[Def. 3.22]{BH15} is in fact redundant, as the double coset formula 
$H/K \times H/L \simeq \coprod_{[g] \in L \backslash H/ K}
{H/K \cap L^g}$
allows such $H$-sets to be built using self inductions as displayed by the rightmost tree above (the case of products of sets with multiple orbits being then obtained via disjoint units).
\end{remark}

Definitions \ref{BOUNDINCG DEF}, \ref{INNERHORNG DEF}
and \ref{GINFTYOP DEF} admit weak indexing system analogues.

\begin{definition}
Let $\mathcal{F}$ be a weak indexing system.

A \textit{$\mathcal{F}$-boundary inclusion} (resp. 
\textit{$\mathcal{F}$-inner horn inclusion}) is a boundary inclusion
$\partial \Omega[T] \to \Omega[T]$ (resp. inner horn inclusion $\Lambda^{G e}[T] \to \Omega[T]$) with $T \in \Omega_{\mathcal{F}}$.

A monomorphism is called $\mathcal{F}$-normal (resp. $\mathcal{F}$-anodyne) if it is in the saturation of $\mathcal{F}$-boundary inclusions (resp. $\mathcal{F}$-inner horn inclusions) under pushouts, transfinite compositions and retracts.

Finally, a $G$-dendroidal set $X$ is called a $\mathcal{F}$-$\infty$-operad if it has the right lifting property with respect to all $\mathcal{F}$-inner horn inclusions.
	\[
		\begin{tikzcd}
		\Lambda^{Ge}[T] \ar[r] \ar[d] & X \\ 
		\Omega[T] \ar[dashed]{ur}		
		\end{tikzcd}
	\]
\end{definition}

We now list the necessary modifications to extend the results in this paper to the indexing system case.

A direct analogue of Proposition \ref{NORMCHAR PROP} yields that
$X \in \mathsf{dSet}^G$ is $\mathcal{F}$-normal (i.e. $\emptyset \to X$ is a $\mathcal{F}$-normal monomorphism) iff all dendrices $x \in X(T)$ have $\mathcal{F}$-isotropy, i.e. isotropies $\Gamma \leq G \times \Sigma_T$ that are graph subgroups for  partial homomorphisms
$G \geq H \to \Sigma_T$ such that the induced $G$-tree $G \cdot_H T$ is a $\mathcal{F}$-tree.

It then follows that, much like normal dendroidal sets, $\mathcal{F}$-normal dendroidal sets form a sieve, i.e., for any map $X \to Y$ with $Y$ a $\mathcal{F}$-normal dendroidal set then so is $X$. 

Noting that the subtrees of $S \otimes T$ are $\mathcal{F}$-trees whenever $S$, $T$ are $\mathcal{F}$-trees (since the generating vertices/broad relations of $S \otimes T$ are induced from those of $S$, $T$),
it follows that $\Omega[S] \otimes \Omega[T]$ is then $\mathcal{F}$-normal so that the sieve condition implies that  
Proposition \ref{MONOPUSHOUTPROD PROP} generalizes to the $\mathcal{F}$-normal case.

Likewise, the key results Theorem \ref{EXPNPROP THM} and 
\ref{OUTERIN THM} immediately generalize by replacing the terms 
``$G$-tree'' and ``$G$-anodyne'' with ``$\mathcal{F}$-tree'' and ``$\mathcal{F}$-anodyne''. This is because their proofs, while long, ultimately amount to identifying suitable edge orbits of suitable subtrees of $S\otimes T$ and then attaching the corresponding equivariant horns.

Likewise, Proposition \ref{JOINPUSHOUT PROPOSITION} generalizes to the $\mathcal{F}$ case for the same reason, and hence so does Theorem \ref{JOINLIFT THM}, since its proof is an application of
Proposition \ref{JOINPUSHOUT PROPOSITION}.

We can now prove Theorem \ref{INDEXSYSMAIN THM}.

\begin{proof}[proof of Theorem \ref{INDEXSYSMAIN THM}]
	The proof of the existence of the model structure follows just as in \S \ref{EXIST SEC}. The only notable changes are as follows:
	in defining $J_{\mathcal{F}}$-anodyne extensions one uses only $\mathcal{F}$-inner horns and those maps in (\ref{JANODEF EQ})
	for $T \in \Omega_{\mathcal{F}}$;
	the term ``normal'' is replaced with ``$\mathcal{F}$-normal'' throughout
	(note that any monomorphism over a $\mathcal{F}$-normal dendroidal set is a $\mathcal{F}$-normal monomorphism).
	
%	in the case of Proposition \ref{DSETGMODEL PROP}, the only change being that one defines a similar notion of 
%	$J_{\mathcal{F}}$-anodyne extensions by using only 
%$\mathcal{F}$-trees	before running through the argument in 
%\cite[Prop. 3.12]{CM11} (note that the fact that $\mathcal{F}$-normal dendroidal sets form a sieve is key). 

The characterization of the $J_\mathcal{F}$-fibrant objects as being the $\mathcal{F}$-$\infty$-operads follows by repeating the arguments in \S \ref{FIBOBJCHAR SEC}, though some care is needed when adapting the definitions preceding 
Theorem \ref{XDELTA1 THM}. Firstly, letting 
$\Omega_{\mathcal{F}} \subset \Omega_{G}$ denote the sieve of $\mathcal{F}$-trees, one sets 
$\mathsf{dSet}_{\mathcal{F}} = \mathsf{dSet}^{\Omega^{op}_{\mathcal{F}}}$, leading to an adjunction
\[
u^{\**} \colon \mathsf{dSet}_{\mathcal{F}}
	\leftrightarrows
\mathsf{dSet}^G \colon u_{\**}
\]
allowing for the $\mathcal{F}$-normal monomorphisms of 
$\mathsf{dSet}_{\mathcal{F}}$
to be defined from the $\mathcal{F}$-normal monomorphisms in $\mathsf{dSet}^G$ just as in Definition \ref{NORMALMONDSETLOWG DEF}.

For a $\mathcal{F}$-$\infty$-operad and simplicial set $K$,
one defines $X^{(K)} \in \mathsf{dSet}_{\mathcal{F}}$
just as in Notation \ref{XTOTHEPARX NOT}
while for $A \in \mathsf{dSet}^{G}$ a $\mathcal{F}$-normal dendroidal set and $\mathcal{F}$-$\infty$-operad X one defines
$k(A,X) \in \mathsf{sSet}$ just as in Notation \ref{KAX NOT}.

The proofs of Theorems \ref{XDELTA1 THM}, 
Proposition \ref{KANFIB PROP} and Theorem \ref{FIBRANTOBJ THM}
now extend mutatis mutandis by using the $\mathcal{F}$ versions of Theorems \ref{OUTERIN THM} and \ref{JOINLIFT THM}.
\end{proof}


\begin{thebibliography}{10}

\bibitem{BM08}
C.~Berger and I.~Moerdijk.
\newblock On an extension of the notion of {R}eedy category.
\newblock {\em Math. Z.}, 269(3-4):977--1004, 2011.

\bibitem{BH15}
A.~J. Blumberg and M.~A. Hill.
\newblock Operadic multiplications in equivariant spectra, norms, and
  transfers.
\newblock {\em Adv. Math.}, 285:658--708, 2015.

\bibitem{BP17}
P.~Bonventre and L.~A. Pereira.
\newblock Genuine equivariant operads.
\newblock arXiv preprint: 1707.02226, 2017.

\bibitem{Ci06}
D.-C. Cisinski.
\newblock Les pr\'efaisceaux comme mod\`eles des types d'homotopie.
\newblock {\em Ast\'erisque}, (308):xxiv+390, 2006.

\bibitem{CM11}
D.-C. Cisinski and I.~Moerdijk.
\newblock Dendroidal sets as models for homotopy operads.
\newblock {\em J. Topol.}, 4(2):257--299, 2011.

\bibitem{CM13a}
D.-C. Cisinski and I.~Moerdijk.
\newblock Dendroidal {S}egal spaces and {$\infty$}-operads.
\newblock {\em J. Topol.}, 6(3):675--704, 2013.

\bibitem{CM13b}
D.-C. Cisinski and I.~Moerdijk.
\newblock Dendroidal sets and simplicial operads.
\newblock {\em J. Topol.}, 6(3):705--756, 2013.

\bibitem{CM14}
D.-C. Cisinski and I.~Moerdijk.
\newblock Note on the tensor product of dendroidal sets.
\newblock Available at: \texttt{https://arxiv.org/abs/1403.6507}, 2014.

\bibitem{CW91}
S.~R. Costenoble and S.~Waner.
\newblock Fixed set systems of equivariant infinite loop spaces.
\newblock {\em Trans. Amer. Math. Soc.}, 326(2):485--505, 1991.

\bibitem{GJ95}
E.~Getzler and J.~Jones.
\newblock Operads, homotopy algebra and iterated integrals.
\newblock Available at: \texttt{https://arxiv.org/abs/hep-th/9403055}, 1995.

\bibitem{HHM16}
G.~Heuts, V.~Hinich, and I.~Moerdijk.
\newblock On the equivalence between {L}urie's model and the dendroidal model
  for infinity-operads.
\newblock {\em Adv. Math.}, 302:869--1043, 2016.

\bibitem{Joy02}
A.~Joyal.
\newblock Quasi-categories and {K}an complexes.
\newblock {\em J. Pure Appl. Algebra}, 175(1-3):207--222, 2002.
\newblock Special volume celebrating the 70th birthday of Professor Max Kelly.

\bibitem{Lu09}
J.~Lurie.
\newblock {\em Higher topos theory}, volume 170 of {\em Annals of Mathematics
  Studies}.
\newblock Princeton University Press, Princeton, NJ, 2009.

\bibitem{MSS02}
M.~Markl, S.~Shnider, and J.~Stasheff.
\newblock {\em Operads in algebra, topology and physics}, volume~96 of {\em
  Mathematical Surveys and Monographs}.
\newblock American Mathematical Society, Providence, RI, 2002.

\bibitem{May72}
J.~P. May.
\newblock {\em The geometry of iterated loop spaces}.
\newblock Springer-Verlag, Berlin-New York, 1972.
\newblock Lectures Notes in Mathematics, Vol. 271.

\bibitem{MW07}
I.~Moerdijk and I.~Weiss.
\newblock Dendroidal sets.
\newblock {\em Algebr. Geom. Topol.}, 7:1441--1470, 2007.

\bibitem{MW08}
I.~Moerdijk and I.~Weiss.
\newblock On inner {K}an complexes in the category of dendroidal sets.
\newblock {\em Adv. Math.}, 221(2):343--389, 2009.

\bibitem{St16}
M.~Stephan.
\newblock On equivariant homotopy theory for model categories.
\newblock {\em Homology Homotopy Appl.}, 18(2):183--208, 2016.

\bibitem{We12}
I.~Weiss.
\newblock Broad posets, trees, and the dendroidal category.
\newblock Available at: \texttt{https://arxiv.org/abs/1201.3987}, 2012.

\end{thebibliography}
\end{document}